\documentclass[reqno, a4paper, 12pt]{amsart}
\usepackage{amssymb, url, color, mathrsfs}
\usepackage[colorlinks=true, bookmarks=true, pdfstartview=FitH, pagebackref=true, linktocpage=true]{hyperref}

\usepackage{pgf,tikz,pgfplots}
\usetikzlibrary{hobby}
\usepgfplotslibrary{fillbetween}

\usepackage[a4paper, vmargin={30mm,30mm},hmargin={20mm,20mm}, top=3.25cm, bottom=3.5cm, left=2.5cm, right=2.5cm]{geometry}

\usepackage[short,nodayofweek]{datetime}
\usepackage[pagewise,running,mathlines,displaymath, switch]{lineno}
\usepackage{mathabx}
\usepackage{times}
\usepackage{indentfirst, tabularx}

\usepackage[T5]{fontenc}
\usepackage[utf8]{inputenc}

\theoremstyle{plain}
\numberwithin{equation}{section}
\newtheorem{theorem}{Theorem}[section] 
\newtheorem{lemma}{Lemma}[section] 

\newtheorem{proposition}{Proposition}[section]

\parskip=6pt

\setlength{\baselineskip}{2pt} 
\definecolor{brown}{rgb}{0.5,0,0}
\definecolor{backgroundcolor}{rgb}{0.98, 0.92, 0.73}

\def\N{\mathbb N}% natural number
\def\R{\mathbf R} %real number
\def\C{\mathbb C} %complex number
\def\cN{\mathcal N} 
\def\bZ{\mathbb Z} %integers

\def\sN{\mathsf N} 
\def\sP{\mathsf P}

\def\fT{\mathbf T} %1d torus
\def\bT{\mathbb T} %torus

\def\bS{\mathbb S} %stress tensor
 %projection operator

\def\bB{\mathscr B} %bilinear form
\def\fB{\mathbf B} %bilinear form
\def\fL{\mathbf L} %Lagrangian 
\def\cL{\mathscr L} %Lagrangian
 %linear operator
\def\cP{\mathsf p} %Poisson

\def\sW{\mathscr W}

\def\cE{\mathcal E} %energy
\def\cM{\mathcal M} %multiplication operator
\def\cD{\mathcal D}%dissipation 
 %harmonic representation

\def\cA{\mathcal A} %matrix
\def\cQ{\mathcal Q} %quadratic terms
 %nonlinear geometric terms

\def\Csf{\mathsf c} %constant
\def\vk{\mathbf k} %wave number
\def\cJ{\mathcal J} 
\def\ve{\varepsilon}

\newif\ifprint
\ifprint
 %\pagecolor{white}
 %\hypersetup{colorlinks=false}
\else
 %\pagecolor{backgroundcolor}
\fi

\author[Tiến-Tài Nguyễn]{Tiến-Tài Nguyễn }
\address[Tiến-Tài Nguyễn]{Laboratoire Analyse G\'eom\'etrie et Applications, Universit\'e Sorbonne Paris Nord,  93430 - Villetaneuse, France}
\email{\href{mailto: Tiến-Tài Nguyễn <tientai.nguyen@math.univ-paris13.fr>}{tientai.nguyen@math.univ-paris13.fr}}
\address[Tiến-Tài Nguyễn]{University of Science, Vietnam National University, Hanoi, Vietnam}
\email{\href{mailto: Tiến-Tài Nguyễn <nttai.hus@vnu.edu.vn>}{nttai.hus@vnu.edu.vn}}

\begin{document}
\allowdisplaybreaks

\setpagewiselinenumbers
\setlength\linenumbersep{100pt}
%\modulolinenumbers[1]
%\linenumbers

\title[Rayleigh-Taylor instability]{Nonlinear Rayleigh-Taylor instability of the viscous surface wave in an infinitely deep ocean}

\begin{abstract}
In this paper, we consider an incompressible viscous fluid in an infinitely deep ocean, being bounded above by a free moving boundary.  The governing equations are the  gravity-driven incompressible Navier-Stokes equations with variable density and no surface tension is taken into account on the free surface.  After using the Lagrangian transformation, we write the main equations in a perturbed form in a fixed domain. In the first part, we  describe a spectral analysis of the linearized equations around a hydrostatic equilibrium $(\rho_0(x_3), 0, P_0(x_3))$ for a smooth increasing density profile $\rho_0$. Precisely, we prove that there exist \textit{infinitely many} normal modes to the linearized equations by following  the operator method initiated by Lafitte and Nguyễn  \cite{LN20}.  In the second part, we study the nonlinear Rayleigh-Taylor instability around the above profile by constructing a \textit{wide class} of initial data for  the nonlinear perturbation problem departing from the equilibrium, based on the finding of infinitely many normal modes. Our nonlinear result follows the previous framework of  Guo and Strauss \cite{GS95} and also  of  Grenier \cite{Gre00} with a refinement.
\end{abstract}

\date{\bf \today \; at \, \currenttime}

\subjclass[2010]{35Q30, 47A05, 47A55,  76D05, 76B15}

\keywords{viscous surface wave, nonlinear Rayleigh--Taylor instability, spectral theory, nonlinear energy estimate}

\maketitle
 \tableofcontents

\section*{Acknowledgments}

The author wishes to thank Prof. David Lannes for his suggestion on this problem and Prof. Jeffrey Rauch for his advice on the nonlinear study. The author is deeply grateful to the supervision of Prof.  Olivier Lafitte and Prof. Jean-Marc Delort during PhD at Universit\'e Sorbonne Paris Nord.   This paper was done while the author was visiting Centre de Recherches Math\'ematiques (CRM)-Universit\'e de Montr\'eal (UdeM). The author gratefully acknowledges CRM-UdeM for its hospitality. This work is supported by a grant from R\'egion \^Ile-de-France.

 \section{Introduction}

This paper is devoted to the study of nonlinear Rayleigh--Taylor (RT) instability for an incompressible nonhomogeneous viscous fluid separated from air by an interface. More precisely, the fluid lies on a vertical gravity field, below some smooth interface separating it from air (see picture below).
\begin{center}
\begin{tikzpicture}[scale=1.0]
		\draw[line width = 1pt] (0,0) -- (0, 5);
		\draw[line width = 1pt] (8,0) -- (8, 5);
		%\draw[name path=A, line width = 1pt] (0,0) -- (8, 0);
		\draw[thick, fill=gray, opacity=0.5] (0,0) to (0,3) to[curve through =
		{ (0.5, 3.2) (3,2.5) (7.5,4.2) }] (8,4) to (8,0);
		\draw (4,4.5) node {\textit{air}};
		\draw (3.2,2.25) node {$x_3=\eta(t,x_1,x_2)$};
		\draw (4,1.0) node {\textit{fluid}};
\end{tikzpicture}
\end{center}

We set the problem in a periodic framework. If $\bT=\R/\bZ$ and for some $L>0$, we set $\fT^2=(2\pi L\bT)^2$. We consider a three dimensional fluid of infinite depth, $2\pi L$-periodic relatively to the horizontal variables $(x_1,x_2)$, the vertical variable being denoted by $x_3$. At time $t$, the fluid occupies a domain 
\[
\Omega(t)= \{ x=(x_1,x_2,x_3) \in  \fT^2 \times \R, x_3 <\eta(t,x_1,x_2)\},
\]
where $\eta$ is a smooth enough function of its arguments, periodic in $(x_1,x_2)$. We denote by $\Gamma(t)$ the boundary of $\Omega(t)$, that is  $\Gamma(t) = \{x_3=\eta(t,x_1,x_2)\}$.
The fluid is submitted to vertical gravity field $-g e_3$ with $g>0$ and has variable density denoted by $\tilde \rho(t,x)$. 

The evolution of the velocity $\tilde u$ in the fluid domain, of the density $\tilde \rho$ and of the interface $\Gamma(t)$ is driven by the equation of fluid mechanics: we assume that $(\tilde \rho,\tilde u)$ satisfies the incompressible Navier-Stokes equation in $\Omega(t)$, with pressure $\tilde p$, that the interface moves according to the velocity of the fluid and that a convenient boundary holds at the interface $\Gamma(t)$ (without surface tension).  We get thus a system 
\begin{equation}\label{EqNS}
\begin{cases}
\partial_t \tilde \rho +\text{div}(\tilde\rho \tilde u) =0 \quad &\text{in } \Omega(t),\\
\partial_t(\tilde\rho \tilde u) + \text{div}(\tilde\rho \tilde u\otimes \tilde u) +\nabla\tilde p =\mu \Delta \tilde u - g\tilde\rho  e_3 \quad &\text{in } \Omega(t),\\
\text{div}\tilde u=0 \quad &\text{in } \Omega(t),\\
(\tilde p \text{Id} - \mu \bS \tilde u ) n= p_{atm} n  \quad&\text{on } \Gamma(t), \\
\partial_t\eta = \tilde u_3 - \tilde u_1 \partial_1\eta - \tilde u_2 \partial_2\eta \quad&\text{on } \Gamma(t).
\end{cases}
\end{equation}
 Here $\mu>0$ is the viscosity coefficient and  $\bS\tilde u= \nabla \tilde u+ \nabla \tilde u^T$ is the stress tensor. The outward normal vector to the boundary $\Gamma(t)$, $ n$ is given by 
\begin{equation}
n = \frac{(-\partial_1\eta , -\partial_2\eta, 1)^T}{\sqrt{1+|\partial_1\eta|^2+|\partial_2\eta|^2}}.
\end{equation} 
 The given constant $p_{atm}$ is the atmospheric pressure.  For a more physical description of the equations \eqref{EqNS} and the boundary conditions in \eqref{EqNS}, we refer to \cite[Sect. 1.8]{Lan13}.  
 %To complete the statement of the problem, we must specify the  initial conditions. We suppose that the initial surface $\Gamma(0)$ is given by the graph of the function $\eta(0)=\eta_0$, which yields the open set $\Omega(0)$ on which we specify the initial data for the velocity, $u(0)=u_0 :\Omega(0)\to \R^3$. We assume that the initial surface function satisfies the ''zero-average'' condition
%\begin{equation}\label{ZeroAverageEta}
%\int_{\fT^2}\eta_0 =0
%\end{equation}
%and $\eta(0), u(0)$ satisfy certain compatibility conditions, which we will present later (see Proposition \ref{PropLocalSolution}).  Note that, for sufficiently regular solutions to the problem, the condition \eqref{ZeroAverageEta} holds for all $t\geq 0$, that is
%\begin{equation}
%\int_{\fT^2}\eta(t) =0 \quad\text{for all }t\geq 0.
%\end{equation}
%It is a consequence of 
%\[
%\frac{d}{dt}\int_{\fT^2} \eta = \int_{\fT^2}\partial_t \eta = \int_{\Gamma(t)} \tilde u\cdot n = \int_{\Omega(t)}\text{div}\tilde u=0.
%\]

The movement of $\Omega(t)$ and $\Gamma(t)$ raises numerous mathematical difficulties. Hence, let $\R_- = (-\infty,0)$, we use the unknown  function $\eta$ and the Lagrangian coordinate transformation  to  transform the free boundary problem \eqref{EqNS} into the equivalent problem, \eqref{EqNS_Lagrangian} in a fixed domain $\Omega = \fT^2 \times \R_-$, of which the fixed upper boundary is $\Gamma = \fT^2\times \{x_3=0\}$. Let  $'=d/$, we consider two smooth functions $\rho_0$ and $P_0$ depending only on $x_3$ such that $P_0'=-g\rho_0$.  Hence, we have that $(\rho_0(x_3), 0, P_0(x_3))$ is a steady state solution of Eq. \eqref{EqNS_Lagrangian}.  We shall assume that the density $\rho_0$ is an increasing function of $x_3\in \R_-$. In that way, we have heavier fluid above lighter one and we are thus in the situation where Rayleigh--Taylor instability occurs.

Our goal is to show  nonlinear RT instability for the nonlinear equations \eqref{EqNS_Lagrangian} around the equilibrium state $(\rho_0(x_3), 0, P_0(x_3))$, assuming that \begin{equation}\label{AssumeRho1}
C_0^\infty(\R_-)  \ni \rho_0' \geq 0 \quad\text{such that } \text{supp}\rho_0'=[-a,0], \quad\text{with }a>0,
\end{equation}
and denoting that
\begin{equation}\label{AssumeRho2}
0< \rho_- = \rho_0(x_3) \text{ for all }x_3 \leq -a,\quad \rho_0(0)=\rho_+.
\end{equation} 
That means, we construct initial data of small size $\delta$, giving rise to a solution defined up to some time $T^\delta$, and which at that time has $L^2$-norm bounded from below by a fixed constant (independent of $\delta$). We refer to Section \ref{SectMainResults} for the precise statement of our main theorems and describe here our strategy of the proof.

The first step in our proof is to construct a solution of the linearization at this stationary solution of the nonlinear equations satisfied by $(\rho, u,p, \theta)$. We want this solution of the linearized equations \eqref{EqLinearized} to have growing normal modes and we look for it schematically as $U(t,x)=e^{\lambda t}V(x)$, where $\lambda$ is positive. The profile $V$ is taken as an oscillatory function of the horizontal variables $(x_1,x_2)$, with mode $\vk=(k_1,k_2)$, the $x_3$-dependence being given in terms of unknown functions of $(\vk, x_3)$. Then $U$ is a solution of the linearized equations if some function $x_3 \to \phi(\vk,x_3)$ (from which $U$ maybe reconstructed) solves a fourth order ODE on the half line $\R_-= (-\infty, 0)$, depending on $\vk$ and $\lambda$. Our first theorem asserts that one may find \textit{infinitely many} solutions to that ODE and thus get \textit{infinitely many} normal mode solutions of the linearized equations. The line of investigation is the same as in \cite{LN20}, where the case of a viscous nonhomogeneous  incompressible fluid in the whole space has been treated. 

The second part of the paper is to devoted to the proof of nonlinear instability. The spectral analysis allows us to study  the fully nonlinear perturbation equations \eqref{EqPertur}.  To this purpose, we follow the same procedure as in \cite{Tai22} for RT problem in an infinite strip with Navier-slip boundary conditions, 
\begin{enumerate}
\item[Step 1.] establish some \textit{a priori} energy estimates to the nonlinear equations,

\item[Step 2.] formulate a linear combination of normal modes to the linearized equations  \eqref{EqLinearized} to set its value at initial time $t=0$ of size $0<\delta \ll 1$ as an initial datum to the nonlinear perturbation equations, 

\item[Step 3.] obtain the difference between the local exact solution and the approximate solution in Step 2 and exploit some energy estimates for the difference,

\item[Step 4.] deduce the bound in time of the difference functions and prove the nonlinear instability.
\end{enumerate}
Our nonlinear study is  inspired by the abstract frameworks of Guo and Strauss \cite{GS95} and  of Grenier \cite{Gre00}. In the above frameworks,   only  the maximal normal mode  was used in Step 2 to approximate the nonlinear equations. Let us emphasize that, our nonlinear results show that  a \textit{wide class} of initial data (related to a linear combination of normal modes) to the nonlinear problem departing from the equilibrium is formulated  in Step 2 and it gives rise to the nonlinear instability.

The Rayleigh--Taylor instability, studied first by Lord Rayleigh in \cite{Str83} and then Taylor \cite{Tay50} is well known as a gravity-driven instability in two semi-infinite inviscid and incompressible  fluids when the heavy one is on top of the light one. It  has attracted much attention due to both its physical and mathematical importance. Two applications  worth mentioning are implosion of inertial confinement fusion capsules \cite{Lin98} and core-collapse of supernovae  \cite{RDTA00}. For a detailed physical comprehension of the linear RT instability, we refer to three survey papers  \cite{Kul91, Zho17_1, Zho17_2}. Mathematically speaking, the nonlinear study of classical RT instability is proven by Desjardins and Grenier \cite{DG06}. For the inviscid and incompressible fluid with a smooth density profile, the classical  RT instability was investigated by Lafitte \cite{Laf01}, by Guo and Hwang \cite{GH03} and by  Helffer and Lafitte \cite{HL03}.

Concerning the viscous linear RT instability, one of the first studies can be seen in the book of Chandrasekhar \cite[Chapter X]{Cha61}. He considers two uniform viscous fluid separated by a horizontal boundary and  generalize the classical result of Rayleigh and Taylor.  We refer the readers to  mathematical viscous linear/nonlinear RT studies for  two (in-)compressible channel flows in \cite{GT11}, \cite{WT12} and \cite{JTW16}.  For the incompressible fluid in the whole space $\R^3$, with a smooth density profile, we mention  the results of Jiang et. al   \cite{JJN13} and of Lafitte and the author \cite{LN20}, respectively.  Recently, the author studies the  nonlinear viscous RT instability in a slab domain $2\pi L\bT \times (-1,1)$ $(L>0)$, with Navier-slip boundary conditions. This paper continues the nonlinear RT instability \cite{Tai22} in a rigid domain to that one in a moving domain.

We organize this paper as follows. In Section \ref{SectMainResults}, from the formulation in Eulerian coordinates of the governing equations \eqref{EqNS}, we derive the formulation in Lagrangian coordinates, see \eqref{EqNS_Lagrangian}. We introduce our two theorems, Theorem \ref{ThmLinear} describing the spectral analysis of the linearized equations and Theorem \ref{ThmNonlinear} proving the nonlinear instability. Section \ref{SectLinear} is devoted to the proof of Theorem \ref{ThmLinear}. In Section \ref{SectAprioriEnergy}, we construct the \textit{a priori} energy estimates to the nonlinear  equations. In the last part, Section \ref{SectNonlinear}, we prove Theorem \ref{ThmNonlinear}.

We  employ the Einstein convention of summing over repeated indices. Throughout the paper $C > 0$ will denote  universal constants that  depend on the physical parameters of the problem, $\mu,  g, k, a$ and  $\rho_\pm$.  Such constants are allowed to change from line to line.  We will employ the notation $a\lesssim b$ $(a\gtrsim b)$ to mean that $a\leq Cb$ $(a\geq Cb)$ for a universal constant $C > 0$.

\section{The governing equations and main results}\label{SectMainResults}
%\subsection{Formulation in Eulerian coordinates}
%We are concerned with the viscous RT of the nonhomogeneous incompressible Navier-Stokes equations without any effects of surface tension, that read as

\subsection{Formulation in Lagrangian coordinates}

Instead of using Eulerian coordinates,  we will  transform the free boundary problem \eqref{EqNS} into a new  problem in a fixed domain by following Beale  \cite{Bea81}. We define (see Appendix \ref{AppPoisson})
\begin{equation}
\theta := \text{Poisson extension of } \eta \text{ into } \fT^2 \times \{x_3\leq 0\}
\end{equation}
and  the following coordinate transformation:
\begin{equation}\label{ThetaTransform}
\Omega \ni x =(x_1,x_2,x_3) \mapsto (x_1,x_2, x_3+ \theta(t,x_1,x_2,x_3)) =: \Theta(t,x)=(y_1,y_2,y_3) \in \Omega(t).
\end{equation} 
If $\eta$ is sufficiently small (in an appropriate Sobolev norm), then the mapping $\Theta$ is a diffeomorphism.  Following \cite{Bea81} again, we denote by
\begin{equation}\label{NotationABKJ}
A= \partial_1\theta, \quad B=\partial_2\theta, \quad J= 1+\partial_3\theta,  \quad K= J^{-1}.
\end{equation}
From the definition of $\Theta$ \eqref{ThetaTransform}, we have
\begin{equation}\label{MatrixA}
\nabla \Theta = \begin{pmatrix} 1 &0 & 0\\ 0 & 1 & 0 \\ \partial_1\theta & \partial_2\theta & 1+\partial_3\theta \end{pmatrix}, \quad  \cA := ((\nabla \Theta)^{-1})^T = \begin{pmatrix} 1 &0 & -AK \\ 0 & 1 & -BK  \\ 0 & 0 & K \end{pmatrix}.
\end{equation}
We write the differential operators $\nabla_{\cA}, \text{div}_{\cA}, \Delta_{\cA}$ with their actions given by
\[
(\nabla_{\cA} f)_i := \sum_{j=1}^3 \cA_{ij}\partial_jf, \quad \text{div}_{\cA} X := \sum_{1\leq i,j\leq 3}\cA_{ij}\partial_jX_i, \quad \Delta_{\cA} f=\text{div}_{\cA}\nabla_{\cA} f.
\]
We write  $\cN := (-\partial_1\eta, -\partial_2\eta, 1)^T$ for the non-unit normal vector to $\Gamma(t)$, and we also write the stress tensor $\bS_{\cA} u $ as $(\bS_{\cA} u)_{ij} =  \cA_{ik}\partial_ku_j + \cA_{jk}\partial_ku_i$.

We now define the density $ \rho$, the velocity $ u$ and the pressure $ p$ on the domain $\Omega$ by the composition 
\[
 (\rho,  u, p)(t,x) = (\tilde\rho, \tilde u, \tilde p)(t,\Theta(t,x))
 \]
and  transform \eqref{EqNS} into the following system in the new coordinates
\begin{equation}\label{EqNS_Lagrangian}
\begin{cases}
\partial_t\rho - K \partial_t  \theta\partial_3\rho + \text{div}_{\cA}(\rho  u)=0 \quad&\text{in } \Omega, \\
\rho( \partial_t u - K\partial_t \theta \partial_3u+u\cdot \nabla_{\cA}u)+\nabla_{\cA} p - \mu \text{div}_{\cA}\bS_{\cA}u= -g\rho  e_3 \quad&\text{in } \Omega, \\
\text{div}_{\cA}u=0 \quad&\text{in } \Omega, \\
\partial_t\eta = u\cdot \cN \quad&\text{on } \Gamma, \\
(p\text{Id}- \mu \bS_{\cA}u)\cN =p_{atm}\cN \quad&\text{on } \Gamma.
\end{cases}
\end{equation}
\subsection{Equilibrium state and  and main results}

We now rewrite \eqref{EqNS_Lagrangian} in a perturbed formulation around the steady-state solution 
\[
(\rho(t,x),u(t,x), p(t,x), \eta(t,x_h))=(\rho_0(x_3),  0, P_0(x_3),0 ),
\]
satisfying that  $ P_0'=-g\rho_0$ and adding the condition $P_0(0)=p_{atm}$.  We define a special density and pressure perturbation by
\begin{equation}\label{PerturTerms}
\zeta = \rho-\rho_0 - \rho_0'\theta, \quad q= p-P_0 +g\rho_0\theta.
\end{equation}
We still call  the perturbations of the velocity and of the characterization of surface as $(u, \eta)$ respectively.
The equations for the perturbation $U=(\zeta,u,q,\eta)$ write
\begin{equation}\label{EqPertur}
\begin{cases}
\partial_t \zeta +\rho_0' u_3= \cQ^1(U) \quad&\text{in } \Omega,\\
\rho_0\partial_t  u+ \nabla q -\mu \Delta  u +g\zeta  e_3= \cQ^2(U) \quad&\text{in }\Omega,\\
\text{div} u= \cQ^3(U) \quad&\text{in } \Omega,\\
\partial_t \eta - u_3 =\cQ^4(U) \quad&\text{on } \Gamma,\\
((q-g\rho_+ \eta)\text{Id}-\mu \bS u)  e_3= \cQ^5(U)\quad&\text{on } \Gamma.
\end{cases}
\end{equation} 
We refer to Appendix \ref{AppNonlinearTerms} for the precise expression of $\cQ^i(1\leq i\leq 5)$ as a polynomial of $U, A,B,J,K$.  
This is the system considered from now on.

The linearized equations are  \begin{equation}\label{EqLinearized}
\begin{cases}
\partial_t\zeta+ \rho_0' u_3=0 \quad&\text{in }\Omega,\\
\rho_0\partial_t u+ \nabla q - \mu\Delta u+g\zeta e_3=0,\quad&\text{in }\Omega,\\
\text{div}u=0 \quad&\text{in }\Omega,\\
\partial_t \eta= u_3 \quad&\text{on }\Gamma,\\
((q-g\rho_+\eta)\text{Id}-\mu \bS u) e_3 =0 \quad&\text{on }\Gamma.
\end{cases}
\end{equation}
As in  \cite[Chapter XI]{Cha61}, we seek  normal modes  $U(t,x)=e^{\lambda t}V(x)$ of \eqref{EqLinearized}, which are
\begin{equation}\label{GrowingMode}
(\zeta, u, q)(t,x)= e^{\lambda t}(\omega, v, r)(x), \quad \eta(t,x_h)=e^{\lambda t} \nu(x_h).
\end{equation}
We deduce the following system on $(\omega,v,r,\nu)$, 
\begin{equation}
\begin{cases}
\lambda \omega+\rho_0'v_3=0 \quad&\text{in }\Omega,\\
\lambda\rho_0 v+ \nabla r -\mu \Delta v+g\omega  e_3 = 0\quad&\text{in }\Omega,\\
\text{div}v =0\quad&\text{in }\Omega,\\
\lambda \nu= v_3 \quad&\text{on } \Gamma,\\
((r-g\rho_+\nu)\text{Id} - \mu(\nabla v+\nabla v^T)) e_3 =0 \quad&\text{on } \Gamma. 
\end{cases}
\end{equation}
That implies 
\begin{equation}\label{RelationOmegaNu}
\omega = -\frac1{\lambda}\rho_0'v_3, \quad \nu= \frac1{\lambda}v_3|_{\Gamma}
\end{equation}
and
\begin{equation}\label{SystemV_R}
\begin{cases}
\lambda^2 \rho_0 v+\lambda \nabla r -\lambda\mu \Delta v- g\rho_0' v_3  e_3=0 \quad&\text{in }\Omega,\\
\text{div} v=0 \quad&\text{in }\Omega,\\
((\lambda r-g\rho_+v_3)\text{Id} - \lambda \mu(\nabla v+\nabla v^T)) e_3= 0 \quad&\text{on } \Gamma.
\end{cases}
\end{equation}
Let $\vk =(k_1,k_2) \in (L^{-1}\bZ)^2 \setminus \{0\}$, we  further  assume that
\begin{equation}\label{SeparationOfVR}
\begin{cases}
v_1(x) = \sin(k_1x_1+k_2x_2) \psi(\vk,x_3),\\
v_2(x) = \sin(k_1x_1+k_2x_2) \varphi(\vk, x_3),\\
v_3(x) =\cos(k_1x_1+k_2x_2) \phi(\vk, x_3),\\
r(x) = \cos(k_1x_1+k_2x_2) \pi(\vk, x_3).
\end{cases}
\end{equation}
Denote  by $k=|\vk|=\sqrt{k_1^2+k_2^2}$, we deduce from \eqref{SystemV_R} that 
\begin{equation}\label{SystFunctionsX_3}
\begin{cases}
\lambda^2\rho_0 \psi- \lambda k_1 \pi +\lambda\mu(k^2 \psi-\psi'')=0 \quad&\text{in } \R_-,\\
\lambda^2\rho_0 \varphi- \lambda k_2\pi + \lambda \mu(k^2\varphi-\varphi'') =0 \quad&\text{in } \R_-,\\
\lambda^2\rho_0\phi + \lambda\pi' +\lambda\mu(k^2\phi-\phi'')=g\rho_0'\phi \quad&\text{in } \R_-,\\
k_1\psi+k_2\varphi+\phi'=0 \quad&\text{in } \R_-.
\end{cases}
\end{equation}
At $x_3=0$, we have the boundary conditions 
\begin{equation}\label{BoundFunctionsX_3}
\begin{cases}
\mu(k_1\phi(0)-\psi'(0))=0, \\
\mu(k_2\phi(0)- \varphi'(0))=0,\\
\lambda\pi(0) -g\rho_+\phi(0) -2\lambda\mu \phi'(0)= 0.
\end{cases}
\end{equation}
We also need the decaying condition at $-\infty$,
\begin{equation}\label{FunctionsAtInfty}
\lim_{x_3\to -\infty}(\psi,\varphi,\phi,\pi)(x_3)=0.
\end{equation}

Note that, due to $\eqref{SystFunctionsX_3}_{1,2,4}$
\begin{equation}\label{RelationPi_Phi}
\pi =- \frac1{k^2}(\lambda \rho_0\phi' +\mu(k^2\phi' -\phi''')) \quad\text{in } \R_-.
\end{equation}
Hence, from \eqref{RelationPi_Phi} and $\eqref{SystFunctionsX_3}_3$, we get a fourth-order ODE for $\phi$, 
\begin{equation}\label{4thOrderEqPhi}
\lambda^2(k^2 \rho_0 \phi -(\rho_0 \phi')') +\lambda \mu(\phi^{(4)}-2k^2\phi''+k^4\phi) =gk^2\rho_0' \phi,
\end{equation}
The boundary conditions at $x_3=0$ deduced from $\eqref{SystFunctionsX_3}_4$, \eqref{BoundFunctionsX_3} and \eqref{RelationPi_Phi} are 
\begin{equation}\label{RightBoundary}
\begin{cases}
\mu(k^2\phi(0)+\phi''(0))=0, \\
 -\lambda\mu \phi'''(0)+ (3\lambda\mu k^2+ \lambda^2\rho_+)\phi'(0) +gk^2\rho_+\phi(0)=0
\end{cases}
\end{equation}
and from \eqref{FunctionsAtInfty}, we have that $\phi$ decays at $-\infty$, i.e.
\begin{equation}\label{PhiAt-Infty}
\lim_{x_3\to -\infty}\phi(x_3)=0.
\end{equation}
The finding of normal modes of the form \eqref{GrowingMode} to Eq. \eqref{EqLinearized} relies on the investigation of the characteristic values $\lambda(\vk) \in \C$ (Re$\lambda>0)$ such that Eq. \eqref{4thOrderEqPhi}-\eqref{RightBoundary}-\eqref{PhiAt-Infty} has a nontrivial solution $\phi$ living at least in $H^4(\R_-)$.

We first show that all characteristic values $\lambda(\vk)$ are real. Since our goal is to study the instability, we only consider positive $\lambda(\vk)$ in what follows.  Let $L_0 := (\|\frac{\rho_0'}{\rho_0}\|_{L^\infty(\R_-)})^{-1}$ be the characteristic length of density profile, we further obtain the  uniform upper bound $\sqrt{\frac{g}{L_0}}$ of $\lambda(\vk)$ in the following lemma,  whose proof is given in the beginning of Section \ref{SectLinear}. 

\begin{lemma}\label{LemEigenvalueReal}
For any $\vk \in (L^{-1}\bZ)^2\setminus \{0\}$, 
\begin{itemize}
\item all characteristic values $\lambda(\vk)$ are always real,
\item all characteristic values $\lambda(\vk)$ satisfy that $\lambda(\vk) \leqslant \sqrt{\frac{g}{L_0}}$.
\end{itemize}
\end{lemma}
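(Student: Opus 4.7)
The strategy is to derive an energy identity by testing \eqref{4thOrderEqPhi} against $\bar\phi$ and integrating over $\R_-$. On $(-\infty,-a)$, where $\rho_0 \equiv \rho_-$, equation \eqref{4thOrderEqPhi} is a constant-coefficient fourth-order ODE whose decaying solutions combine $e^{kx_3}$ and $e^{qx_3}$ with $q^2 = k^2 + \lambda \rho_-/\mu$, guaranteeing exponential decay at $-\infty$ of $\phi$ and its first three derivatives, so all boundary contributions at $-\infty$ vanish. The two boundary conditions \eqref{RightBoundary} at $x_3 = 0$ then conspire to produce a clean identity. The key cancellation: the term $-\lambda^2 \rho_+ \phi'(0) \overline{\phi(0)}$ arising from integrating $-(\rho_0\phi')' \bar\phi$ by parts is exactly absorbed by the $\lambda^2 \rho_+ \phi'(0) \overline{\phi(0)}$ contained in $\lambda\mu\phi'''(0)\overline{\phi(0)}$ after using $\eqref{RightBoundary}_2$, while $\eqref{RightBoundary}_1$ replaces $-\lambda\mu \phi''(0)\overline{\phi'(0)}$ by $\lambda\mu k^2 \phi(0) \overline{\phi'(0)}$.

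I expect the identity
\[
\lambda^2 I_1(\phi) + \lambda \mu I_2(\phi) + g k^2 \rho_+ |\phi(0)|^2 = g k^2 \int_{-\infty}^0 \rho_0' |\phi|^2 \, dx_3,
\]
with the real functionals
\[
I_1(\phi) := \int_{-\infty}^0 \rho_0 \bigl(k^2|\phi|^2 + |\phi'|^2\bigr) dx_3, \quad I_2(\phi) := \int_{-\infty}^0 \bigl(|\phi''|^2 + 2k^2|\phi'|^2 + k^4|\phi|^2\bigr) dx_3 + 2k^2 \mathrm{Re}\bigl(\phi'(0)\overline{\phi(0)}\bigr).
\]
Clearly $I_1(\phi) > 0$ for $\phi \not\equiv 0$ since $\rho_0 \geq \rho_- > 0$. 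The coercivity $I_2(\phi) \geq 0$ is the crux of the argument, because the boundary term $2k^2 \mathrm{Re}(\phi'(0)\overline{\phi(0)})$ has no definite sign a priori. I would establish it through the algebraic rewriting
\[
I_2(\phi) = \int_{-\infty}^0 |\phi''+k^2\phi|^2 \, dx_3 + 4k^2 \int_{-\infty}^0 |\phi'|^2 \, dx_3 \geq 0,
\]
obtained by expanding the first square and performing one integration by parts; the surface contribution generated exactly absorbs the unfavorable boundary term.

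With this in place both conclusions follow at once. For reality, write $\lambda = \alpha + i\beta$ with $\alpha > 0$ (characteristic values have strictly positive real part by definition). Taking imaginary parts of the identity gives $\beta \bigl(2\alpha I_1(\phi) + \mu I_2(\phi)\bigr) = 0$; since $\alpha > 0$ and $I_1(\phi) > 0$, the bracket is strictly positive, hence $\beta = 0$. For the upper bound, $\lambda$ is now real and positive, so dropping the non-negative terms $\lambda\mu I_2(\phi)$ and $gk^2 \rho_+ |\phi(0)|^2$ and using the pointwise bound $\rho_0' \leq L_0^{-1} \rho_0$ yields
\[
\lambda^2 I_1(\phi) \leq g k^2 \int_{-\infty}^0 \rho_0' |\phi|^2 dx_3 \leq \frac{g}{L_0} \int_{-\infty}^0 \rho_0 k^2 |\phi|^2 dx_3 \leq \frac{g}{L_0} I_1(\phi),
\]
and dividing by $I_1(\phi)$ gives $\lambda \leq \sqrt{g/L_0}$. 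The only subtle step is the sum-of-squares rewriting of $I_2(\phi)$; the rest amounts to careful bookkeeping of boundary terms.
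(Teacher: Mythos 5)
Your proof is correct and follows essentially the same route as the paper: you derive the identical energy identity by testing against $\bar\phi$ and integrating by parts, and the sum-of-squares rewriting of $I_2(\phi)$ is precisely the paper's form $\int_{\R_-}(|\phi''+k^2\phi|^2 + 4k^2|\phi'|^2)\,dx_3$ in \eqref{EqVariational}. The only (purely cosmetic) difference is that you first record $I_2$ with the boundary term $2k^2\mathrm{Re}(\phi'(0)\overline{\phi(0)})$ explicit and then absorb it by an extra integration by parts, whereas the paper performs that integration in passing; the real/imaginary-part split and the pointwise bound $\rho_0'\le L_0^{-1}\rho_0$ match the paper's argument exactly.
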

We state our first theorem solving the ODE \eqref{4thOrderEqPhi}--\eqref{RightBoundary}--\eqref{PhiAt-Infty}.

\begin{theorem}\label{ThmLinear}
Let $\vk\in  (L^{-1}\bZ)^2\setminus\{0\}$ be fixed and let  $\rho_0$ satisfy \eqref{AssumeRho1}--\eqref{AssumeRho2}.  There exists an infinite sequence of characteristic values $\{\lambda_n(\vk)\}_{n\geq 1}$ deceasing towards 0 as $n\to \infty$,  such that for $\lambda=\lambda_n$,  Eq. \eqref{4thOrderEqPhi}-\eqref{RightBoundary}-\eqref{PhiAt-Infty} has a solution $\phi_n \in H^\infty(\R_-)$.  
\end{theorem}

Let us discuss about the strategy for proving Theorem \ref{ThmLinear}, which is in the same spirit as \cite{LN20}. To make our paper self-contained, we present all the steps below.

In the first step, on $(-\infty,-a)$, the ODE \eqref{4thOrderEqPhi}  is an ODE with constant coefficients, for which we can find explicit solutions in Proposition \ref{PropSolDecay} decaying to 0 at $-\infty$.  Hence,  we transform the problem for the normal modes on $\R_-$ into an ODE problem stated on a compact interval $(-a,0)$ with appropriate boundary conditions deduced from the outer solutions.   They are described by  \eqref{RightBoundary} stated above and \eqref{LeftBoundary}  (to be seen in Lemma \ref{LemBoundSmooth}). 

In the second step, in order to solve a fourth-order ODE \eqref{4thOrderEqPhi}  with the boundary conditions  \eqref{LeftBoundary} and  \eqref{RightBoundary}, the crucial step is to construct a  continuous and coercive bilinear form $\bB_{a,k,\lambda}$ on $H^2((-a,0))$ (see \eqref{1stBilinearForm} in Proposition \ref{PropPropertyR}),   such that the finding of a solution $\phi\in H^4((-a,0))$ of Eq. \eqref{4thOrderEqPhi}-\eqref{RightBoundary}-\eqref{LeftBoundary}   is equivalent to finding a weak solution $\phi \in H^2((-a,0))$ to  the variational problem 
\begin{equation}\label{EqVariational_Ido}
\lambda \bB_{a,k,\lambda}(\phi,\omega) = gk^2 \int_{-a}^0 \rho_0'\phi\omega  \quad\text{for all }\omega\in H^2((-a,0))
\end{equation}
and thus improving the regularity of that weak solution $\phi$. 

In the third step, as $\bB_{a,k,\lambda}$ is a coercive form on $H^2((-a,0))$, thus $\sqrt{\bB_{a,k, \lambda}(\cdot,\cdot)}$ is a norm on $H^2((-a,0))$. Let  $(H^2((-a,0)))'$ be the dual space of the functional space $H^2((-a,0))$, associated with the norm $\sqrt{\bB_{a,k, \lambda}(\cdot,\cdot)}$. In view of Riesz's representation theorem, we obtain an abstract  operator $Y_{a,k,\lambda}$ from $H^2((-a,0))$ to  $(H^2((-a,0)))'$, such that 
\begin{equation}\label{EqBilinearY}
 \bB_{a,k,\lambda}(\vartheta, \varrho) = \langle Y_{a,k,\lambda}\vartheta, \varrho\rangle \quad \text{for all } \vartheta,\varrho \in H^2((-a,0)).
\end{equation}
Hence, from \eqref{EqVariational_Ido} and \eqref{EqBilinearY}, we have that the  existence of a solution $\phi \in H^4((-a,0))$ of Eq. \eqref{4thOrderEqPhi}-\eqref{RightBoundary}-\eqref{LeftBoundary} is reduced to the finding  of a weak solution $\phi \in H^2((-a,0))$ of
\begin{equation}\label{EqWeakFormY}
\lambda Y_{a,k,\lambda}\phi = gk^2\rho_0' \phi \quad\text{ in } (H^2((-a,0)))'.
\end{equation}

Restricting to $\varrho \in C_0^\infty((-a,0))$ in \eqref{EqBilinearY}, we find the precise expression of $Y_{a,k,\lambda}$ (see Proposition \ref{PropInverseOfR}(1)), i.e. for all $\vartheta \in H^2((-a,0))$, 
\[
Y_{a,k,\lambda}\vartheta=\lambda(k^2 \rho_0 \vartheta -(\rho_0 \vartheta')') + \mu(\vartheta^{(4)}-2k^2\vartheta''+k^4\vartheta)  \quad\text{in } \mathcal{D}'((-a,0)).
\]  
Furthermore, a classical bootstrap argument (see Proposition \ref{PropInverseOfR}(2)) shows that we are able to define the inverse operator  $Y_{a,k,\lambda}^{-1}$ of $Y_{a,k,\lambda}$,  from $L^2((-a,0))$ to a subspace of $H^4((-a,0))$ requiring all elements satisfy \eqref{LeftBoundary}-\eqref{RightBoundary}.  Note that, 
because  $\phi$ belongs to $H^4((-a,0))$, these boundary conditions (involving the derivatives $\phi'', \phi'''$ of $\phi$ at $x_3=-a$ and at $x_3=0$) are well defined. Composing the above operator $Y_{a,k,\lambda}^{-1}$ with the continuous injection from $H^4((-a,0))$ to $L^2((-a,0))$ (see Proposition \ref{RemNormR}), we obtain that $Y_{a,k,\lambda}^{-1}$ is a compact and self-adjoint operator from $L^2((-a,0))$ to itself.

In the third step, we introduce $\cM$ the operator of multiplication by $\sqrt{\rho_0'}$ in $L^2((-a,0))$. Note from \eqref{EqWeakFormY} that, we thus find $v$ satisfying
\begin{equation}\label{EqMY-1M}
\frac{\lambda}{gk^2} v = \cM Y_{a,k,\lambda}^{-1}\cM v.
\end{equation}
Indeed, if $v$ satisfies \eqref{EqMY-1M},  $Y_{a,k,\lambda}^{-1}\cM v$ will satisfy \eqref{EqWeakFormY}. 

We show that the operator $\cM Y_{a,k,\lambda}^{-1}\cM$ is compact and self-adjoint from $L^2((-a,0))$ to itself (see Proposition \ref{PropOpeS}), which enables to use the spectral  theory of self-adjoint and compact operators to obtain that
\begin{center}
the discrete spectrum of the operator $\cM Y_{a,k,\lambda}^{-1}\cM$ is  an infinite sequence of eigenvalues (denoted by  $\{\gamma_n(\lambda,k)\}_{n\geq 1}$).
\end{center}
Let $v_{n,k,\lambda}$ be an eigenfunction of $ \cM Y_{a,k,\lambda}^{-1}\cM$ associated with the eigenvalue $\gamma_n(\lambda,k)$ and let 
\[
\phi_{n,k,\lambda} = Y_{a,k,\lambda}^{-1}\cM v_{n,k,\lambda} \in H^4((-a,0)),
\]
 we obtain 
\[
\gamma_n(\lambda,k) Y_{a,k,\lambda}\phi_{n,k,\lambda}= \cM^2\phi_{n,k,\lambda_n}=\rho_0'\phi_{n,k,\lambda}.
\]
Note that $\text{supp}\rho_0'=[-a,0]$, hence $\phi_{n,k,\lambda}$ is glued with the decaying solutions of \eqref{4thOrderEqPhi} in the outer region $(-\infty,-a)$ by the boundary conditions \eqref{LeftBoundary} at $x_3=-a$ to become a solution in $H^4(\R_-)$ of 
\begin{equation}\label{EqYonR}
\gamma_n(\lambda,k) Y_{a,k,\lambda}\phi_{n,k,\lambda}= \cM^2\phi_{n,k,\lambda_n}=\rho_0'\phi_{n,k,\lambda}
\end{equation}
satisfying \eqref{RightBoundary}-\eqref{PhiAt-Infty}. 

In the fourth step, from  \eqref{EqWeakFormY} and \eqref{EqYonR}, we see that the problem of finding characteristic values of \eqref{4thOrderEqPhi}-\eqref{RightBoundary}-\eqref{PhiAt-Infty} amounts to solving all the equations
\begin{equation}\label{EqFindLambda}
\gamma_n(\lambda,k)= \frac{\lambda}{gk^2}.
\end{equation}
We will show that, for each $n\geq 1$, there exists only one positive $\lambda_n$ satisfying \eqref{EqFindLambda}. They are owing to the differentiability in  $\lambda$ of $\gamma_n(\lambda,k)$ (see Lemma \ref{LemGammaCont}), which is an  extension of  Kato's perturbation theory of the spectrum of operators (see \cite{Kat95}), and  to the monotonicity of  the function $\lambda \mapsto \frac{\lambda^\alpha}{\gamma_n(\lambda, k)}$ in $\lambda>0$ for some $\alpha\in (0,1]$ through some direct computations  (see Lemma \ref{LemGammaDecrease}).  
%In addition, $\sN(, k)$ increases towards $\infty$ as $ \searrow 0$ and $\{\lambda_n\}_{1\leq n\leq \sN(,k)}$ is a decreasing sequence. 
With that $\lambda_n$ ($n\geq 1$), we have $\phi_{n,k,\lambda_n}$ is a solution in $H^\infty(\R_-)$ of \eqref{4thOrderEqPhi}-\eqref{RightBoundary}-\eqref{PhiAt-Infty} after using a bootstrap argument $H^4(\R_-)$. Theorem \ref{ThmLinear} is proven.

Once Eq. \eqref{4thOrderEqPhi}-\eqref{RightBoundary}-\eqref{PhiAt-Infty} is solved for fixed $\vk$, we go back to the linearized equations \eqref{EqLinearized}.  We obtain a finite sequence of real solutions to the linearized equations \eqref{EqLinearized} (see Proposition \ref{PropSolEqLinear}), which are $(j\geq 1)$
\[
e^{\lambda_j(\vk)t} V_j(\vk,x)= (e^{\lambda_j(\vk) t}(\zeta_j(\vk,x), u_j(\vk,x), q_j(\vk,x),\eta_j(\vk,x_h))^T.
\]
To close the linear section, we show that $\Lambda$ defined by
\begin{equation}\label{DefineLambda}
0<\Lambda :=\sup_{\vk \in (L^{-1}\bZ)^2 \setminus\{0\}}\lambda_1(\vk) \leq \sqrt\frac{g}{L_0},
\end{equation}
 is the maximal growth rate of the linearized equations, see Proposition \ref{PropSharpGrowthRate}.

We move to solve the nonlinear instability. 

The local well-posedness of \eqref{EqPertur} in our functional framework (see Proposition \ref{PropLocalSolution}) can be established similarly as in \cite[Theorem 6.3]{GT13}  for the incompressible viscous surface wave problem, which is used in \cite{WT12} for the incompressible viscous surface-internal wave problem and \cite{Wan19} for the incompressible viscous fluid with magnetic field. We refer to \cite{GT13, WT12, Wan19} for the construction of local solutions to \eqref{EqPertur} with some specific compatibility conditions. 

We  derive the \textit{a priori} energy estimate \eqref{EstAprioriEnergy} to the nonlinear  equations in Proposition \ref{PropAprioriEnergy}.

Thanks to \eqref{DefineLambda}, we define the non-empty set
\[
S_\Lambda :=\Big\{\vk\in (L^{-1}\bZ)^2\setminus\{0\}: \lambda_1(\vk) >\frac{2\Lambda}3 \Big\}.
\]
We further fix a $\vk \in S_\Lambda$. Hence, there is a unique $\sP \in \N^\star$ such that  
\begin{equation}\label{AssumeLambdaN}
\Lambda \geq \lambda_1(\vk) > \lambda_2(\vk)> \dots > \lambda_\sP(\vk) >\frac{2\Lambda}3 >\lambda_{\sP+1}(\vk) > \dots.
\end{equation}
In view of getting infinitely many characteristic values of the linearized problem, we consider  a linear combination of  normal  modes 
\begin{equation}\label{GeneralInitialCond_Ido}
 U^\sN(t,  x)=  \sum_{j=1}^\sN \Csf_j e^{\lambda_j t}V_j( x)  \quad(\text{for any natural number } \sN)
\end{equation}
to be  an approximate solution to the nonlinear equations \eqref{EqPertur}, with  constants $\Csf_j$ being chosen such that 
\begin{equation}\label{NormalizedCond_1}
\text {at least one of }\Csf_j \text{ }(1\leq j\leq \sP) \text{ is non-zero}
\end{equation}
and 
\begin{equation}\label{NormalizedCond_2} 
\begin{split}
\frac12 |\Csf_{j_m}| \| u_{j_m}\|_{L^2(\Omega)}> \sum_{j\geq j_m+1}|\Csf_j|\| u_j\|_{L^2(\Omega)} \quad(j_m:= \min \{j: 1\leq j\leq \sP, \Csf_j \neq 0\}).
\end{split}
\end{equation}

In order to prove the nonlinear instability, we would like to use  $U^\sN(0,x)$ as the initial data for the nonlinear equations \eqref{EqPertur}.  Unfortunately, $U^\sN(0,x)$ does not satisfy the compatibility conditions  (see  Proposition \ref{PropLocalSolution}). Thanks to an abstract argument from \cite[Section 5C]{JT13}, which was also used in \cite{WT12, Wan19},  we modify  $U^\sN(0,x)$ in Proposition \ref{PropModifyData} as follows:  there exists $\delta_0>0$ such that  the family of initial data 
\begin{equation}\label{ModifyData}
U^{\delta, \sN}(x)=\delta U^\sN(0,x)+ \delta^2  U_\star^{\delta,\sN}(x) 
\end{equation}
for $\delta \in (0,\delta_0)$ satisfies the compatibility conditions \eqref{CompaCond}. Eq. \eqref{EqPertur} with the initial data $U^{\delta, \sN}(x)$ \eqref{ModifyData} has a unique local strong solution $U^{\delta, \sN}(t,x)$ on $[0, T_{\max})$. 

Hence, we observe that 
\[
U^d(t)= U^{\delta,\sN}(t)- \delta U^\sN(t)
\]
 solves \eqref{EqPertur} with the initial data $U^d(0)=\delta^2  U_\star^{\delta, \sN}$ and the same nonlinear terms  $\cQ^i (1\leq i\leq 5)$ (see Eq. \eqref{EqDiff}).  For $t$ small enough, we deduce the following  bound in time  (see Proposition \ref{PropL2_NormU^d}), 
\[
\begin{split}
&\|(\zeta^d, u^d)(t)\|_{L^2(\Omega)}^2+\|\eta^d(t)\|_{L^2(\Gamma)}^2 \lesssim \delta^3 \Big(\sum_{j=j_m}^\sP|\Csf_j| e^{\lambda_j t}+ \max(0, \sN-\sP)\max_{\sP+1\leq j\leq \sN}|\Csf_j| e^{\frac23 \Lambda t} \Big)^3,
\end{split}
\]
That relies on some energy estimates of  Eq. \eqref{EqDiff} and the bound in time of  a suitable Sobolev norm of $U^{\delta,M}(t)$ (see  Proposition \ref{PropNormU^delta_M}), which we obtain thanks to  the \textit{a priori} energy estimate established in Proposition \ref{PropAprioriEnergy}. Combining those estimates, we obtain the following nonlinear result.

\begin{theorem}\label{ThmNonlinear}
Let $\rho_0$ satisfy \eqref{AssumeRho1}--\eqref{AssumeRho2}. Let $\sN$ be an arbitrary integer,  there exist  two positive constants $\nu_0, \delta_0$ sufficiently small and another constant $m_0 >0$, so that for any $\delta \in (0,\delta_0)$  the nonlinear  equations \eqref{EqPertur}  with the initial data \eqref{ModifyData}, i.e. 
\[
\delta\sum_{j=1}^{\sN} \Csf_j V_j(x) +\delta^2 U_\star^{\delta,\sN}(x),
\]
satisfying \eqref{NormalizedCond_1}-\eqref{NormalizedCond_2} admits a unique local strong solution  $U^{\delta, \sN}$ such that 
\begin{equation}\label{BoundU_2Tdelta}
\|  u^{\delta,\sN}(T^{\delta})\|_{L^2(\Omega)} \geq m_0\nu_0,
\end{equation}
where $T^\delta\in (0, T_{\max})$  is given by $\delta \sum_{j=1}^\sN|\Csf_j| e^{\lambda_j T^\delta }=\nu_0$.
\end{theorem}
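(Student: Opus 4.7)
The plan is to use the linear superposition $\delta U^M(t,x) = \delta\sum_{j=1}^M \Csf_j e^{\lambda_j(\vk_0) t} V_j(\vk_0,x)$ as an approximate solution and to control the discrepancy $U^d = U^{\delta,M} - \delta U^M$ in $L^2$ by means of the bound announced in Proposition~\ref{PropL2_NormU^d}. The first step is to invoke Proposition~\ref{PropModifyData}, which produces a family of modified initial data \eqref{ModifyData} satisfying the compatibility conditions \eqref{CompaCond}; then Proposition~\ref{PropLocalSolution} supplies a unique local strong solution $U^{\delta,M}$ of \eqref{EqPertur} on $[0, T_{\max})$. Because $\delta U^M$ is an exact solution of the linear system \eqref{EqLinearized}, the difference $U^d$ satisfies \eqref{EqDiff} with initial datum $\delta^2 U_\star^{\delta,M}$ and with the same nonlinear terms $\cQ^i$.

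At the escape time $T^\delta$, determined by $\delta \sum_{j=j_m}^M |\Csf_j| e^{\lambda_j(\vk_0) T^\delta} = \epsilon_0$, the following two estimates are decisive. On the one hand, $\lambda_{j_m}(\vk_0)$ strictly dominates every other exponent in the sum (either $j > j_m$ with $j \leq N$, which gives $\lambda_j(\vk_0) < \lambda_{j_m}(\vk_0)$ by \eqref{AssumeLambdaN}, or $j > N$, which gives $\lambda_j(\vk_0) < 2\Lambda/3 < \lambda_{j_m}(\vk_0)$), and $\delta \to 0$ forces $T^\delta \to \infty$, so the ratios $e^{(\lambda_j(\vk_0)-\lambda_{j_m}(\vk_0))T^\delta}$ tend to $0$; combined with the normalization \eqref{NormalizedCond_2}, a triangle inequality yields, for $\delta$ small,
\[
\|\delta u^M(T^\delta)\|_{L^2(\Omega)} \geq \tfrac14\, \delta |\Csf_{j_m}| e^{\lambda_{j_m}(\vk_0) T^\delta} \|u_{j_m}\|_{L^2(\Omega)} \geq c'\epsilon_0
\]
for some $c' > 0$, since the term on the left is also a positive fraction of the whole sum $\delta\sum_{j=j_m}^M|\Csf_j|e^{\lambda_j(\vk_0)T^\delta}=\epsilon_0$. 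On the other hand, Proposition~\ref{PropL2_NormU^d}, together with the same domination observation (using $e^{2\Lambda t/3} \leq e^{\lambda_{j_m}(\vk_0) t}$), produces
\[
\|u^d(T^\delta)\|_{L^2(\Omega)} \lesssim \delta^{3/2} \Bigl(\textstyle\sum_{j=j_m}^M |\Csf_j| e^{\lambda_j(\vk_0) T^\delta}\Bigr)^{3/2} = \epsilon_0^{3/2}.
\]
Choosing $\epsilon_0$ small enough so that $\epsilon_0^{3/2} \leq \tfrac12 c' \epsilon_0$, the reverse triangle inequality $\|u^{\delta,M}(T^\delta)\|_{L^2} \geq \|\delta u^M(T^\delta)\|_{L^2} - \|u^d(T^\delta)\|_{L^2}$ delivers \eqref{BoundU_2Tdelta} with $m_0 = c'/2$.

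The main obstacle will be to ensure that $T^\delta < T_{\max}$, i.e.\ that the local solution persists up to the escape time, so that the two bounds above can actually be applied at $t = T^\delta$. This is precisely the role played by Proposition~\ref{PropNormU^delta_M}: combining the \textit{a priori} energy estimate \eqref{EstAprioriEnergy} of Proposition~\ref{PropAprioriEnergy} with the explicit growth rate of $\delta U^M$ pins a higher-order Sobolev norm of $U^{\delta,M}(t)$ uniformly by a constant multiple of $\delta \sum_{j=j_m}^M |\Csf_j| e^{\lambda_j(\vk_0) t}$ throughout $[0, T^\delta]$. At $t = T^\delta$ this quantity equals $\epsilon_0$, which is small by choice, so a standard continuation argument extends the solution strictly past $T^\delta$ for every $\delta \in (0,\delta_0)$, closing the loop and finishing the proof.
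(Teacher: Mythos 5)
Your proposal follows the paper's proof in its essentials: Propositions~\ref{PropModifyData} and \ref{PropLocalSolution} give the modified data and local solution, Proposition~\ref{PropL2_NormU^d} bounds the defect $\|u^d(T^\delta)\|_{L^2}\lesssim\delta^{3/2}F_M(T^\delta)^{3/2}=\epsilon_0^{3/2}$, a triangle inequality with the normalization \eqref{NormalizedCond_2} gives $\|\delta u^M(T^\delta)\|_{L^2}\gtrsim\epsilon_0$, and a small $\epsilon_0$ closes the gap, exactly as in the paper. Two minor remarks on presentation: the paper shows $\|u^M(t)\|_{L^2}\geq C_{28}F_M(t)$ for \emph{every} $t\geq 0$ directly from \eqref{NormalizedCond_2} (so the appeal to $T^\delta\to\infty$ as $\delta\to 0$ is superfluous), and the ``standard continuation argument'' you invoke at the end is realized in the paper concretely via the stopping times $T^\star,T^{\star\star}$ of \eqref{DefTstar} together with a contradiction argument establishing $T^\delta\le\min(T^\star,T^{\star\star})$, a step that is logically necessary before Propositions~\ref{PropL2_NormU^d} and~\ref{PropNormU^delta_M} may legitimately be evaluated at $t=T^\delta$.
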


\section{The linear analysis}\label{SectLinear}

In this section, we prove the linear result, Theorem \ref{ThmLinear}.  
%To keep our paper in reasonable length, we only give a sketch of the proof, which mainly follows \cite{LN20} and  omit details of the proof of some lemmas/propositions in Section \ref{SectLinear}. 
We begin with the proof of Lemma \ref{LemEigenvalueReal}.
\begin{proof}[Proof of Lemma \ref{LemEigenvalueReal}]
Multiplying by $\overline \phi$ on both sides of \eqref{4thOrderEqPhi} and then integrating by parts, we  obtain that 
\[
\begin{split}
&\lambda^2\Big( \int_{\R_-} ( k^2 \rho_0 |\phi|^2 + \rho_0 |\phi'|^2 ) - \rho_0\phi' \overline \phi\Big|_{-\infty}^0 \Big)  + \lambda \mu  \int_{\R_-} (|\phi''+k^2\phi|^2 + 4k^2 |\phi'|^2 ) \\
&\qquad + \lambda \mu   ((\phi'''-3k^2\phi')\overline\phi -(\phi''+k^2\phi)\overline\phi') \Big|_{-\infty}^0 =gk^2\int_{\R_-}\rho_0'|\phi|^2.
\end{split}
\]
Using \eqref{RightBoundary} and \eqref{PhiAt-Infty}, we get 
%\[
%\begin{split}
%&\lambda^2 \int_{\R_-} ( k^2 \rho_0 |\phi|^2 + \rho_0 |\phi'|^2 )  + \lambda \mu \int_{\R_-} (|\phi''|^2 + 2k^2 |\phi'|^2 + k^4 |\phi|^2 ) +gk^2\rho_+|\phi(0)|^2  =gk^2\int_{\R_-}\rho_0'|\phi|^2.
%\end{split}
%\] 
%Using the integration by parts and \eqref{RightBoundary} again, we have
\begin{equation}\label{EqVariational}
\begin{split}
\lambda^2 \int_{\R_-} ( k^2 \rho_0 |\phi|^2 + \rho_0 |\phi'|^2 )  &+ \lambda \mu \int_{\R_-} ( |\phi''+k^2\phi|^2 + 4k^2 |\phi'|^2  ) \\
& =- gk^2\rho_+ |\phi(0)|^2 + gk^2 \int_{\R_-} \rho_0'|\phi|^2 . 
\end{split}
\end{equation}
Suppose that $\lambda = \lambda_1 + i\lambda_2$, then one deduces from \eqref{EqVariational} that 
\begin{equation}\label{EqImaginaryPart}
-2\lambda_1 \lambda_2 \int_{\R_-} ( k^2 \rho_0 |\phi|^2 + \rho_0 |\phi'|^2 )  = \lambda_2\mu\int_{\R_-} ( |\phi''+k^2\phi|^2 + 4k^2 |\phi'|^2  ) .
\end{equation}
If $\lambda_2 \neq 0$, \eqref{EqImaginaryPart} leads us to
\[
-2\lambda_1 \int_{\R_-} ( k^2 \rho_0 |\phi|^2 + \rho_0 |\phi'|^2 )  =  \mu \int_{\R_-} ( |\phi''+k^2\phi |^2 + 4k^2 |\phi'|^2 )  <0,
\]
that contradiction yields $\lambda_2=0$, i.e. $\lambda$ is real.
Using \eqref{EqVariational} again, we further get that 
\[
\lambda^2 \int_{\R_-}\rho_0(k^2|\phi|^2+|\phi'|^2)  \leqslant gk^2 \int_{\R_-}\rho_0'|\phi|^2 .
\]
It tells us that $\lambda$ is  bounded by  $\sqrt{\frac{g}{L_0}}$. This finishes the proof of Lemma \ref{LemEigenvalueReal}.
\end{proof}
In view of Lemma \ref{LemEigenvalueReal}, we look for functions $\phi$ being real and we only consider the vector spaces of real-valued functions in what follows in the linear analysis. 

%Note again that, thanks to Lemma \ref{LemEigenvalueReal}, in what follows in this section, we only use real-valued functions. 

\subsection{Solutions on the outer region $(-\infty,-a)$ and reduction to an ODE on the finite interval $(-a,0)$}\label{SubSolOuterRegions}

\begin{proposition}\label{PropSolDecay}
Let $\tau_- =\sqrt{k^2+\lambda\rho_-/\mu}$. All solutions  decaying to 0 at $-\infty$ as $x_3 \in (-\infty,-a]$ of \eqref{4thOrderEqPhi} are of the form 
\begin{equation}\label{LeftSol}
\phi(x_3) = A_1 e^{k(x_3+a)} + A_2 e^{\tau_-(x_3+a)}.
\end{equation}
\end{proposition}
\begin{proof}
On the interval $(-\infty,-a)$, Eq. \eqref{4thOrderEqPhi} is an ODE with constant coefficients,
\begin{equation}\label{EqLeftSide}
-\lambda \rho_-( k^2 \phi - \phi'') = \mu(\phi^{(4)} - 2k^2 \phi'' +k^4 \phi).
\end{equation}
We seek $\phi$ as $\phi(x_3) = e^{rx_3}$. Hence, 
\[
-\lambda\rho_-(k^2-r^2) = \mu(r^4-2k^2r^2+k^4),
\]
which yields $r= \pm k$ or $r=\pm (k^2+\lambda \rho_-/\mu)^{1/2}$. Since $\phi$ tends to 0 at $-\infty$,  we get two  independent solutions of \eqref{EqLeftSide},
\[
\phi_1^-(x_3)= e^{kx_3}\quad\text{and}\quad\phi_2^-(x_3)=e^{(k^2+\lambda\rho_-/\mu)^{1/2}x_3}.
\]
Hence, all bounded solutions of \eqref{EqLeftSide} are of the form \eqref{LeftSol}.
Proposition \ref{PropSolDecay} is proven.
\end{proof}

Once it is proven that $\phi(x_3)$ outside $(-a,0)$ is known precisely, we look for $\phi$ on $(-a,0)$. We will show that in the following lemma. 

\begin{lemma}\label{LemBoundSmooth}
The boundary conditions of \eqref{4thOrderEqPhi} at $x_3=-a$, for $\phi\in H^4((-a,0))$, are
\begin{equation}\label{LeftBoundary}
\begin{cases}
k\tau_{-} \phi(-a) -(k+\tau_-) \phi'(-a) +\phi''(-a)=0,\\
k\tau_-(k+\tau_-) \phi(-a) - (k^2+k\tau_-+\tau_-^2)\phi'(-a)+\phi'''(-a)=0.
\end{cases}
\end{equation}
and at $x_3=0$, are \eqref{RightBoundary}.
\end{lemma}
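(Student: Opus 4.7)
The plan is to exploit the structure already established in Proposition \ref{PropSolDecay}: on the outer interval $(-\infty,-a]$ the coefficient $\rho_0'$ vanishes, so Eq.~\eqref{4thOrderEqPhi} reduces to the constant-coefficient ODE \eqref{EqLeftSide}, whose only bounded solutions (vanishing at $-\infty$) are spanned by $\phi_1^-$ and $\phi_2^-$. Hence any $\phi\in H^4(\R_-)$ solving \eqref{4thOrderEqPhi}--\eqref{PhiAt-Infty} must agree on $(-\infty,-a]$ with some combination
\[
\phi(x_3)=A_1 e^{k(x_3+a)}+A_2 e^{\tau_-(x_3+a)},\qquad A_1,A_2\in\R.
\]
The boundary conditions at $x_3=-a$ are then forced by the requirement that this outer expression glue to the solution on $(-a,0)$ with the correct regularity.

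To make this precise I would invoke the Sobolev embedding $H^4(\R_-)\hookrightarrow C^3(\overline{\R_-})$, which guarantees that the traces $\phi(-a)$, $\phi'(-a)$, $\phi''(-a)$, $\phi'''(-a)$ are well defined and single-valued across $x_3=-a$. Evaluating the outer representation and its first three derivatives at $-a$ yields the $4\times 2$ linear system
\[
\begin{cases}
\phi(-a)=A_1+A_2, & \phi'(-a)=kA_1+\tau_- A_2,\\
\phi''(-a)=k^2 A_1+\tau_-^2 A_2, & \phi'''(-a)=k^3 A_1+\tau_-^3 A_2.
\end{cases}
\]
Since $\tau_-=\sqrt{k^2+\lambda\rho_-/\mu}>k$, the first two equations solve uniquely for
\[
A_1=\frac{\tau_-\phi(-a)-\phi'(-a)}{\tau_--k},\qquad A_2=\frac{\phi'(-a)-k\phi(-a)}{\tau_--k}.
\]
Plugging these into the remaining two equations and simplifying via the factorizations $\tau_-^2-k^2=(\tau_--k)(\tau_-+k)$ and $\tau_-^3-k^3=(\tau_--k)(k^2+k\tau_-+\tau_-^2)$, the factor $(\tau_--k)$ cancels and one is left exactly with the two identities in \eqref{LeftBoundary}. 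Conversely, these two relations are enough to recover $A_1,A_2$ making the outer extension $C^3$ at $-a$, so they are both necessary and sufficient.

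The boundary conditions at $x_3=0$ asserted in the lemma are just \eqref{RightBoundary}, which has already been obtained in the discussion preceding the lemma by eliminating the horizontal components $\psi,\varphi$ and the pressure $\pi$ from \eqref{BoundFunctionsX_3} via $\eqref{SystFunctionsX_3}_4$ and \eqref{RelationPi_Phi}; no additional argument is needed. The only substantive step in the whole proof is therefore the matching at $x_3=-a$, which is pure linear algebra, so I do not anticipate any real obstacle — the result should fall out in a few lines once the outer representation from Proposition \ref{PropSolDecay} is in hand.
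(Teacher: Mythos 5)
Your proof is correct and follows essentially the same route as the paper: both reduce the matching at $x_3=-a$ to the condition that $(\phi,\phi',\phi'',\phi''')(-a)$ lies in the two-dimensional subspace of $\R^4$ spanned by $(1,k,k^2,k^3)^T$ and $(1,\tau_-,\tau_-^2,\tau_-^3)^T$. The only difference is cosmetic: the paper expresses this membership via two vectors spanning the orthogonal complement, while you do Gaussian elimination (solve for $A_1,A_2$ from the first two trace equations, then substitute into the last two and cancel the common factor $\tau_--k$); both packagings yield the same two relations in \eqref{LeftBoundary}.
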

\begin{proof}
For a solution  $\phi$ of Eq. \eqref{4thOrderEqPhi} on $(-a,0)$, the boundary conditions at $x_3= -a$ are equivalent to the fact that $\phi$ belongs to the space of decaying solutions at $-\infty$. On the one hand, it can be seen from \eqref{LeftSol}  that 
\[
\begin{pmatrix}
\phi(x_3) \\ \phi'(x_3) \\ \phi''(x_3) \\ \phi'''(x_3) 
\end{pmatrix} 
= A_1^- e^{k(x_3+a)}
\begin{pmatrix}
1 \\ k \\ k^2 \\k^3
\end{pmatrix}
+A_2^- e^{\tau_-(x_3+a)}
\begin{pmatrix}
1 \\ \tau_- \\ \tau_-^2 \\ \tau_-^3
\end{pmatrix}
\quad\text{for }x_3\leqslant -a.
\]
On the other hand, direct computations show that the orthogonal complement of the subspace  of $\R^4$ spanned by two vectors $(1,k, k^2,k^3)^T$ and $(1,\tau_-,\tau_-^2,\tau_-^3)^T$ is spanned by 
\[
(k\tau_-,-(k+\tau_-),1,0)^T \quad\text{and} \quad(k\tau_-(k+\tau_-),-(k^2+k\tau_-+\tau_-^2),0,1)^T.
\]  
The above arguments allow us to set \eqref{LeftBoundary}   as boundary conditions of Eq. \eqref{4thOrderEqPhi} at $x_3=-a$.
\end{proof}

%The proof of Proposition \ref{PropSolDecay} and Lemma \ref{LemBoundSmooth} is straightforward. We refer to \cite[Proposition 3.1, Lemma 3.1]{LN20} for the calculations.

\subsection{A bilinear form and a self-adjoint invertible operator}

\begin{proposition}\label{PropPropertyR}
Let us denote by 
\begin{equation}
\begin{split}
BV_{-a,k,\lambda}(\vartheta,\varrho) &:= \mu \left( \begin{split} 
& k\tau_-(k+\tau_-) \vartheta(-a) \varrho(-a) - k\tau_-\vartheta'(-a) \varrho(-a) \\
&\qquad -  k\tau_- \vartheta(-a) \varrho'(-a) + (k+\tau_-)\vartheta'(-a) \varrho'(-a)
\end{split}\right), \\
BV_{0,k,\lambda}(\vartheta,\varrho) &:= \mu k^2(\vartheta'(0) \varrho(0)+ \vartheta(0) \varrho'(0)) + \frac{gk^2\rho_+}{\lambda} \vartheta(0)\varrho(0),
\end{split}
\end{equation}
and
\begin{equation}\label{1stBilinearForm}
\begin{split}
\bB_{a,k,\lambda}(\vartheta, \varrho) & := BV_{0,k,\lambda}(\vartheta,\varrho) + BV_{-a,k,\lambda}(\vartheta,\varrho) + \lambda\int_{-a}^0 \rho_0(k^2\vartheta  \varrho + \vartheta'  \varrho')  \\
&\qquad\qquad+ \mu \int_{-a}^0 (\vartheta''  \varrho'' + 2k^2 \vartheta'  \varrho' +k^4 \vartheta  \varrho).
\end{split} 
\end{equation}
The bilinear form $\bB_{a,k,\lambda}$ is a continuous and coercive on $H^2((-a,0))$.

Furthermore, let  $(H^2((-a,0)))'$ be the dual space of $H^2((-a,0))$, which is associated with the norm $\sqrt{\bB_{a,k,\lambda}(\cdot,\cdot)}$, there exists a unique operator   $Y_{a,k,\lambda} \in  \mathcal{L}(H^2((-a,0)), (H^2((-a,0)))')$, that is also bijective,  such that for all $\vartheta, \varrho \in H^2((-a,0))$,
 \begin{equation}\label{EqMathcalB_a}
\bB_{a,k,\lambda}(\vartheta, \varrho) = \langle Y_{a,k,\lambda}\vartheta,  \varrho\rangle.
\end{equation}
\end{proposition}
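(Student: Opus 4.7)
The plan is to verify continuity and coercivity of $\bB_{a,k,\lambda}$ on $H^2((-a,0))$ and then to produce $Y_{a,k,\lambda}$ by a Riesz-type representation argument. Continuity will be immediate: the four integral terms in \eqref{1stBilinearForm} are bounded by $C\|\vartheta\|_{H^2}\|\varrho\|_{H^2}$ via Cauchy--Schwarz (using $\rho_0 \in L^\infty((-a,0))$), and the boundary contributions $BV_{0,k,\lambda}$ and $BV_{-a,k,\lambda}$ depend only on pointwise values of $\vartheta,\varrho,\vartheta',\varrho'$ at $x_3=0$ and $x_3=-a$, hence are continuous via the embedding $H^2((-a,0)) \hookrightarrow C^1([-a,0])$.

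The heart of the proof will be coercivity. First I would exploit the bulk integrals
\[
\lambda\int_{-a}^0 \rho_0(k^2\vartheta^2+\vartheta'^2)\,dx_3 + \mu\int_{-a}^0(\vartheta''^2 + 2k^2\vartheta'^2 + k^4\vartheta^2)\,dx_3,
\]
which already control $\|\vartheta\|_{H^2((-a,0))}^2$ up to a positive constant depending on $\lambda,\mu,k,\rho_-$, using $\rho_0 \geq \rho_- > 0$. Next, I would read $BV_{-a,k,\lambda}(\vartheta,\vartheta)$ as a quadratic form in $(\vartheta(-a),\vartheta'(-a))$ with matrix
\[
\mu\begin{pmatrix} k\tau_-(k+\tau_-) & -k\tau_- \\ -k\tau_- & k+\tau_- \end{pmatrix},
\]
whose trace $\mu(k+\tau_-)(k\tau_-+1)$ and determinant $\mu^2 k\tau_-(k^2+k\tau_-+\tau_-^2)$ are both strictly positive, so the matrix is positive definite and this boundary contribution is non-negative. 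The hard part will be the remaining term
\[
BV_{0,k,\lambda}(\vartheta,\vartheta) = 2\mu k^2\vartheta(0)\vartheta'(0) + \frac{gk^2\rho_+}{\lambda}\vartheta(0)^2,
\]
whose cross term is sign-indefinite; I plan to control it by Young's inequality combined with the sharp trace-interpolation bounds
\[
\vartheta(0)^2 \lesssim \|\vartheta\|_{L^2}^2 + \|\vartheta\|_{L^2}\|\vartheta'\|_{L^2}, \qquad \vartheta'(0)^2 \lesssim \|\vartheta'\|_{L^2}^2 + \|\vartheta'\|_{L^2}\|\vartheta''\|_{L^2},
\]
which follow from writing $\vartheta(0)^2 = \vartheta(c)^2 + 2\int_c^0 \vartheta\vartheta'\,dx_3$ for a suitable $c \in (-a,0)$ and using Cauchy--Schwarz, and similarly for $\vartheta'$. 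Applied to the cross term these estimates produce a piece of the form $\varepsilon\|\vartheta''\|_{L^2}^2 + C_\varepsilon(\|\vartheta\|_{L^2}^2+\|\vartheta'\|_{L^2}^2)$ for any chosen $\varepsilon > 0$; taking $\varepsilon$ small relative to $\mu$ lets the offending piece be absorbed into the bulk integral, yielding $\bB_{a,k,\lambda}(\vartheta,\vartheta) \geq c\|\vartheta\|_{H^2}^2$.

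Once continuity and coercivity are in hand, $\bB_{a,k,\lambda}$ is an inner product on $H^2((-a,0))$ equivalent to the standard one, so $(H^2((-a,0)),\bB_{a,k,\lambda})$ is a Hilbert space. For each $\vartheta \in H^2((-a,0))$, the map $\varrho \mapsto \bB_{a,k,\lambda}(\vartheta,\varrho)$ lies in the dual $(H^2((-a,0)))'$ and defines $Y_{a,k,\lambda}\vartheta$; bilinearity of $\bB$ makes $Y_{a,k,\lambda}$ linear, and the Riesz representation theorem applied in the Hilbert space $(H^2((-a,0)),\bB_{a,k,\lambda})$ identifies $Y_{a,k,\lambda}$ as the canonical isometric isomorphism onto the dual, in particular bijective, and this identification forces uniqueness. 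The main obstacle in the whole scheme is the indefiniteness of $BV_{0,k,\lambda}$: without the sharp trace bounds producing an absorbable $\varepsilon\|\vartheta''\|_{L^2}^2$, the crude estimate $|\vartheta(0)|,|\vartheta'(0)| \lesssim \|\vartheta\|_{H^2}$ would be too weak to close the coercivity argument.
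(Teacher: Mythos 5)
Your continuity argument and the Riesz--representation step producing $Y_{a,k,\lambda}$ are both fine and agree with the paper. The gap is in the coercivity, and it is a real one.

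You drop $BV_{-a,k,\lambda}(\vartheta,\vartheta)$ entirely (lower-bounding it by $0$ via positive definiteness of the $2\times2$ matrix), keep or drop $\frac{gk^2\rho_+}{\lambda}\vartheta(0)^2\geq 0$, and then try to dominate the cross term $2\mu k^2\vartheta(0)\vartheta'(0)$ by Young's inequality plus trace interpolation, absorbing the resulting $\varepsilon\|\vartheta''\|^2$ into the bulk. The claim that the lower-order remainder $C_\varepsilon(\|\vartheta\|^2+\|\vartheta'\|^2)$ is absorbable is false: after optimizing over all free Young parameters, the coefficient in front of $\|\vartheta'\|_{L^2}^2$ that comes out of this procedure is of order $\mu k^4$, while the bulk supplies only $2\mu k^2+\lambda\rho_-$ in front of $\|\vartheta'\|_{L^2}^2$. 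Concretely, scaling the inner Young parameter as $\nu=c/k$, the absorption constraints (with $\varepsilon<1$ to preserve positivity of the $\|\vartheta''\|^2$ coefficient) collapse to $(c^2-1)^2<0$ in the regime where the $\lambda\rho_-/\mu k^2$ terms are negligible, which is impossible. The failure is also visible on a test function: take $\vartheta(x_3)=e^{-kx_3}$; then
\[
2k^2\vartheta(0)\vartheta'(0)+\int_{-a}^0\bigl((\vartheta'')^2+2k^2(\vartheta')^2+k^4\vartheta^2\bigr)\,dx_3 \;=\; 2k^3\bigl(e^{2ka}-2\bigr),
\]
which is strictly negative whenever $ka<\tfrac12\ln 2$, and the remaining pieces ($BV_{-a}$, the $\lambda$-weighted bulk, and the $g\rho_+$ boundary term) that you have dropped or estimated by generic constants are exactly what saves coercivity in the true inequality. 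So $BV_{-a}\geq 0$ combined with generic trace and Young estimates cannot close the argument.

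The paper closes the gap by a sharper, structurally different route. It bounds $BV_{-a}/\mu$ not by $0$ but by $-2k^2\vartheta(-a)\vartheta'(-a)$ (completing a square so that the nonnegative remainder is thrown away), precisely so that the two boundary contributions combine into the antisymmetric pair $2k^2\bigl(\vartheta(0)\vartheta'(0)-\vartheta(-a)\vartheta'(-a)\bigr)$. This combination is then compared to the purely $\mu$-weighted bulk $\int_{-a}^0\bigl((\vartheta'')^2+2k^2(\vartheta')^2+k^4\vartheta^2\bigr)dx_3$ via Lemma \ref{LemFormulaD_k}, whose exact constrained minimization shows that the ratio is bounded below by the sharp constant $-\frac{\sinh(ka)+ka}{3\sinh(ka)-ka}$, which is strictly greater than $-1$ for every $ka>0$. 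That lemma is proved by solving the Euler--Lagrange ODE $\phi^{(4)}-2k^2\phi''+k^4\phi=0$ explicitly with the natural boundary conditions and computing the determinant of the resulting $4\times 4$ system; no generic Young/trace estimate reproduces such a constant. The missing ingredient in your proof is precisely this lemma, together with the specific lower bound $BV_{-a}/\mu\geq -2k^2\vartheta(-a)\vartheta'(-a)$ (rather than $BV_{-a}\geq 0$) that makes it applicable.
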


Before proving Proposition \ref{PropPropertyR}, we state our key lemma, whose proof is postponed to Appendix \ref{AppFormulaD_k}. This yields the coercivity of $\bB_{a,k,\lambda}$ as it will appear in the proof of Proposition \ref{PropPropertyR}.
\begin{lemma}\label{LemFormulaD_k}
We have
\[\begin{split}
& \min_{\vartheta\in H^2((-a,0))} 
\left(\begin{split}
&2k^2(  \vartheta'(0)\vartheta(0)- \vartheta'(-a)\vartheta(-a)),  \vartheta \text{ satisfies the constraint}\\
&\qquad \int_{-a}^0 ((\vartheta'')^2 +2k^2(\vartheta')^2+k^4 \vartheta^2 ) =1.
\end{split}\right) = -\frac{\sinh(ka)+ka}{3\sinh(ka)-ka},
\end{split}\]
and 
\[\begin{split}
&\max_{\vartheta \in H^2((-a,0))} 
\left(\begin{split}
&2k^2(  \vartheta'(0)\vartheta(0)- \vartheta'(-a)\vartheta(-a)),  \vartheta \text{ satisfies the constraint}\\
&\qquad \int_{-a}^0 ((\vartheta'')^2 +2k^2(\vartheta')^2+k^4 \vartheta^2 ) =1.
\end{split}\right) =1.
\end{split}\]
\end{lemma}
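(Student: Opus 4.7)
The proof should split into two parts according to the two extrema: the upper bound is an elementary perfect-square identity, while the lower bound requires solving an Euler--Lagrange eigenvalue problem. Throughout, abbreviate $J(\vartheta) := 2k^2(\vartheta'(0)\vartheta(0)-\vartheta'(-a)\vartheta(-a))$ and $D_k(\vartheta):=\int_{-a}^0((\vartheta'')^2+2k^2(\vartheta')^2+k^4\vartheta^2)\,dx_3$. By a single integration by parts,
\[
J(\vartheta)=2k^2\int_{-a}^0((\vartheta')^2+\vartheta''\vartheta)\,dx_3,
\]
so that
\[
D_k(\vartheta)-J(\vartheta)=\int_{-a}^0(\vartheta''-k^2\vartheta)^2\,dx_3\geq 0.
\]
Under the constraint $D_k(\vartheta)=1$ this gives $J(\vartheta)\leq 1$, with equality precisely when $\vartheta''=k^2\vartheta$. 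Any nontrivial $\vartheta=\alpha e^{kx_3}+\beta e^{-kx_3}$ rescaled so that $D_k(\vartheta)=1$ realizes the maximum value $1$.

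For the minimum, I would introduce a Lagrange multiplier $\mu$ and study the critical points of $J(\vartheta)-\mu(D_k(\vartheta)-1)$. Computing the first variation and integrating by parts twice yields the interior Euler--Lagrange equation $\vartheta^{(4)}-2k^2\vartheta''+k^4\vartheta=0$ on $(-a,0)$ together with the four natural boundary conditions
\begin{align*}
\mu\vartheta''(0)&=k^2\vartheta(0), & \mu\vartheta'''(0)&=(2\mu-1)k^2\vartheta'(0),\\
\mu\vartheta''(-a)&=k^2\vartheta(-a), & \mu\vartheta'''(-a)&=(2\mu-1)k^2\vartheta'(-a).
\end{align*}
Because $J$ and $D_k$ are both homogeneous of degree two in $\vartheta$, at a critical point the Lagrange multiplier coincides with the critical value, i.e.\ $J(\vartheta)=\mu$. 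The sought minimum is therefore the smallest eigenvalue $\mu$ of this self-adjoint boundary value problem.

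The characteristic polynomial of the interior equation is $(r^2-k^2)^2$, so every solution takes the form $\vartheta(x_3)=(A+Cx_3)e^{kx_3}+(B+Dx_3)e^{-kx_3}$. The reflection $x_3\mapsto -a-x_3$ leaves both $J$ and $D_k$ invariant, so eigenfunctions decompose into sectors that are either symmetric or antisymmetric about the midpoint $x_3=-a/2$. Within each sector the problem reduces to a $2\times 2$ homogeneous linear system: writing a symmetric mode in the centered coordinate $y=x_3+a/2$ as $\alpha\cosh(ky)+\gamma y\sinh(ky)$ (respectively $\beta\sinh(ky)+\delta y\cosh(ky)$ in the antisymmetric case), imposing the two boundary conditions at $y=a/2$, and requiring the corresponding determinant to vanish leads to a pair of transcendental equations relating $\mu$ and $ka$.

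I expect the main obstacle to lie in the algebraic reduction of these determinants, using the identities $\sinh(ka)=2\sinh(ka/2)\cosh(ka/2)$ and $\cosh(ka)=2\cosh^2(ka/2)-1$, and in the comparison of all resulting roots. The symmetric branch should produce the eigenvalue $\mu=-(\sinh(ka)+ka)/(3\sinh(ka)-ka)$, whereas the antisymmetric branch together with the trivial eigenvalue $\mu=1$ (arising from the reduced condition $\vartheta''=k^2\vartheta$) must be shown to be strictly larger; a direct monotonicity check in $ka$ suffices to confirm this and close the argument.
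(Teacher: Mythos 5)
Your perfect-square identity for the maximum is correct and cleaner than the paper's: writing $D_k(\vartheta)-J(\vartheta)=\int_{-a}^0(\vartheta''-k^2\vartheta)^2\,dx_3$ gives the bound $J(\vartheta)\leq 1$ at once and characterizes equality, whereas the paper recovers the maximal eigenvalue $\beta_k=1$ only after solving the full Euler--Lagrange system and factoring the $4\times4$ determinant. For the minimum, your Lagrange-multiplier setup agrees with the paper's (same interior equation, same four natural boundary conditions; your form $\mu\vartheta'''(0)=(2\mu-1)k^2\vartheta'(0)$ is equivalent to \eqref{2Eq_LemFormulaD_k}), and your observation that the reflection $x_3\mapsto -a-x_3$ leaves both $J$ and $D_k$ invariant is correct, so the parity decomposition collapses the paper's $4\times 4$ system \eqref{SystABCD_1} into two $2\times 2$ blocks---a genuine structural simplification the paper does not exploit.

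The gap is that you never carry out those $2\times 2$ determinant computations; the phrases ``I expect the main obstacle to lie in the algebraic reduction'' and ``the symmetric branch should produce the eigenvalue'' make this explicit. That omission is precisely the substance of the lower-bound claim: one must actually derive the characteristic equation, identify its roots, and order them. The paper does this and obtains the factorization $k^2(\beta_k-1)^2\big((ka)^2(\beta_k-1)^2-\sinh^2(ka)(3\beta_k+1)^2\big)=0$ in \eqref{DetBeta_k}, with roots $1$ (double), $-\frac{\sinh(ka)-ka}{3\sinh(ka)+ka}$ and $-\frac{\sinh(ka)+ka}{3\sinh(ka)-ka}$; the ordering then follows from $\sinh(ka)>ka>0$ and a cross-multiplication. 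Your parity split should reproduce the two linear factors $ka(\beta_k-1)=\pm\sinh(ka)(3\beta_k+1)$ and identify which parity sector carries the smallest root, but until that computation is done the value of the minimum remains a conjecture rather than a proof.
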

\begin{proof}[Proof of Proposition \ref{PropPropertyR}]
Clearly, $\bB_{a,k,\lambda}$ is a bounded bilinear form on  $H^2((-a,0))$ since the terms $BV_{a,k,\lambda}(\vartheta,\varrho)$ and $BV_{0,k,\lambda}(\vartheta,\varrho)$ are well defined. We move to show the coercivity of $\bB_{a,k,\lambda}$. We have that 
\[
\begin{split}
\bB_{a,k,\lambda}(\vartheta,\vartheta) &= BV_{0,k,\lambda}(\vartheta,\vartheta) + BV_{-a,k,\lambda}(\vartheta,\vartheta) + \lambda \int_{-a}^0  \rho_0 (k^2\vartheta^2  + (\vartheta')^2 )  \\
&\qquad+  \mu \int_{-a}^0 ((\vartheta'')^2 + 2k^2 (\vartheta')^2 +k^4 (\vartheta)^2 ).
\end{split}
\]
We have
\[
\begin{split}
\frac1{\mu}BV_{-a,k,\lambda}(\vartheta,\vartheta) &=   k\tau_-(k+\tau_-) (\vartheta(-a))^2 -2 k\tau_-\vartheta(-a)\vartheta'(-a) + (k+\tau_-)(\vartheta'(-a))^2\\
&= (k+\tau_-) \Big( \vartheta'(-a) + \frac{k(k-\tau_-)}{k+\tau_-} \vartheta(-a)\Big)^2 \\
&\qquad + \frac{ k(\tau_-(k+\tau_-)^2 -k(k-\tau_-)^2) }{k+\tau_-} (\vartheta(-a))^2 -2 k^2 \vartheta(-a)\vartheta'(-a) \\
&\geq -2 k^2 \vartheta(-a)\vartheta'(-a).
\end{split}
\]
Therefore, we deduce that 
\[
\begin{split}
\frac1{\mu}\bB_{a,k,\lambda}(\vartheta,\vartheta) &\geq 2 k^2 (\vartheta(0)\vartheta'(0)- \vartheta(-a)\vartheta'(-a)) + \int_{-a}^0 ((\vartheta'')^2+2k^2(\vartheta')^2+k^4\vartheta^2).
\end{split}
\]
Notice from Lemma \ref{LemFormulaD_k} that 
\begin{equation}\label{LowerBoundB}
\frac1\mu \bB_{a,k,\lambda}(\vartheta,\vartheta)  \geq \frac{2(\sinh(ka)-ka)}{3\sinh(ka)-ka} \int_{-a}^0 ((\vartheta'')^2+2k^2(\vartheta')^2+k^4 \vartheta^2).
\end{equation}
The inequality \eqref{LowerBoundB} tell us that $\bB_{a,k,\lambda}$ is a  continuous and coercive bilinear form on $H^2((-a,0))$. It follows from Riesz's representation theorem  that there is a unique operator 
\[
Y_{a,k,\lambda} \in  \mathcal{L}(H^2((-a,0)), (H^2((-a,0)))'),
\]
 that is also bijective, satisfying \eqref{EqMathcalB_a} for all $\vartheta, \varrho \in H^2((-a,0))$. Proof of Proposition \ref{PropPropertyR} is complete.
\end{proof}

The next proposition is to devoted to studying the properties of $Y_{a,k,\lambda}$.

\begin{proposition}\label{PropInverseOfR}
We have the following results.
\begin{enumerate}
\item For all $\vartheta \in H^2((-a,0))$, 
\[
Y_{a,k,\lambda}\vartheta=\lambda(k^2 \rho_0 \vartheta -(\rho_0 \vartheta')') +\mu(\vartheta^{(4)}-2k^2\vartheta''+k^4\vartheta)  \quad\text{ in } \mathcal{D}'((-a,0)).  
\]

\item Let $f\in L^2((-a,0))$ be given, there exists a unique solution  $\vartheta \in H^2((-a,0))$ of \begin{equation}\label{EqY=f}
Y_{a,k,\lambda}\vartheta = f \text{ in } ( H^2((-a,0)))'.
\end{equation}
Moreover, $\vartheta\in H^4((-a,0))$ and satisfies the boundary conditions \eqref{LeftBoundary}--\eqref{RightBoundary}.
\end{enumerate}
\end{proposition}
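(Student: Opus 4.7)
The plan is to prove the two parts in succession, with integration by parts as the unifying tool. For Part (1), I restrict the identity $\bB_{a,k,\lambda}(\vartheta,\varrho) = \langle Y_{a,k,\lambda}\vartheta,\varrho\rangle$ to test functions $\varrho \in C_0^\infty((-a,0))$. Because $\varrho$ and $\varrho'$ vanish at $x_3 = -a$ and $x_3 = 0$, both boundary contributions $BV_{-a,k,\lambda}(\vartheta,\varrho)$ and $BV_{0,k,\lambda}(\vartheta,\varrho)$ in the definition \eqref{1stBilinearForm} are identically zero. The pairing then reduces to the two integral terms, and interpreting derivatives in $\mathcal{D}'((-a,0))$ (legitimate since $\vartheta''\in L^2$ and $\rho_0\in C^\infty$) gives the stated formula.

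For Part (2), existence and uniqueness of a weak solution $\vartheta \in H^2((-a,0))$ follow immediately from Proposition \ref{PropPropertyR}: the operator $Y_{a,k,\lambda}\colon H^2((-a,0)) \to (H^2((-a,0)))'$ is bijective, and $f \in L^2((-a,0))$ embeds continuously into $(H^2((-a,0)))'$ via $\varrho \mapsto \int_{-a}^0 f\varrho\,dx_3$. Uniqueness is inherited from the coercivity of $\bB_{a,k,\lambda}$.

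To upgrade to $H^4$ regularity, I would rewrite the identity from Part (1) as
\[
\lambda\mu\vartheta^{(4)} = f - \lambda(k^2\rho_0\vartheta - (\rho_0\vartheta')') + 2\lambda\mu k^2\vartheta'' - \lambda\mu k^4\vartheta \quad\text{in }\mathcal{D}'((-a,0)).
\]
Since $\vartheta \in H^2((-a,0))$ and $\rho_0$ is smooth, every term on the right-hand side lies in $L^2((-a,0))$; in particular $(\rho_0\vartheta')' = \rho_0'\vartheta' + \rho_0\vartheta''$ is $L^2$. Hence $\vartheta^{(4)} \in L^2((-a,0))$, which together with $\vartheta \in H^2$ forces $\vartheta \in H^4((-a,0))$ by a one-dimensional Sobolev argument.

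Finally, once $\vartheta \in H^4((-a,0))$, I test against an arbitrary $\varrho \in H^2((-a,0))$. Integrating by parts twice in the viscous integral and once in the density integral recasts $\bB_{a,k,\lambda}(\vartheta,\varrho)$ as
\[
\int_{-a}^0 (Y_{a,k,\lambda}\vartheta)\,\varrho\,dx_3 + R_{-a}(\vartheta,\varrho) + R_0(\vartheta,\varrho),
\]
where $R_{-a}$ and $R_0$ collect the $BV$ terms of \eqref{1stBilinearForm} together with the boundary contributions generated by integration by parts. Since $\bB_{a,k,\lambda}(\vartheta,\varrho) = \int_{-a}^0 f\varrho\,dx_3 = \int_{-a}^0 (Y_{a,k,\lambda}\vartheta)\varrho\,dx_3$, each of $R_{-a}$ and $R_0$ must vanish for all admissible $\varrho$. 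Because the trace map $\varrho \mapsto (\varrho(0),\varrho'(0),\varrho(-a),\varrho'(-a))$ is surjective from $H^2((-a,0))$ onto $\mathbb{R}^4$, I can read off four independent scalar identities, and these turn out to be precisely \eqref{RightBoundary} at $x_3=0$ and \eqref{LeftBoundary} at $x_3=-a$. The main obstacle is the algebraic bookkeeping in this final step: I must verify that the $BV$ contributions (which involve $\tau_-$ and $\rho_\pm$ nontrivially) combine with the boundary traces $\mu[\vartheta''\varrho' - \vartheta'''\varrho + 2k^2\vartheta'\varrho]_{-a}^0 + \lambda[\rho_0\vartheta'\varrho]_{-a}^0$ exactly so as to reproduce the Robin-type conditions \eqref{LeftBoundary}-\eqref{RightBoundary}, rather than some weaker or stronger set.
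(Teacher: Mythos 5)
Your proof follows the same route as the paper's: the distributional formula in part (1) comes from restricting to $C_0^\infty$ test functions so that both $BV$ terms vanish; existence and uniqueness in part (2) follow from the bijectivity of $Y_{a,k,\lambda}$ established in Proposition \ref{PropPropertyR} together with the continuous embedding $L^2((-a,0))\hookrightarrow(H^2((-a,0)))'$; $H^4$ regularity is a bootstrap from the distributional ODE; and the boundary conditions are recovered by integrating by parts against arbitrary $\varrho\in H^2((-a,0))$ and using surjectivity of the trace map $\varrho\mapsto(\varrho(0),\varrho'(0),\varrho(-a),\varrho'(-a))$ onto $\R^4$. The only place you compress the paper's exposition is the regularity step: the paper constructs an explicit antiderivative $\Sigma$ of $(\vartheta'')''$ via a bump function $\chi_1$ to show $(\vartheta'')'\in L^2((-a,0))$, whereas you invoke the equivalent elementary fact that a distribution on a bounded interval with $L^2$ distributional derivative is itself an $L^2$ function up to a constant---both arguments are sound, and like the paper you defer (but correctly identify) the final algebraic bookkeeping that extracts \eqref{LeftBoundary}--\eqref{RightBoundary} from the boundary traces.
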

The proof of Proposition \ref{PropInverseOfR} is due to a bootstrap argument, which is followed by \cite[Proposition 3.3]{LN20}. Hence we omit the details here. 

We have the following proposition on $Y_{a,k,\lambda}^{-1}$.
\begin{proposition}\label{RemNormR}
The operator $Y_{a,k,\lambda}^{-1} : L^2((-a,0)) \to L^2((-a,0))$ is compact and self-adjoint. 
\end{proposition}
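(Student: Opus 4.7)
The plan is to deduce both compactness and self-adjointness directly from the properties of the bilinear form $\bB_{a,k,\lambda}$ already established.

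First, by Proposition \ref{PropInverseOfR}(2), the map $f \mapsto \vartheta$ determined by $Y_{a,k,\lambda}\vartheta = f$ is a well-defined operator $Y_{a,k,\lambda}^{-1}: L^2((-a,0)) \to H^2((-a,0))$. I would obtain its boundedness from the coercivity of $\bB_{a,k,\lambda}$: writing $\vartheta = Y_{a,k,\lambda}^{-1} f$, the identity \eqref{EqMathcalB_a} applied with $\varrho = \vartheta$ gives
\begin{equation*}
\bB_{a,k,\lambda}(\vartheta,\vartheta) = \langle Y_{a,k,\lambda}\vartheta, \vartheta \rangle = \int_{-a}^0 f\,\vartheta\,dx_3 \leq \|f\|_{L^2}\|\vartheta\|_{L^2} \lesssim \|f\|_{L^2}\|\vartheta\|_{H^2},
\end{equation*}
so the lower bound \eqref{LowerBoundB} yields $\|\vartheta\|_{H^2} \lesssim \|f\|_{L^2}$. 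Thus $Y_{a,k,\lambda}^{-1}$ is continuous as a map $L^2((-a,0)) \to H^2((-a,0))$.

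For compactness, I would factor $Y_{a,k,\lambda}^{-1}: L^2 \to L^2$ as the composition of the bounded map $Y_{a,k,\lambda}^{-1}: L^2((-a,0)) \to H^2((-a,0))$ with the canonical embedding $H^2((-a,0)) \hookrightarrow L^2((-a,0))$. Since $(-a,0)$ is a bounded interval, the Rellich--Kondrachov theorem guarantees that this embedding is compact, and the composition of a bounded operator with a compact operator is compact.

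For self-adjointness, I would exploit the manifest symmetry of $\bB_{a,k,\lambda}$. Inspecting \eqref{1stBilinearForm}, each of $BV_{-a,k,\lambda}$, $BV_{0,k,\lambda}$, and the two integral terms is symmetric under the exchange $\vartheta \leftrightarrow \varrho$, so $\bB_{a,k,\lambda}(\vartheta,\varrho) = \bB_{a,k,\lambda}(\varrho,\vartheta)$ for all $\vartheta,\varrho \in H^2((-a,0))$. Given $f,g \in L^2((-a,0))$, set $\vartheta = Y_{a,k,\lambda}^{-1}f$ and $\varrho = Y_{a,k,\lambda}^{-1}g$. Then
\begin{equation*}
\langle Y_{a,k,\lambda}^{-1}f, g\rangle_{L^2} = \int_{-a}^0 \vartheta \,(Y_{a,k,\lambda}\varrho)\,dx_3 = \bB_{a,k,\lambda}(\varrho,\vartheta) = \bB_{a,k,\lambda}(\vartheta,\varrho) = \langle f, Y_{a,k,\lambda}^{-1}g\rangle_{L^2},
\end{equation*}
which is the self-adjointness assertion. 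I do not anticipate serious obstacles here; the only mildly delicate point is ensuring that the pairing $\langle Y_{a,k,\lambda}\varrho, \vartheta\rangle$ between $(H^2)'$ and $H^2$ really reduces to the $L^2$-inner product when $Y_{a,k,\lambda}\varrho = g \in L^2$, which is immediate from the definition of the duality induced by $\bB_{a,k,\lambda}$.
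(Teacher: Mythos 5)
Your proof is correct and follows essentially the same approach as the paper's: compactness comes from factoring $Y_{a,k,\lambda}^{-1}$ through a compactly embedded Sobolev space, and self-adjointness comes from the symmetry of $\bB_{a,k,\lambda}$. The only small difference is that you factor through $H^2((-a,0))$ and derive the bound $\|\vartheta\|_{H^2}\lesssim\|f\|_{L^2}$ directly from coercivity, whereas the paper factors through $H^4((-a,0))$ via the bootstrap regularity of Proposition~\ref{PropInverseOfR}(2) and leaves the operator bound implicit — either is fine, and your version is slightly more self-contained on that point.
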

We prove  Proposition \ref{PropInverseOfR} thanks to the continuous injection  from $H^4((-a,0))$ to $L^2((-a,0))$, in the same line of \cite[Proposition 3.4]{LN20}. As a result of Proposition \ref{RemNormR}, we obtain the following. 
\begin{proposition}\label{PropOpeS}
Let $S_{a,k,\lambda} := \cM Y_{a,k,\lambda}^{-1}\cM$, where $\cM$ is the operator of multiplication by $\sqrt{\rho'_0}$. The operator $S_{a,k,\lambda} : L^2((-a,0)) \to L^2((-a,0))$ is compact and self-adjoint.
\end{proposition}

As a result of the spectral theory of compact and self-adjoint operators, the point spectrum of $S_{a,k,\lambda}$ is discrete, i.e. is a  sequence $\{\gamma_n(\lambda,k)\}_{n\geqslant 1}$ of  eigenvalues of $S_{a,k,\lambda}$, associated with normalized orthogonal eigenfunctions $\{\varpi_n\}_{n\geqslant 1}$ in $L^2((-a,0))$. That means 
\[
\gamma_n(\lambda,k)\varpi_n =S_{a,k,\lambda}\varpi_n= \cM Y_{a,k,\lambda}^{-1}\cM \varpi_n.
\]
So that with $\phi_n = Y_{a,k,\lambda}^{-1}\cM \varpi_n \in H^4((-a,0))$, one has
\begin{equation}\label{EqRf_n}
\gamma_n(\lambda, k) Y_{a,k,\lambda}\phi_n =  \rho_0' \phi_n
\end{equation}
and $\phi_n$ satisfies \eqref{LeftBoundary}-\eqref{RightBoundary}. Eq.
\eqref{EqRf_n} also tells us that $\gamma_n(\lambda,k) >0$ for all $n$. Indeed, we obtain 
%\[
%\gamma_n(\lambda,k) \int_{-a}^0 (Y_{a,k,\lambda}\phi_n)  \phi_n  = \int_{-a}^0\rho_0'\phi_n^2 .
%\]
%That implies
\begin{equation}\label{EqPhi_nB}
\gamma_n(\lambda,k) \int_{-a}^0 (Y_{a,k,\lambda}\phi_n)  \phi_n  = \gamma_n(\lambda,k) \bB_{a,k,\lambda}(\phi_n,\phi_n) = \int_{-a}^0 \rho_0' \phi_n^2 .
\end{equation}
Since $\bB_{a,k,\lambda}(\phi_n,\phi_n) >0$ and $\rho_0' >0$ on $(-a,0)$, we know that $\gamma_n(\lambda,k)$ is positive for all $n$. Hence, by reordering and using the spectral theory of compact and self-adjoint operators again, we obtain that $\{\gamma_n(\lambda,k)\}_{n\geq 1}$ is a positive sequence decreasing towards 0 as $n\to \infty$.

\subsection{Proof of Theorem \ref{ThmLinear}}

For each $n$,  we extend a solution $\phi_n$  of \eqref{EqRf_n} on $(-a,0)$ with the boundary conditions  \eqref{LeftBoundary}-\eqref{RightBoundary} to a solution of \eqref{EqRf_n} on  $\R_-$ satisfying \eqref{RightBoundary}-\eqref{PhiAt-Infty} as follows. Let 
\[
\phi_n(x_3)=  A_{n,1}e^{kx_3} + A_{n,2} e^{\sqrt{k^2+\frac{\lambda \rho_-}\mu}(x_3+a)}
\]
for some constants $A_{n,1}, A_{n,2}$.  Those constants $A_{n,1}, A_{n,2}$ are defined by 
\begin{equation}\label{EqA+}
\phi_n(-a) = A_{n,1} + A_{n,2}, \quad \phi_n'(-a) = kA_{n,1} + A_{n,2}  \sqrt{k^2+\frac{\lambda \rho_-}{\mu}}.
\end{equation}
Solving \eqref{EqA+}, we get that 
\begin{equation}\label{ConstA+}
A_{n,1} = \frac{\sqrt{k^2+\frac{\lambda \rho_-}{\mu}} \phi_n(-a) - \phi_n'(-a)}{\sqrt{k^2+\frac{\lambda \rho_-}{\mu}} -k}, \quad A_{n,2} = \frac{\phi_n'(-a) -k \phi_n(-a) }{\sqrt{k^2+\frac{\lambda \rho_-}{\mu}} -k}.
\end{equation}
In order to verify that $\phi_n$ is a solution of \eqref{4thOrderEqPhi}-\eqref{RightBoundary}-\eqref{PhiAt-Infty}, we are left to look for real values $\lambda_n$  satisfying \eqref{EqFindLambda}.

To solve \eqref{EqFindLambda}, we need the three following lemmas.
\begin{lemma}\label{LemVariationR}
There holds 
\begin{equation}\label{VariationTheta}
\max_{\vartheta \in H^2(\R_-)} 
\Big( k^2 (\vartheta(0))^2,  \vartheta \text{ satisfies the constraint } \int_{\R_- } ( (\vartheta''+k^2\vartheta)^2 +4k^2(\vartheta')^2) =1\Big)  = \frac1{2k}.
\end{equation}
\end{lemma}
The proof of Lemma \ref{LemVariationR} is postponed to Appendix \ref{AppVariationR}.
\begin{lemma}\label{LemGammaCont}
For each $n$, $\gamma_n(\lambda,k)$ and $\phi_n$ are differentiable in $\lambda$.
\end{lemma}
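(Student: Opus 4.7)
The plan is to invoke the Kato-Rellich analytic perturbation theory for a family of compact self-adjoint operators. Concretely, I will show that $\lambda \mapsto S_{a,k,\lambda}$ is a real-analytic map from $(0,\sqrt{g/L_0}]$ into the space of compact self-adjoint operators on $L^2((-a,0))$; differentiability of $\gamma_n(\lambda,k)$ and $\varpi_n(\lambda)$ will then follow from standard perturbation theory, and $\phi_n = Y_{a,k,\lambda}^{-1}\cM\varpi_n$ will inherit this regularity as a product of differentiable maps.

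To establish the smoothness of $\lambda \mapsto Y_{a,k,\lambda}$, I inspect the four contributions to the bilinear form $\bB_{a,k,\lambda}$ defined in \eqref{1stBilinearForm}: the integral $\lambda\int_{-a}^0 \rho_0(k^2\vartheta\varrho+\vartheta'\varrho')\,dx_3$ is linear in $\lambda$; the biharmonic-type integral is $\lambda$-independent; the surface term $BV_{0,k,\lambda}$ contains the piece $\frac{gk^2\rho_+}{\lambda}\vartheta(0)\varrho(0)$, real-analytic for $\lambda>0$; and $BV_{-a,k,\lambda}$ depends on $\tau_-=\sqrt{k^2+\lambda\rho_-/\mu}$, which is also real-analytic for $\lambda>0$. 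Consequently, via \eqref{EqMathcalB_a}, $\lambda\mapsto Y_{a,k,\lambda}$ is real-analytic as an $\mathcal{L}(H^2((-a,0)),(H^2((-a,0)))')$-valued map. Since every $Y_{a,k,\lambda}$ is invertible by Proposition \ref{PropPropertyR}, the resolvent-type identity
\[
Y_{a,k,\lambda+h}^{-1} - Y_{a,k,\lambda}^{-1} = -\,Y_{a,k,\lambda+h}^{-1}\bigl(Y_{a,k,\lambda+h}-Y_{a,k,\lambda}\bigr)Y_{a,k,\lambda}^{-1}
\]
yields the real-analyticity of $\lambda\mapsto Y_{a,k,\lambda}^{-1}$ with values in $\mathcal{L}((H^2((-a,0)))',H^2((-a,0)))$. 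Composing with $\cM$ (independent of $\lambda$ and bounded) and the continuous injections $L^2((-a,0))\hookrightarrow (H^2((-a,0)))'$ and $H^2((-a,0))\hookrightarrow L^2((-a,0))$ gives real-analyticity of the family $S_{a,k,\lambda}$ in $\mathcal{L}(L^2((-a,0)))$, and compactness plus self-adjointness were already established in Proposition \ref{PropOpeS}.

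At this point I apply the Kato-Rellich theorem for analytic families of self-adjoint operators (see \cite{Kato}): in a neighborhood of any fixed $\lambda_0>0$, the isolated eigenvalue $\gamma_n(\lambda_0,k)$ of $S_{a,k,\lambda_0}$ extends to a real-analytic branch $\lambda\mapsto\gamma_n(\lambda,k)$, together with a real-analytic normalized eigenvector $\lambda\mapsto\varpi_n(\lambda)\in L^2((-a,0))$ chosen from the (finite-dimensional) spectral projector. Every nonzero $\gamma_n$ is isolated because the spectrum of the compact operator $S_{a,k,\lambda}$ accumulates only at $0$. Differentiability of $\phi_n = Y_{a,k,\lambda}^{-1}\cM\varpi_n(\lambda)$ as an $H^4((-a,0))$-valued function then follows immediately.

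The main subtlety is that at specific values of $\lambda$ two eigenvalues $\gamma_n$ and $\gamma_{n+1}$ may cross or a multiple eigenvalue may appear, and individual ``labeled'' eigenvalues need not remain smooth across such a crossing. The self-adjoint Kato-Rellich framework is designed precisely to handle this: it guarantees an analytic parametrization of a complete system of eigenvalues and spectral projections through crossings, possibly at the cost of permuting labels. For the use made of this lemma in Proposition \ref{PropExistLambda}—namely a local differentiation of $\gamma_n(\cdot,k)$ in order to locate a root of \eqref{EqDefineLambda}—this local analytic branch is all that is required.
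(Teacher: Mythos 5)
Your proof is correct and follows the same route the paper implicitly takes: it cites Kato's perturbation theory and the analogous result in \cite{LN20} without giving details, while you supply them by verifying that $\lambda\mapsto\bB_{a,k,\lambda}$, hence $\lambda\mapsto Y_{a,k,\lambda}^{-1}$ and $\lambda\mapsto S_{a,k,\lambda}$, are real-analytic and then invoking Kato--Rellich for the compact self-adjoint family. Your caveat about eigenvalue crossings is well taken (and is in fact a latent issue in the paper's subsequent use of $\gamma_n$ as an ordered eigenvalue in Lemma~\ref{LemGammaDecrease}); you handle it appropriately by noting that the locally analytic branches, each strictly decreasing, yield a continuous, piecewise-analytic, strictly decreasing ordered sequence, which is all Proposition~\ref{PropExistLambda} requires.
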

\begin{proof}
The proof of Lemma \ref{LemGammaCont} is the same as \cite[Lemma 3.2]{LN20}, we omit the details here.
\end{proof}
\begin{lemma}\label{LemGammaDecrease}
For each $n$,  there holds
\begin{enumerate}
\item  the function $\frac{\lambda}{\gamma_n(\lambda,k)}$ is strictly increasing as $\lambda >0$,
\item  for fixed $\ve>0$ and fixed $\alpha \in (\frac{g\rho_+L}{g\rho_+L+2\mu\ve}, 1)$, the function  $\frac{\lambda^\alpha}{\gamma_n(\lambda,k)}$ is strictly increasing as $\lambda \geq \ve>0 $.
\end{enumerate}
\end{lemma}
\begin{proof}
Let $z_n= \frac{d\phi_n}{d\lambda}$ and $\alpha>0$, it follows from  \eqref{EqRf_n} that 
\[
\lambda^\alpha Y_{a,k,\lambda}\phi_n= \frac{\lambda^\alpha}{\gamma_n(\lambda,k)} \rho_0'\phi_n.
\]
It yields, on $\R_-$,
\[
\begin{split}
&(1+\alpha) \lambda^\alpha (k^2\rho_0\phi_n- (\rho_0\phi_n')')+  \alpha\lambda^{\alpha-1} \mu (\phi_n^{(4)}-2k^2\phi_n''+k^4\phi_n)+ \lambda^\alpha Y_{a,k,\lambda}z_n \\
&= \frac{\lambda^\alpha}{\gamma_n(\lambda,k)} \rho_0'z_n+ \frac{d}{d\lambda}\Big( \frac{\lambda^\alpha}{\gamma_n(\lambda,k)}\Big)\rho_0'\phi_n.
\end{split}
\]
Equivalently, on $\R_-$, we have
\begin{equation}\label{1stEqDeriTz_n}
\begin{split}
&\lambda^\alpha (k^2\rho_0\phi_n- (\rho_0\phi_n')')+ \alpha \lambda^{\alpha-1}  Y_{a,k,\lambda}\phi_n + \lambda^{1/2} Y_{a,k,\lambda}z_n \\
&\qquad\qquad= \frac{\lambda^\alpha}{\gamma_n(\lambda,k)} \rho_0'z_n+ \frac{d}{d\lambda}\Big( \frac{\lambda^\alpha}{\gamma_n(\lambda,k)}\Big)\rho_0'\phi_n.
\end{split}
\end{equation}
Note that, at $x_3=0$,  due to \eqref{RightBoundary}, we have
\begin{equation}\label{ZnAt0}
\begin{cases}
z_n''(0)+k^2 z_n(0)=0, \\
z_n'''(0)- (3k^2+\frac{\lambda\rho_+}{\mu})z_n'(0)-  \frac{gk^2\rho_+}{\lambda\mu}z_n(0)= \frac{\rho_+}{\mu} \phi_n'(0) - \frac{gk^2\rho_+}{\lambda^2\mu}\phi_n(0).
\end{cases}
\end{equation}
Multiplying by $\phi_n$ on both sides of \eqref{1stEqDeriTz_n}, we obtain that 
\begin{equation}\label{2ndEqDeriTz_n}
\begin{split}
&\lambda^\alpha \int_{\R_-} (k^2\rho_0\phi_n- (\rho_0\phi_n')') \phi_n +\alpha\lambda^{\alpha-1} \int_{\R_-}(Y_{a,k,\lambda}\phi_n )\phi_n  +\lambda^\alpha\int_{\R_-} (Y_{a,k,\lambda}z_n) \phi_n  \\
&\qquad\qquad= \frac{\lambda^\alpha}{\gamma_n(\lambda,k)} \int_{\R_-} \rho_0'z_n \phi_n + \frac{d}{d\lambda}\Big( \frac{\lambda^\alpha}{\gamma_n(\lambda,k)}\Big) \int_{\R_-} \rho_0' \phi_n^2 .
\end{split}
\end{equation}
Thanks to the integration by parts, we have that
\begin{equation}\label{2ndEqDeriTz_nPsi_n}
 \int_{\R_-} (k^2\rho_0\phi_n- (\rho_0\phi_n')') \phi_n  =  \int_{\R_-} \rho_0(k^2 \phi_n^2+(\phi_n')^2) - \rho_+\phi_n'(0) \phi_n(0),
\end{equation}
that
\begin{equation}\label{2ndEqDeriTz_nTz_n}
\begin{split}
\int_{\R_-} (Y_{a,k,\lambda}z_n) \phi_n &=  \int_{\R_-} (Y_{a,k,\lambda} \phi_n)z_n  + \Big( \mu( z_n'''\phi_n -3k^2z_n')  \phi_n -\mu(z_n''+k^2z_n)\phi_n'-\lambda\rho_0 z_n' \phi_n \Big)(0) \\
&\qquad- \Big(\mu(\phi_n'''-3k^2\phi_n')z_n -\mu(\phi_n''+k^2\phi_n) z_n'-\lambda\rho_0 \phi_n'z_n\Big)(0)
\end{split}
\end{equation}
and that 
\begin{equation}\label{3rdEqDeriTz_nTz_n}
\begin{split}
\int_{\R_-} (Y_{a,k,\lambda}\phi_n) \phi_n &=  \lambda \int_{\R_-}  \rho_0(k^2\phi_n^2+(\phi_n')^2) +\mu \int_{\R_-}((\phi_n''+k^2\phi_n)^2+4k^2 (\phi_n')^2)\\
&\qquad+ \Big( \mu( \phi_n''' -3k^2\phi_n')  \phi_n -\mu(\phi_n''+k^2\phi_n)\phi_n'-\lambda\rho_0 \phi_n' \phi_n \Big)(0).
\end{split}
\end{equation}
Using  \eqref{ZnAt0} and \eqref{RightBoundary}, we obtain 
\begin{equation}\label{4thEqDeriTz_n}
\begin{split}
&\mu\Big( (z_n''' -3k^2z_n')  \phi_n -\mu(z_n''+k^2z_n)\phi_n'-\lambda\rho_0 z_n' \phi_n \Big)(0) - \rho_+\phi_n'(0)\phi_n(0) \\
&\qquad- \Big(\mu(\phi_n'''-3k^2\phi_n')z_n -\mu(\phi_n''+k^2\phi_n) z_n'-\lambda\rho_0 \phi_n'z_n\Big)(0) = -\frac{gk^2\rho_+}{\lambda^2}(\phi_n(0))^2,
\end{split}
\end{equation}
and 
\begin{equation}\label{5thEqDeriTz_n}
\Big( \mu( \phi_n''' -3k^2\phi_n')  \phi_n -\mu(\phi_n''+k^2\phi_n)\phi_n'-\lambda\rho_0 \phi_n' \phi_n \Big)(0)=\frac{gk^2\rho_+}{\lambda}(\phi_n(0))^2. 
\end{equation}
Owing to the above identities \eqref{2ndEqDeriTz_nPsi_n}, \eqref{2ndEqDeriTz_nTz_n}, \eqref{3rdEqDeriTz_nTz_n}, \eqref{4thEqDeriTz_n} and \eqref{5thEqDeriTz_n}, we obtain from \eqref{2ndEqDeriTz_n} that
\[
\begin{split}
\frac{d}{d\lambda}\Big(\frac{\lambda^\alpha}{\gamma_n(\lambda,k)}\Big) \int_{\R_-} \rho_0' \phi_n^2  &= (1+\alpha) \lambda^\alpha \int_{\R_-} \rho_0(k^2 \phi_n^2+(\phi_n')^2)+ ( \alpha-1)\lambda^{\alpha-2}  gk^2\rho_+ (\phi_n(0))^2 \\
&\qquad+\alpha \lambda^{\alpha-1} \mu \int_{\R_-}((\phi_n''+k^2\phi_n)^2+4k^2 (\phi_n')^2).
\end{split}
\]
We make use of Lemma \ref{LemVariationR} to get further
\begin{equation}\label{3rdEqDeriTz_n}
\begin{split}
\lambda^{2-\alpha}\frac{d}{d\lambda}\Big(\frac{\lambda^\alpha}{\gamma_n(\lambda,k)}\Big) \int_{\R_-} \rho_0' \phi_n^2  &>\alpha \lambda \mu \int_{\R_-}((\phi_n''+k^2\phi_n)^2+4k^2 (\phi_n')^2) -(1-\alpha) gk^2\rho_+ (\phi_n(0))^2\\
&\geq   \Big(\alpha \lambda \mu - \frac{(1-\alpha)g\rho_+}{2k}\Big) \int_{\R_-}((\phi_n''+k^2\phi_n)^2+4k^2 (\phi_n')^2).
\end{split}
\end{equation}
The first assertion of Lemma \ref{LemGammaDecrease} was derived from \eqref{3rdEqDeriTz_n} by letting $\alpha=1$. To prove the second assertion, we  note that $k\geq \frac1L$ since $\vk \in (L^{-1}\bZ)^2\setminus \{0\}$. Hence, for $\frac{g\rho_+L}{g\rho_+L+2\mu\ve}\leq \alpha <1$, we have
\[
\lambda k - \frac{1-\alpha}{\alpha} \frac{g\rho_+}{2\mu} \geq \frac{\ve}L -\frac{1-\alpha}{\alpha} \frac{g\rho_+}{2\mu} = \frac{\ve}L+ \frac{g\rho_+}{2\mu}- \frac1\alpha \frac{g\rho_+}{2\mu}  >0.
\]
%\[
% \frac{}L + \frac{g\rho_+}{2\mu} \geq \frac1\alpha \frac{g\rho_+}{2\mu}  \Leftrightarrow \alpha \geq \frac{ \frac{g\rho_+}{2\mu}}{ \frac{g\rho_+}{2\mu}+ \frac{}L } \Leftrightarrow \alpha \geq \frac1{1+ \frac{2\mu}{g\rho_+L}}\in (0,1).
%\]
Consequently, $\frac{\lambda^\alpha}{\gamma_n(\lambda,k)}$ is strictly increasing in $\lambda \geq \ve >0$.  Lemma \ref{LemGammaDecrease} is proven.
\end{proof}
Now we are in position to solve \eqref{EqFindLambda} and finish the proof of Theorem \ref{ThmLinear}.
\begin{proof}[Proof of Theorem \ref{ThmLinear}]
Using \eqref{EqRf_n} and \eqref{3rdEqDeriTz_nTz_n}, we know that
\[\begin{split}
\frac1{\gamma_n(\lambda,k)} \int_{\R_-} \rho_0' \phi_n^2  &=  \lambda\int_{\R_-} \rho_0(k^2\phi_n^2+ (\phi_n')^2)+ \mu \int_{\R_-}((\phi_n''+k^2\phi_n)^2+4k^2 (\phi_n')^2). 
\end{split}\]
We deduce that
\[
\frac1{L_0\gamma_n(\lambda,k)}  \geqslant \lambda  k^2.
\]
Hence
\begin{equation}\label{LimitGammaRight}
\lim_{\lambda \to +\infty} \frac{\lambda}{\gamma_n(\lambda,k)} =+\infty.
\end{equation}
Next, we prove that 
\begin{equation}\label{LimitGammaLeft}
\lim_{\lambda \to 0} \frac{\lambda}{\gamma_n(\lambda,k)} =0.
\end{equation}
For $0<\lambda \leq \ve$, due to Lemma \ref{LemGammaDecrease}(2), we have that  
\[
\frac{\lambda^\alpha}{\gamma_n(\lambda,k)} \leq \frac{\ve^\alpha}{\gamma_n(\ve,k)} \quad\text{for }\alpha = \frac{g\rho_+L}{g\rho_+L+2\mu\lambda}\in (0,1).
\]
That implies
\[
\frac{\lambda}{\gamma_n(\lambda,k)}= \lambda^{1-\alpha} \frac{\lambda^\alpha}{\gamma_n(\lambda,k)} \leq \lambda^{1-\alpha} \frac{\ve^\alpha}{\gamma_n(\ve,k)} \leq \lambda^{1-\alpha} \frac\ve{\gamma_n(\ve,k)}\to 0 \quad\text{as }\lambda \to 0,
\]
yielding \eqref{LimitGammaLeft}. Combining the two limits \eqref{LimitGammaRight} and \eqref{LimitGammaLeft} with Lemma \ref{LemGammaDecrease}(1), we deduce that, for each $n\geq 1$, there is a unique $\lambda_n$ solving \eqref{EqFindLambda}. Since $\lambda_n$ is a characteristic value, we have that $\lambda_n\in (0,\sqrt{\frac{g}{L_0}})$.

We  prove that the sequence $(\lambda_n)_{n\geq 1}$ is  decreasing. Indeed, if $\lambda_m<\lambda_{m+1}$ for some $m\geq 1$,  we make use of Lemma \ref{LemGammaDecrease}(1) to have
\[
\frac{\lambda_m}{\gamma_m(\lambda_m,k)}< \frac{\lambda_{m+1}}{\gamma_m(\lambda_{m+1},k)}.
\]
Note that $\gamma_m(\lambda_{m+1},k) > \gamma_{m+1}(\lambda_{m+1},k)$.  
That implies 
\[
gk^2= \frac{\lambda_m}{\gamma_m(\lambda_m,k)} <  \frac{\lambda_{m+1}}{\gamma_{m+1}(\lambda_{m+1},k)} =gk^2.
\]
That contradiction tells us that $(\lambda_n)_{n\geq 1}$ is a decreasing sequence.  

To complete the proof of Theorem \ref{ThmLinear},  we now prove that $\lim_{n\to \infty}\lambda_n =0$. Indeed,  suppose that $\lim_{n\to \infty}\lambda_n=c_0>0$, one has that $\lambda_n\geq c_0$ for all $n\geq 1$. By Lemma \ref{LemGammaDecrease}(1) again, it yields
\[
\frac{c_0}{\gamma_n(c_0,k)} \leq \frac{\lambda_n}{\gamma_n(\lambda_n,k)}=\frac1{gk^2},
\]
thus 
\[
c_0 \leq \frac{\gamma_n(c_0,k)}{gk^2}.
\]
Letting $n\to \infty$, we obtain a contradiction that $c_0 \leq 0$. Hence, $\lim_{n\to \infty}\lambda_n =0$.
 The proof of Theorem \ref{ThmLinear} is complete.
\end{proof}

Once we have solutions of \eqref{4thOrderEqPhi}-\eqref{RightBoundary}-\eqref{PhiAt-Infty}, we go back to the linearized equations \eqref{EqLinearized}.
\begin{proposition}\label{PropSolEqLinear}
For each $\vk=(k_1,k_2)\in  (L^{-1}\bZ)^2 \setminus \{0\}$, there exists an infinite sequence of  normal modes 
\begin{equation}
e^{\lambda_n(\vk) t}  V_n(\vk, x) = e^{\lambda_n(\vk) t}(\zeta_n(\vk,x),  u_n(\vk,x), q_n(\vk,x),\eta_n(\vk, x_h))
\end{equation}
to the linearized equations \eqref{EqLinearized}, such that 
\begin{equation}\label{RegularSolutionsLinear}
\zeta_n \in H^\infty(\Omega),  u_n \in (H^\infty(\Omega))^3 \text{ and } q_n \in H^\infty(\Omega).
 \end{equation}
\end{proposition}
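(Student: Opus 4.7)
The plan is to build each normal mode directly from the scalar profile $\phi_n(\vk,\cdot)$ supplied by Theorem \ref{ThmLinear} by running the separation of variables \eqref{SeparationOfVR}--\eqref{RelationOmegaNu} in reverse. Fix $\vk=(k_1,k_2) \in L_1^{-1}\bZ\times L_2^{-1}\bZ\setminus\{0\}$, set $k=|\vk|$, and let $(\lambda_n(\vk),\phi_n(\vk,\cdot)) \in (0,\sqrt{g/L_0}) \times H^\infty(\R_-)$ denote the pair provided by Theorem \ref{ThmLinear}, so that \eqref{4thOrderEqPhi}-\eqref{RightBoundary}-\eqref{PhiAt-Infty} hold. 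Motivated by the equations $\eqref{SystFunctionsX_3}_{1,2,4}$, I would define the remaining horizontal and pressure profiles by
\[
\psi_n(\vk,x_3) = -\tfrac{k_1}{k^2}\phi_n'(\vk,x_3), \quad \varphi_n(\vk,x_3) = -\tfrac{k_2}{k^2}\phi_n'(\vk,x_3),
\]
and $\pi_n(\vk,x_3) = -\tfrac{1}{k^2}(\lambda_n \rho_0\phi_n' + \mu(k^2\phi_n' - \phi_n'''))$ as in \eqref{RelationPi_Phi}. Assembling these through \eqref{SeparationOfVR} gives $v_n,r_n$, and the density and surface profiles are then forced by \eqref{RelationOmegaNu}, namely $\omega_n = -\lambda_n^{-1}\rho_0' v_{n,3}$ and $\nu_n = \lambda_n^{-1} v_{n,3}|_{\Gamma}$. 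The normal mode candidate is $V_n(\vk,x) = (\zeta_n,u_n,q_n,\eta_n)(\vk,x)$.

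The second step is the verification of each equation in \eqref{EqLinearized}. The density equation and the kinematic condition on $\Gamma$ hold by construction of $\omega_n$ and $\nu_n$. The incompressibility $\mathrm{div}\,v_n = 0$ reduces to $k_1\psi_n+k_2\varphi_n+\phi_n'=0$, which is automatic from the definitions of $\psi_n,\varphi_n$. The three components of the momentum equation produce exactly the system \eqref{SystFunctionsX_3}: the first two hold trivially because of how $\psi_n,\varphi_n,\pi_n$ are defined in terms of $\phi_n$, while the third is equivalent, after eliminating $\pi_n$, to the fourth-order ODE \eqref{4thOrderEqPhi} that $\phi_n$ satisfies by Theorem \ref{ThmLinear}. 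For the stress-free boundary condition, the tangential components reduce to $\mu(\psi_n'(0)-k_1\phi_n(0)) = -\tfrac{\mu k_1}{k^2}(\phi_n''(0)+k^2\phi_n(0))$ and its $\varphi_n$-analogue, which vanish by $\eqref{RightBoundary}_1$; the normal component, after substituting $\pi_n(0)$, collapses to $\eqref{RightBoundary}_2$.

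For the regularity statement \eqref{RegularSolutionsLinear}, I would note that $\phi_n(\vk,\cdot) \in H^\infty(\R_-)$ from Theorem \ref{ThmLinear} (the outer part is an exponentially decaying combination of $e^{k(x_3+a)}$ and $e^{\tau_-(x_3+a)}$, so all Sobolev norms on $\R_-$ are finite), $\rho_0$ is smooth, and the horizontal trigonometric factors are smooth on the compact torus $\fT^2$. Products and finite-order differentiations then give $\zeta_n,u_n,q_n \in H^\infty(\Omega)$ and $\eta_n \in H^\infty(\Gamma)$. There is no substantive obstacle in this proposition once Theorem \ref{ThmLinear} is in hand; the only thing to be careful about is the consistent choice of $\psi_n,\varphi_n$ that simultaneously enforces incompressibility and the first two momentum profiles in \eqref{SystFunctionsX_3}, after which all remaining identities are algebraic consequences of the boundary conditions \eqref{RightBoundary} built into $\phi_n$.
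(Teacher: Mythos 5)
Your proof is correct and reaches the same conclusion, but it takes a genuinely different route to the horizontal velocity profiles. You write down the explicit closed forms $\psi_n = -\tfrac{k_1}{k^2}\phi_n'$ and $\varphi_n = -\tfrac{k_2}{k^2}\phi_n'$, and then verify by direct substitution that $\eqref{SystFunctionsX_3}_{1,2,4}$ and $\eqref{BoundFunctionsX_3}_{1,2}$ all hold: the divergence constraint is built into the ansatz, the first two momentum profiles cancel identically once $\pi_n$ from \eqref{RelationPi_Phi} is inserted, and the tangential stress conditions collapse to $\eqref{RightBoundary}_1$. The paper instead treats $\eqref{SystFunctionsX_3}_1$ (with $\pi_n$ already known) as a second-order BVP for $\psi_n$ on $\R_-$ with data $\psi_n'(0)=k_1\phi_n(0)$ and decay at $-\infty$, and appeals to ODE existence theory on bounded intervals together with a domain-expansion argument; the constraint $k_1\psi_n+k_2\varphi_n+\phi_n'=0$ is then implicit in the uniqueness of that BVP rather than manifest. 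Your closed form necessarily coincides with the paper's $\psi_n$, since the BVP has at most one $H^\infty$ solution (a homogeneous solution $\chi$ with $\chi'(0)=0$ and $\chi\to0$ at $-\infty$ is annihilated after multiplying by $\chi$ and integrating by parts, as the potential $\lambda\rho_0+\mu k^2$ is strictly positive). Your route is the more economical one: it avoids BVP existence machinery, makes the incompressibility transparent, and exhibits the normal mode in fully explicit terms. The regularity argument in both cases is the same — $\phi_n\in H^\infty(\R_-)$ from Theorem \ref{ThmLinear}, smoothness of $\rho_0$, and smooth periodic factors on $\fT^2$.
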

\begin{proof}
For each  solution $\lambda_n\in  (0,\sqrt{\frac{g}{L_0}}) $ of \eqref{EqFindLambda}, we  have a solution $\phi_n$ in $H^4(\R_-)$ of \eqref{4thOrderEqPhi} as $\lambda=\lambda_n$, being found in Theorem \ref{ThmLinear}. Furthermore, $\phi_n \in H^\infty(\R_-)$. We find uniquely $\pi_n \in H^\infty(\R_-)$ from \eqref{RelationPi_Phi}  such that
\[\pi_n(\vk, x_3)  =
- \frac1{k^2} (\lambda_n\rho_0\phi_n'+\mu(k^2\phi_n' -\phi_n'''))(,\vk,x_3). 
\]
To look for $\psi_n$, we rewrite $\eqref{SystFunctionsX_3}$ as a second order ODE, 
\[
-\mu \psi_n'' + (\lambda_n\rho_0\psi_n + \mu k^2\psi_n - k_1\pi_n)=0.
\]
Note from \eqref{BoundFunctionsX_3} and \eqref{FunctionsAtInfty} that $\psi_n$ satisfies that $\psi_n'(0)=k_1\phi_n(0)$ and that $\lim_{x_3\to -\infty}\psi_n(x_3)=0$. By the ODE theory on a bounded interval and the domain expansion technique, we  obtain a unique solution $\psi_n \in H^\infty(\R_-)$, where the solution $\psi_n$ depends on the known functions $\phi_n$ and $\pi_n$. We get $\varphi_n$ in a similar way. Hence, $(\psi_n,\varphi_n,\phi_n,\pi_n) \in (H^\infty(\R_-))^4$ is a  solution of \eqref{SystFunctionsX_3}-\eqref{BoundFunctionsX_3}.

Following \eqref{SeparationOfVR}, we then construct the functions
\[\begin{split}
v_{1,n}(\vk, x) &= \sin(k_1x_1+k_2x_2)\psi_n(\vk, x_3),\\
v_{2,n}(\vk, x) &=\sin(k_1x_1+k_2x_2)\varphi_n(\vk, x_3),\\
v_{3,n}(\vk, x) &= \cos(k_1x_1+k_2x_2)\phi_n(\vk, x_3),\\
r_n(\vk, x_3) &=  \cos(k_1x_1+k_2x_2)\pi_n(\vk, x_3).
\end{split}\]
Keep in mind \eqref{RelationOmegaNu}, let us define also
\[
\begin{split}
\omega_n(\vk, x) &= -\frac1{\lambda_n(\vk)}\rho_0'(x_3)v_{3,n}(\vk, x_3), \\
\nu_n(\vk, x_h) &= \frac1{\lambda_n(\vk)} v_{3,n}(\vk, x_h,0).
\end{split}
\]
Hence 
\[
(\zeta_n(t,\vk, x),u_n(t,\vk, x), q_n(t,\vk, x), \eta_n(t,\vk, x_h))=  e^{\lambda_n(\vk)t}(\omega_n, v_n, r_n, \nu_n)(\vk, x)
\]
is a real-valued solution of \eqref{EqLinearized}. We claim \eqref{RegularSolutionsLinear} since $(\psi_n,\varphi_n,\phi_n,\pi_n) \in (H^\infty(\R_-))^4$.
\end{proof}

\subsection{Maximal growth rate}

We derive the following proposition on the largest characteristic value $\lambda_1$ found in Theorem \ref{ThmLinear}.
\begin{proposition}\label{PropLambda_1}
Let us recall the bilinear form $\bB_{a,k,\lambda}$ on $H^2((-a,0))$ \eqref{1stBilinearForm} and $(\lambda_1,\phi_1)$ from Theorem \ref{ThmLinear}. We have that 
\begin{equation}\label{EquivalenceVariaProb}
\begin{split}
\frac1{gk^2} &= \max_{\phi \in H^2((-a,0))} \frac{\int_{-a}^0 \rho_0' \phi^2}{\lambda_1\bB_{a,k,\lambda_1}(\phi,\phi)},
\end{split}
\end{equation}
and the extremal problem   \eqref{EquivalenceVariaProb} is attained by  $\phi_1$ restricted on $(-a,0)$ up to a constant. 

Furthermore, let us define the following bilinear form on $H^2(\R_-)$,
\[
\begin{split}
\fB_{k,\lambda}(\phi,\theta) := \lambda \int_{\R_-}\rho_0(k^2 \phi\theta +\phi' \theta') &+ \mu \int_{\R_-}((\phi''+k^2\phi) ( \theta''+k^2\theta)+4k^2\phi' \theta') \\
&\qquad\qquad+ \frac{gk^2 \rho_+}{\lambda}\phi(0) \theta(0).
\end{split}
\]
Hence, we have
\begin{equation}\label{VariaProbR_-}
\frac1{gk^2}= \max_{\phi \in H^2(\R_-)} \frac{\int_{\R_-} \rho_0' \phi^2}{\lambda_1 \fB_{k,\lambda_1}(\phi,\phi)}.
\end{equation}
The extremal problem \eqref{VariaProbR_-} is attained by  $\phi_1$ up to a constant. 
\end{proposition}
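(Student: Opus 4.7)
The plan is to prove \eqref{EquivalenceVariaProb} by a Rayleigh--Ritz characterization of the largest eigenvalue of a compact self-adjoint operator on $H^2((-a,0))$ endowed with the inner product $\bB_{a,k,\lambda_1}$, and then derive \eqref{VariaProbR_-} from \eqref{EquivalenceVariaProb} by showing that $\fB_{k,\lambda_1}(\phi,\phi)\geq \bB_{a,k,\lambda_1}(\phi|_{(-a,0)},\phi|_{(-a,0)})$ via a minimization over the outer region.

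For \eqref{EquivalenceVariaProb}, I would equip $H^2((-a,0))$ with the coercive inner product $\bB_{a,k,\lambda_1}$ from Proposition~\ref{PropPropertyR} and introduce the operator $\widetilde{S}:H^2((-a,0))\to H^2((-a,0))$ defined via Riesz by $\bB_{a,k,\lambda_1}(\widetilde{S}\phi,\varrho)=\int_{-a}^0\rho_0'\phi\varrho\,dx_3$. This $\widetilde{S}$ is self-adjoint for $\bB_{a,k,\lambda_1}$, and it is compact because $\widetilde{S}\phi=Y_{a,k,\lambda_1}^{-1}(\rho_0'\phi)$: multiplication by $\rho_0'$ is continuous $H^2\to L^2$, Proposition~\ref{PropInverseOfR} gives $Y_{a,k,\lambda_1}^{-1}:L^2\to H^4((-a,0))$, and the Rellich embedding $H^4\hookrightarrow H^2$ is compact. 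The eigenvalue equation $\widetilde{S}\phi=\gamma\phi$ rewrites as $\gamma Y_{a,k,\lambda_1}\phi=\rho_0'\phi$, which is exactly \eqref{EqRf_n}, so the spectrum of $\widetilde{S}$ is precisely $\{\gamma_n(\lambda_1,k)\}_{n\geq 1}$ with eigenfunctions $\phi_n$. Since $\gamma_1(\lambda_1,k)=\lambda_1/(gk^2)$ by Proposition~\ref{PropExistLambda}, the Rayleigh--Ritz principle for the top eigenvalue gives
\[
\frac{\lambda_1}{gk^2}=\gamma_1(\lambda_1,k)=\max_{\phi\neq 0}\frac{\bB_{a,k,\lambda_1}(\widetilde{S}\phi,\phi)}{\bB_{a,k,\lambda_1}(\phi,\phi)}=\max_{\phi\neq 0}\frac{\int_{-a}^0\rho_0'\phi^2\,dx_3}{\bB_{a,k,\lambda_1}(\phi,\phi)},
\]
the maximum being attained by $\phi_1$ restricted to $(-a,0)$; dividing by $\lambda_1$ yields \eqref{EquivalenceVariaProb}.

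To obtain \eqref{VariaProbR_-}, observe that $\rho_0'\equiv 0$ on $(-\infty,-a)$, so the numerators in \eqref{EquivalenceVariaProb} and \eqref{VariaProbR_-} coincide and it suffices to establish $\fB_{k,\lambda_1}(\phi,\phi)\geq \bB_{a,k,\lambda_1}(\phi|_{(-a,0)},\phi|_{(-a,0)})$ for every $\phi\in H^2(\R_-)$, with equality attainable. Splitting the integrals in $\fB_{k,\lambda_1}$ at $x_3=-a$ and integrating by parts on $(-a,0)$ (using $\int 2k^2\phi''\phi\,dx_3 = 2k^2[\phi'\phi]_{-a}^0 - 2k^2\int(\phi')^2\,dx_3$), one rewrites
\[
\mu\int_{-a}^0((\phi''+k^2\phi)^2+4k^2(\phi')^2)dx_3=\mu\int_{-a}^0((\phi'')^2+2k^2(\phi')^2+k^4\phi^2)dx_3+2\mu k^2[\phi'(0)\phi(0)-\phi'(-a)\phi(-a)].
\]
The terms at $x_3=0$ combine with $gk^2\rho_+\phi(0)^2/\lambda_1$ to form exactly $BV_{0,k,\lambda_1}(\phi,\phi)$, so the reduction boils down to the outer-region inequality
\[
\fB^{(-\infty,-a)}_{k,\lambda_1}(\phi,\phi)\geq BV_{-a,k,\lambda_1}(\phi,\phi)+2\mu k^2\phi'(-a)\phi(-a),
\]
where $\fB^{(-\infty,-a)}_{k,\lambda_1}$ denotes the restriction of the integral terms of $\fB_{k,\lambda_1}$ to $(-\infty,-a)$.

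The last inequality follows from minimizing $\fB^{(-\infty,-a)}_{k,\lambda_1}(\phi,\phi)$ over all $H^2$ extensions with prescribed data $(\phi(-a),\phi'(-a))$ decaying at $-\infty$. The Euler--Lagrange equation is precisely the outer ODE \eqref{EqLeftSide}, whose decaying solutions are spanned by $e^{k(x_3+a)}$ and $e^{\tau_-(x_3+a)}$ from Proposition~\ref{PropSolDecay}; hence the minimizer is $\phi_0=A_1 e^{k(x_3+a)}+A_2 e^{\tau_-(x_3+a)}$ with $A_1,A_2$ determined by \eqref{ConstA+}. The main computational step, and the most delicate one, is the direct evaluation of $\fB^{(-\infty,-a)}_{k,\lambda_1}(\phi_0,\phi_0)$ using $\int_{-\infty}^{-a}e^{c(x_3+a)}dx_3=1/c$ and $\tau_-^2-k^2=\lambda_1\rho_-/\mu$, which after simplification matches $BV_{-a,k,\lambda_1}(\phi_0,\phi_0)+2\mu k^2\phi_0'(-a)\phi_0(-a)$ term by term in $(A_1,A_2)$. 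For general $\phi$, writing $\phi=\phi_0+\psi$ with $\psi(-a)=\psi'(-a)=0$ and integrating by parts shows the cross term $\fB^{(-\infty,-a)}_{k,\lambda_1}(\phi_0,\psi)$ vanishes (the interior piece by the ODE for $\phi_0$, the boundary pieces by the vanishing of $\psi,\psi'$ at $-a$ and decay at $-\infty$), so $\fB^{(-\infty,-a)}_{k,\lambda_1}(\phi,\phi)=\fB^{(-\infty,-a)}_{k,\lambda_1}(\phi_0,\phi_0)+\fB^{(-\infty,-a)}_{k,\lambda_1}(\psi,\psi)$, and nonnegativity of the second term gives the inequality. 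Equality holds exactly when $\phi|_{(-\infty,-a)}$ is the outer extension, which is how $\phi_1\in H^\infty(\R_-)$ was constructed in Theorem~\ref{ThmLinear}; combined with \eqref{EquivalenceVariaProb}, this identifies $\phi_1$ as the maximizer of \eqref{VariaProbR_-}.
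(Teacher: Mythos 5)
Your Part~1 argument and the paper's establish the same identity $\alpha_1(\lambda_1,k)=\gamma_1(\lambda_1,k)/\lambda_1$, though you package it as a Rayleigh--Ritz characterization for the compact self-adjoint operator $\widetilde S=Y_{a,k,\lambda_1}^{-1}(\rho_0'\,\cdot)$ on $H^2((-a,0))$ equipped with the inner product $\bB_{a,k,\lambda_1}$, whereas the paper introduces a Lagrangian, locates stationary points, and proves the two inequalities \eqref{GammaLeqAlpha_1}, \eqref{Alpha_1LeqGamma} separately; same substance, slightly tighter route. Part~2 is where you genuinely diverge, and your argument is both correct and more transparent than the paper's. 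The paper redoes the Lagrange-multiplier analysis over $\R_-$: it introduces $\alpha_2$, runs the full bootstrap regularity argument to show the stationary point $\Phi_\star$ lies in $H^4(\R_-)$ and satisfies \eqref{RightBoundary}--\eqref{PhiAt-Infty}, matches it against an eigenvalue of $S_{a,k,\lambda}$ to get $\lambda\alpha_2\leq\gamma_1$, and then closes the reverse inequality by asserting the identity $\fB_{k,\lambda}(\phi_1,\phi_1)=\bB_{a,k,\lambda}(\phi_1|_{(-a,0)},\phi_1|_{(-a,0)})$ \eqref{EquiBilinearForm} with a ``direct computations show'' and no details. You instead prove the pointwise comparison $\fB_{k,\lambda_1}(\phi,\phi)\geq\bB_{a,k,\lambda_1}(\phi|_{(-a,0)},\phi|_{(-a,0)})$ for every $\phi\in H^2(\R_-)$: split at $x_3=-a$, integrate $2k^2\phi''\phi$ by parts to absorb $BV_{0,k,\lambda_1}$, and minimize the outer functional $\fB^{(-\infty,-a)}_{k,\lambda_1}$ over decaying $H^2$ extensions with prescribed $(\phi(-a),\phi'(-a))$; the orthogonal decomposition $\phi=\phi_0+\psi$ with vanishing cross term gives both the inequality and its equality case, which together with \eqref{EquivalenceVariaProb} yields \eqref{VariaProbR_-}. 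This avoids the regularity analysis over $\R_-$ entirely and explains \emph{why} \eqref{EquiBilinearForm} holds rather than asserting it. The ``delicate term-by-term'' identity you defer---that $\fB^{(-\infty,-a)}_{k,\lambda_1}(\phi_0,\phi_0)=BV_{-a,k,\lambda_1}(\phi_0,\phi_0)+2\mu k^2\phi_0'(-a)\phi_0(-a)$---does check out: integrating by parts, the interior integral vanishes by the outer ODE, the boundary terms at $-\infty$ vanish by decay, and substituting the conditions \eqref{LeftBoundary} for $\phi_0''(-a),\phi_0'''(-a)$ together with $\lambda_1\rho_-=\mu(\tau_-^2-k^2)$ reduces the $x_3=-a$ boundary contribution exactly to the claimed form. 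You should spell that computation out rather than gesture at it, but the paper itself does no better for \eqref{EquiBilinearForm}.
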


\begin{proof}[Proof of Proposition \ref{PropLambda_1}]
We divide the proof into two parts, proving \eqref{EquivalenceVariaProb} and \eqref{VariaProbR_-}, respectively.

\noindent\textbf{Part 1}. We show that \eqref{EquivalenceVariaProb} holds.
For all $\lambda>0$, we solve the variational problem 
\begin{equation}
\alpha_1(\lambda,k)= \max\Big( \int_{-a}^0 \rho_0' \phi^2 \Big| \phi \in H^2((-a,0)), \quad \lambda\bB_{a,k,\lambda}(\phi,\phi)=1\Big).
\end{equation}
Let us define the Lagrangian functional
\begin{equation}
\cL_{\bB}(\nu,\phi) =  \int_{-a}^0 \rho_0' \phi^2 - \nu (\lambda\bB_{a,k,\lambda}(\phi,\phi)-1).
\end{equation}
It follows from the Lagrange multiplier theorem that the extrema of the quotient 
\[
\frac{\int_{-a}^0 \rho_0' \phi^2}{\lambda\bB_{a,k,\lambda}(\phi,\phi)}
\]
are necessarily obtained at the stationary points $(\nu_\star,\phi_\star)$ of $\cL_{\bB}$, which satisfy
\begin{equation}\label{PsiConstraintAlpha1}
\lambda\bB_{a,k,\lambda}(\phi_\star,\phi_\star)=1
\end{equation}
and
\begin{equation}\label{EqConstraintLb}
\int_{-a}^0 \rho_0' \phi_\star \theta -  \lambda\nu_\star\bB_{a,k,\lambda}(\phi_{\star}, \theta) =0,
\end{equation}
for all $\theta \in  H^2((-a,0))$.  Restricting $\theta \in C_0^{\infty}((-a,0))$ and following the line of the proof of Proposition \ref{PropInverseOfR}, one deduces from \eqref{EqConstraintLb} that $\phi_{\star}$ has to satisfy 
\begin{equation}
\lambda\nu_\star  Y_{a,k,\lambda}\phi_{\star} = \rho_0'\phi_{\star}
\end{equation}
in a weak sense. We further get that $\phi_{\star}\in H^4((-a,0))$ and satisfies \eqref{PsiConstraintAlpha1} and the boundary conditions \eqref{LeftBoundary}-\eqref{RightBoundary}.  Hence, all stationary points  $(\nu_\star,\phi_\star)$ of $\mathcal{L}_\bB$ satisfy that,  $\lambda\nu_\star$ is an eigenvalue of the compact and self-adjoint operator $ S_{a,k,\lambda}=\cM Y_{a,k,\lambda}^{-1}\cM$ from $L^2((-a,0))$ to itself, with 
\[
\cM^{-1}Y_{a,k,\lambda} \phi_\star = \frac1{\lambda \nu_\star}\cM \phi_\star \in L^2((-a,0))
\] 
being an associated eigenfunction. That implies 
\begin{equation}\label{GammaLeqAlpha_1}
\alpha_1(\lambda,k) \leq \lambda^{-1}\gamma_1(\lambda,k).
\end{equation}

Meanwhile, since the operator $S_{a,k,\lambda}$ is  self-adjoint and positive, we thus obtain that
\[
\gamma_1(\lambda,k)=\sup_{\phi \in L^2((-a,0))} \frac{\langle S_{a,k,\lambda}\phi, \phi\rangle}{\|\phi\|_{L^2((-a,0))}^2}.
\]
Hence, for all $\phi\in L^2((-a,0))$ and  for $\psi= Y_{a,k,\lambda}^{-1}\cM\phi \in H^4((-a,0))$, we have
\[
\langle Y_{a,k,\lambda}\psi, \psi \rangle = \langle S_{a,k,\lambda}\phi, \phi\rangle,
\]
which yields
\[
\gamma_1(\lambda,k) \langle Y_{a,k,\lambda}\psi,\psi \rangle \leq  \frac{\langle S_{a,k,\lambda}\phi, \phi\rangle^2}{\|\phi\|_{L^2((-a,0)}^2} \leq  \| S_{a,k,\lambda}\phi\|_{L^2((-a,0))}^2.
\]
This yields
\[
\gamma_1(\lambda,k) \leq \sup \Big\{ \frac{\|\cM \psi\|_{L^2((-a,0))}^2}{\langle Y_{a,k,\lambda}\psi,\psi \rangle}|\psi \in H^4((-a,0)) \text{ and } \cM^{-1}Y_{a,k,\lambda}\psi \in L^2((-a,0))\Big\}.
\]
Owing to \eqref{EqMathcalB_a}, we have that
\[
\gamma_1(\lambda,k) \leq  \sup \Big\{ \frac{\int_{-a}^0 \rho_0' \psi^2}{\bB_{a,k,\lambda}(\psi,\psi)}|\psi \in H^4((-a,0)) \text{ and } \cM^{-1}Y_{a,k,\lambda}\psi \in L^2((-a,0))\Big\}.
\]
We thus obtain
\begin{equation}\label{Alpha_1LeqGamma}
\lambda^{-1}\gamma_1(\lambda,k) \leq \alpha_1(\lambda,k)
\end{equation}
The two inequalities \eqref{GammaLeqAlpha_1} and \eqref{Alpha_1LeqGamma} tell us that $\alpha_1(k,\lambda)=\lambda^{-1}\gamma_1(k,\lambda)$ for all $\lambda>0$,  from which we deduce $\alpha_1(\lambda_1,k)=\frac1{gk^2}$ and the extremal problem  \eqref{EquivalenceVariaProb} is attained by the function $\phi_1|_{(-a,0)}$ up to a constant.

\noindent\textbf{Part 2}. We  prove that \eqref{VariaProbR_-} holds. We set 
\[
\alpha_2(\lambda,k)= \max_{\phi \in H^2(\R_-)}\Big( \int_{\R_-}\rho_0' \phi^2  \Big| \lambda \fB_{k,\lambda}(\phi,\phi)=1 \Big).
\]
and consider the Lagrangian functional 
\[
\fL_{\fB}(\omega, \phi)= \int_{\R_-}\rho_0' \phi^2  - \omega ( \fB_{k,\lambda}(\phi,\phi)-1).
\]
Thanks to  Lagrange multiplier theorem again, the extrema  of the quotient 
$\frac{\int_{\R_-}\rho_0' \phi^2 }{ \lambda \fB_{k,\lambda}(\phi,\phi)}$
are necessarily obtained at the  stationary points $(\omega_\star, \Phi_\star) \in \R_+ \times H^2(\R_-)$ of $\fL_{\fB}$, which satisfy
\[
\lambda\fB_{k,\lambda}(\Phi_\star,\Phi_\star) =1, \qquad \int_{\R_-}\rho_0'\Phi_\star \theta  - \lambda\omega_\star \fB_{k,\lambda}(\Phi_\star,\theta)=0
\]
for all $\theta\in H^2(\R_-)$.  Restricting further $\theta \in C_0^\infty(\R_-)$,  we make use of a bootstrap argument to get that  $\Phi_\star$ belongs to $H^4(\R_-)$ and $\Phi_\star$ satisfies the ODE
\begin{equation}
\mu (\Phi_\star^{(4)}- 2k^2 \Phi_\star''  +k^4\Phi_\star) +\lambda (k^2\rho_0\Phi_\star - (\rho_0\Phi_\star')') = \frac1{\lambda\omega_\star}\rho_0'\Phi_\star \quad\text{on }\R_-,
\end{equation}
with the boundary condition  \eqref{RightBoundary}.  Since $\text{supp} \rho_0'=[-a,0]$, we see that $\Phi_\star$ is a solution of 
\[
\mu (\Phi_\star^{(4)}- 2k^2 \Phi_\star''  +k^4\Phi_\star) +\lambda (k^2\rho_0\Phi_\star - (\rho_0\Phi_\star')') =0 \quad\text{on }(-\infty,-a).
\]
 Then, $\Phi_\star$ on $(-\infty,-a)$ is of the form \eqref{LeftSol}. Mimicking the computations in the proof of Lemma \ref{LemBoundSmooth}, we deduce $\Phi_\star$ on $(-a,0)$ is a solution of 
\[
\lambda\omega_\star Y_{a,k,\lambda}(\Phi_\star|_{(-a,0)})= \rho_0'\Phi_\star|_{(-a,0)}= \cM^2 \Phi_\star|_{(-a,0)}
\]
with the boundary conditions \eqref{LeftBoundary}-\eqref{RightBoundary}. Set 
\begin{equation}\label{TildeBigPhi}
\tilde\Phi= \cM^{-1}Y_{a,k,\lambda}(\Phi_\star|_{(-a,0)})= \frac1{\lambda\omega_\star} \cM \Phi_\star|_{(-a,0)} \in L^2((-a,0)),
\end{equation}
it yields $\lambda\omega_\star \tilde\Phi= \cM Y_{a,k,\lambda}^{-1}\cM \tilde\Phi = S_{a,k,\lambda}\tilde\Phi$.
That means $\lambda\omega_\star$ is an eigenvalue of the compact and self-adjoint operator $S_{a,k,\lambda}$ from $L^2((-a,0))$ to itself, with $\tilde\Phi \in L^2((-a,0))$ (defined as in \eqref{TildeBigPhi}) being an associated eigenfunction. Hence, we get 
\begin{equation}\label{Alpha_2LeqGamma}
\lambda\alpha_2(\lambda,k) \leq \gamma_1(\lambda,k). 
\end{equation}

Let us recall the function $\phi_1$ from Theorem \ref{ThmLinear}.  One thus has
\begin{equation}\label{Alpha2Geq}
\alpha_2(\lambda,k) \geq \frac{\int_{\R_-}\rho_0'\phi_1^2}{\lambda \fB_{k,\lambda}(\phi_1,\phi_1)}.
\end{equation}
Note that from Proposition \ref{PropSolDecay}
\[
\phi_1(x_3) = A_1 e^{k(x_3+a)}+ A_2e^{\sqrt{k^2+\frac{\lambda_1\rho_-}\mu}(x_3+a)} \quad\text{as } -\infty<x_3<-a.
\]
Let us write $\phi_1|_{(-a,0)}$ as the function $\phi_1$ being restricted on $(-a,0)$. Hence, the direct computations show that 
\begin{equation}\label{EquiBilinearForm}
 \fB_{k,\lambda}(\phi_1,\phi_1) = \bB_{a,k,\lambda}(\phi_1|_{(-a,0)},\phi_1|_{(-a,0)}),
\end{equation}
and we keep in mind the assumption $\text{supp} \rho_0' =[-a,0]$. Then, from \eqref{Alpha2Geq} and \eqref{EquiBilinearForm}, we have
\[
\alpha_2(\lambda,k) \geq \frac{\int_{-a}^0 \rho_0' \phi_1^2}{\lambda \bB_{a,k,\lambda}(\phi_1|_{(-a,0)},\phi_1|_{(-a,0)})}.
\]
It then follows 
\begin{equation}\label{GammaLeqAlpha_2}
\alpha_2(\lambda_1,k) \geq \frac{\int_{-a}^0 \rho_0' \phi_1^2}{\lambda_1 \bB_{a,k,\lambda_1}(\phi_1|_{(-a,0)},\phi_1|_{(-a,0)})} = \frac1{gk^2}. 
\end{equation}
Combining \eqref{Alpha_2LeqGamma} and \eqref{GammaLeqAlpha_2} gives us that $\alpha_2(\lambda_1,k)=\frac1{gk^2}$ and the extremal problem \eqref{VariaProbR_-}  is attained by  $\phi_1$ up to a constant.   We finish the proof of Proposition \ref{PropLambda_1}. 
\end{proof}

Recall the definition of $\Lambda$ from \eqref{DefineLambda}, we  prove that $\Lambda$ is the maximal growth rate of the linearized equations \eqref{EqLinearized} in the following sense:
\begin{proposition}\label{PropSharpGrowthRate}
For arbitrary solution $(\zeta,u,\eta)$ of the linearized equations \eqref{EqLinearized}, the following inequalities hold 
\begin{equation}\label{IneSGR_ZetaU}
\begin{split}
&\|\zeta(t)\|_{H^1(\Omega)}^2 + \|u(t)\|_{H^1(\Omega)}^2 +\|\partial_t u(t)\|_{L^2(\Omega)}^2 +\int_0^t \| u(s)\|_{H^1(\Omega)}^2 ds \\
&\qquad\lesssim (\|\zeta(0)\|_{H^1(\Omega)}^2+\|\partial_t u(0)\|_{L^2(\Omega)}^2+ \|u(0)\|_{H^1(\Omega)}^2) e^{2\Lambda t},
\end{split}
\end{equation}
and 
\begin{equation}\label{IneSGR_Eta}
\begin{split}
&\|(\eta,\partial_t \eta)(t)\|_{H^{1/2}(\Gamma)}^2+ \int_0^t  \|\partial_t\eta(s)\|_{H^{1/2}(\Gamma)}^2 \lesssim (\|\eta(0)\|_{H^{1/2}(\Gamma)}^2 +\|\partial_t u(0)\|_{L^2(\Omega)}^2+\|u(0)\|_{H^1(\Omega)}^2) e^{2\Lambda t}.
\end{split}
\end{equation}
\end{proposition}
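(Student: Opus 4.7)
The plan is to close the linear stability estimate for \eqref{EqLinearized} by combining standard energy identities with the sharp variational characterization of $\Lambda$ from Proposition~\ref{PropLambda_1}, and to conclude via Gronwall. Since \eqref{EqLinearized} is linear with time-independent coefficients, $(\partial_t\zeta,\partial_t u,\partial_t q,\partial_t\eta)$ solves the same linear system; this is what allows the $\|\partial_t u(0)\|_{L^2}$ term to appear on the right-hand side of \eqref{IneSGR_ZetaU}--\eqref{IneSGR_Eta} and what lets me control $\partial_t u$ in $L^2$ on an equal footing with $u$ in $H^1$.

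First, I would perform a horizontal Fourier decomposition in $\vk\in L_1^{-1}\bZ\times L_2^{-1}\bZ$. For each $\vk\neq 0$, the incompressibility $ik_1\widehat u_1+ik_2\widehat u_2+\widehat u_3'=0$ reduces the vector problem to a scalar one for $\phi=\widehat u_3(\vk,\cdot)$, whose natural kinetic/viscous/boundary energy is equivalent to $\fB_{k,\lambda}(\phi,\phi)$. Proposition~\ref{PropLambda_1} then supplies the master inequality
\[
gk^2\int_{\R_-}\rho_0'\phi^2\,dx_3 \;\le\; \lambda_1(\vk)\,\fB_{k,\lambda_1(\vk)}(\phi,\phi) \;\le\; \Lambda\,\fB_{k,\Lambda}(\phi,\phi),
\]
the second step using that $\lambda\mapsto \lambda\fB_{k,\lambda}(\phi,\phi)$ is monotone increasing (since $\lambda\fB_{k,\lambda}$ expands as $\lambda^2$- and $\lambda$-weighted positive integrals plus the $\lambda$-independent term $gk^2\rho_+\phi(0)^2$). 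The $\vk=0$ mode is handled separately using the zero-average condition \eqref{ZeroAverageEta}.

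Second, testing the momentum equation of \eqref{EqLinearized} against $u$, using divergence-freeness, the stress boundary condition $(q-g\rho_+\eta)e_3=\mu\bS u\,e_3$, and the kinematic identities $\partial_t\eta=u_3|_\Gamma$ and $\partial_t\zeta=-\rho_0'u_3$, yields
\[
\tfrac{d}{dt}\!\Big(\tfrac12\!\int_\Omega\!\rho_0|u|^2+\tfrac{g\rho_+}{2}\!\int_\Gamma\!\eta^2\Big)+\tfrac{\mu}{2}\!\int_\Omega|\bS u|^2=-g\!\int_\Omega\!\zeta u_3.
\]
The right-hand side is estimated mode-by-mode through the master inequality above with $\phi=\widehat u_3(\vk)$, producing $\frac{d}{dt}E_\vk+c\mathcal D_\vk\le 2\Lambda E_\vk$ at each frequency. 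Performing the parallel operation on the time-differentiated system (testing against $\partial_t u$) and summing over $\vk$ via Parseval furnishes
\[
\tfrac{d}{dt}E(t)+c\,\|u(t)\|_{H^1(\Omega)}^2 \;\le\; 2\Lambda\, E(t),\qquad E(t)\simeq\|\zeta(t)\|_{H^1}^2+\|u(t)\|_{H^1}^2+\|\partial_t u(t)\|_{L^2}^2.
\]
Gronwall then yields $E(t)\le CE(0)e^{2\Lambda t}$, and time-integration absorbs $\int_0^t\|u\|_{H^1}^2\,ds$, proving \eqref{IneSGR_ZetaU}. For \eqref{IneSGR_Eta}, the bound on $\|\partial_t\eta(t)\|_{H^{1/2}}$ follows at once from $\partial_t\eta=u_3|_\Gamma$ and the trace inequality $\|u_3|_\Gamma\|_{H^{1/2}(\Gamma)}\lesssim\|u\|_{H^1(\Omega)}$ combined with \eqref{IneSGR_ZetaU}; integrating in time and reusing the $L^2_t$-bound on $\|u\|_{H^1}$ produces the bounds on $\|\eta(t)\|_{H^{1/2}}$ and $\int_0^t\|\partial_t\eta\|_{H^{1/2}}^2\,ds$.

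The main obstacle will be the careful matching between the vector energy identity on the full fluid domain and the scalar bilinear form $\fB_{k,\lambda}$: the coefficient $4k^2(\phi')^2$ appearing in $\fB_{k,\lambda}$ (rather than the naïve $2k^2$ from $|\bS u|^2$ alone) and the surface contribution $gk^2\rho_+\phi(0)^2/\lambda$ only emerge after judicious use of incompressibility and of the stress boundary condition, and this matching is precisely what makes $\Lambda$ the sharp upper bound for the growth rate. A secondary subtlety is the treatment of $\zeta$ outside $\mathrm{supp}\,\rho_0'=[-a,0]$, where $\partial_t\zeta\equiv 0$ forces $\zeta(t)\equiv\zeta(0)$; for natural initial data supported in $[-a,0]$ the weighted quantity $\int_\Omega\zeta^2/\rho_0'$ is well-defined and the closure of the argument is standard.
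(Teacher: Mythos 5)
Your core strategy — the time-differentiated energy identity for $\partial_t u$, the variational characterization of $\Lambda$ from Proposition~\ref{PropLambda_1} applied mode-by-mode via horizontal Fourier decomposition and incompressibility, and a Gronwall closure — is precisely what the paper does (Lemmas~\ref{LemEqD_tU_linearized}, \ref{LemIneLambdaSquare} and the subsequent chaining). But there are two concrete gaps in how you propose to close the estimate.

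First, the energy identity obtained by testing $\eqref{EqLinearized}_2$ against $u$ produces the forcing $-g\int_\Omega\zeta u_3$, which the master inequality does not control: that inequality bounds $\int g\rho_0'|u_3|^2$, not $\int\zeta u_3$. Your suggested remedy, that $\int\zeta u_3 = -\tfrac12\tfrac{d}{dt}\int\zeta^2/\rho_0'$, requires $\zeta(t)$ supported in $\mathrm{supp}\,\rho_0'$ and $\rho_0'$ nonvanishing there, neither of which is assumed in the proposition ($\zeta(0)$ is arbitrary, and $\rho_0'\geq 0$ may have interior zeros). The paper avoids this entirely by only using the \emph{time-differentiated} identity (Lemma~\ref{LemEqD_tU_linearized}): differentiating the momentum equation converts $g\zeta$ to $-g\rho_0'u_3$, so the test against $\partial_t u$ directly produces the well-defined term $\tfrac{d}{dt}\int g\rho_0'|u_3|^2$, which is exactly what Lemma~\ref{LemIneLambdaSquare} bounds. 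The first test is both unnecessary and unworkable here.

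Second, the claimed one-line closure $\tfrac{d}{dt}E(t)\leq 2\Lambda E(t)$ with $E\simeq\|\zeta\|_{H^1}^2+\|u\|_{H^1}^2+\|\partial_t u\|_{L^2}^2$ cannot be extracted in that form, because $\tfrac{d}{dt}\|\bS u\|_{L^2}^2 = 2\int\bS u:\bS\partial_t u$ brings in $\|\bS\partial_t u\|_{L^2}^2$ — a dissipation, not a piece of $E$. The actual closure in the paper integrates Lemma~\ref{LemEqD_tU_linearized} in time so that $\int_0^t\|\bS\partial_t u\|_{L^2}^2\,ds$ sits on the left-hand side (see \eqref{1stInePropSGR}), then expands $\|\bS u(t)\|^2 = \|\bS u(0)\|^2 + 2\int_0^t\bS u:\bS\partial_t u\,ds$ with Cauchy--Schwarz tuned to the parameter $\Lambda$ (see \eqref{2ndInePropSGR}--\eqref{3rdInePropSGR}), and only then obtains a Gronwall inequality on the combined quantity $\int_\Omega\rho_0|u(t)|^2+\tfrac12\int_0^t\int_\Omega\mu|\bS u|^2$. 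The $\zeta$- and $\eta$-bounds are then read off \emph{after} Gronwall, using $\partial_t\zeta=-\rho_0'u_3$ and $\partial_t\eta = u_3|_\Gamma$ with the trace theorem — the latter part of your proposal is correct. Your observation that $\lambda\mapsto\lambda\fB_{k,\lambda}(\phi,\phi)$ is monotone and thus gives $\lambda_1\fB_{k,\lambda_1}\leq\Lambda\fB_{k,\Lambda}$ is exactly the step the paper uses to pass from $\lambda_1(\vk)$ to $\Lambda$ in \eqref{Ine1stLemLambdaSquare}.
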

The proof of Proposition \ref{PropSharpGrowthRate} relies on the two lemmas below.
\begin{lemma}\label{LemEqD_tU_linearized}
For arbitrary solution $(\zeta,u,\eta)$ of the linearized equations \eqref{EqLinearized}, there holds 
\begin{equation}\label{EqD_tU_linearized}
\begin{split}
\frac12 \frac{d}{dt}\Big( \int_{\Omega}\rho_0|\partial_t u|^2  -\int_{\Omega}g\rho_0'|u_3|^2  +\int_{\Gamma}g\rho_+|u_3|^2  \Big) +\frac{\mu}2 \int_{\Omega} |\bS\partial_t u|^2  = 0.
\end{split}
\end{equation}
\end{lemma}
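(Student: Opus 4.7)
The plan is to derive the identity by time-differentiating the momentum equation in \eqref{EqLinearized} and testing it against $\partial_t u$ in $L^2(\Omega)$, then handling the boundary contributions by means of the linearized stress-free condition and the kinematic condition.

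First, I would differentiate $\eqref{EqLinearized}_2$ in $t$, obtaining
\[
\rho_0 \partial_{tt} u + \nabla \partial_t q - \mu \Delta \partial_t u + g (\partial_t \zeta) e_3 = 0 \quad \text{in } \Omega,
\]
and substitute $\partial_t \zeta = -\rho_0' u_3$ from $\eqref{EqLinearized}_1$ to eliminate $\partial_t \zeta$. Next, I would take the $L^2(\Omega)$-inner product with $\partial_t u$. The inertial term yields $\tfrac12 \frac{d}{dt}\int_\Omega \rho_0 |\partial_t u|^2$, while the buoyancy term becomes $-g \int_\Omega \rho_0' u_3 \, \partial_t u_3 = -\tfrac{g}{2} \frac{d}{dt}\int_\Omega \rho_0' |u_3|^2$.

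For the pressure term, I would integrate by parts and use $\text{div}\,\partial_t u = 0$ (from $\eqref{EqLinearized}_3$) to kill the interior contribution, leaving only $\int_\Gamma \partial_t q \, \partial_t u_3$. For the viscous term, using that $u$ is divergence-free I would rewrite
\[
-\mu \int_\Omega \Delta \partial_t u \cdot \partial_t u = \frac{\mu}{2}\int_\Omega |\bS \partial_t u|^2 - \mu \int_\Gamma (\bS \partial_t u)\, e_3 \cdot \partial_t u,
\]
using the identity $\int_\Omega \bS w : \nabla v = \tfrac12 \int_\Omega \bS w : \bS v$ for symmetric $\bS w$, combined with the fact that $\text{div}(\bS u) = \Delta u$ when $\text{div}\,u=0$.

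The key point, and the only place where something nontrivial happens, is the combination of the two boundary integrals. Differentiating the boundary condition $\eqref{EqLinearized}_5$ in $t$ gives
\[
\mu (\bS \partial_t u)\, e_3 = (\partial_t q - g\rho_+ \partial_t \eta)\, e_3 \quad \text{on } \Gamma,
\]
so that the pressure boundary term and the viscous boundary term combine into $\int_\Gamma g\rho_+ \partial_t \eta \, \partial_t u_3$. Finally, from the linearized kinematic condition $\partial_t \eta = u_3$ on $\Gamma$ (which is $\eqref{EqLinearized}_4$), we have $\partial_t \eta \, \partial_t u_3 = u_3 \, \partial_t u_3 = \tfrac12 \partial_t |u_3|^2$, giving the term $\tfrac{g\rho_+}{2}\frac{d}{dt}\int_\Gamma |u_3|^2$. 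Collecting the four contributions produces exactly \eqref{EqD_tU_linearized}. The only delicate step is bookkeeping of the boundary integrals and the correct use of the symmetrized-gradient integration by parts; everything else is direct.
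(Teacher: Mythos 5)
Your proof plan is correct and matches the paper's own argument essentially step-by-step: differentiate $\eqref{EqLinearized}_2$ in time, eliminate $\partial_t\zeta$ via $\eqref{EqLinearized}_1$, test against $\partial_t u$, integrate by parts using $\text{div}\,\partial_t u=0$ and $\text{div}(\bS\partial_t u)=\Delta\partial_t u$, and close the boundary terms via the time-differentiated stress condition $\eqref{EqLinearized}_5$ and the kinematic relation $\eqref{EqLinearized}_4$. No gaps.
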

\begin{proof}
We differentiate $\eqref{EqLinearized}_2$ in time, multiply the resulting equation by $\partial_t u$ and then use $\eqref{EqLinearized}_1$ to obtain 
\[
\begin{split}
\int_{\Omega}\rho_0 \partial_t^2 u \cdot \partial_t u + \int_{\Omega}  \nabla \partial_tq \cdot\partial_t u - \mu \int_{\Omega} \Delta \partial_t u \cdot\partial_t u -\int_{\Omega}g\rho_0' u_3 \partial_t u_3=0.
\end{split}
\]
That is equivalent to 
\begin{equation}\label{1_EqD_tU_linearized}
\begin{split}
\frac12 \frac{d}{dt}\Big( \int_{\Omega}\rho_0|\partial_t u|^2  -\int_{\Omega}g\rho_0'|u_3|^2   \Big)  + \int_{\Omega}  \nabla \partial_tq \cdot\partial_t u - \mu \int_{\Omega} \Delta \partial_t u \cdot\partial_t u=0.
\end{split}
\end{equation}
We use the integration by parts over $\Omega$ to have 
\[
\begin{split}
\int_{\Omega}  \nabla \partial_tq \cdot\partial_t u -  \mu \int_{\Omega} \Delta \partial_t u \cdot\partial_t u  &= \int_\Gamma (\partial_t q\text{Id}- \mu \bS\partial_tu)e_3 \cdot \partial_t u -\int_\Omega \partial_t q \text{div}\partial_t u +\frac{\mu}2 \int_\Omega|\bS\partial_t u|^2 
\end{split}
\]
Thanks to  $\eqref{EqLinearized}_{3,4,5}$, we obtain 
\begin{equation}\label{2_EqD_tU_linearized}
\begin{split}
\int_{\Omega}  \nabla \partial_tq \cdot\partial_t u -  \mu \int_{\Omega} \Delta \partial_t u \cdot\partial_t u &= \int_\Gamma g\rho_+ \partial_t \eta\partial_t u_3 +\frac{\mu}2 \int_\Omega|\bS\partial_t u|^2\\
&= \int_\Gamma g\rho_+u_3 \partial_t u_3  +\frac{\mu}2 \int_\Omega|\bS\partial_t u|^2. 
\end{split}
\end{equation}
Substituting \eqref{2_EqD_tU_linearized} into \eqref{1_EqD_tU_linearized}, we conclude \eqref{EqD_tU_linearized}. 
\end{proof}

\begin{lemma}\label{LemIneLambdaSquare}
For any  $u$ such that $\text{div}u=0$, there holds 
\begin{equation}\label{IneLambdaSquare}
 \int_{\Omega}g\rho_0'|u_3|^2 \leq \int_{\Gamma}g\rho_+|u_3|^2+  \Lambda^2 \int_{\Omega}\rho_0|u|^2 + \frac12 \Lambda \int_{\Omega}\mu |\bS u|^2.
\end{equation}
\end{lemma}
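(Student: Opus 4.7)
The plan is to reduce the inequality to a mode-by-mode estimate via a horizontal Fourier decomposition of $u$, and then to apply the variational characterization of $\Lambda$ given in Proposition \ref{PropLambda_1} to the vertical Fourier component of each mode. Writing
\[
u(x_h, x_3) = \sum_{\vk \in L_1^{-1}\bZ \times L_2^{-1}\bZ} \hat u(\vk, x_3)\, e^{i\vk\cdot x_h}, \qquad \hat u = (A_1, A_2, A_3),
\]
the incompressibility condition reads $i k_1 A_1 + i k_2 A_2 + \partial_3 A_3 = 0$. For $\vk = 0$ this, together with the decay $A_3(0,\cdot) \to 0$ at $-\infty$ (which follows from $u \in H^1(\Omega)$), forces $A_3(0,\cdot) \equiv 0$, so the zero mode contributes nothing. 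For each $\vk \neq 0$ with $k = |\vk|$, I apply \eqref{VariaProbR_-} with $\phi = A_3(\vk,\cdot)$ (splitting real and imaginary parts), divide by $k^2$, and use $\lambda_1(\vk) \leq \Lambda$ to obtain the per-mode estimate
\[
g\!\!\int_{\R_-}\!\!\rho_0'|A_3|^2 \leq \Lambda^2\!\!\int_{\R_-}\!\!\rho_0\Bigl(|A_3|^2 + \tfrac{|\partial_3 A_3|^2}{k^2}\Bigr) + \tfrac{\Lambda\mu}{k^2}\!\!\int_{\R_-}\!\bigl[|\partial_3^2 A_3 + k^2 A_3|^2 + 4k^2|\partial_3 A_3|^2\bigr] + g\rho_+|A_3(\vk,0)|^2.
\]

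Summing this over $\vk$ with the Parseval factor $|\fT^2|$, the lemma reduces to two mode-wise bounds. The first, $|A_3|^2 + |\partial_3 A_3|^2/k^2 \leq |A_1|^2 + |A_2|^2 + |A_3|^2$, is immediate from Cauchy--Schwarz and the divergence-free condition, since $|\partial_3 A_3|^2 = |k_1 A_1 + k_2 A_2|^2 \leq k^2(|A_1|^2 + |A_2|^2)$. The second, which is the main technical point, is the integrated bound
\[
\tfrac{2}{k^2}\!\!\int_{\R_-}\!\bigl[|\partial_3^2 A_3 + k^2 A_3|^2 + 4k^2|\partial_3 A_3|^2\bigr] dx_3 \leq \int_{\R_-}\!\!|\widehat{\bS u}(\vk, \cdot)|^2 dx_3.
\]

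To establish this last bound I introduce the solenoidal coordinates in the horizontal Fourier plane,
\[
R := k_1 A_1 + k_2 A_2, \qquad S := k_1 A_2 - k_2 A_1,
\]
which satisfy $|A_1|^2 + |A_2|^2 = (|R|^2 + |S|^2)/k^2$ and, by divergence-free, $R = i\partial_3 A_3$ (hence $\partial_3 R = i\partial_3^2 A_3$). Expanding $|\widehat{\bS u}|^2$ in the variables $(A_3, R, S)$ and their $\partial_3$-derivatives, and processing the cross term $\mathrm{Re}(\bar A_3\, \partial_3^2 A_3)$ arising from $2|i k_j A_3 + \partial_3 A_j|^2$ via one integration by parts in $x_3$, I expect to arrive at the identity
\[
\int_{\R_-}|\widehat{\bS u}|^2 dx_3 = \tfrac{2}{k^2}\int_{\R_-}\bigl[|\partial_3^2 A_3 + k^2 A_3|^2 + 4k^2|\partial_3 A_3|^2\bigr] dx_3 + \int_{\R_-}\Bigl[2|S|^2 + \tfrac{2|\partial_3 S|^2}{k^2}\Bigr] dx_3,
\]
from which the desired inequality is immediate because the last integral is nonnegative. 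The main obstacle is precisely this algebraic identity: it requires careful bookkeeping in the $(R, S, A_3)$ coordinates, the boundary contributions produced by the integration by parts must cancel exactly with those implicit in the expansion of $|\partial_3^2 A_3 + k^2 A_3|^2$, and the remainder must assemble cleanly into the nonnegative vortical norm $2|S|^2 + 2|\partial_3 S|^2/k^2$.
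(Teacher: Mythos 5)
Your proposal is correct and follows the same strategy as the paper: horizontal Fourier transform, elimination of the $\vk=0$ mode via incompressibility, application of the variational characterization \eqref{VariaProbR_-} to each vertical profile $\phi=A_3(\vk,\cdot)$ together with $\lambda_1(\vk)\leq\Lambda$, and use of the divergence-free relation $ik_1A_1+ik_2A_2+\partial_3A_3=0$ to convert the $\phi$-only quadratic forms into bounds by $\rho_0|u|^2$ and $|\bS u|^2$. The one genuinely different feature is that you package the viscous bound as an exact algebraic identity (with nonnegative vortical remainder $2|S|^2+2|\partial_3 S|^2/k^2$), whereas the paper proves it as a chain of three pointwise inequalities \eqref{1_InePsiTheta}, \eqref{3_InePsiTheta}, \eqref{2_InePsiTheta}. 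Your version is tighter and exhibits the defect explicitly, which is a nice clarification.

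The one thing to correct is your stated ``main obstacle.'' No integration by parts in $x_3$ is needed, and no boundary terms arise. The cross term $4\,\mathrm{Re}(\bar A_3\,\partial_3^2 A_3)$ coming from $2|ik_1A_3+\partial_3A_1|^2+2|ik_2A_3+\partial_3A_2|^2$ (after substituting $k_1\partial_3A_1+k_2\partial_3A_2=\partial_3R=i\partial_3^2A_3$) is exactly the cross term produced by expanding $\frac{2}{k^2}|\partial_3^2A_3+k^2A_3|^2$; the two cancel pointwise, not after integration. Working out the remaining terms with $|R|^2=|\partial_3A_3|^2$, $|R|^2+|S|^2=k^2(|A_1|^2+|A_2|^2)$, and $4k_1^2|A_1|^2+4k_2^2|A_2|^2+2|k_1A_2+k_2A_1|^2=4|R|^2+2|S|^2$, one obtains
\[
|\widehat{\bS u}|^2 = \frac{2}{k^2}\Big(|\partial_3^2A_3+k^2A_3|^2+4k^2|\partial_3A_3|^2\Big)+2|S|^2+\frac{2}{k^2}|\partial_3 S|^2
\]
as a pointwise identity on $\R_-$, so the anticipated bookkeeping over boundary contributions simply does not arise.
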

\begin{proof}
Let $\vk=(k_1,k_2)\in (L^{-1}\bZ)^2$ be fixed and   $\hat f$ be the horizontal Fourier transform of $f$, i.e. 
\[
\hat f(\vk, x_3)= \int_{\fT^2} f(x_h,x_3)e^{-i \vk \cdot x_h} dx_h.
\]
We write 
\[
\hat u_1(\vk, x) =-i\psi(\vk,x_3), \quad \hat u_2(\vk, x)=-i\varphi(\vk,x_3), \quad \hat u_3(\vk, x)= \phi(\vk,x_3).
\]
Notice that for $\vk=0$, 
\[
\phi(0,0)= \int_\Gamma u_3 = \int_\Omega \text{div}u=0.
\]
Hence, together with Parseval's theorem, we have
\begin{equation}\label{Eq_grhou_3}
\begin{split}
\int_{\Gamma}g\rho_+|u_3|^2  &=   \frac1{4\pi^2L^2} \sum_{\vk  \in (L^{-1}\bZ)^2\setminus\{0\}}g \rho_+|\phi(\vk,0)|^2. 
\end{split}
\end{equation}
We may reduce to estimate \eqref{Eq_grhou_3} when $\psi, \varphi$ and $\phi$ are  real-valued  and then continue the estimate to the real and imaginary parts of $\psi, \varphi$ and $\phi$.

For each $\vk \in   (L^{-1}\bZ)^2 \setminus\{0\}$, we deduce from Proposition \ref{PropLambda_1} that
\[
\begin{split}
\int_{\R_-} g\rho_0' \phi^2(\vk,x_3)  &\leq g \rho_+(\phi(\vk,0))^2  + \lambda_1^2 \int_{\R_-}\rho_0\Big(\phi^2+\frac{(\phi')^2}{k^2} \Big)(\vk,x_3) \\
&\qquad+ \lambda_1\mu \int_{\R_-}\Big( \Big( \frac{\phi''}k+k\phi\Big)^2+4(\phi')^2\Big)(\vk,x_3) .
\end{split}
\]
It thus follows from the definition of $\Lambda$ \eqref{DefineLambda} that  
\begin{equation}\label{Ine1stLemLambdaSquare}
\begin{split}
\int_{\R_-} g\rho_0' \phi^2(\vk,x_3)  &\leq g \rho_+(\phi(\vk,0))^2 + \Lambda^2 \int_{\R_-}\rho_0\Big(\phi^2+\frac{(\phi')^2}{k^2} \Big)(\vk,x_3) \\
&\qquad+ \Lambda\mu \int_{\R_-}\Big( \Big(\frac{\phi''}k+k\phi\Big)^2+4(\phi')^2\Big)(\vk,x_3) 
\end{split}
\end{equation}
for all $\vk \in    (L^{-1}\bZ)^2\setminus\{0\}$. 

Meanwhile, for $\vk \neq 0$, notice that $k_1\psi+k_2\varphi+\phi'=0$. One thus has 
\begin{equation}\label{1_InePsiTheta}
(\phi')^2 \leq (k_1\psi+k_2\varphi)^2+ (k_1\varphi-k_2\psi)^2 = k^2(\psi^2+ \varphi^2),
\end{equation}
and 
\begin{equation}\label{3_InePsiTheta}
\begin{split}
2(\phi')^2 =2k_1^2\psi^2 +2k_2^2\varphi^2 + 4k_1k_2\psi\varphi \leq 2k_1^2\psi^2 +2k_2^2\varphi^2 + (k_1\varphi+k_2\psi)^2.
\end{split}
\end{equation}
Furthermore, we obtain that 
\[
(\phi'')^2 \leq  (k_1\psi'+k_2\varphi')^2+ (k_1\varphi'-k_2\psi')^2 = k^2((\psi')^2+(\varphi')^2).
\]
This yields
\[
\Big( \frac1k \phi''+k\phi \Big)^2= \frac1{k^2}(\phi'')^2 + 2\phi\phi''+ k^2\phi^2 \leq (\psi')^2+(\varphi')^2-2\phi(k_1\psi'+k_2\varphi') + k^2\phi^2,
\]
so that
\begin{equation}\label{2_InePsiTheta}
\Big( \frac1k \phi''+k\phi \Big)^2\leq  (k_1\phi-\psi')^2+(k_2\phi-\varphi')^2. 
\end{equation}
Then, in view of  Fubini's and Parseval's theorem again, we find that due to \eqref{1_InePsiTheta},
\begin{equation}\label{Ine2ndLemLambdaSquare}
\begin{split}
 \int_{\Omega}\rho_0|u|^2 &= \frac1{4\pi^2L^2} \sum_{\vk  \in (L^{-1}\bZ)^2} \int_{\R_-}\rho_0(\psi^2+ \varphi^2+ \phi^2)(\vk,x_3) \\
&\geq \frac1{4\pi^2L^2} \sum_{\vk  \in (L^{-1}\bZ)^2\setminus\{0\}} \int_{\R_-}\rho_0\Big( \phi^2+\frac{(\phi')^2}{k^2} \Big)(\vk,x_3), 
\end{split}
\end{equation}
and that due to \eqref{3_InePsiTheta} and \eqref{2_InePsiTheta},
\begin{equation}\label{Ine3rdLemLambdaSquare}
\begin{split}
\frac12  \int_{\Omega}\mu |\bS u|^2  &=   \frac{\mu}{4\pi^2L^2} \sum_{\vk  \in (L^{-1}\bZ)^2 } \int_{\R_-} 
\left( \begin{split}
& 2(\phi')^2+ 2k_1^2\psi^2+2k_2^2\varphi^2 + (k_1\varphi+k_2\psi)^2 \\
&+(k_1\phi-\psi')^2+ (k_2\phi-\varphi')^2
\end{split} \right)\\
&\geq   \frac{\mu}{4\pi^2L^2} \sum_{\vk  \in (L^{-1}\bZ)^2\setminus\{0\}} \int_{\R_-}\Big( \Big( \frac{\phi''}k+k\phi\Big)^2+4(\phi')^2\Big) .
\end{split}
\end{equation}
Combining \eqref{Eq_grhou_3}, \eqref{Ine1stLemLambdaSquare}, \eqref{Ine2ndLemLambdaSquare} and \eqref{Ine3rdLemLambdaSquare}, the inequality  \eqref{IneLambdaSquare} follows, we end the proof here. 
\end{proof}

We are in position to prove Proposition \ref{PropSharpGrowthRate}. 
\begin{proof}[Proof of Proposition \ref{PropSharpGrowthRate}]
Owing to \eqref{EqD_tU_linearized} and \eqref{IneLambdaSquare}, we have that
\begin{equation}\label{1stInePropSGR}
 \begin{split}
\int_{\Omega}\rho_0|\partial_t u(t)|^2 + \int_0^t \int_{\Omega}\mu|\bS\partial_t u(s)|^2ds &= y_1 + \int_{\Omega}g\rho_0'|u_3(t)|^2-\int_{\Gamma}g\rho_+|u_3(t)|^2 \\
&\leq y_1+ \Lambda^2 \int_{\Omega}\rho_0|u(t)|^2 + \frac12\Lambda \int_{\Omega}\mu|\bS u(t)|^2,
\end{split} 
\end{equation}
where 
\[
y_1 = \int_{\Omega}\rho_0|\partial_t u(0)|^2 - \int_{\Omega}g\rho_0'|u_3(0)|^2 +\int_{\Gamma}g\rho_+|u_3(0)|^2 .
\]
Using Cauchy-Schwarz's inequality, we have that
\begin{equation}\label{2ndInePropSGR}
\begin{split}
 \int_{\Omega}\mu|\bS u(t)|^2 &=\int_{\Omega}\mu|\bS u(0)|^2+ 2\int_0^t\int_{\Omega} \mu \bS u(s) : \bS \partial_t u(s) ds \\
&\leq  \int_{\Omega}\mu|\bS u(0)|^2 + \frac1{\Lambda} \int_0^t \int_{\Omega}\mu|\bS \partial_t u(s)|^2 ds + \Lambda \int_0^t\int_{\Omega} \mu |\bS u(s)|^2 ds
\end{split}
\end{equation}
and that
\begin{equation}\label{3rdInePropSGR}
\frac{d}{dt} \int_{\Omega}\rho_0|u|^2  \leq \frac1{\Lambda} \int_{\Omega}\rho_0|\partial_t u|^2 + \Lambda \int_{\Omega}\rho_0|u|^2.
\end{equation}
The three inequalities \eqref{1stInePropSGR}, \eqref{2ndInePropSGR} and \eqref{3rdInePropSGR} imply that 
\begin{equation}\label{4thInePropSGR}
\begin{split}
\frac{d}{dt} \int_{\Omega}\rho_0|u(t)|^2 + \frac12 \int_{\Omega}\mu|\bS u(t)|^2 &\leq y_2 +  2\Lambda\int_{\Omega}\rho_0|u(t)|^2 + \Lambda \int_0^t\int_{\Omega} \mu |\bS u(s)|^2ds.
\end{split}
\end{equation}
where 
\[
y_2= \frac{y_1}{\Lambda} + \int_{\Omega}\mu|\bS u(0)|^2.
\]
In view of Gronwall's inequality, we obtain from \eqref{4thInePropSGR} that
\begin{equation}\label{5thInePropSGR}
\begin{split}
 \int_{\Omega}\rho_0|u(t)|^2  + \frac12 \int_0^t \int_{\Omega}\mu|\bS u(s)|^2ds &\leq e^{2\Lambda t}\int_{\Omega}\rho_0|u(0)|^2 + \frac{y_2}{2\Lambda}(e^{2\Lambda t}-1).
\end{split}
\end{equation}
Hence, 
\begin{equation}\label{6thInePropSGR}
\begin{split}
\frac1{\Lambda} \int_{\Omega}\rho_0|\partial_t u(t)|^2  + \frac12\int_{\Omega}\mu|\bS u(t)|^2
&\leq y_2 +\Lambda\int_{\Omega}\rho_0|u(t)|^2+ \Lambda\int_0^t \int_{\Omega}\mu|\bS u(s)|^2ds \\
&\leq \Big(y_2+2\Lambda \int_{\Omega}\rho_0|u(0)|^2\Big) e^{2\Lambda t}
\end{split}
\end{equation}
Using the trace theorem, we have 
\begin{equation}\label{7thInePropSGR}
y_1 + y_2 \lesssim \|u(0)\|_{H^1(\Omega)}^2 + \|\partial_t u(0)\|_{L^2(\Omega)}^2. 
\end{equation}
Because of \eqref{5thInePropSGR}, \eqref{6thInePropSGR} and \eqref{7thInePropSGR}, we observe
\begin{equation}\label{9thInePropSGR}
\begin{split}
&\|(u, \bS u, \partial_t u)(t)\|_{L^2(\Omega)}^2+\int_0^t \|\bS u(s)\|_{L^2(\Omega)}^2 ds  \lesssim( \|\partial_t u(0)\|_{L^2(\Omega)}^2 +\|u(0)\|_{H^1(\Omega)}^2) e^{2\Lambda t}.
\end{split}
\end{equation}
In view of Korn's inequality (see \eqref{KornIne}), that implies
\begin{equation}\label{9thInePropSGR}
\begin{split}
&\|(u,\nabla u, \partial_t u)(t)\|_{L^2(\Omega)}^2 +\int_0^t \|\nabla u(s)\|_{L^2(\Omega)}^2 ds   \lesssim( \|\partial_t u(0)\|_{L^2(\Omega)}^2 +\|u(0)\|_{H^1(\Omega)}^2) e^{2\Lambda t}.
\end{split}
\end{equation}
Using $\eqref{EqLinearized}_1$ and \eqref{9thInePropSGR} also, we get 
\begin{equation}\label{10thInePropSGR}
\begin{split}
\|\zeta(t)\|_{H^1(\Omega)}^2 &\lesssim \|\zeta(0)\|_{H^1(\Omega)}^2+ \int_0^t \|u(s)\|_{H^1(\Omega)}^2 ds  \\
&\lesssim (\|\zeta(0)\|_{H^1(\Omega)}^2 +\|\partial_t u(0)\|_{L^2(\Omega)}^2 +\|u(0)\|_{H^1(\Omega)}^2)e^{2\Lambda t}.
\end{split}
\end{equation}
The inequality \eqref{IneSGR_ZetaU} follows from \eqref{9thInePropSGR} and \eqref{10thInePropSGR}. 

To prove \eqref{IneSGR_Eta}, we use the trace theorem to obtain that
\[
\begin{split}
\| \partial_t \eta(t)\|_{H^{1/2}( \Gamma)}^2 + \int_0^t\|\partial_t \eta(s)\|_{H^{1/2}( \Gamma)}^2 ds &=\|u_3(t)\|_{H^{1/2}( \Gamma)}^2 + \int_0^t\|u_3(s)\|_{H^{1/2}( \Gamma)}^2 ds \\
&\leq \|u_3(t)\|_{H^1(\Omega)}^2 + \int_0^t\|u_3(s)\|_{H^1(\Omega)}^2 ds.
\end{split}
\]
Together with \eqref{5thInePropSGR}, \eqref{7thInePropSGR} and \eqref{9thInePropSGR}, we deduce that
\begin{equation}\label{11thInePropSGR}
\| \partial_t \eta(t)\|_{H^{1/2}( \Gamma)}^2 + \int_0^t\|\partial_t \eta(s)\|_{H^{1/2}( \Gamma)}^2 ds \lesssim (\|\partial_t u(0)\|_{L^2(\Omega)}^2 +\|u(0)\|_{H^1(\Omega)}^2) e^{2\Lambda t}.
\end{equation}
The resulting inequality tells us that
\begin{equation}\label{12thInePropSGR}
\begin{split}
\|\eta(t)\|_{H^{1/2}(\Gamma)}^2 &\leq \|\eta(0)\|_{H^{1/2}(\Gamma)}^2 + \int_0^t \|\partial_t \eta(s)\|_{H^{1/2}(\Gamma)}^2 ds \\
&\lesssim (\|\eta(0)\|_{H^{1/2}(\Gamma)}^2 +\|\partial_t u(0)\|_{L^2(\Omega)}^2 +\|u(0)\|_{H^1(\Omega)}^2 )e^{2\Lambda t}.
\end{split}
\end{equation}
The inequality \eqref{IneSGR_Eta} follows from \eqref{11thInePropSGR} and \eqref{12thInePropSGR}.  Proposition \ref{PropSharpGrowthRate} is proven. 
\end{proof}

\section{A priori energy estimates}\label{SectAprioriEnergy}

Let us recall the perturbation terms $U=(\zeta, u, q, \eta)$ from \eqref{PerturTerms}. We define the full energy functional $\cE_f(U(t))>0$ such that 
\begin{equation}\label{EnergyFull}
\begin{split}
\cE_f^2(U(t)) &:= \|\eta(t)\|_{H^{9/2}(\Gamma)}^2+\sum_{l=0}^2 \|\partial_t^l \eta\|_{H^{4-2l}(\Gamma)}^2 +\sum_{l=0}^2 \|\partial_t^l (\zeta, u)(t)\|_{H^{4-2l}(\Omega)}^2\\
&\qquad +\|q(t)\|_{H^3(\Omega)}^2 +\|\partial_tq(t)\|_{H^1(\Omega)}^2
\end{split}
\end{equation}
and its corresponding dissipation $\cD_f(U(t))>0$, 
\begin{equation}\label{DissipationFull}
\begin{split}
\cD_f^2(U(t)) &:= \sum_{l=0}^2 \|\partial_t^l u(t)\|_{H^{5-2l}(\Omega)}^2 +\|\partial_t q\|_{H^2(\Omega)}^2 + \| q\|_{H^4(\Omega)}^2.
\end{split}
\end{equation}
For notational convenience, we only write $\cE_f(t)$ and $\cD_f(t)$ in this section.

The local existence of regular solution to \eqref{EqPertur} then follows from \cite[Theorem 6.3]{GT13}, that we restate below.  Let us recall the definition of $K$ from \eqref{NotationABKJ} and of $\cA$ from  \eqref{MatrixA} and define $\mathbb{M}= K\cA$, $R= \partial_t \mathbb{M} \mathbb{M}^{-1}$ and $D_t u= \partial_t u - Ru$.  We also define an orthogonal projection onto the tangent space of the surface $\{x_3 = \eta_0(x_1,x_2)\}$ according to
\begin{equation}
\Pi_0 v = v- \frac{v\cdot \cN_0}{|\cN_0|^2} \cN_0 \quad\text{for }\cN_0=(-\partial_1 \eta_0,\partial_2 \eta_0,1)^T.
\end{equation}
  Let us write
\[
\begin{split}
G^{2,0} &= g\rho_+\eta\cN \text{ on }\Gamma,\\
G^{2,1} &= D_t G^{2,0} + \mu \bS_\cA(Ru)\cN + (\mu \bS_\cA u-q\text{Id})\partial_t\cN +\mu \bS_{\partial_t\cA}u \cN \text{ on }\Gamma.
\end{split}
\]

\begin{proposition}\label{PropLocalSolution}
Suppose that there is a sufficiently small constant $\nu_1 \in (0,1)$ such that $(\zeta_0, u_0, q_0, \eta_0)$ satisfying
\[
\|\zeta_0\|_{H^4(\Omega)}^2 +\|u_0\|_{H^4(\Omega)}^2+\|q_0\|_{H^3(\Omega)}^2+\|\eta_0\|_{H^{9/2}(\Gamma)}^2 \leq \nu_1.
\]
Suppose also that   the following compatibility conditions hold for $j=0$ and 1,
\begin{equation}\label{CompaCond}
\begin{cases}
\text{div}_{\cA_0} D_t^j u_0 =0 &\quad\text{in }\Omega,\\
\Pi_0(G^{2,j}(0) +\mu \bS_{\cA_0} D_t^j u_0 \cN_0) =0 &\quad\text{on }\Gamma.\end{cases}
\end{equation}
Then,  there exist $\nu_2>0$  and $T_{\max} >0$ such that if $\cE_f(0) \leq \nu_2$,  Eq. \eqref{EqPertur} with the initial data $(\zeta_0, u_0, q_0, \eta_0)$ satisfying the compatibility conditions \eqref{CompaCond}   has  a unique solution $(\zeta, u,q, \eta)$  on the time interval $[0, T_{\max})$. Moreover, we have
\[
\cE_f(t) \lesssim(1+T_{\max})\cE_f(0),
\]
 and $\eta$ is such that the mapping $\Theta(\cdot, t)$ defined by \eqref{ThetaTransform} is a $C^2$-diffeomorphism for each $t\in [0,T_{\max})$. 
\end{proposition}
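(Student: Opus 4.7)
The plan is to adapt the construction of Guo--Tice \cite{GT13} for the constant-density viscous surface wave problem to our variable-density setting. The essential observation is that, after the Lagrangian straightening via $\theta$, the density perturbation $\zeta$ solves a transport equation driven by $u$, while $(u,q)$ satisfies a quasilinear Stokes-type system with the free surface variable $\eta$ coupled through the kinematic boundary condition $\partial_t\eta - u_3 = \cQ^4(U)$ and the dynamic boundary condition on $\Gamma$. The a priori energy functional $\cE_f$ and dissipation $\cD_f$ from \eqref{EnergyFull}--\eqref{DissipationFull} are chosen precisely to match the natural regularity of this coupling: two temporal derivatives (with spatial regularity dropping by two per time derivative) plus a higher-regularity top layer for $\eta$.

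First I would decouple the system and set up an iteration. Given $(u^{(n)},\eta^{(n)})$, solve the transport equation $\eqref{EqPertur}_1$ for $\zeta^{(n+1)}$ by the method of characteristics (noting that $\rho_0' u_3$ and the nonlinear terms $\cQ^1$ depend on quantities already constructed), then solve the linear Stokes problem with right-hand side $\cQ^2$, divergence datum $\cQ^3$, and boundary data $\cQ^5$ to obtain $(u^{(n+1)},q^{(n+1)})$, and finally update $\eta^{(n+1)}$ via $\partial_t \eta^{(n+1)} = u_3^{(n+1)} + \cQ^4$. Uniform estimates at the level of $\cE_f(t) + \int_0^t \cD_f(s)\,ds$ require (i) elliptic regularity for the Stokes problem at each time slice to recover top spatial derivatives from $\partial_t u$ and $q$, (ii) parabolic-type estimates to propagate the time-differentiated equations, and (iii) transport estimates to close the regularity of $\zeta$. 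Smallness of $\cE_f(0)$ absorbs the nonlinear contributions, yielding a uniform time interval $[0,T_{\max})$ on which $\cE_f(t) \lesssim (1+T_{\max})\cE_f(0)$.

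The compatibility conditions \eqref{CompaCond} enter precisely here: the iteration produces solutions whose time derivatives at $t=0$ must coincide with those obtained by formally differentiating the equations and restricting to $t=0$. The divergence-free constraint $\mathrm{div}_{\cA_0} D_t^j u_0 = 0$ and the tangential stress matching $\Pi_0(G^{2,j}(0)+\mu\bS_{\cA_0}D_t^j u_0 \cN_0)=0$ for $j=0,1$ are exactly the necessary conditions so that $\partial_t^j u(0)$ and $\partial_t^j \eta(0)$ inherit the spatial regularity required by $\cE_f$, without which $\|\partial_t u(0)\|_{H^2}$ or $\|\partial_t^2 u(0)\|_{L^2}$ would fail to be controlled by the initial spatial norms. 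Contraction in a lower-regularity norm, combined with the uniform bound in high-regularity norm, yields convergence to a unique strong solution.

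The main obstacle I anticipate is closing the top-level energy estimate when the density gradient $\rho_0'$ appears in the source of the transport equation for $\zeta$: naively, $\zeta$ inherits only the regularity of $u$, but to control the pressure and velocity at the top level one needs $\zeta \in H^4$, which forces careful commutator estimates between $\partial_t^l$ and the Lagrangian operators $\nabla_\cA, \mathrm{div}_\cA$. Provided the smallness of $\nu_1, \nu_2$ is chosen so that $J = 1+\partial_3\theta$ stays uniformly bounded away from zero (ensuring $\Theta(\cdot,t)$ remains a $C^2$-diffeomorphism), these nonlinear contributions can be absorbed into the linear part, and the argument of \cite{GT13} adapts line-by-line with the additional transport equation for $\zeta$ handled separately.
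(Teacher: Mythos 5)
The paper does not prove Proposition~\ref{PropLocalSolution}; it invokes \cite[Theorem 6.3]{GT13} (and its adaptations in \cite{WT12,Wang19}) verbatim, noting only that the variable-density case is handled similarly. Your sketch is a reasonable outline of that adaptation and matches the cited strategy in spirit: decouple the transport equation for $\zeta$ from a quasilinear Stokes system for $(u,q)$ and the kinematic equation for $\eta$, close uniform estimates at the level of $\cE_f+\int_0^t\cD_f^2$ via elliptic regularity and commutator control, use the compatibility conditions~\eqref{CompaCond} to give $\partial_t^j u(0)$, $\partial_t^j\eta(0)$ the regularity required by $\cE_f$, and contract in a lower-order norm. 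One small caveat worth flagging: in your iteration you write that $\cQ^1$ ``depends on quantities already constructed,'' but $\cQ^1$ as defined in~\eqref{TermQ} involves $\partial_3\zeta$ and $\mathrm{div}_\cA(\zeta u)$, so it contains the unknown $\zeta^{(n+1)}$ itself; the correct reading is that~\eqref{EqZetaTilde} is a genuine first-order transport equation for $\zeta$ along the (already constructed) flow, so solvability by characteristics still holds, but you must treat the $\zeta$-dependent terms as part of the linear transport operator rather than as known forcing.
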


With that  regular solution $(\zeta,u,q,\eta)$ of \eqref{EqPertur} on a finite time interval $[0,T_{\max})$,  we aim at showing the \textit{a priori} energy estimates for the nonlinear  equations \eqref{EqPertur}.

 \begin{proposition}\label{PropAprioriEnergy}
 %Let $C_{\text{emb}}$ be the optimal constant of  the Sobolev embedding  $H^2(\Omega)\hookrightarrow L^\infty(\Omega)$. Let $\delta_0$ be sufficiently small 
 %such that 
%\begin{equation}\label{CondDelta_1}
%0 < \delta_0 \leq  \frac{\rho_-}{2C_{\text{emb}} \max(1,\max_{\R_-}\rho_0'(x_3)) },
%\end{equation}
%and \eqref{CondDelta_2} holds later.
Let $\delta_0$ be sufficiently small,  there are $\varepsilon>0$ sufficiently small and another $C_0>0$  such that for all $\delta \in (0,\delta_0)$ if $\sup_{0\leq s\leq t}\cE_f(s)\leq\delta$, we have
\begin{equation}\label{EstAprioriEnergy}
\begin{split}
\cE_f^2(t)+\int_0^t\cD_f^2(s)ds &\leq C_0 \Big( \varepsilon^{-5} \cE_f^2(0)+ \varepsilon \int_0^t \cE_f^2(s)ds+  \varepsilon^{-5}  \int_0^t \cE_f(\cE_f^2+\cD_f^2)(s)ds+\varepsilon^{-5} \cE_f^3(t) \Big)\\
&\qquad\quad  + C_0 \varepsilon^{-59} \int_0^t (\|(\zeta,u)(s)\|_{L^2(\Omega)}^2 +\|\eta(s)\|_{L^2(\Gamma)}^2 )ds.
\end{split}
\end{equation}
\end{proposition}
\noindent\textbf{Strategy of the proof}. 
Respectively, we derive the \textit{a priori} energy estimates for the space-time derivatives of $\eta$  in Propositions \ref{PropEstTransportEtaH^4}, \ref{PropEstEta^9/2}, for the temporal derivatives of $u$ in Proposition \ref{PropEstPartial_t^lU_L2}, for the horizontal space-time derivatives of $u$ in Proposition \ref{PropEstHorizonU} and for the space-time derivatives of $\zeta$ in Proposition \ref{PropEstZeta}.  Then, we derive some estimates thanks to the elliptic regularity theory (see Propositions \ref{PropCompareE}, \ref{PropCompareD}). In view of these above estimates,  we obtain \eqref{EstAprioriEnergy} and  complete the proof of Proposition \ref{PropAprioriEnergy}.

In what follows, the constants $C_i$ $(i\geq 1)$ are to indicate some constants, which are referred later. 

\subsection{Energy estimates of the perturbation transport}
We first derive the \textit{a priori} energy estimates for $\eta$.
\begin{proposition}\label{PropEstTransportEtaH^4}
The following inequalities hold
\begin{equation}\label{EstTransportEtaH^4}
\begin{split}
\|\eta(t)\|_{H^4(\Gamma)}^2 &\leq C_1\Big( \cE_f^2(0) +  \int_0^t( \varepsilon \|\eta(s)\|_{H^4(\Gamma)}^2 + \varepsilon^{-1} \|\nabla u(s)\|_{H^4(\Omega)}^2)ds\int_0^t \cE_f^3(s)ds\Big),
\end{split}
\end{equation}
\begin{equation}\label{EstTransportEta_tH^2}
\begin{split}
\|\partial_t\eta(t)\|_{H^2(\Gamma)}^2 &\leq C_2 \Big( \cE_f^2(0)+ \int_0^t (\varepsilon\|\partial_t\eta(s)\|_{H^2(\Gamma)}^2 +\varepsilon^{-1} \|\nabla \partial_t u(s)\|_{H^2(\Omega)}^2)ds+ \int_0^t \cE_f^3(s) ds \Big),
\end{split}
\end{equation}
and
 \begin{equation}\label{EstTransportEta_t^2L^2}
\begin{split}
\|\partial_t^2\eta (t)\|_{L^2(\Gamma)}^2 &\leq  C_3 \Big( \cE_f^2(0)+ \int_0^t (\varepsilon \|\partial_t^2\eta(s)\|_{L^2(\Gamma)}^2 +\varepsilon^{-1} \|\nabla \partial_t^2 u(s)\|_{L^2(\Omega)}^2 )ds +\int_0^t \cE_f^3(s) ds\Big).
\end{split}
\end{equation}
\end{proposition}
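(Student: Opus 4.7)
\textbf{Plan for Proposition \ref{PropEstTransportEtaH^4}.} The starting point is the kinematic boundary condition from the fourth equation of \eqref{EqPertur},
\begin{equation*}
\partial_t \eta = u_3 + \cQ^4(U) \quad \text{on } \Gamma, \qquad \cQ^4 = -u_1\partial_1\eta - u_2\partial_2\eta,
\end{equation*}
which is a transport equation for $\eta$ on $\Gamma$ sourced by the trace of the normal velocity. Each of the three inequalities will be proved by an energy estimate at the appropriate order in horizontal and temporal derivatives, combined with the trace inequality from $\Omega$ to $\Gamma$ and tame Sobolev product/commutator estimates on $\Gamma$.

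For \eqref{EstTransportEtaH^4}, I would apply $\partial_h^\alpha$ with $|\alpha|\le 4$ to the kinematic equation, pair with $\partial_h^\alpha \eta$ on $\Gamma$, and sum to obtain
\begin{equation*}
\tfrac{1}{2}\tfrac{d}{dt}\|\eta\|_{H^4(\Gamma)}^2 = \sum_{|\alpha|\le 4}\int_\Gamma \partial_h^\alpha \eta\, \partial_h^\alpha u_3 + \sum_{|\alpha|\le 4}\int_\Gamma \partial_h^\alpha \eta\, \partial_h^\alpha \cQ^4.
\end{equation*}
The linear source is bounded by $\varepsilon \|\eta\|_{H^4(\Gamma)}^2 + C\varepsilon^{-1}\|u_3\|_{H^4(\Gamma)}^2$ via Young's inequality, and the trace inequality controls the second factor by $\|u_3\|_{H^{9/2}(\Omega)}^2 \lesssim \|\nabla u\|_{H^4(\Omega)}^2$ (lower-order pieces being absorbed into the $\cE_f^3$ residue below). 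The nonlinear term is expanded by Leibniz: the top-order piece $u_h\cdot\nabla_h \partial_h^\alpha \eta$ is integrated by parts on $\Gamma$, producing $-\tfrac{1}{2}\int_\Gamma (\partial_h^\alpha \eta)^2 \mathrm{div}_h u_h$, which is bounded by $\|\mathrm{div}_h u_h\|_{L^\infty(\Gamma)}\|\eta\|_{H^4(\Gamma)}^2 \lesssim \cE_f \|\eta\|_{H^4(\Gamma)}^2$ using the embedding $H^2(\Omega)\hookrightarrow L^\infty$ and the smallness $\cE_f\le \delta$. The remaining commutator terms distribute derivatives between $u_h$ and $\nabla_h \eta$ and are dispatched by standard Sobolev product estimates, each producing a contribution $\lesssim \cE_f(\|\eta\|_{H^4(\Gamma)}^2 + \|\nabla u\|_{H^4(\Omega)}^2)$. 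Integrating in time and packaging the cubic-in-energy residues into $\int_0^t \cE_f^3$ yields \eqref{EstTransportEtaH^4}.

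For the higher temporal orders \eqref{EstTransportEta_tH^2} and \eqref{EstTransportEta_t^2L^2}, I would first differentiate the kinematic equation in $t$ to obtain $\partial_t^{j+1}\eta = \partial_t^j u_3 + \partial_t^j \cQ^4$ for $j=1,2$, then apply $\partial_h^\alpha$ with $|\alpha|\le 4-2j$ and run the identical scheme against $\partial_h^\alpha \partial_t^j \eta$. The linear sources are bounded via trace by $\|\nabla \partial_t^j u\|_{H^{2(2-j)}(\Omega)}$, matching the statement. The nonlinear sources $\partial_t^j \cQ^4$ decompose by Leibniz into finite sums of products $\partial_t^{j_1} u_h \cdot \nabla_h \partial_t^{j_2}\eta$ with $j_1+j_2=j$; each is dispatched by placing low-order factors in $L^\infty$ via $H^2\hookrightarrow L^\infty$, with the top-order transport contribution again handled by the on-$\Gamma$ integration by parts. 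For \eqref{EstTransportEta_t^2L^2} no horizontal derivative is applied, so the bookkeeping simplifies accordingly.

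\textbf{Main obstacle.} The essential difficulty is the top-order transport term $u_h\cdot \nabla_h \partial_h^\alpha \eta$ at maximal $|\alpha|$ (and its time-differentiated analogues): naively it would cost one horizontal derivative on $\eta$ that the energy spaces $H^4(\Gamma), H^2(\Gamma), L^2(\Gamma)$ cannot supply. The divergence-form integration by parts on $\Gamma$ is the device averting this loss, and it rests on $\mathrm{div}_h u_h \in L^\infty(\Gamma)$, which is secured by $H^2(\Omega)\hookrightarrow L^\infty$, the trace, and $\cE_f\le \delta$. A secondary concern is tracking the powers of $\varepsilon$ carefully so that only $\varepsilon$-weighted $\eta$-norms appear on the right (for later absorption into the complete energy estimate) while $\varepsilon^{-1}$ factors multiply only dissipation norms of $u$ to be consumed by $\int_0^t \cD_f^2$.
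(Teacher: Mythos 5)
Your proposal mirrors the paper's proof: differentiate the kinematic equation $\partial_t\eta = u_3 + \cQ^4$ horizontally (and in time for the higher orders), pair with the corresponding derivative of $\eta$, integrate by parts on $\Gamma$ to convert the top-order transport term $u_h\cdot\nabla_h\partial_h^\alpha\eta$ into $\tfrac12\int_\Gamma(\partial_1 u_1+\partial_2 u_2)|\partial_h^\alpha\eta|^2$, control the linear source $\partial_h^\alpha u_3$ via the trace theorem and Young's inequality with $\varepsilon$-weights, and dispatch the Leibniz remainder by Sobolev product estimates and the embedding $H^2(\Omega)\hookrightarrow L^\infty$. The argument is sound and the key obstacle you identify (the derivative loss in the top-order transport, averted by the divergence-form integration by parts on $\Gamma$) is exactly the device the paper relies on.
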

\begin{proof}
Let us prove \eqref{EstTransportEtaH^4}. For any $\alpha \in \N^2, |\alpha|\leq 4$, we have by $\eqref{EqPertur}_4$, 
\[
\partial_t\partial^\alpha \eta = \partial^\alpha u_3- (u_1\partial^\alpha \partial_1\eta +u_2 \partial^\alpha \partial_2\eta) - \underbrace{\sum_{0\neq \beta \leq \alpha}(\partial^\beta u_1\partial^{\alpha-\beta} \partial_1\eta +\partial^\beta u_2 \partial^{\alpha-\beta} \partial_2\eta)}_{=:R_1^\alpha}.
\]
Using the integration by parts, we obtain
\[
\begin{split}
\frac12 \frac{d}{dt}\|\partial^\alpha \eta\|_{L^2(\Gamma)}^2 &= \frac12 \int_\Gamma (\partial_1 u_1+\partial_2 u_2) |\partial^\alpha\eta|^2 +\int_\Gamma (\partial^\alpha u_3- R_1^\alpha)\partial^\alpha\eta .
\end{split}
\]
So that, we have
\begin{equation}\label{EqDtPartialEta}
\begin{split}
\frac12 \frac{d}{dt}\|\partial^\alpha \eta\|_{L^2(\Gamma)}^2 &\lesssim (\|\partial_1 u_1\|_{L^\infty(\Gamma)}+\|\partial_2 u_2\|_{L^\infty(\Gamma)})\|\partial^\alpha \eta\|_{L^2(\Gamma)}^2 \\
&\qquad+( \|\partial^\alpha u_3\|_{L^2(\Gamma)} +\|R_1^\alpha\|_{L^2(\Gamma)}) \|\partial^\alpha\eta\|_{L^2(\Gamma)}.
\end{split}
\end{equation}
We make use of the trace theorem to obtain that 
\begin{equation}\label{1stEstPropTransport}
\|\partial_j u_j\|_{L^\infty(\Gamma)} \lesssim \|u\|_{H^3(\Gamma)} \lesssim \|u\|_{H^4(\Omega)}, \quad
\|\partial^\alpha u_3\|_{L^2(\Gamma)} \lesssim \|\partial^\alpha u\|_{H^1(\Omega)}
\end{equation}
and that
\begin{equation}\label{3rdEstPropTransport}
\|R_1^\alpha\|_{L^2(\Gamma)} \lesssim \sum_{0\neq \beta\leq \alpha} \|\partial^\beta u\|_{L^2(\Gamma)} \|\partial^{\alpha-\beta}\eta\|_{H^1(\Gamma)} \lesssim \|\partial^\alpha u\|_{H^1(\Omega)} \|\eta\|_{H^{|\alpha|}(\Gamma)}.
\end{equation}
By summing over $\alpha\in \N^2, |\alpha|\leq 4$, it follows from \eqref{EqDtPartialEta}, \eqref{1stEstPropTransport}\ and \eqref{3rdEstPropTransport} that 
\[
\begin{split}
\frac{d}{dt}\|\eta\|_{H^4(\Gamma)}^2 &\lesssim \|\nabla u\|_{H^4(\Omega)}\|\eta\|_{H^4(\Gamma)} +\|u\|_{H^4(\Omega)} \|\eta\|_{H^4(\Gamma)}^2\lesssim \|\nabla u\|_{H^4(\Omega)}\|\eta\|_{H^4(\Gamma)} + \cE_f^3.
\end{split}
\]
Using Cauchy-Schwarz's inequality and then integrating the resulting inequality from 0 to $t$, we obtain  $\eqref{EstTransportEtaH^4}$.

We  show \eqref{EstTransportEta_tH^2}. Let $\alpha \in \N^2, |\alpha|\leq 2$, we get 
\[ \begin{split}
\partial_t^2 \partial^\alpha\eta &= \partial^\alpha\partial_t u_3-( u_1\partial^\alpha \partial_1\partial_t \eta+ u_2\partial^\alpha\partial_2\partial_t\eta )- (\partial_t u_1\partial^\alpha\partial_1\eta + \partial_t u_2 \partial^\alpha \partial_2\eta) \\
&\qquad- \underbrace{ \sum_{0\neq \beta \leq \alpha}(\partial^\beta u_1\partial^{\alpha-\beta} \partial_1\partial_t \eta +\partial^\beta u_2 \partial^{\alpha-\beta} \partial_2\partial_t\eta)}_{=:R_2^\alpha} \\
&\qquad- \underbrace{  \sum_{0\leq \beta \leq \alpha}(\partial^\beta \partial_t u_1\partial^{\alpha-\beta} \partial_1\eta +\partial^\beta\partial_t  u_2 \partial^{\alpha-\beta} \partial_2\eta)}_{=:R_3^\alpha}.
\end{split}
\]
Via the integration by parts, this yields
\[
\begin{split}
\frac12 \frac{d}{dt}\|\partial^\alpha \partial_t \eta\|_{L^2(\Gamma)}^2 &= \frac12 \int_\Gamma (\partial_1 u_1+\partial_2 u_2) |\partial^\alpha\partial_t \eta|^2+ \int_\Gamma 
\partial_t (\partial_1u_1+\partial_2u_2) \partial^\alpha \eta  \partial^\alpha\partial_t\eta \\
&\qquad + \int_\Gamma (\partial^\alpha\partial_t u_3 - R_2^\alpha-R_3^\alpha)\partial^\alpha\partial_t \eta.
\end{split}
\]
Using Sobolev embedding and the trace theorem, we have 
\begin{equation}\label{8thEstPropTransport}
\int_\Gamma (\partial_1 u_1+\partial_2 u_2) |\partial^\alpha\partial_t \eta|^2  \lesssim \|u\|_{H^3(\Gamma)}\|\partial^\alpha\partial_t \eta\|_{L^2(\Gamma)}^2 \lesssim \|u\|_{H^4(\Omega)} \|\partial_t\eta\|_{H^2(\Gamma)}^2 \lesssim \cE_f^3.
\end{equation}
\begin{equation}\label{7thEstPropTransport}
\begin{split}
\int_\Gamma  \partial_t(\partial_1u_1+\partial_2u_2)  \partial^\alpha \eta  \partial^\alpha\partial_t\eta &\lesssim  \|\partial_t u\|_{H^1(\Gamma)}\|\partial^\alpha \eta\|_{H^2(\Gamma)}\|\partial_t\partial^\alpha\eta\|_{L^2(\Gamma)}\\
& \lesssim \|\partial_t u\|_{H^2(\Omega)}\|\eta\|_{H^4(\Gamma)}\|\partial_t\eta\|_{H^2(\Gamma)},
\end{split}
\end{equation}
and 
\begin{equation}\label{6thEstPropTransport}
\begin{split}
\int_\Gamma \partial^\alpha\partial_t u_3 \partial^\alpha\partial_t \eta  \lesssim \|\partial_t u_3\|_{H^2(\Gamma)}\|\partial_t\eta\|_{H^2(\Gamma)} \lesssim \|\partial_t u\|_{H^3(\Omega)}\|\partial_t\eta\|_{H^2(\Gamma)}. 
\end{split}
\end{equation}
We obtain also 
\begin{equation}\label{4thEstPropTransport}
\begin{split}
\|R_2^\alpha\|_{L^2(\Gamma)} &\lesssim  \sum_{0\neq \beta\leq \alpha} \|\partial^\beta u\|_{L^2(\Gamma)} \|\partial^{\alpha-\beta}\partial_t \eta\|_{H^1(\Gamma)}  \lesssim \|u\|_{H^3(\Omega)} \|\partial_t \eta\|_{H^2(\Gamma)}
\end{split}
\end{equation}
and that
\begin{equation}\label{5thEstPropTransport}
\begin{split}
\|R_3^\alpha\|_{L^2(\Gamma)} &\lesssim \sum_{0\leq \beta\leq \alpha} \|\partial^\beta \partial_t u\|_{L^2(\Gamma)}  \|\partial^{\alpha-\beta}\eta\|_{H^1(\Gamma)}\lesssim \| \partial_tu \|_{H^3(\Omega)} \|\eta\|_{H^3(\Gamma)}.
\end{split}
\end{equation}
Combining \eqref{8thEstPropTransport}, \eqref{7thEstPropTransport}, \eqref{6thEstPropTransport}, \eqref{4thEstPropTransport} and \eqref{5thEstPropTransport}, we deduce that 
\[
 \frac{d}{dt} \|\partial_t\eta\|_{H^2(\Gamma)}^2 \lesssim   \|\partial_t u\|_{H^3(\Omega)} \|\partial_t\eta\|_{H^2(\Gamma)}+\cE_f^3.
\]
Combining \eqref{6thEstPropTransport}, \eqref{4thEstPropTransport} and \eqref{5thEstPropTransport}, we deduce that 
\[
 \frac{d}{dt} \|\partial_t\eta\|_{H^2(\Gamma)}^2 \lesssim   \|\nabla \partial_t u\|_{H^2(\Omega)} \|\partial_t\eta\|_{H^2(\Gamma)}+\cE_f^3.
\]
Using Cauchy-Schwarz's inequality and then integrating from 0 to $t$, we obtain $\eqref{EstTransportEta_tH^2}$.

We have $\eqref{EstTransportEta_t^2L^2}$ by following the same strategy as for  proving $\eqref{EstTransportEta_tH^2}$. The proof of Proposition \ref{PropEstTransportEtaH^4} is complete.
\end{proof}

\begin{proposition}\label{PropEstEta^9/2}
There holds 
\begin{equation}\label{EstEtaH^9/2}
\begin{split}
 \|\eta(t)\|_{H^{9/2}(\Gamma)}^2 &\leq  C_4\Big( \cE_f^2(0)+ \int_0^t (\varepsilon \|\eta(s)\|_{H^{9/2}(\Gamma)}^2 +\varepsilon^{-1} \|u(s)\|_{H^5(\Omega)}^2)ds+ \int_0^t \cE_f^3(s)  ds\Big).
\end{split}
\end{equation}
\end{proposition}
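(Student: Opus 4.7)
The plan is to extend the strategy of Proposition \ref{PropEstTransportEtaH^4} to the fractional-order space $H^{9/2}(\Gamma)$. Denote by $\Lambda_h := (1-\Delta_h)^{1/2}$ the tangential Bessel potential on $\Gamma$, so that $\|\eta\|_{H^{9/2}(\Gamma)}^2 = \|\Lambda_h^{9/2}\eta\|_{L^2(\Gamma)}^2$. Applying $\Lambda_h^{9/2}$ to $\eqref{EqPertur}_4$ and testing against $\Lambda_h^{9/2}\eta$ in $L^2(\Gamma)$ produces
\[
\frac{1}{2}\frac{d}{dt}\|\eta\|_{H^{9/2}(\Gamma)}^2 = \int_\Gamma \Lambda_h^{9/2}u_3\,\Lambda_h^{9/2}\eta\,dx_h + \int_\Gamma \Lambda_h^{9/2}\cQ^4\,\Lambda_h^{9/2}\eta\,dx_h,
\]
where $\cQ^4 = -u_1\partial_1\eta - u_2\partial_2\eta$ as in \eqref{TermQ}. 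Cauchy--Schwarz, Young's inequality and the trace bound $\|u_3\|_{H^{9/2}(\Gamma)} \lesssim \|u\|_{H^5(\Omega)}$ control the linear integral by $\frac{\varepsilon}{2}\|\eta\|_{H^{9/2}(\Gamma)}^2 + \frac{C}{\varepsilon}\|u\|_{H^5(\Omega)}^2$, accounting precisely for the two integrand terms in \eqref{EstEtaH^9/2}.

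The delicate point is the nonlinear integral, since $\partial_j\eta \in H^{7/2}(\Gamma)$ only, so $\cQ^4$ fails to sit in $H^{9/2}(\Gamma)$ at top order. To sidestep this loss I would split, for each $j\in\{1,2\}$,
\[
\int_\Gamma \Lambda_h^{9/2}(u_j\partial_j\eta)\,\Lambda_h^{9/2}\eta\,dx_h = \int_\Gamma u_j\,\Lambda_h^{9/2}\partial_j\eta\,\Lambda_h^{9/2}\eta\,dx_h + \int_\Gamma [\Lambda_h^{9/2},u_j]\partial_j\eta\,\Lambda_h^{9/2}\eta\,dx_h.
\]
After a horizontal integration by parts, the first integral equals $-\tfrac{1}{2}\int_\Gamma (\partial_j u_j)|\Lambda_h^{9/2}\eta|^2 \lesssim \|\nabla u\|_{L^\infty(\Gamma)}\|\eta\|_{H^{9/2}(\Gamma)}^2 \lesssim \|u\|_{H^4(\Omega)}\|\eta\|_{H^{9/2}(\Gamma)}^2 \lesssim \cE_f^3$. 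The commutator is handled by a Kato--Ponce estimate on the $2$-torus,
\[
\|[\Lambda_h^{9/2},u_j]\partial_j\eta\|_{L^2(\Gamma)} \lesssim \|\nabla u_j\|_{L^\infty(\Gamma)}\|\partial_j\eta\|_{H^{7/2}(\Gamma)} + \|u_j\|_{H^{9/2}(\Gamma)}\|\partial_j\eta\|_{L^\infty(\Gamma)},
\]
followed by a Young's inequality on the $\|u_j\|_{H^{9/2}(\Gamma)} \lesssim \|u\|_{H^5(\Omega)}$ piece to absorb its contribution into the $\varepsilon^{-1}\|u\|_{H^5(\Omega)}^2$ term, leaving a residue of order $\cE_f^3$ (using $\|\partial_j\eta\|_{L^\infty(\Gamma)} \lesssim \|\eta\|_{H^3(\Gamma)}\lesssim \cE_f$ and the smallness of $\cE_f$).

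Collecting these bounds gives a differential inequality of the form $\tfrac{d}{dt}\|\eta\|_{H^{9/2}(\Gamma)}^2 \leq \varepsilon\|\eta\|_{H^{9/2}(\Gamma)}^2 + C\varepsilon^{-1}\|u\|_{H^5(\Omega)}^2 + C\cE_f^3$; integrating from $0$ to $t$ yields exactly \eqref{EstEtaH^9/2}. The principal technical obstacle is the half-derivative loss in the Sobolev product: a naive estimate would force $\eta$ into $H^{11/2}(\Gamma)$, a space not controlled by $\cE_f$. The combination of transport-type integration by parts with a Kato--Ponce commutator on $\Gamma$ is precisely what rescues the estimate. A minor further subtlety is the rigorous interpretation of $\tfrac{d}{dt}\|\eta\|_{H^{9/2}(\Gamma)}^2$, justified by a standard horizontal mollification argument that commutes with $\Lambda_h^{9/2}$.
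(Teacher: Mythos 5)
Your proposal is correct and follows essentially the same route as the paper: apply $\cJ^{9/2}$ (your $\Lambda_h^{9/2}$) to the transport equation, split the nonlinear term into a principal part (integrated by parts to produce $\tfrac12\int_\Gamma(\partial_1 u_1+\partial_2 u_2)|\cJ^{9/2}\eta|^2$) and a commutator, bound the commutator via the Kato--Ponce estimate \eqref{EstCommutator}, and close with Young's inequality and integration in time. The paper cites this strategy from \cite[Lemma~3.9]{WT12}, and your reasoning, including the trace bound $\|u_3\|_{H^{9/2}(\Gamma)}\lesssim\|u\|_{H^5(\Omega)}$, matches the estimates \eqref{1stEstPropEta^9/2}--\eqref{3rdEstPropEta^9/2}.
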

\begin{proof}
To prove \eqref{EstEtaH^9/2}, we borrow the idea from \cite[Lemma 3.9]{WT12}. Let 
\[
\cJ = \sqrt{1-\partial_1^2-\partial_2^2}.
\]
We apply $\cJ^{9/2}$ to $\eqref{EqPertur}_4$ and then multiply the resulting equation by $\cJ^{9/2}\eta$. Hence,  we find that
\[\begin{split}
\frac{d}{dt}\|\eta\|_{H^{9/2}(\Gamma)}^2 &= \frac12\int_\Gamma(\partial_1u_1+\partial_2 u_2)|\cJ^{9/2}\eta|^2 +\int_\Gamma (\cJ^{9/2}u_3 - [\cJ^{9/2}, u_1]\partial_1\eta - [\cJ^{9/2}, u_2]\partial_2\eta)\cJ^{9/2}\eta.
\end{split}\]
Thanks to \eqref{EstCommutator}, we have the following estimates,
\begin{equation}\label{1stEstPropEta^9/2}
\int_\Gamma \partial_ju_j|\cJ^{9/2}\eta|^2 \lesssim \|\partial_j u_j\|_{L^\infty(\Gamma)}\|\cJ^{9/2}\eta\|_{L^2(\Gamma)}^2 \lesssim \|u\|_{H^3(\Gamma)} \|\eta\|_{H^{9/2}(\Gamma)}^2,
\end{equation}
\begin{equation}\label{2ndEstPropEta^9/2}
\int_\Gamma \cJ^{9/2}u_3\cJ^{9/2}\eta \lesssim \|\cJ^{9/2}u_3\|_{L^2(\Gamma)} \|\cJ^{9/2}\eta\|_{L^2(\Gamma)} \lesssim \|\cJ^4 u\|_{H^1(\Omega)}\|\eta\|_{H^{9/2}(\Gamma)},
\end{equation}
and
\begin{equation}\label{3rdEstPropEta^9/2}
\begin{split}
\int_\Gamma [\cJ^{9/2}, u_j]\partial_j\eta \cJ^{9/2}\eta &\lesssim \|\partial_j u_j\|_{L^\infty(\Gamma)}\|\cJ^{7/2}\eta\|_{L^2(\Gamma)}\|\cJ^{9/2}\eta\|_{L^2(\Gamma)}\\
&\qquad\qquad +\|\cJ^{9/2}u\|_{L^2(\Gamma)} \|\partial_j\eta\|_{L^\infty(\Gamma)}\|\cJ^{9/2}\eta\|_{L^2(\Gamma)}\\
&\lesssim \|u\|_{H^3(\Gamma)} \|\eta\|_{H^{9/2}(\Gamma)}^2 +
\|\cJ^4 u\|_{H^1(\Omega) }\|\eta\|_{H^3(\Gamma)} \|\eta\|_{H^{9/2}(\Gamma)}.
\end{split}
\end{equation}
In view of \eqref{1stEstPropEta^9/2}, \eqref{2ndEstPropEta^9/2} and \eqref{3rdEstPropEta^9/2},  we get
\[
\begin{split}
\frac{d}{dt}\|\eta\|_{H^{9/2}(\Gamma)}^2   &\lesssim   \|u\|_{H^3(\Gamma)} \|\eta\|_{H^{9/2}(\Gamma)}^2 + \|\cJ^4 u\|_{H^1(\Omega) }(1+\|\eta\|_{H^3(\Gamma)}) \|\eta\|_{H^{9/2}(\Gamma)} \\
&\lesssim \|u\|_{H^5(\Omega)} \|\eta\|_{H^{9/2}(\Gamma)}+ \cE_f^3.
\end{split}
\]
Using Cauchy-Schwarz's inequality and then integrating from 0 to $t$, we obtain \eqref{EstEtaH^9/2}.
\end{proof}

\subsection{Temporal estimates for the perturbation velocity}

If we use the nonlinear equations in the perturbed form \eqref{EqPertur}, there will be no control of the highest temporal derivative of $q$ appearing in the nonlinear term $\cQ^2$.   Instead, we switch our original nonlinear equations \eqref{EqNS_Lagrangian} to a new formulation using a geometric transformation of the domain. The equations are
 \begin{equation}\label{1stEqPerturGeometric}
\begin{cases}
\partial_t\zeta + \text{div}_{\cA}(\rho_0 u) = F^1 &\quad\text{in }\Omega,\\
(\rho_0+\rho_0'\theta+\zeta)\partial_tu+ \nabla_{\cA}q - \mu\text{div}_\cA \bS_\cA u+ g\zeta e_3 = F^2 &\quad\text{in }\Omega,\\
\text{div}_\cA u=0 &\quad\text{in }\Omega,\\
\partial_t \eta= u\cdot \cN &\quad\text{on }\Gamma,\\
(qId- \mu\bS_\cA u)\cN =g\rho_+ \eta\cN, &\quad\text{on }\Gamma.
\end{cases}
\end{equation}
Here,
\begin{equation}\label{TermsF12}
\begin{split}
F^1 &= K\partial_t\theta(  \rho_0''  \theta +\partial_3\zeta) -\text{div}_\cA((\rho_0' \theta+\zeta)u),\\
F^2 &= -(\rho_0+\rho_0'\theta+\zeta)(-K\partial_t\theta \partial_3u + u\cdot\nabla_\cA u)- g\rho_0' (AK\theta, BK \theta, (1-K)\theta)^T.
\end{split}
\end{equation}

Applying the temporal differential operator $\partial_t^l$ $(l\geq 1)$ to \eqref{1stEqPerturGeometric}, the resulting equations are 
\begin{equation}\label{EqPerturGeometric}
\begin{cases}
\partial_t(\partial_t^l \zeta)+ \text{div}_\cA(\rho_0 \partial_t^l u)=F^{1,l} &\quad\text{in }\Omega,\\
(\rho_0+\rho_0'\theta+\zeta)\partial_t(\partial_t^l u)+ \nabla_{\cA}\partial_t^l q- \mu\text{div}_\cA \bS_\cA \partial_t^l u+ g \partial_t^l \zeta e_3 = F^{2,l} &\quad\text{in }\Omega,\\
\text{div}_\cA \partial_t^l u= F^{3,l} &\quad\text{in }\Omega,\\
\partial_t( \partial_t^l \eta)= \partial_t^l u\cdot \cN+ F^{4,l} &\quad\text{on }\Gamma,\\
(\partial_t^l qId- \mu  \bS_\cA\partial_t^l u) \cN=g\rho_+ \partial_t^l\eta \cN +F^{5,l} &\quad\text{on }\Gamma.
\end{cases}
\end{equation}
The terms $F^{j,l}$ with $l\geq 1$ and $1\leq j\leq 5$ are given in Appendix \ref{AppNonlinearTerms} (see \eqref{TermF1}, \eqref{TermF2}, \eqref{TermF34} and \eqref{TermF5}). We use the convention that $F^{1,0}=F^1, F^{2,0}= F^2$ and $F^{j,0}=0$ for $3\leq j\leq 5$. We now derive the following proposition.
\begin{proposition}\label{PropEstPartial_t^lU_L2}
For $l=0$ and 1, we have
\begin{equation}\label{EstPartial_t^lU_L2}
\begin{split}
&\|\partial_t^l u(t)\|_{L^2(\Omega)}^2 + \|\partial_t^l \eta(t)\|_{L^2(\Gamma)}^2+ \int_0^t \|\nabla \partial_t^l u(s)\|_{L^2(\Omega)}^2 ds \\
&\qquad \leq C_5\Big( \cE_f^2(0)+\int_0^t \|(u,\zeta)(s)\|_{L^2(\Omega)}^2 ds +\int_0^t \cE_f^3(s)ds\Big).
\end{split}
\end{equation}
We also have
\begin{equation}\label{EstPartial_t^2U_L2}
\begin{split}
&\|\partial_t^2 u(t)\|_{L^2(\Omega)}^2 + \|\partial_t^2 \eta(t)\|_{L^2(\Gamma)}^2+ \int_0^t \|\nabla \partial_t^2 u(s)\|_{L^2(\Omega)}^2 ds \\
&\qquad \leq C_6 \Big(\cE_f^2(0)+\int_0^t \|(u,\zeta)(s)\|_{L^2(\Omega)}^2 ds +\int_0^t \cE_f(\cE_f^2+\cD_f^2)(s)ds+\cE_f^3(t)\Big).
\end{split}
\end{equation}
\end{proposition}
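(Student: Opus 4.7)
The plan is to perform an energy estimate on the geometric system \eqref{EqPerturGeometric} at each temporal order $l\in\{0,1,2\}$. I would take the $L^2(\Omega)$-inner product of $\eqref{EqPerturGeometric}_2$ with $\partial_t^l u$. The inertial term yields $\tfrac{1}{2}\tfrac{d}{dt}\int_\Omega(\rho_0+\rho_0'\theta+\zeta)|\partial_t^l u|^2$ plus a correction $-\tfrac{1}{2}\int_\Omega\partial_t(\rho_0'\theta+\zeta)|\partial_t^l u|^2$ that is cubic in $\cE_f$; the pressure and viscous terms integrate by parts to $\tfrac{\mu}{2}\int_\Omega|\bS_\cA\partial_t^l u|^2 -\int_\Omega\partial_t^l q\,F^{3,l}$ together with a boundary piece $\int_\Gamma[(\partial_t^l q\,\mathrm{Id}-\mu\bS_\cA\partial_t^l u)\cN]\cdot\partial_t^l u$. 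Substituting the dynamic condition $\eqref{EqPerturGeometric}_5$ and then the kinematic identity $\eqref{EqPerturGeometric}_4$ turns this piece into $\tfrac{g\rho_+}{2}\tfrac{d}{dt}\int_\Gamma|\partial_t^l\eta|^2 + \int_\Gamma F^{5,l}\cdot\partial_t^l u - \int_\Gamma g\rho_+\partial_t^l\eta\,F^{4,l}$. The smallness \eqref{CondDelta_1} ensures $\rho_0+\rho_0'\theta+\zeta\ge \rho_-/2$, so the leading quadratic form controls $\|\partial_t^l u\|_{L^2}^2$, and Korn's inequality converts $\|\bS_\cA\partial_t^l u\|_{L^2}^2$ into $\|\nabla\partial_t^l u\|_{L^2}^2$ up to an absorbable $\cE_f^3$ error coming from $\cA-\mathrm{Id}=O(\eta)$.

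The one remaining indefinite contribution is $g\int_\Omega\partial_t^l\zeta\,\partial_t^l u_3$. For $l=0$ I would apply Cauchy-Schwarz to obtain $\tfrac{\varepsilon}{2}\|u\|_{L^2}^2+\tfrac{1}{2\varepsilon}\|\zeta\|_{L^2}^2$, the first piece absorbed into the energy and the second producing the $\|\zeta\|_{L^2}^2$ contribution to the right-hand side of \eqref{EstPartial_t^lU_L2}. For $l=1$ I would substitute $\partial_t\zeta=-\rho_0' u_3+\cQ^1$ from $\eqref{EqPertur}_1$ and estimate $\|\partial_t\zeta\|_{L^2}^2\lesssim \|u\|_{L^2}^2+\cE_f^4$, which after Cauchy-Schwarz supplies the $\|u\|_{L^2}^2$ contribution. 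All remaining forcing products $\int F^{2,l}\cdot\partial_t^l u$, $\int\partial_t^l q\,F^{3,l}$, $\int_\Gamma F^{4,l}\partial_t^l\eta$ and $\int_\Gamma F^{5,l}\cdot\partial_t^l u$ (whose polynomial structure is given in Appendix \ref{AppNonlinearTerms}) are cubic in $U$ with at most $4-2l$ spatial derivatives, and hence are controlled by $\cE_f^3$ after product, Sobolev and trace estimates together with the Poisson bounds for $\theta$. Integrating in time from $0$ to $t$ then closes the estimate for $l=0,1$.

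The main obstacle is the $l=2$ bound \eqref{EstPartial_t^2U_L2}, because $F^{j,2}$ and the derivatives $\partial_t^2\zeta$, $\partial_t^3\eta$ now contain top-order objects $\partial_t^2 u$, $\partial_t^2 q$ and $\partial_t^3\eta$ that do not belong to $\cE_f$ but only to $\cD_f$ (cf.\ \eqref{DissipationFull} and Lemma \ref{LemEstEta_H^1/2}). The plan is to allocate one factor of $\partial_t^2$ to the dissipation: every trilinear product containing such a top-order factor is bounded via Cauchy-Schwarz by $\cE_f(\cE_f^2+\cD_f^2)$, producing the corresponding integrand on the right of \eqref{EstPartial_t^2U_L2}. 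For the indefinite gravity term I would differentiate the continuity equation once more to get $\partial_t^2\zeta=-\rho_0'\,\partial_t u_3+\partial_t\cQ^1$; Cauchy-Schwarz then gives $\|\partial_t^2\zeta\|_{L^2}^2\lesssim\|\partial_t u\|_{L^2}^2+\cE_f^2\cD_f^2$, and the $\|\partial_t u\|_{L^2}^2$ piece is controlled via the already-established $l=1$ bound, which is what feeds the $\int_0^t\|(u,\zeta)\|_{L^2}^2 ds$ term in \eqref{EstPartial_t^2U_L2} through the lower-order estimate. Finally, a handful of forcing contributions cannot be written either as a total $\tfrac{d}{dt}$ or as a time-integrated dissipation; I would integrate these by parts in time on $[0,t]$, and the resulting pointwise-in-$t$ boundary evaluation accounts for the additional $\cE_f^3(t)$ term on the right of \eqref{EstPartial_t^2U_L2}. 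A last Korn step and absorption of the $\varepsilon$-small pieces concludes the argument.
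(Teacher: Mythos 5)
Your plan for the $l=0$ case and your identification of the $\cE_f^3(t)$ contribution via integration by parts in time (which the paper performs on the term $\int_0^t\int_\Omega JF^{3,2}\partial_t^2 q\,ds$) are both correct and in the spirit of the paper's argument.

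There is, however, a genuine gap in your treatment of the indefinite gravity term for $l\geq 1$. You propose to substitute $\partial_t\zeta=-\rho_0'u_3+\cQ^1$, bound $\|\partial_t^l\zeta\|_{L^2}$, and then Cauchy--Schwarz against $\|\partial_t^l u_3\|_{L^2}$. After time integration this produces, via Young's inequality, a term of the form $\nu\int_0^t\|\partial_t^l u_3\|_{L^2(\Omega)}^2\,ds$. This cannot be absorbed: the dissipation on the left of \eqref{EstPartial_t^lU_L2} contains only $\int_0^t\|\nabla\partial_t^l u\|_{L^2}^2\,ds$, there is no Poincar\'e-type inequality on the half-unbounded domain $\Omega=\fT^2\times\R_-$ that would convert $\|\partial_t^l u_3\|_{L^2}$ into $\|\nabla\partial_t^l u\|_{L^2}$, and neither $\int_0^t\|\partial_t u\|_{L^2}^2\,ds$ nor $\int_0^t\cE_f^2\,ds$ appears on the allowed right-hand side (the latter would even be incompatible with the Gronwall step in Proposition \ref{PropNormU^delta_M}, where the coefficient in front of $\int_0^t\cE_f^2\,ds$ must be forced small). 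The same objection applies at $l=2$: after Cauchy--Schwarz you would be left with $\nu\int_0^t\|\partial_t^2 u_3\|_{L^2}^2\,ds$, which sits in $\cD_f^2$ but lacks the extra factor of $\cE_f$ required to match the $\cE_f\cD_f^2$ integrand in \eqref{EstPartial_t^2U_L2}.

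The missing idea is the exact total-derivative cancellation. After replacing $\partial_t^l\zeta$ through $\eqref{EqPerturGeometric}_1$ at order $l-1$ (see \eqref{4thEqLemEqU_L^2}), the leading piece of the gravity term is
\[
\int_\Omega g\rho_0'\,\partial_t^{l-1}u_3\,\partial_t^l u_3
=\frac12\frac{d}{dt}\int_\Omega g\rho_0'\,|\partial_t^{l-1}u_3|^2,
\]
and the paper moves this inside the time derivative, producing the modified energy in \eqref{EqU_L^2_L=1}. After integrating in time one is left not with an accumulating space-time integral but with the pointwise boundary evaluation $\int_\Omega g\rho_0'|\partial_t^{l-1}u_3(t)|^2\lesssim\|\partial_t^{l-1}u_3(t)\|_{L^2(\Omega)}^2$, which is then controlled by the already-established lower-order estimate (the $l=0$ bound for $l=1$, the $l=1$ bound for $l=2$). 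This is what actually funnels the $\int_0^t\|(u,\zeta)\|_{L^2}^2\,ds$ term onto the right-hand side. The lower-order correctors $F^{3,l-1}$, $F^{1,l-1}$ and the terms in $A,B$ generated by the substitution are then all cubic and give the $\cE_f^3$ (or $\cE_f(\cE_f^2+\cD_f^2)$) contributions as you expected. Without this total-derivative structure, the estimate does not close.
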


The proof of Proposition \ref{PropEstPartial_t^lU_L2} relies on Lemmas \ref{LemIntByPartsA}, \ref{LemEqU_L^2} and \ref{LemF_lTermsL^2} below.
\begin{lemma}\label{LemIntByPartsA}
Let $J$ be defined as in \eqref{NotationABKJ}. For any scalar function $\vartheta \in \R$ and any vector function $ \varrho \in \R^3$, there holds
\begin{equation}\label{IntByPartsA}
\int_\Omega (\nabla_\cA \vartheta) \cdot J \varrho = \int_\Gamma \vartheta (\cN\cdot \varrho) -\int_\Omega J \vartheta\text{div}_\cA\varrho.
\end{equation}
\end{lemma}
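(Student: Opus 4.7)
The plan is to prove this by unfolding the definitions in index notation, performing a standard integration by parts in each coordinate direction, and then identifying the boundary and interior terms using (i) the explicit formula \eqref{MatrixA} for $\cA$ together with the notation \eqref{NotationABKJ}, and (ii) the Piola-type identity $\partial_j(J\cA_{ij})=0$ for each $i\in\{1,2,3\}$.

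First I would write
\[
\int_\Omega (\nabla_\cA\vartheta)\cdot J\varrho \;=\; \int_\Omega \sum_{i,j} J\cA_{ij}\,\varrho_i\,\partial_j\vartheta \,dx,
\]
and integrate by parts in $x_j$. The $x_1$, $x_2$ boundary contributions vanish by $\fT^2$-periodicity, and the contribution at $x_3=-\infty$ vanishes under the usual decay assumptions on $\vartheta,\varrho$ (which are the only functions for which we will apply the lemma); only the upper boundary $\Gamma=\{x_3=0\}$ survives, producing
\[
\int_\Gamma \sum_i \big(J\cA_{i3}\big)\,\vartheta\,\varrho_i \,dx_h \;-\; \int_\Omega \sum_{i,j} \partial_j\big(J\cA_{ij}\varrho_i\big)\,\vartheta \,dx.
\]

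For the boundary term, the explicit form \eqref{MatrixA} gives
\[
J\cA = \begin{pmatrix} J & 0 & -A \\ 0 & J & -B \\ 0 & 0 & 1 \end{pmatrix},
\]
so the third column of $J\cA$ is $(-A,-B,1)^T$; restricted to $\Gamma$, where $\theta = \eta$, this is precisely $\cN=(-\partial_1\eta,-\partial_2\eta,1)^T$. Hence the boundary term equals $\int_\Gamma \vartheta\,(\cN\cdot\varrho)$, as desired.

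For the volume term I would expand $\partial_j(J\cA_{ij}\varrho_i)=\varrho_i\,\partial_j(J\cA_{ij})+J\cA_{ij}\,\partial_j\varrho_i$ and then verify the Piola identity $\partial_j(J\cA_{ij})=0$ for $i=1,2,3$ directly from the matrix above: for $i=1$, $\partial_1 J+\partial_3(-A)=\partial_1\partial_3\theta-\partial_3\partial_1\theta=0$; similarly for $i=2$; and for $i=3$ every entry is constant in the relevant variable. Thus the volume term collapses to $-\int_\Omega J\vartheta\,\cA_{ij}\partial_j\varrho_i=-\int_\Omega J\vartheta\,\text{div}_\cA\varrho$, which completes the proof. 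No step here is genuinely hard; the only thing to be careful about is justifying the vanishing of boundary contributions at $x_3=-\infty$, which is immediate for the functions encountered in the estimates that invoke this lemma.
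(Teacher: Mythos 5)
Your proof is correct and follows essentially the same route as the paper: integrate by parts, identify the surviving boundary term on $\Gamma$ with $\cN\cdot\varrho$ via $J\cA_{i3}$, and use the Piola identity $\partial_j(J\cA_{ij})=0$ to reduce the volume term to $J\vartheta\,\mathrm{div}_\cA\varrho$. The only difference is that you explicitly verify the two identities the paper simply asserts, and you flag the vanishing of the contribution at $x_3=-\infty$, which the paper leaves implicit.
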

\begin{proof}
We have from the integration by parts that
\begin{equation}\label{1_IntByPartsA}
\begin{split}
\int_\Omega  (\nabla_\cA \vartheta) \cdot J \varrho &= \int_\Omega  J \cA_{ij}\partial_j \vartheta \varrho_i = \int_\Gamma \vartheta (J\cA_{i3} \varrho_i) -\int_\Omega \vartheta \partial_j (J\cA_{ij}\varrho_i) 
\end{split}
\end{equation}
Note that $J\cA_{i3}\varrho_i = \cN \cdot \varrho$ and $\partial_j(J \cA_{ij}) =  0$, hence
\begin{equation}\label{3_IntByPartsA}
\int_\Omega  (\nabla_\cA \vartheta) \cdot J \varrho=  \int_\Gamma \vartheta(\cN \cdot\varrho)  -\int_\Omega \vartheta J\cA_{ij}\partial_j  \varrho_i = \int_\Gamma \vartheta(\cN \cdot\varrho)- \int_\Omega  J\vartheta \text{div}_\cA \varrho.
\end{equation}
We obtain \eqref{IntByPartsA}, i.e. Lemma \ref{LemIntByPartsA}.\end{proof}

\begin{lemma}\label{LemEqU_L^2}
There holds for all $l\geq 0$, 
\begin{equation}\label{EqU_L^2_L=0}
\begin{split}
&\frac12 \frac{d}{dt}\Big( \int_{\Omega}(\rho_0 +\rho_0'\theta+\zeta) J|\partial_t^l u|^2 + \int_\Gamma g\rho_+|\partial_t^l \eta |^2 \Big) +\frac12 \mu\int_{\Omega} J|\bS_\cA\partial_t^l u|^2 \\
&= \frac12 \int_{\Omega}\partial_t((\rho_0+\rho_0'\theta+\zeta)J)|\partial_t^l u|^2 +\int_{\Omega} J(F^{2,l}\cdot \partial_t^l u- g\partial_t^l\zeta \partial_t^l u_3+ F^{3,l} \partial_t^l q) \\
&\qquad-\int_\Gamma( g\rho_+\partial_t^l \eta F^{4,l} +F^{5,l}\cdot \partial_t^l u).
\end{split}
\end{equation}
If $\l\geq 1$, one has
\begin{equation}\label{EqU_L^2_L=1}
\begin{split}
&\frac12 \frac{d}{dt}\Big( \int_{\Omega}(\rho_0 +\rho_0'\theta+\zeta) J|\partial_t^l u|^2 + \int_\Gamma g\rho_+|\partial_t^l \eta |^2 -\int_{\Omega}g\rho_0'|\partial_t^{l-1}u_3|^2\Big) +\frac12 \mu\int_{\Omega} J|\bS_\cA\partial_t^l u|^2 \\
&= \frac12 \int_{\Omega}\partial_t((\rho_0+\rho_0'\theta+\zeta)J)|\partial_t^l u|^2 +\int_{\Omega} J (F^{2,l}\cdot \partial_t^l u+  F^{3,l} \partial_t^l q)-\int_\Gamma( g\rho_+\partial_t^l \eta F^{4,l} +F^{5,l}\cdot \partial_t^l u)\\
&\quad + \int_\Omega g\rho_0 J F^{3,l-1}\partial_t^l u_3 - \int_{\Omega}g\rho_0'(A\partial_t^{l-1}\partial_3 u_1+ B\partial_t^{l-1} \partial_3 u_2)\partial_t^l u_3 - \int_\Omega gJ F^{1,l-1}\partial_t^lu_3. 
\end{split}
\end{equation}
\end{lemma}
\begin{proof}
We multiply by $J\partial_t^l u$ on both sides of $\eqref{EqPerturGeometric}_2$ to have that 
\begin{equation}\label{1stEqLemEqU_L^2}
\begin{split}
&\frac12 \frac{d}{dt} \Big( \int_{\Omega}(\rho_0 +\rho_0'\theta+\zeta) J|\partial_t^l u|^2\Big)\\
 &= \frac12 \int_\Omega \partial_t((\rho_0+ \rho_0'\theta+\zeta)J)|\partial_t^l u|^2 -\int_\Omega\nabla_\cA\partial_t^l q\cdot J\partial_t^l u  \\
&\quad+ \int_\Omega \mu (\text{div}_\cA \bS_\cA \partial_t^l u) \cdot J\partial_t^l u  -\int_\Omega gJ \partial_t^l \zeta\partial_t^l u_3 +\int_\Omega J F^{2,l} \cdot \partial_t^l u.
\end{split}
\end{equation}
Thanks to Lemma \ref{LemIntByPartsA}, one deduces 
\begin{equation}\label{2ndEqLemEqU_L^2}
\begin{split}
&-\int_\Omega\nabla_\cA\partial_t^l q\cdot J\partial_t^l u  + \int_\Omega \mu (\text{div}_\cA \bS_\cA \partial_t^l u) \cdot J\partial_t^l u \\
&\quad=\int_\Gamma (\mu \bS_\cA\partial_t^l u-\partial_t^l q Id)\cN \cdot \partial_t^l u+\int_\Omega J (\text{div}_\cA \partial_t^l u) \partial_t^l q -\frac12 \int_\Omega \mu J| \bS_\cA \partial_t^l u|^2
\end{split}
\end{equation}
Substituting  \eqref{2ndEqLemEqU_L^2} into \eqref{1stEqLemEqU_L^2}, we have  
\begin{equation}\label{3rdEqLemEqU_L^2}
\begin{split}
&\frac12 \frac{d}{dt} \Big( \int_\Omega(\rho_0 +\rho_0'\theta+\zeta) J|\partial_t^l u|^2\Big) + \frac12 \int_\Omega \mu J| \bS_\cA \partial_t^l u|^2 \\
&= \frac12 \int_\Omega \partial_t((\rho_0+\rho_0'\theta+\zeta)J)|\partial_t^l u|^2 +\int_\Omega J (\text{div}_\cA \partial_t^l u) \partial_t^l q+\int_\Gamma (\mu \bS_\cA\partial_t^l u-\partial_t^l q Id)\cN \cdot \partial_t^l u  \\
& \quad-\int_\Omega gJ \partial_t^l\zeta \partial_t^l u_3+\int_\Omega J F^{2,l} \cdot \partial_t^l u.  
\end{split}
\end{equation}
Using $\eqref{EqPerturGeometric}_{3,4,5}$, we obtain  \eqref{EqU_L^2_L=0} from \eqref{3rdEqLemEqU_L^2}.

To prove \eqref{EqU_L^2_L=1}, we use $\eqref{EqPerturGeometric}_1$ at order $l-1$ to get that 
\begin{equation}\label{4thEqLemEqU_L^2}
\begin{split}
-\int_\Omega gJ\partial_t^l\zeta \partial_t^l u_3 &= \int_\Omega gJ\text{div}_\cA(\rho_0\partial_t^{l-1} u) \partial_t^l u_3 - \int_\Omega gJ F^{1,l-1}\partial_t^lu_3 \\
&=  \int_\Omega g\rho_0' \partial_t^{l-1}u_3 \partial_t^l u_3 + \int_\Omega g\rho_0 J F^{3,l-1}\partial_t^l u_3 \\
&\quad - \int_{\Omega}g\rho_0'(A\partial_t^{l-1}\partial_3 u_1+ B\partial_t^{l-1} \partial_3 u_2)\partial_t^l u_3  - \int_\Omega gJ F^{1,l-1}\partial_t^lu_3.
\end{split}
\end{equation}
Combining \eqref{4thEqLemEqU_L^2} and \eqref{EqU_L^2_L=0}, we obtain \eqref{EqU_L^2_L=1}.  Lemma \ref{LemEqU_L^2} is proven.
\end{proof}

\begin{lemma}\label{LemF_lTermsL^2}
The following inequalities hold
\begin{equation}\label{F_lTermsL^2}
\sum_{l=0}^1\Big( \|(F^{1,l}, F^{2,l}, F^{3,l})\|_{L^2(\Omega)}+ \|(F^{4,l},F^{5,l})\|_{L^2(\Gamma)}\Big) \lesssim \cE_f^2,
\end{equation}
\begin{equation}\label{F_2TermsL^2}
 \|(F^{1,2}, F^{2,2})\|_{L^2(\Omega)}+ \|(F^{4,2},F^{5,2})\|_{L^2(\Gamma)} \lesssim \cE_f(\cE_f+\|\nabla \partial_t u\|_{H^2(\Omega)}+ \|\nabla \partial_t^2 u\|_{L^2(\Omega)}),
\end{equation}
and 
\begin{equation}\label{BoundD_tJF}
\|(F^{3,2}, JF^{3,2})\|_{L^2(\Omega)} \lesssim \cE_f^2, \quad  \| \partial_t(JF^{3,2})\|_{L^2(\Omega)} \lesssim \cE_f(\cE_f+\|\nabla \partial_t u\|_{H^2(\Omega)}+ \|\nabla \partial_t^2 u\|_{L^2(\Omega)}).
\end{equation}
\end{lemma}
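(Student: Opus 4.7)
The plan is to exploit the polynomial structure of each $F^{j,l}$ (as laid out in Appendix \ref{AppNonlinearTerms}) in the unknowns $U=(\zeta,u,q,\eta)$ and the geometric quantities $A=\partial_1\theta$, $B=\partial_2\theta$, $J=1+\partial_3\theta$, $K=J^{-1}$, together with their time derivatives. Since $\theta$ is the Poisson extension of $\eta$, one has the elliptic estimate $\|\partial_t^j\theta\|_{H^s(\Omega)}\lesssim \|\partial_t^j\eta\|_{H^{s-1/2}(\Gamma)}$, so $A, B, J-1, K-1$ and their time derivatives are all controlled by the Sobolev norms of $\eta$ appearing in $\cE_f$. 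Under the running assumption that $\sup_{[0,t]}\cE_f\leq \delta$ with $\delta$ small (guaranteeing in particular $\|J-1\|_{L^\infty}\leq 1/2$, hence $K$ uniformly bounded together with its derivatives), each $F^{j,l}$ becomes a finite sum of quadratic (or higher-order) monomials in perturbation quantities.

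The workhorse tools are the Sobolev embedding $H^2(\Omega)\hookrightarrow L^\infty(\Omega)$ (plus its trace analogue on $\Gamma$), the product estimate $\|fg\|_{L^2}\lesssim \|f\|_{L^\infty}\|g\|_{L^2}$, and the inequality \eqref{ProductEst}. For \eqref{F_lTermsL^2}, I would inspect each monomial in $F^{1,l}, F^{2,l}, F^{3,l}, F^{4,l}, F^{5,l}$ for $l=0,1$, place the factor bearing the highest derivative count in $L^2$, and bound the remaining factors in $L^\infty$ through $H^2$. A representative term such as $\rho_0'\theta\,\partial_t u$ in $F^{2,0}$ satisfies $\|\rho_0'\theta\,\partial_t u\|_{L^2}\lesssim \|\theta\|_{L^\infty(\Omega)}\|\partial_t u\|_{L^2(\Omega)}\lesssim \|\eta\|_{H^{5/2}(\Gamma)}\|\partial_t u\|_{L^2(\Omega)}\lesssim \cE_f^2$, and the same allocation of derivatives handles every other monomial (including the boundary quantities $F^{4,l}, F^{5,l}$ after invoking the trace theorem), producing the uniform $\cE_f^2$ bound.

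The delicate cases are \eqref{F_2TermsL^2} and the second bound of \eqref{BoundD_tJF}, where two time derivatives must be distributed over the monomials. Whenever $\partial_t^2$ lands on a factor of $u$, the resulting $\partial_t^2 u$ is controlled only at $L^2$-level by $\cE_f$, so I would keep it in $L^2$ and estimate the pairing factor in $L^\infty$; this yields contributions of the form $\cE_f\cdot\|\nabla\partial_t^2 u\|_{L^2(\Omega)}$. Similarly, when two derivatives split as two $\partial_t$'s landing on different $u$-factors, placing $\partial_t u$ in $L^\infty$ via $\|\partial_t u\|_{L^\infty}\lesssim \|\nabla\partial_t u\|_{H^2(\Omega)}$ produces the intermediate contribution $\cE_f\cdot \|\nabla\partial_t u\|_{H^2(\Omega)}$. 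For $F^{3,l}$, which essentially encodes $\mathrm{div}\,u-\mathrm{div}_\cA u$ and its time derivatives, the quantity $F^{3,2}$ itself contains at most $\partial_t u$, giving the clean bound $\|F^{3,2}\|_{L^2}\lesssim \cE_f^2$; it is only the extra $\partial_t$ in $\partial_t(JF^{3,2})$ that exposes a $\partial_t^2 u$ and forces the dissipation norm to appear. The factor $J$ and its derivatives are handled uniformly since $J-1$ and $\partial_t J$ are controlled by $\eta$ and $\partial_t\eta$ in high Sobolev norms.

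The main obstacle is a careful bookkeeping task rather than any subtle analysis: systematically classifying every monomial produced by expanding $\partial_t^{l}F^{j,0}$ according to where the time derivatives fall, and verifying that the worst factor is always estimable either by norms built into $\cE_f$ or by the explicitly allowed pieces $\|\nabla\partial_t u\|_{H^2(\Omega)}$ and $\|\nabla\partial_t^2 u\|_{L^2(\Omega)}$ of $\cD_f$. Once this case analysis is carried out term by term from the formulas \eqref{TermF1}--\eqref{TermF5}, the three asserted inequalities follow directly from the product estimate and Sobolev embedding.
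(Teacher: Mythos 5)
Your proposal correctly identifies the overall strategy (expand each $F^{j,l}$ into monomials, put the highest-order factor in $L^2$ and bound the rest in $L^\infty$ via $H^2\hookrightarrow L^\infty$, using the product estimate and the Appendix~\ref{AppUsefulEst} coefficient bounds), but there is a genuine gap in the claim that ``$A, B, J-1, K-1$ and their time derivatives are all controlled by the Sobolev norms of $\eta$ appearing in $\cE_f$.'' They are not. Inspecting the coefficient estimates \eqref{CoefEst_21}--\eqref{CoefEst_24}, the quantities needed when two or three time derivatives fall on the geometric coefficients involve norms such as $\|\partial_t\eta\|_{H^{5/2}(\Gamma)}$ (even $\|\partial_t\eta\|_{H^{7/2}(\Gamma)}$), $\|\partial_t^2\eta\|_{H^{1/2}(\Gamma)}$ (even $H^{3/2}$), and $\|\partial_t^3\eta\|_{H^{1/2}(\Gamma)}$. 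None of these are components of $\cE_f$, which controls $\partial_t\eta$ only in $H^2(\Gamma)$, $\partial_t^2\eta$ only in $L^2(\Gamma)$, and does not contain $\partial_t^3\eta$ at all. Your naive allocation of derivatives therefore stalls precisely in the delicate cases \eqref{F_2TermsL^2} and the second half of \eqref{BoundD_tJF}.

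The missing ingredient is Lemma~\ref{LemEstEta_H^1/2}, which upgrades these surface norms through the boundary transport equation $\partial_t\eta = u_3 - u_1\partial_1\eta - u_2\partial_2\eta$ (and its first two time derivatives) and the trace theorem. It yields $\|\partial_t\eta\|_{H^{7/2}(\Gamma)}\lesssim\cE_f+\cE_f^2$, $\|\partial_t^2\eta\|_{H^{3/2}(\Gamma)}\lesssim\cE_f+\cE_f^2$, and crucially $\|\partial_t^3\eta\|_{H^{1/2}(\Gamma)}\lesssim\|\partial_t^2 u\|_{H^1(\Omega)}(1+\cE_f)+\cE_f^2$; this last one is what produces the $\|\nabla\partial_t^2 u\|_{L^2(\Omega)}$ contribution on the right-hand side of \eqref{F_2TermsL^2}. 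Without this auxiliary lemma the terms involving $\partial_t^2\cA$, $\partial_t^3\cA$, $\partial_t^2 K$, $\partial_t^3 K$ (and similarly $\partial_t^2\theta$, $\partial_t^3\theta$) cannot be closed, so the proposal as written is incomplete.
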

\begin{proof}

For $\Sigma=\Omega$ or $\Gamma$, all quadratic terms $\|X_1X_2\|_{L^2(\Sigma)}$ or cubic ones $\|X_1X_2X_3\|_{L^2(\Sigma)}$ appearing in $F^{j,l}$ with $1\leq j\leq 5$ will be bounded by  using  Sobolev embedding, Lemma \ref{LemEstNablaQ_Pf} and other inequalities in  Appendix \ref{AppUsefulEst}. Precisely,  we have
\[\begin{split}
\|X_1X_2\|_{L^2(\Sigma)} &\lesssim \|X_1\|_{L^\infty(\Sigma)}\|X_2\|_{L^2(\Sigma)} \lesssim \|X_1\|_{H^2(\Sigma)}\|X_2\|_{L^2(\Sigma)}
\end{split}\]
and
\[\begin{split}
\|X_1X_2X_3\|_{L^2(\Sigma)}  &\lesssim \|X_1\|_{L^\infty(\Sigma)} \|X_2\|_{L^\infty(\Sigma)}\|X_3\|_{L^2(\Sigma)}\lesssim \|X_1\|_{H^2(\Sigma)} \|X_2\|_{H^2(\Sigma)}\|X_3\|_{L^2(\Sigma)}. 
\end{split}\]
We only show the estimates of the term $F^{2,l} (0\leq l\leq 2)$ (see  \eqref{TermsF12} and \eqref{TermF2}),  the estimates of  others terms are proven in the same way.  

For $F^2$ (see \eqref{TermsF12}), we have
\[
(\rho_0+\rho_0'\theta+\zeta)K \partial_t\theta\partial_3 u = (\rho_0+\rho_0'\theta+\zeta)(K-1) \partial_t\theta\partial_3 u +(\rho_0+\rho_0'\theta+\zeta) \partial_t\theta\partial_3 u.
\]
Thanks to Lemma \ref{LemEstNablaQ_Pf} and \eqref{CoefEst_22}, we obtain
\begin{equation}\label{1_F_lTermsL^2}
\begin{split}
\|(\rho_0&+\rho_0'\theta+\zeta)K \partial_t\theta\partial_3 u\|_{L^2(\Omega)} \\
&\lesssim (1+ \|(\theta,\zeta)\|_{H^2(\Omega)}) (1+\|K-1\|_{H^2(\Omega)}) \|\partial_t \theta\|_{H^2(\Omega)} \|u\|_{H^1(\Omega)} \\
&\lesssim (1+\|\zeta\|_{H^2(\Omega)}+\|\eta\|_{H^{3/2}(\Gamma)})(1+\|\eta\|_{H^{5/2}(\Gamma)})\|\partial_t\eta\|_{H^{3/2}(\Gamma)}\|u\|_{H^1(\Omega)} \\
&\lesssim \cE_f^2.
\end{split}
\end{equation}
Note that $u\cdot\nabla_\cA u = u\cdot\nabla_{\cA-\text{Id}} u+ u\cdot \nabla u$, we  use Lemma \ref{LemEstNablaQ_Pf} and \eqref{CoefEst_24} to get that
\begin{equation}\label{2_F_lTermsL^2}
\begin{split}
\|(\rho_0&+\rho_0'\theta +\zeta)  u\cdot \nabla_\cA u\|_{L^2(\Omega)} \\
&\lesssim \|(\rho_0+\rho_0'\theta+\zeta) u\cdot \nabla_{\cA-\text{Id}} u\|_{L^2(\Omega)}+\|(\rho_0+\rho_0'\theta+\zeta) u\cdot \nabla  u\|_{L^2(\Omega)} \\
&\lesssim   (1+\|\zeta\|_{H^2(\Omega)}+\|\eta\|_{H^{3/2}(\Gamma)}) (1+\|\cA-\text{Id}\|_{H^2(\Omega)}) \|u\|_{H^2(\Omega)}\|u\|_{H^1(\Omega)}\\
&\lesssim (1+\|\zeta\|_{H^2(\Omega)}+\|\eta\|_{H^{3/2}(\Gamma)}) (1+\|\eta \|_{H^{5/2}(\Gamma)})\|u\|_{H^2(\Omega)}\|u\|_{H^1(\Omega)}\\
&\lesssim \cE_f^2.
\end{split}
\end{equation}
Due to  Lemma \ref{LemEstNablaQ_Pf} again and \eqref{CoefEst_22}, \eqref{CoefEst_23},  we have
\begin{equation}\label{3_F_lTermsL^2}
\begin{split}
\|(AK\theta,BK\theta,(1-K)\theta)\|_{L^2(\Omega)} &\lesssim \|(AK,BK,K-1)\|_{H^2(\Omega)} \|\theta\|_{L^2(\Omega)}\\
&\lesssim \|\eta\|_{H^{5/2}(\Gamma)} \|\eta\|_{L^2(\Gamma)}\lesssim \cE_f^2.
\end{split}
\end{equation}
It follows from \eqref{1_F_lTermsL^2}, \eqref{2_F_lTermsL^2} and \eqref{3_F_lTermsL^2}  that $\|F^2 \|_{L^2(\Omega)} \lesssim \cE_f^2$.

For $F^{2,1}$ (see \eqref{TermF2}),  we  obtain
\begin{equation}\label{4_F_lTermsL^2}
\begin{split}
\|F^{2,1}\|_{L^2(\Omega)} &\lesssim \|\partial_t F^2\|_{L^2(\Omega)} + (1+\|\cA-\text{Id} \|_{H^2(\Omega)}) \|\partial_t\cA\|_{H^1(\Omega)} \|\nabla u\|_{H^2(\Omega)}  \\
&\qquad+(1+ \|\cA-\text{Id}\|_{H^3(\Omega)})  \|\nabla^2 u\|_{H^2(\Omega)} \|\partial_t \cA\|_{H^1(\Omega)}  +\|\zeta \|_{H^3(\Omega)}  \|\partial_t \cA \|_{L^2(\Omega)} \\
&\qquad+ (\|\partial_t \zeta\|_{H^2(\Omega)} +\|\partial_t\theta\|_{H^2(\Omega)} )\|\partial_t u\|_{L^2(\Omega)}.
\end{split}
\end{equation}
%Since, 
%\[\begin{split}
%\|\partial_t \cA\|_{H^1(\Omega)} &\lesssim \|\partial_t K\|_{H^1(\Omega)} (1+\|(A,B)\|_{H^2(\Omega)})+ \|K\|_{H^1(\Omega)} \|(\partial_t A,\partial_tB)\|_{H^2(\Omega)}  \\
%&\lesssim \|\partial_t\eta\|_{H^{3/2}(\Gamma)} + \|\eta\|_{H^{3/2}(\Gamma)} \|\partial_t\eta\|_{H^{5/2}(\Gamma)},
%\end{split}\]
According to  Lemma \ref{LemEstNablaQ_Pf} and \eqref{CoefEst_24}, it follows from \eqref{4_F_lTermsL^2} that
%\begin{equation}\label{5_F_lTermsL^2}
%\begin{split}
%\|\cA-\text{Id}\|_{H^3(\Omega)}\lesssim \|\eta\|_{H^{7/2}(\Gamma)}, \quad \|\partial_t\cA\|_{H^2(\Omega)} \lesssim \|(\eta,\partial_t \eta)\|_{H^{5/2}(\Gamma)}.
%\end{split}
%\end{equation}
%This yields from \eqref{4_F_lTermsL^2} and \eqref{5_F_lTermsL^2}  that
\begin{equation}\label{6_F_lTermsL^2}
\begin{split}
\|F^{2,1}\|_{L^2(\Omega)}&\lesssim  \|\partial_t F^2\|_{L^2(\Omega)} +(1+\|\eta\|_{H^{7/2}(\Gamma)}) \|\partial_t \eta\|_{H^{3/2}(\Gamma)} \| u\|_{H^4(\Omega)}+\|\zeta\|_{H^3(\Omega)}\|\partial_t\eta\|_{H^{1/2}(\Gamma)}\\
&\quad+ (\|\partial_t \zeta\|_{H^2(\Omega)} +\|\partial_t\eta\|_{H^{3/2}(\Gamma)} )\|\partial_t u\|_{L^2(\Omega)}\\
&\lesssim  \|\partial_t F^2\|_{L^2(\Omega)} + \cE_f^2.
\end{split}
\end{equation}
We calculate each term of $\partial_tF^2$, 
\[
\begin{split}
\partial_t ((\rho_0+\rho_0'\theta+\zeta)K\partial_t\theta\partial_3 u) &= (\rho_0+\rho_0'\theta+\zeta)(\partial_t K\partial_t\theta\partial_3 u+ K\partial_t^2 \theta\partial_3 u +K\partial_t\theta \partial_t\partial_3 u)\\
&\qquad +(\rho_0' \partial_t\theta+ \partial_t\zeta)K\partial_t\theta\partial_3 u,
\end{split}
\]
which will be bounded as follows
\[
\begin{split}
\|\partial_t& ((\rho_0+\rho_0'\theta+\zeta)K\partial_t\theta\partial_3 u)\|_{L^2(\Omega)}\\
&\lesssim   (1+\|(\theta,\zeta)\|_{H^2(\Omega)})  \|\partial_t K\|_{L^2(\Omega)}\|\partial_t\theta\|_{H^2(\Omega)} \|u\|_{H^1(\Omega)}\\
&+  (1+\|(\theta,\zeta)\|_{H^2(\Omega)})  (\|K-1\|_{H^2(\Omega)}+1) \\
&\qquad\qquad\times( \|\partial_3 u\|_{H^2(\Omega)} \|\partial_t^2\theta\|_{L^2(\Omega)}+\|\partial_t \theta\|_{H^2(\Omega)}\|(u,\partial_t u)\|_{H^1(\Omega)})\\
%&\qquad + (1+\|(\theta,\zeta)\|_{H^2(\Omega)}) (\|K-1\|_{H^2(\Omega)}+1)
%\|\partial_t \theta\|_{H^2(\Omega)}\|(u,\partial_t u)\|_{H^1(\Omega)}\\
&+\|(\partial_t \theta,\partial_t \zeta)\|_{H^2(\Omega)}  (\|K-1\|_{H^2(\Omega)}+1) \|\partial_t \theta\|_{H^2(\Omega)}\|u\|_{H^1(\Omega)}.
\end{split}
\]
Using  Lemma \ref{LemEstNablaQ_Pf} and \eqref{CoefEst_22}, we deduce that
\begin{equation}\label{7_F_lTermsL^2}
\begin{split}
\|\partial_t &((\rho_0+\rho_0'\theta+\zeta)K\partial_t\theta\partial_3 u)\|_{L^2(\Omega)}\\&\lesssim (1+\|\zeta\|_{H^2(\Omega)}+\|\eta\|_{H^{3/2}(\Gamma)}) \|\eta\|_{H^{1/2}(\Gamma)} \|\partial_t \eta\|_{H^{3/2}(\Gamma)}\|u\|_{H^1(\Omega)}\\
&\quad+ (1+\|\zeta\|_{H^2(\Omega)}+\|\eta\|_{H^{3/2}(\Gamma)}) (1+\|\eta\|_{H^{5/2}(\Gamma)}) \\
&\qquad\qquad\times(\|u\|_{H^3(\Omega)} \|\partial_t^2\eta\|_{L^2(\Gamma)} + \|\partial_t\eta\|_{H^{3/2}(\Gamma)}\| (u,\partial_t u)\|_{H^1(\Omega)}) \\
&\quad + (\|\partial_t\zeta\|_{H^2(\Omega)} +\|\partial_t\eta\|_{H^{3/2}(\Gamma)})(1+\|\eta\|_{H^{5/2}(\Gamma)}) \|u\|_{H^1(\Omega)} \|\partial_t\eta\|_{H^{3/2}(\Gamma)}\\
%&\lesssim \|\partial_t^2\eta\|_{L^2(\Gamma)}\|u\|_{H^1(\Omega)} + \|\partial_t\eta \|_{L^2(\Gamma)}\|\partial_t u\|_{H^1(\Omega)}\\
&\lesssim \cE_f^2.
\end{split}
\end{equation}
Next, we compute 
\[
\begin{split}
\partial_t ((\rho_0+\rho_0'\theta+\zeta)u\cdot\nabla_\cA u) &=  (\rho_0+\rho_0'\theta+\zeta) (\partial_t u_i \cA_{ij}\partial_j u_k + u_i \partial_t \cA_{ij} \partial_j u_k + u_i \cA_{ij}\partial_t \partial_j u_k)\\
&\qquad+(\rho_0'\partial_t\theta+\partial_t\zeta) u_i \cA_{ij}\partial_j u_k.
\end{split}
\]
Hence, it follows from Lemma \ref{LemEstNablaQ_Pf} and \eqref{CoefEst_24} that
\begin{equation}\label{8_F_lTermsL^2}
\begin{split}
\|\partial_t &((\rho_0+\rho_0'\theta+\zeta)u\cdot\nabla_\cA u)\|_{L^2(\Omega)}\\
&\lesssim (1+\|(\theta,\zeta)\|_{H^2(\Omega)})  (\|\cA-\text{Id}\|_{H^2(\Omega)} +1) \|u\|_{H^3(\Omega)}\|\partial_t u\|_{H^1(\Omega)}\\
&\qquad +(1+\|(\theta,\zeta)\|_{H^2(\Omega)})  \|u\|_{H^3(\Omega)}^2 \|\partial_t\cA\|_{L^2(\Omega)} \\
&\qquad+ \|(\partial_t \theta,\partial_t \zeta)\|_{H^2(\Omega)}(\|\cA-\text{Id}\|_{H^2(\Omega)} +1) \|u\|_{H^2(\Omega)}\|u\|_{H^1(\Omega)}\\
&\lesssim (1+\|\zeta\|_{H^2(\Omega)}+\|\eta\|_{H^{3/2}(\Gamma)}) (\|\eta\|_{H^{5/2}(\Gamma)}+1) \|u\|_{H^3(\Omega)}\|\partial_t u\|_{H^1(\Omega)}\\
&\qquad + (1+\|\zeta\|_{H^2(\Omega)}+\|\eta\|_{H^{3/2}(\Gamma)})   \|u\|_{H^3(\Omega)}^2\|\partial_t\eta\|_{H^{1/2}(\Gamma)}\\
&\qquad+  (\|\partial_t\zeta\|_{H^2(\Omega)} +\|\partial_t\eta\|_{H^{3/2}(\Gamma)})(\|\eta\|_{H^{5/2}(\Gamma)}+1)\|u\|_{H^2(\Omega)}\|u\|_{H^1(\Omega)}\\
% \|\partial_t u\|_{H^1(\Omega)}\|u\|_{H^1(\Omega)}+\|u\|_{H^1(\Omega)}^2\\
&\lesssim \cE_f^2.
\end{split}
\end{equation}
Using again Lemma \ref{LemEstNablaQ_Pf} and \eqref{CoefEst_22}, \eqref{CoefEst_23}, one has
\begin{equation}\label{9_F_lTermsL^2}
\begin{split}
\|\partial_t(AK\theta,BK\theta,(1-K)\theta)\|_{L^2(\Omega)} &\lesssim \|(AK, BK, K-1)\|_{H^2(\Omega)} \|\partial_t \theta\|_{L^2(\Omega)}\\
&\qquad +\|\partial_t (AK,BK,K-1)\|_{L^2(\Omega)}\|\theta\|_{H^2(\Omega)}\\
&\lesssim \|\eta\|_{H^{5/2}(\Gamma)}\|\partial_t\eta\|_{L^2(\Gamma)} + \|\partial_t\eta\|_{H^{1/2}(\Gamma)} \|\eta\|_{H^{3/2}(\Gamma)} \\
&\lesssim \cE_f^2.
\end{split}
\end{equation}
We deduce  $\|\partial_tF^2\|_{L^2(\Omega)}\lesssim \cE_f^2$ from \eqref{6_F_lTermsL^2}, \eqref{7_F_lTermsL^2}, \eqref{8_F_lTermsL^2} and \eqref{9_F_lTermsL^2}. So that, $\|F^{2,1}\|_{L^2(\Omega)}\lesssim \cE_f^2$.

For $F^{2,2}$ (see again \eqref{TermsF12}), we use  the product estimate \eqref{ProductEst} and Sobolev embedding  to obtain that  
\[
\begin{split}
 \|F^{2,2} \|_{L^2(\Omega)}  &\lesssim \|\partial_t^2 F^2\|_{L^2(\Omega)} + \|\cA\|_{H^2(\Omega)}( \|\partial_t\cA\|_{H^2(\Omega)}\|\nabla \partial_t u\|_{H^2(\Omega)}+ \|\partial_t^2 \cA\|_{H^1(\Omega)}\|\nabla u\|_{H^3(\Omega)}) \\
&+\|\partial_t\cA\|_{H^2(\Omega)} (\|\partial_t\cA\|_{H^2(\Omega)}\|\nabla u\|_{H^2(\Omega)}+ \|\cA\|_{H^3(\Omega)} \|\nabla\partial_t u\|_{H^1(\Omega)})\\
&+\|\partial_t^2 \cA\|_{L^2(\Omega)} \|\cA\|_{H^3(\Omega)}\|\nabla u\|_{H^3(\Omega)}+\|\partial_t\cA\|_{H^2(\Omega)}\|\partial_t\zeta\|_{H^1(\Omega)}+\|\partial_t^2\cA\|_{L^2(\Omega)}\|\nabla \zeta\|_{H^2(\Omega)} \\
& +\|(\partial_t\zeta,\partial_t\theta)\|_{H^2(\Omega)}\|\partial_t^2u\|_{L^2(\Omega)} +\|(\partial_t^2\zeta,\partial_t^2\theta)\|_{L^2(\Omega)}\|\partial_t u\|_{H^2(\Omega)}.
\end{split}
\]
We make use of Lemma  \ref{LemEstNablaQ_Pf} and \eqref{CoefEst_24} to further get that
\[
\begin{split}
\|F^{2,2}\|_{L^2(\Omega)} &\lesssim \|\partial_t^2 F^2\|_{L^2(\Omega)} + \|\eta\|_{H^{3/2}(\Gamma)} \|\partial_t\eta\|_{H^{3/2}(\Gamma)}\|\nabla\partial_t u\|_{H^2(\Omega)}\\
&\qquad+ \|\eta\|_{H^{3/2}(\Gamma)}\|\partial_t^2\eta\|_{H^{1/2}(\Gamma)}\|u\|_{H^4(\Omega)}+\|\partial_t\eta\|_{H^{3/2}(\Gamma)}^2 \|u\|_{H^3(\Omega)}  \\
&\qquad+\|\partial_t\eta\|_{H^{3/2}(\Gamma)} \|\eta\|_{H^{7/2}(\Gamma)}\|\partial_t u\|_{H^2(\Omega)}+ \|\partial_t^2\eta\|_{L^2(\Gamma)}\|\eta\|_{H^{7/2}(\Gamma)}\|u\|_{H^4(\Omega)} \\
&\qquad +\|\partial_t\eta\|_{H^{3/2}(\Gamma)}\|\partial_t\zeta\|_{H^1(\Omega)}+\|\partial_t^2\eta\|_{L^2(\Gamma)}\|\zeta\|_{H^3(\Omega)} \\
&\qquad +(\|\partial_t\zeta\|_{H^2(\Omega)}+\|\partial_t\eta\|_{H^{3/2}(\Gamma)})\|\partial_t^2u\|_{L^2(\Omega)}\\
&\qquad +(\|\partial_t^2\zeta\|_{L^2(\Omega)}+\|\partial_t^2\eta\|_{L^2(\Gamma)})\|\partial_t u\|_{H^2(\Omega)}\\
&\lesssim  \|\partial_t^2 F^2\|_{L^2(\Omega)} +\cE_f(\cE_f+ \|\partial_t^2\eta\|_{H^{1/2}(\Gamma)} +\|\nabla\partial_t u\|_{H^2(\Omega)}).
\end{split}
\]
Together with \eqref{EstEta_H^3/2}, we deduce from the resulting inequality that
\[
\|F^{2,2}\|_{L^2(\Omega)} \lesssim \|\partial_t^2 F^2\|_{L^2(\Omega)} +\cE_f(\cE_f+\|\nabla\partial_t u\|_{H^2(\Omega)}).
\]
Hence, in order to show that 
\[
\|F^{2,2}\|_{L^2(\Omega)} \lesssim \cE_f(\cE_f+\|\nabla\partial_t u\|_{H^2(\Omega)}+\|\nabla \partial_t^2u\|_{L^2(\Omega)}),
\]
we will prove that
\[
\|\partial_t^2 F^2\|_{L^2(\Omega)} \lesssim \cE_f(\cE_f+\|\nabla\partial_t^2 u\|_{L^2(\Omega)}).
\]
We now estimate each term of $\partial_t^2 F^2$.  Due to Sobolev embedding, one has that
\[
\begin{split}
\|\partial_t^2 &((\rho_0+\rho_0'\theta+\zeta)K\partial_t\theta\partial_3 u)\|_{L^2(\Omega)}\\ &\lesssim   (1+\|(\theta,\zeta)\|_{H^2(\Omega)}) (\|K-1\|_{H^2(\Omega)}+1)  \\
&\qquad\quad \times  (\|\partial_3 u\|_{H^2(\Omega)} \|\partial_t^3 \theta\|_{L^2(\Omega)} +\|\partial_t\partial_3u\|_{L^2(\Omega)} \|\partial_t^2 \theta\|_{H^2(\Omega)}
+ \|\partial_t^2 \partial_3 u\|_{L^2(\Omega)}\|\partial_t\theta\|_{H^2(\Omega)})\\
&+  (1+\|(\theta,\zeta)\|_{H^2(\Omega)}) \|\partial_t K\|_{H^2(\Omega)}(\|\partial_t\theta\|_{H^2(\Omega)}\|\partial_tu\|_{H^1(\Omega)}+ \|\partial_t^2\theta\|_{L^2(\Omega)} \|u\|_{H^3(\Omega)}) \\
&+  (1+\|(\theta,\zeta)\|_{H^2(\Omega)})\|\partial_t^2 K\|_{L^2(\Omega)}  \|\partial_t \theta\|_{H^2(\Omega)}\|u\|_{H^3(\Omega)}\\
& +\|(\partial_t \theta,\partial_t \zeta)\|_{H^2(\Omega)}(\|K-1\|_{H^2(\Omega)}+1) ( \|\partial_t^2 \theta\|_{L^2(\Omega)}\|u\|_{H^3(\Omega)}+\|\partial_t\theta\|_{H^2(\Omega)}\|\partial_t u\|_{H^1(\Omega)})\\
&+\|(\partial_t \theta,\partial_t \zeta)\|_{H^2(\Omega)}\|\partial_tK\|_{L^2(\Omega)}\|\partial_t \theta\|_{H^2(\Omega)}\|u\|_{H^3(\Omega)}\\
&+ \|(\partial_t^2\theta,\partial_t^2\zeta)\|_{L^2(\Omega)}  (\|K-1\|_{H^2(\Omega)}+1)\|\partial_t\theta\|_{H^2(\Omega)}\|u\|_{H^3(\Omega)}.
\end{split}
\]
Thanks to Lemma \ref{LemEstNablaQ_Pf} and \eqref{CoefEst_22}, this yields
 \begin{equation}\label{010_F_lTermsL^2}
\begin{split}
&\|\partial_t^2 ((\rho_0+\rho_0'\theta+\zeta)K\partial_t\theta\partial_3 u)\|_{L^2(\Omega)} \\
&\lesssim (1+\|\eta\|_{H^{3/2}(\Gamma)}+\|\zeta\|_{H^2(\Omega)}) (\|\eta\|_{H^{5/2}(\Gamma)}+1) \\
&\qquad\quad \times (\|u\|_{H^3(\Omega)} \|\partial_t^3\eta\|_{L^2(\Gamma)} + \|\partial_t u\|_{H^1(\Omega)}\|\partial_t^2\eta\|_{H^{3/2}(\Gamma)} +\|\nabla \partial_t^2 u\|_{L^2(\Omega)} \|\partial_t\eta\|_{H^{3/2}(\Gamma)}) \\
&\quad+(1+\|\eta\|_{H^{3/2}(\Gamma)}+\|\zeta\|_{H^2(\Omega)}) \|\partial_t \eta\|_{H^{5/2}(\Gamma)} (\|\partial_t\eta\|_{H^{3/2}(\Gamma)}\|\partial_t u\|_{H^1(\Omega)} +\|\partial_t^2\eta\|_{L^2(\Gamma)}\|u\|_{H^3(\Omega)} )\\
&\quad+ (1+\|\eta\|_{H^{3/2}(\Gamma)}+\|\zeta\|_{H^2(\Omega)})  (\|\partial_t^2 \eta\|_{H^{1/2}(\Gamma)} +\|\partial_t\eta\|_{H^{1/2}(\Gamma)}^2) \|\partial_t\eta\|_{H^{3/2}(\Gamma)}\|u\|_{H^3(\Omega)} \\
&\quad +(\|\partial_t\eta\|_{H^{3/2}(\Gamma)}+\|\partial_t \zeta\|_{H^2(\Omega)}) (\|\eta\|_{H^{5/2}(\Gamma)}+1)  (\|\partial_t^2 \eta\|_{L^2(\Gamma)}\|u\|_{H^3(\Omega)}+\|\partial_t\eta\|_{H^{3/2}(\Gamma)}\|\partial_t u\|_{H^1(\Omega)})\\
&\quad+(\|\partial_t\eta\|_{H^{3/2}(\Gamma)}+\|\partial_t \zeta\|_{H^2(\Omega)}) \|\partial_t\eta\|_{H^{1/2}(\Gamma)}\|\partial_t\eta\|_{H^{3/2}(\Gamma)}\|u\|_{H^3(\Omega)}\\
&\quad+ (\|\partial_t^2\eta\|_{L^2(\Gamma)}+\|\partial_t^2\zeta\|_{L^2(\Omega)})(\|\eta\|_{H^{5/2}(\Gamma)}+1)\|\partial_t\eta\|_{H^{3/2}(\Gamma)}\|u\|_{H^3(\Omega)}.
\end{split}
\end{equation}
 Using \eqref{EstEta_H^3/2} and \eqref{EstEtaD_t^3}, we thus have from \eqref{010_F_lTermsL^2} that
 \begin{equation}\label{10_F_lTermsL^2}
\begin{split}
&\|\partial_t^2 ((\rho_0+\rho_0'\theta+\zeta)K\partial_t\theta\partial_3 u)\|_{L^2(\Omega)}\\ &\lesssim \cE_f(\cE_f+  \|\nabla \partial_t^2 u\|_{L^2(\Omega)}+ \|\partial_t^3\eta\|_{L^2(\Gamma)}+\|\partial_t^2\eta\|_{H^{1/2}(\Gamma)}+ \|\partial_t\eta\|_{H^{5/2}(\Gamma)})\\
&\lesssim \cE_f(\cE_f+ \|\nabla \partial_t^2 u\|_{L^2(\Omega)}).
%&\lesssim \cE_f(\cE_f+\cD_f).
\end{split}
\end{equation}
In a same way, we have
\[
\begin{split}
&\|\partial_t^2 ((\rho_0+\rho_0'\theta+\zeta)u\cdot\nabla_\cA u)\|_{L^2(\Omega)}\\
&\lesssim   (1+\|(\theta,\zeta)\|_{H^2(\Omega)})(\|\cA-\text{Id}\|_{H^2(\Omega)}+1) (\|\partial_t^2 u\|_{H^1(\Omega)} \| u\|_{H^3(\Omega)} +\|\partial_t u\|_{H^2(\Omega)}^2 )\\
&\qquad+ (1+\|(\theta,\zeta)\|_{H^2(\Omega)})(\|\partial_t\cA\|_{L^2(\Omega)} \|\partial_tu\|_{H^3(\Omega)} +\|\partial_t^2\cA\|_{L^2(\Omega)} \|u\|_{H^2(\Omega)}) \| u\|_{H^3(\Omega)}\\
&\qquad+ \|(\partial_t \theta,\partial_t \zeta)\|_{H^2(\Omega)}(\|\cA-\text{Id}\|_{H^2(\Omega)}+1)\|\partial_t u\|_{H^1(\Omega)}\|u\|_{H^3(\Omega)} \\
&\qquad+ \|(\partial_t^2\theta, \partial_t^2\zeta)\|_{L^2(\Omega)} (\|\cA-\text{Id}\|_{H^2(\Omega)}+1)\|u\|_{H^3(\Omega)}^2.
\end{split}\]
Thanks to Lemma \ref{LemEstNablaQ_Pf} and \eqref{CoefEst_24}, we further get that
\[\begin{split}
&\|\partial_t^2 ((\rho_0+\rho_0'\theta+\zeta)u\cdot\nabla_\cA u)\|_{L^2(\Omega)}\\
&\lesssim  (1+\|\eta\|_{H^{3/2}(\Gamma)}+ \|\zeta\|_{H^2(\Omega)})(\|\eta\|_{H^{5/2}(\Gamma)}+1) (\|\partial_t^2 u\|_{H^1(\Omega)} \| u\|_{H^3(\Omega)} +\|\partial_t u\|_{H^2(\Omega)}^2 )\\
&\quad+ (1+\|\eta\|_{H^{3/2}(\Gamma)}+ \|\zeta\|_{H^2(\Omega)}) ( \|\partial_t\eta\|_{H^{1/2}(\Gamma)} \|\partial_t u\|_{H^3(\Omega)}\\
&\quad+( \|\partial_t^2\eta\|_{H^{1/2}(\Gamma)}+\|\partial_t\eta\|_{H^{5/2}(\Gamma)}^2)\|u\|_{H^2(\Omega)})\|u\|_{H^3(\Omega)}\\
&\quad+ (\|\partial_t\eta\|_{H^{3/2}(\Gamma)}+ \|\partial_t \zeta\|_{H^2(\Omega)} )(\|\eta\|_{H^{5/2}(\Gamma)}+1) \|\partial_t u\|_{H^3(\Omega)}\|u\|_{H^3(\Omega)}\\
&\quad+( \|\partial_t^2\eta\|_{L^2(\Gamma)}+ \| \partial_t^2\zeta\|_{L^2(\Omega)}) (\|\eta\|_{H^{5/2}(\Gamma)}+1)\|u\|_{H^3(\Omega)}^2 \\
&\lesssim \cE_f(\cE_f+\|\partial_t^2\eta\|_{H^{1/2}(\Gamma)}+ \|\partial_t\eta\|_{H^{5/2}(\Gamma)}^2+\|\nabla \partial_t^2u\|_{L^2(\Omega)}). 
\end{split}
\]
Due to  \eqref{EstEta_H^3/2}, we obtain
\begin{equation}\label{11_F_lTermsL^2}
\|\partial_t^2 ((\rho_0+\rho_0'\theta+\zeta)u\cdot\nabla_\cA u)\|_{L^2(\Omega)} \lesssim \cE_f(\cE_f+\|\nabla \partial_t^2u\|_{L^2(\Omega)}). 
\end{equation}
Furthermore, thanks to \eqref{EstEta_H^3/2} again and Lemma \ref{LemEstNablaQ_Pf}, \eqref{CoefEst_22}, \eqref{CoefEst_23}, one has
\begin{equation}\label{12_F_lTermsL^2}
\begin{split}
\|\partial_t^2 &(AK\theta,BK\theta,(1-K)\theta)\|_{L^2(\Omega)}\\
 &\lesssim \|\partial_t^2(AK,BK,K-1)\|_{L^2(\Omega)}\| \theta\|_{H^2(\Omega)}+ \|\partial_t(AK,BK,K-1)\|_{L^2(\Omega)}\|\partial_t\theta\|_{H^2(\Omega)}\\
&\qquad + \|(AK,BK,K-1)\|_{H^2(\Omega)}\|\partial_t^2 \theta\|_{L^2(\Omega)} \\
&\lesssim (\|\partial_t^2\eta\|_{H^{1/2}(\Gamma)}+\|\partial_t\eta\|_{H^{5/2}(\Gamma)}^2) \|\eta\|_{H^{3/2}(\Gamma)} + \|\partial_t\eta\|_{H^{3/2}(\Gamma)} \|\partial_t\eta\|_{H^{3/2}(\Gamma)} +\|\eta\|_{H^{5/2}(\Gamma)} \|\partial_t^2\eta\|_{L^2(\Gamma)}\\
&\lesssim \cE_f(\cE_f+ \|\partial_t^2\eta\|_{H^{1/2}(\Gamma)}+\|\partial_t\eta\|_{H^{5/2}(\Gamma)}^2)\\
&\lesssim \cE_f^2. 
\end{split}
\end{equation}
 Consequently, there holds
\[
\|\partial_t^2 F^2\|_{L^2(\Omega)} \lesssim \cE_f(\cE_f+ \|\nabla \partial_t^2 u\|_{L^2(\Omega)})
\]
thanks to \eqref{10_F_lTermsL^2}, \eqref{11_F_lTermsL^2} and \eqref{12_F_lTermsL^2}.

We are left to prove \eqref{BoundD_tJF}. From the formula of $F^{3,2}$ (see \eqref{TermF34}), we use Sobolev embedding and \eqref{CoefEst_24} to get
\[
\begin{split}
\|F^{3,2}\|_{L^2(\Omega)} &\lesssim \|\partial_t^2\cA\|_{L^2(\Omega)} \|\nabla u\|_{H^2(\Omega)} + \|\partial_t\cA\|_{H^2(\Omega)}\|\nabla \partial_t u\|_{L^2(\Omega)} \\
&\lesssim (\|\partial_t^2\eta\|_{H^{1/2}(\Gamma)}+\|\partial_t\eta\|_{H^{5/2}(\Gamma)}^2) \|u\|_{H^3(\Omega)}+ \|\partial_t\eta\|_{H^{5/2}(\Gamma)}\|\partial_t u\|_{H^1(\Omega)}.
\end{split}
\]
Owing to \eqref{EstEta_H^1/2} and \eqref{EstEta_H^3/2}, we deduce that 
\begin{equation}\label{F^32Est_1}
\begin{split}
\|F^{3,2}\|_{L^2(\Omega)} &\lesssim (\|\partial_t^2\eta\|_{H^{1/2}(\Gamma)}+\|\partial_t\eta\|_{H^{5/2}(\Gamma)}^2)\cE_f+ \|\partial_t\eta\|_{H^{5/2}(\Gamma)}\cE_f \lesssim \cE_f^2. 
\end{split}
\end{equation}
Together with \eqref{CoefEstimates}, this yields 
\begin{equation}\label{F^32Est_2}
\|JF^{3,2}\|_{L^2(\Omega)} \lesssim (1+\|J-1\|_{L^\infty(\Omega)})\|F^{3,2}\|_{L^2(\Omega)} \lesssim \cE_f^2. 
\end{equation}
We continue using \eqref{CoefEstimates}, Lemma \ref{LemEstNablaQ_Pf} and \eqref{F^32Est_1} to get
\begin{equation}\label{F^32Est_4}
\begin{split}
\|\partial_t(JF^{3,2})\|_{L^2(\Omega)} &\lesssim \|\partial_t J\|_{L^\infty(\Omega)} \|F^{3,2}\|_{L^2(\Omega)} + (1+\|J-1\|_{L^\infty(\Omega)})\|\partial_tF^{3,2}\|_{L^2(\Omega)} \\
&\lesssim \|\partial_t \partial_3\theta\|_{H^2(\Omega)}\cE_f^2 + \|\partial_tF^{3,2}\|_{L^2(\Omega)}\\
&\lesssim \|\partial_t \eta\|_{H^{5/2}(\Omega)}\cE_f^2 + \|\partial_tF^{3,2}\|_{L^2(\Omega)}.
\end{split}
\end{equation}
By Sobolev embedding and \eqref{CoefEst_24}, let us estimate that 
\begin{equation}\label{F^32Est_3}
\begin{split}
\|\partial_tF^{3,2}\|_{L^2(\Omega)} &\lesssim \|\partial_t^2\cA\|_{L^2(\Omega)}\|\nabla\partial_t u\|_{H^2(\Omega)} + \|\partial_t\cA\|_{H^2(\Omega)}\|\nabla\partial_t^2 u\|_{L^2(\Omega)}  + \|\partial_t^3\cA\|_{L^2(\Omega)}\|\nabla u\|_{H^2(\Omega)} \\
&\lesssim (\|\partial_t^2\eta\|_{H^{1/2}(\Gamma)}+\|\partial_t\eta\|_{H^{5/2}(\Gamma)}^2) \|\nabla \partial_t u\|_{H^2(\Omega)} + \|\partial_t\eta\|_{H^{5/2}(\Gamma)}\|\nabla\partial_t^2 u\|_{L^2(\Omega)} \\
&\quad + (\|\partial_t^3 \eta\|_{H^{1/2}(\Gamma)} +\|\partial_t\eta\|_{H^{5/2}(\Gamma)}\|\partial_t^2\eta\|_{H^{1/2}(\Gamma)} + \|\partial_t\eta\|_{H^{5/2}(\Gamma)}^3)\|u\|_{H^3(\Omega)}.
\end{split}
\end{equation}
Combining the resulting inequality \eqref{F^32Est_2} with all inequalities in Lemma \ref{LemEstEta_H^1/2}, we obtain 
\begin{equation}\label{F^32Est_5}
\|\partial_t F^{3,2}\|_{L^2(\Omega)} \lesssim \cE_f (\cE_f+\|\nabla \partial_t u\|_{H^2(\Omega)}+ \|\nabla \partial_t^2 u\|_{L^2(\Omega)}).
\end{equation}
Combining \eqref{EstEta_H^1/2}, \eqref{F^32Est_4} and \eqref{F^32Est_5} gives us that 
\begin{equation}\label{F^32Est_6}
\|\partial_t(JF^{3,2})\|_{L^2(\Omega)}  \lesssim \cE_f (\cE_f+\|\nabla \partial_t u\|_{H^2(\Omega)}+ \|\nabla \partial_t^2 u\|_{L^2(\Omega)}).
\end{equation}
The inequality \eqref{BoundD_tJF} follows from \eqref{F^32Est_1}, \eqref{F^32Est_2} and \eqref{F^32Est_6}.
\end{proof}

We are in position to prove Proposition \ref{PropEstPartial_t^lU_L2}.
\begin{proof}[Proof of Proposition \ref{PropEstPartial_t^lU_L2}]
In view of \eqref{EqU_L^2_L=0} at order $l=0$, we have 
\begin{equation}\label{1stEstPropEstD_tU}
\begin{split}
&\frac12 \Big( \int_{\Omega}(\rho_0 +\rho_0'\theta+\zeta) J| u(t)|^2 + \int_\Gamma g\rho_+ |\eta(t) |^2 \Big) +\frac12 \mu\int_0^t \int_{\Omega} J|\bS_\cA u(s)|^2 ds\\
&= \frac12 \Big( \int_{\Omega}(\rho_0 +\rho_0'\theta+\zeta) J| u(t)|^2 + \int_\Gamma g\rho_+ |\eta(t) |^2 \Big)\Big|_{t=0}+\frac12 \int_0^t \int_{\Omega}\partial_t((\rho_0+\rho_0'\theta+\zeta)J)|u|^2(s) ds  \\
&\quad+\int_0^t \int_{\Omega} J(F^2\cdot u- g\zeta u_3)ds
\end{split}
\end{equation}
We first estimate the l.h.s of \eqref{EstPartial_t^lU_L2}.
Notice that
\[
\begin{split}
 J\|\bS_\cA u\|_{L^2(\Omega)}^2 &=  \|\bS u\|_{L^2(\Omega)}^2 +\int_\Omega (J-1)|\bS u|^2 +\int_\Omega J(\bS_\cA u+\bS u) : (\bS_\cA u-\bS u),
 \end{split}
 \]
and that
\[\begin{split}
\bS_\cA u\pm \bS u = (\cA_{ik}\pm \delta_{ik})\partial_k u_j + (\cA_{jk}\pm \delta_{jk})\partial_k u_j,
\end{split}\]
we  use \eqref{CoefEst_21} to obtain
\[\begin{split}
\int_\Omega J(\bS_\cA u+\bS u) : (\bS_\cA u-\bS u) &= 4 \int_\Omega (A^2 (\partial_1u_2+\partial_2 u_1)^2 + B^2(\partial_1 u_3+\partial_3 u_1)^2)\\
&\lesssim \|(A,B)\|_{H^2(\Omega)}^2 \|\nabla u\|_{L^2(\Omega)}^2\\
&\lesssim \cE_f^4.
\end{split}\]
 Note also that $\|J-1\|_{L^\infty(\Omega)}\lesssim 1$ (see \eqref{CoefEstimates}), we use Korn's inequality \eqref{KornIne} to have
\begin{equation}\label{2ndEstPropEstD_tU}
\begin{split}
 J\|\bS_\cA u\|_{L^2(\Omega)}^2  &\gtrsim \|\nabla u\|_{L^2(\Omega)}^2  -\cE_f^3.
\end{split}
\end{equation}
Due to Sobolev embedding, we then have 
\begin{equation}\label{1_2ndEstPropEstD_tU}
\begin{split}
\inf_\Omega (\rho_0+\rho_0'\theta+\zeta) &\geq \rho_- -C \|(\theta,\zeta)\|_{H^2(\Omega)} \geq  \frac12\rho_-.
\end{split}
\end{equation}
The l.h.s of \eqref{1stEstPropEstD_tU} will be estimated as
\begin{equation}\label{2_2ndEstPropEstD_tU}
\begin{split}
& \int_{\Omega}(\rho_0 +\rho_0'\theta+\zeta) J| u(t)|^2 + \int_\Gamma g\rho_+ |\eta(t) |^2 + \mu\int_0^t \int_{\Omega} J|\bS_\cA u(s)|^2 ds \\
&\qquad\gtrsim \|u(t)\|_{L^2(\Omega)}^2+ \|\eta(t)\|_{L^2(\Gamma)}^2+ \int_0^t \|\nabla  u(s)\|_{L^2(\Omega)}^2 ds-\int_0^t\cE_f^3(s)ds. 
\end{split}
\end{equation}
We now estimate the r.h.s of \eqref{1stEstPropEstD_tU}. By Gagliardo-Nirenberg's inequality (see \eqref{GN_ine}) and Sobolev embedding, one has
\[\begin{split}
&\|\partial_t ((\rho_0+\rho_0'\theta+\zeta)J)\|_{L^\infty(\Omega)}\\
&\lesssim \|(\rho_0+\rho_0'\theta+\zeta)\partial_t J\|_{H^2(\Omega)} + \| (\rho_0'\partial_t\theta+\partial_t\zeta)J\|_{L^\infty(\Omega)}\\
&\lesssim (1+\|(\theta,\zeta)\|_{H^2(\Omega)})\|\partial_t\theta\|_{H^3(\Omega)} +\|(\partial_t\theta,\partial_t\zeta)\|_{H^2(\Omega)}(1+\|J-1\|_{L^\infty(\Omega)}).
\end{split}\]
Together with Lemma \ref{LemEstNablaQ_Pf}, \eqref{EstEta_H^1/2} and \eqref{CoefEstimates}, we observe 
\begin{equation}\label{EstDtJ_L}
\begin{split}
\|\partial_t ((\rho_0+\rho_0'\theta+\zeta)J)\|_{L^\infty(\Omega)}  &\lesssim (1+\|\eta\|_{H^{3/2}(\Gamma)}+\|\zeta\|_{H^2(\Omega)}) \|\partial_t\eta\|_{H^{5/2}(\Gamma)}\\
&\qquad +\|\partial_t\eta\|_{H^{3/2}(\Gamma)}+ \|\partial_t\zeta\|_{H^2(\Omega)}\\
&\lesssim \cE_f,
\end{split}
\end{equation}
which yields 
\begin{equation}\label{3rdEstPropEstD_tU}
\int_0^t \int_{\Omega}\partial_t((\rho_0+\rho_0'\theta+\zeta)J)|u|^2(s) ds\lesssim \int_0^t  \cE_f^3(s)ds.
\end{equation}
Furthermore, thanks to \eqref{CoefEstimates} and \eqref{F_lTermsL^2}, we get 
\begin{equation}\label{2_3rdEstPropEstD_tU}
\begin{split}
\int_{\Omega} J(F^2\cdot u- g\zeta u_3) &\lesssim   (\|J-1\|_{L^\infty(\Omega)}+1)(  \|F^2\|_{L^2(\Omega)}\|u\|_{L^2(\Omega)} +\|\zeta\|_{L^2(\Omega)}\|u_3\|_{L^2(\Omega)} ) \\
&\lesssim \cE_f^3 + \|\zeta\|_{L^2(\Omega)}\|u_3\|_{L^2(\Omega)}.
\end{split}
\end{equation}
Substituting  \eqref{2ndEstPropEstD_tU}, \eqref{2_2ndEstPropEstD_tU}, \eqref{3rdEstPropEstD_tU} and \eqref{2_3rdEstPropEstD_tU} into \eqref{1stEstPropEstD_tU}, we deduce $\eqref{EstPartial_t^lU_L2}_{l=0}$.

For $l=1$,  we make use of \eqref{EqU_L^2_L=1} at order $l=1$ to have that
\begin{equation}\label{4thEstPropEstD_tU}
\begin{split}
&\frac12 \Big( \int_\Omega (\rho_0+\rho_0'\theta+\zeta)J|\partial_t u|^2(t) + \int_\Gamma g\rho_+ |\partial_t\eta(t)|^2 - \int_\Omega g\rho_0'|u_3(t)|^2 \Big) +\frac{\mu}2 \int_0^t \|\bS_\cA \partial_t u(s)\|_{L^2(\Omega)}^2 ds\\
&= \frac12 \Big( \int_{\Omega}(\rho_0 +\rho_0'\theta+\zeta) J|\partial_t u|^2 + \int_\Gamma g\rho_+|\partial_t \eta |^2 -\int_{\Omega}g\rho_0'|u_3|^2\Big)\Big|_{t=0}- \int_0^t \int_\Omega gJ F^1\partial_t u_3(s) ds. \\
&\quad+\frac12 \int_0^t \int_{\Omega}\partial_t((\rho_0+\rho_0'\theta+\zeta)J)|\partial_t u|(s)^2ds  +\int_0^t \int_{\Omega} J (F^{2,1}\cdot \partial_t u+  F^{3,1} \partial_t q)(s)ds\\
&\quad -\int_0^t \int_\Gamma( g\rho_+\partial_t \eta F^{4,1} +F^{5,1}\cdot \partial_t u)(s)ds -\int_0^t \int_{\Omega}g\rho_0'(A\partial_3 u_1+ B \partial_3 u_2)\partial_t u_3(s)ds. 
\end{split}
\end{equation}
By a similar argument as  the proof of \eqref{2_2ndEstPropEstD_tU}, we estimate the l.h.s of \eqref{4thEstPropEstD_tU} as 
\begin{equation}\label{5thEstPropEstD_tU}
\begin{split}
& \int_\Omega (\rho_0+\rho_0'\theta+\zeta)J|\partial_t u|^2(t) + \int_\Gamma g\rho_+ |\partial_t\eta(t)|^2 - \int_\Omega g\rho_0'|u_3(t)|^2  +\mu \int_0^t \|\bS_\cA \partial_t u(s)\|_{L^2(\Omega)}^2 ds\\
&\gtrsim \|\partial_t u(t)\|_{L^2(\Omega)}^2 +\|\partial_t \eta(t)\|_{L^2(\Gamma)}^2 + \int_0^t \|\nabla \partial_t u(s)\|_{L^2(\Omega)}^2-\|u_3(t)\|_{L^2(\Omega)}^2 -\int_0^t\cE_f^3(s)ds.
\end{split}
\end{equation}
For the r.h.s of \eqref{4thEstPropEstD_tU}, we use \eqref{EstDtJ_L} to obtain that 
\begin{equation}\label{6thEstPropEstD_tU}
\int_0^t \int_{\Omega}\partial_t((\rho_0+\rho_0'\theta+\zeta)J)|\partial_t u|^2(s) ds\lesssim \int_0^t  \cE_f^3(s)ds.
\end{equation}
Next, thanks to \eqref{CoefEst_21}, we see that 
\begin{equation}
\int_0^t \int_{\Omega}g\rho_0'(A\partial_3 u_1+ B \partial_3 u_2)\partial_t u_3(s)ds \lesssim \int_0^t \cE_f^2\|(A,B)\|_{H^2(\Omega)}(s) ds \lesssim \int_0^t \cE_f^3(s)ds.
\end{equation}
Let us use \eqref{CoefEstimates} and \eqref{F_lTermsL^2} to estimate that
\begin{equation}\label{7thEstPropEstD_tU}
\begin{split}
&\int_0^t \int_{\Omega} J (F^{2,1}\cdot \partial_t u+  F^{3,1} \partial_t q-gF^1 \partial_3 u)(s)ds -\int_0^t \int_\Gamma( g\rho_+\partial_t \eta F^{4,1} +F^{5,1}\cdot \partial_t u)(s)ds\\
&\lesssim  \int_0^t ((\|J-1\|_{L^\infty(\Omega)}+1)\|(F^1,F^{2,1}, F^{3,1})(s)\|_{L^2(\Omega)}+\|(F^{4,1}, F^{5,1})(s)\|_{L^2(\Gamma)})\cE_f(s)ds \\
&\lesssim \int_0^t \cE_f^3(s)ds.
\end{split}
\end{equation}
Combining \eqref{5thEstPropEstD_tU}, \eqref{6thEstPropEstD_tU} and \eqref{7thEstPropEstD_tU}, we obtain
\begin{equation}\label{8thEstPropEstD_tU}
\begin{split}
\|\partial_t u(t)\|_{L^2(\Omega)}^2+\|\partial_t\eta(t)\|_{L^2(\Gamma)}^2 +\int_0^t \|\nabla\partial_t u(s)\|_{L^2(\Omega)}^2 ds &\lesssim \cE_f^2(0)+ \|u_3(t)\|_{L^2(\Omega)}^2  + \int_0^t \cE_f^3(s) ds.
\end{split}
\end{equation}
As a consequence of \eqref{8thEstPropEstD_tU} and $\eqref{EstPartial_t^lU_L2}_{l=0}$,  the inequality $\eqref{EstPartial_t^lU_L2}_{l=1}$ follows.

For $l=2$, we use \eqref{EqU_L^2_L=1} at order $l=2$ to have that 
\begin{equation}\label{1_EqU_L^2_L=2}
\begin{split}
&\frac12 \Big( \int_{\Omega}(\rho_0 +\rho_0'\theta+\zeta) J|\partial_t^2 u|^2(t) + \int_\Gamma g\rho_+|\partial_t^2 \eta(t) |^2 -\int_{\Omega}g\rho_0'|\partial_t u_3(t)|^2\Big) \\
&\qquad+\frac12 \mu\int_0^t \int_{\Omega} J|\bS_\cA\partial_t^2 u(s)|^2ds \\
&=\frac12 \Big( \int_{\Omega}(\rho_0 +\rho_0'\theta+\zeta) J|\partial_t^2 u|^2 + \int_\Gamma g\rho_+|\partial_t^2 \eta |^2 -\int_{\Omega}g\rho_0'|\partial_t u_3|^2\Big)\Big|_{t=0} \\
&\quad+ \frac12 \int_0^t \int_{\Omega}\partial_t((\rho_0+\rho_0'\theta+\zeta)J)|\partial_t^2 u(s)|^2ds  +\int_0^t \int_{\Omega} J (F^{2,2}\cdot \partial_t^2 u+  F^{3,2} \partial_t^2 q)(s)ds\\
&\quad -\int_0^t \int_\Gamma( g\rho_+\partial_t^2 \eta F^{4,2} +F^{5,2}\cdot \partial_t^2 u)(s) ds+ \int_0^t \int_\Omega g\rho_0 J F^{3,1}\partial_t^2 u_3(s)ds \\
&\quad- \int_0^t \int_{\Omega}g\rho_0'(A\partial_t \partial_3 u_1+ B\partial_t \partial_3 u_2)\partial_t^2 u_3(s) ds - \int_0^t \int_\Omega gJ F^{1,1}\partial_t^2 u_3(s)ds. 
\end{split}
\end{equation}
We follow the previous arguments to observe that
\begin{equation}\label{1_EstDtU_L2}
\begin{split}
&\|\partial_t^2 u(t)\|_{L^2(\Omega)}^2 +\|\partial_t^2\eta(t)\|_{L^2(\Gamma)}^2 + \int_0^t \|\nabla \partial_t^2 u(s)\|_{L^2(\Omega)}^2 ds \\
&\lesssim \cE_f^2(0) + \|\partial_t u_3(t)\|_{L^2(\Omega)}^2 +  \int_0^t \cE_f^2(s) \|(A,B)(s)\|_{H^2(\Omega)} ds  +\int_0^t \|(F^{4,2}, F^{5,2})(s)\|_{L^2(\Gamma)} \cE_f(s) ds\\
&\qquad+\int_0^t (\|J-1\|_{L^\infty(\Omega)}+1)\|(F^{1,1}, F^{2,2}, F^{3,1})(s)\|_{L^2(\Omega)}ds +\int_0^t \int_\Omega (JF^{3,2}\partial_t^2q)(s)ds.
\end{split}
\end{equation}
Since $\partial_t^2 q$ does not appear in $\cE_f$ or $\cD_f$, we use the integration in time to have 
\[\begin{split}
\int_0^t \int_\Omega (JF^{3,2}\partial_t^2q)(s)ds &=  \int_\Omega(\partial_t q JF^{3,2})(t) - \int_\Omega (\partial_t q JF^{3,2})(0)  -\int_0^t \int_\Omega \partial_tq(s)  \partial_t(JF^{3,2})(s)ds.
\end{split}\]
Thanks to \eqref{BoundD_tJF}, we observe 
\begin{equation}\label{2_EstDtU_L2}
\int_0^t \int_\Omega (JF^{3,2}\partial_t^2q)(s)ds  \lesssim \cE_f^2(0) + \cE_f^3(t) +\int_0^t \cE_f^2(\cE_f+\|\nabla \partial_t u\|_{H^2(\Omega)}+ \|\nabla \partial_t^2 u\|_{L^2(\Omega)})(s) ds.
\end{equation}
It follows from  \eqref{CoefEstimates}, \eqref{F_lTermsL^2} and \eqref{F_2TermsL^2} that 
\begin{equation}\label{3_EstDtU_L2}
\begin{split}
&\int_0^t ((\|J-1\|_{L^\infty(\Omega)}+1)\|(F^{2,2}, F^{3,1}, F^{1,1})(s)\|_{L^2(\Omega)}+\|(F^{4,2}, F^{5,2})(s)\|_{L^2(\Gamma)}) \cE_f(s) ds \\
&\lesssim\int_0^t \cE_f^2(\cE_f+\|\nabla \partial_t u\|_{H^2(\Omega)}+ \|\nabla \partial_t^2 u\|_{L^2(\Omega)})(s) ds.
\end{split}
\end{equation}
Using \eqref{1_EstDtU_L2}, \eqref{2_EstDtU_L2}, \eqref{3_EstDtU_L2} and \eqref{CoefEst_21},  we deduce that 
\[
\begin{split}
&\|\partial_t^2 u(t)\|_{L^2(\Omega)}^2 +\|\partial_t^2\eta(t)\|_{L^2(\Gamma)}^2 + \int_0^t \|\nabla \partial_t^2 u(s)\|_{L^2(\Omega)}^2 ds \\
&\lesssim  \cE_f^2(0) + \|\partial_t u_3(t)\|_{L^2(\Omega)}^2 +\cE_f^3(t)+\int_0^t \cE_f^2(\cE_f+\|\nabla \partial_t u\|_{H^2(\Omega)}+ \|\nabla \partial_t^2 u\|_{L^2(\Omega)})(s) ds.\end{split}
\]
We obtain $\eqref{EstPartial_t^2U_L2}$ thanks to the resulting inequality and $\eqref{EstPartial_t^lU_L2}_{l=1}$.
\end{proof}

\subsection{Horizontal estimates of the perturbation velocity} 
We continue deriving  the mixed horizontal space-time derivatives of $u$. Note that $\rho_0$ only depends on $x_3$. Let $\beta=(\beta_1,\beta_2)\in \N^2$ and let us apply the horizontal derivative $\partial_h^\beta=\partial_1^{\beta_1}\partial_2^{\beta_2}$ to \eqref{EqPertur}, we obtain the following equations.
\begin{equation}\label{EqHorizontal}
\begin{cases}
\partial_t \partial_h^\beta \zeta +\rho_0' \partial_h^\beta  u_3= \partial_h^\beta \cQ^1\quad&\text{in } \Omega,\\
\rho_0\partial_t \partial_h^\beta  u+ \nabla\partial_h^\beta  q -\mu \Delta \partial_h^\beta  u +g\partial_h^\beta \zeta  e_3= \partial_h^\beta  \cQ^2 \quad&\text{in }\Omega,\\
\text{div} \partial_h^\beta u=\partial_h^\beta  \cQ^3 \quad&\text{in } \Omega,\\
\partial_t \partial_h^\beta \eta -\partial_h^\beta  u_3 =\partial_h^\beta  \cQ^4 \quad&\text{on } \Gamma,\\
(\partial_h^\beta q \text{Id}-\mu \bS\partial_h^\beta  u)  e_3= g\rho_+\partial_h^\beta\eta e_3+ \partial_h^\beta  \cQ^5\quad&\text{on } \Gamma.
\end{cases}
\end{equation}

\begin{proposition}\label{PropEstHorizonU}
The following inequalities hold 
\begin{equation}\label{EstU_tH^4}
\begin{split}
&\sum_{\beta \in \N^2, 1\leq |\beta|\leq 4} \Big(\|\partial_h^\beta u(t)\|_{L^2(\Omega)}^2  + \int_0^t \|\nabla \partial_h^\beta u(s)\|_{L^2(\Omega)}^2 ds \Big)\\
&\quad\leq C_7\Big( \cE_f^2(0)+ \varepsilon^3  \int_0^t (\cE_f^2(s)+ \|\nabla u_3(s)\|_{H^4(\Omega)}^2) ds+ \int_0^t \cE_f(\cE_f^2+\cD_f^2)(s) ds\Big)\\
&\qquad\quad+ C_7\varepsilon^{-27} \int_0^t (\|u_3(s)\|_{L^2(\Omega)}^2+\|\eta(s)\|_{L^2(\Omega)}^2)ds,
\end{split}
\end{equation}
and 
\begin{equation}\label{EstPartial_tU_H^2}
\begin{split}
&\sum_{\beta \in \N^2, 1\leq |\beta|\leq 2} \Big( \|\partial_h^\beta \partial_t u(t)\|_{L^2(\Omega)}^2+\int_0^t \| \nabla \partial_h^\beta\partial_t u(s)\|_{L^2(\Omega)}^2 ds  \Big)\\
&\quad\leq C_8 \Big(\cE_f^2(0)+  \varepsilon^3 \int_0^t (\cE_f^2(s)+ \|\nabla u_3(s)\|_{H^4(\Omega)}^2 )ds+\int_0^t\cE_f(\cE_f^2+\cD_f^2)(s)ds\Big)\\
&\qquad\quad+C_8 \varepsilon^{-27} \int_0^t (\|u_3(s)\|_{L^2(\Omega)}^2+\|\eta(s)\|_{L^2(\Omega)}^2)ds.
\end{split}
\end{equation}
\end{proposition}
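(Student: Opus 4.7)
The plan is to derive $L^2$ energy estimates for $\partial_h^\beta u$ directly from the system \eqref{EqHorizontal}, which is obtained by applying $\partial_h^\beta$ to the nonlinear perturbation equations \eqref{EqPertur}. Since $\rho_0$ depends only on $x_3$, horizontal derivatives commute freely with $\rho_0$ and $\rho_0'$, so the principal structure of the energy identity is preserved at each order $|\beta|$. This lets us mimic, at the horizontal-derivative level, the computations already performed for the temporal derivatives in Proposition \ref{PropEstPartial_t^lU_L2}.

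I would multiply $\eqref{EqHorizontal}_2$ by $\partial_h^\beta u$ and integrate over $\Omega$. The pressure and viscosity terms produce boundary contributions that are rewritten via $\eqref{EqHorizontal}_5$, yielding a boundary integral $\int_\Gamma g\rho_+ \partial_h^\beta \eta\, \partial_h^\beta u_3$ plus $\partial_h^\beta \cQ^5$-contributions; the divergence constraint $\eqref{EqHorizontal}_3$ contributes $\partial_h^\beta \cQ^3$-errors through $\int_\Omega \partial_h^\beta q\, \partial_h^\beta \cQ^3$. Using $\eqref{EqHorizontal}_4$ to substitute $\partial_h^\beta u_3 = \partial_t \partial_h^\beta \eta - \partial_h^\beta \cQ^4$ on $\Gamma$, the boundary integral becomes a full time derivative $\frac12\frac{d}{dt}\int_\Gamma g\rho_+|\partial_h^\beta\eta|^2$ modulo an error of the form $\int_\Gamma g\rho_+ \partial_h^\beta\eta\, \partial_h^\beta \cQ^4$. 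Applying Korn's inequality \eqref{KornIne} then converts $\frac{\mu}{2}\|\bS\partial_h^\beta u\|_{L^2(\Omega)}^2$ into control of $\|\nabla \partial_h^\beta u\|_{L^2(\Omega)}^2$, producing the dissipation piece of the left-hand side.

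The principal obstacle will be the bulk coupling $g\int_\Omega \partial_h^\beta \zeta\, \partial_h^\beta u_3$, which cannot be absorbed directly since $\|\zeta\|_{H^4(\Omega)}^2$ appears in $\cE_f^2$ but not in $\cD_f^2$. The idea is to exploit $\eqref{EqHorizontal}_1$: on the strip $\{-a\leq x_3\leq 0\}$ where $\rho_0'>0$, we may write $\partial_h^\beta u_3 = -(\rho_0')^{-1}(\partial_t \partial_h^\beta \zeta - \partial_h^\beta \cQ^1)$ and integrate by parts in time against $\partial_h^\beta \zeta$, producing a full time derivative of a weighted $\zeta$-quadratic plus controllable terms; outside the strip, $\partial_t\partial_h^\beta\zeta = \partial_h^\beta\cQ^1$ and $\partial_h^\beta u_3$ is estimated by lower-order $L^2$ norms of $u_3$ via interpolation against the dissipation. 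Each application of Cauchy--Schwarz with $\varepsilon$-weights, together with repeated interpolation between $\|u_3\|_{L^2(\Omega)}$, $\|\eta\|_{L^2(\Gamma)}$ and the $H^5$-dissipation of $u$, accounts for the large inverse power $\varepsilon^{-27}$ in \eqref{EstU_tH^4}. The remaining nonlinear contributions $\partial_h^\beta \cQ^i$ are estimated via product rules (Appendix \ref{AppUsefulEst}), Sobolev embedding and Lemma \ref{LemEstNablaQ_Pf}; each is bounded cubically by $\cE_f(\cE_f^2 + \cD_f^2)^{1/2}$ and hence fits into $\int_0^t \cE_f(\cE_f^2 + \cD_f^2)\,ds$ after Cauchy--Schwarz.

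After summing over $\beta \in \N^2$ with $1\leq|\beta|\leq 4$ and integrating in time, this yields \eqref{EstU_tH^4}. The bound \eqref{EstPartial_tU_H^2} is proved analogously: I would first differentiate \eqref{EqPertur} once in $t$, producing nonlinear sources of the form $F^{i,1}$ as in Section \ref{SectAprioriEnergy}, then apply $\partial_h^\beta$ with $1\leq|\beta|\leq 2$ and repeat the same integration-by-parts scheme, absorbing the $\int_\Omega g\partial_h^\beta\partial_t\zeta\,\partial_h^\beta\partial_t u_3$ term by the same representation trick. The only difference is bookkeeping on which mixed norms of $\cQ^i$ appear, but each is again controlled by $\cE_f(\cE_f+\cD_f)$ through the product estimates, so the final structure of the right-hand side is identical.
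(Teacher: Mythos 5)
Your setup and much of the bookkeeping (test against $\partial_h^\beta u$, integrate by parts, use $\eqref{EqHorizontal}_{3,5}$, then Korn) matches the paper, and your suggestion to turn the boundary term into $\frac{d}{dt}\int_\Gamma g\rho_+|\partial_h^\beta\eta|^2$ is harmless (that term is stable and lands on the left). But your treatment of the critical bulk coupling $\int_\Omega g\rho_0'\,\partial_h^\beta\zeta\,\partial_h^\beta u_3$ is not the paper's route, and it does not produce an estimate of the claimed form. Substituting $\rho_0'\partial_h^\beta u_3=-\partial_t\partial_h^\beta\zeta+\partial_h^\beta\cQ^1$ into this integral (there is no need to worry about the region where $\rho_0'=0$: the integrand vanishes there, and the $\rho_0'$ already present in the integrand cancels any $(\rho_0')^{-1}$) turns it into $\frac{g}{2}\frac{d}{dt}\int_\Omega|\partial_h^\beta\zeta|^2+\ldots$, which sits on the right with a \emph{positive} time-derivative sign. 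After integration in time you are left with the destabilizing instantaneous term $g\|\partial_h^\beta\zeta(t)\|_{L^2(\Omega)}^2$ on the right-hand side: this is quadratic, of order $\cE_f^2(t)$, and it is neither in $\int_0^t\cE_f^2\,ds$ nor in $\int_0^t\cE_f(\cE_f^2+\cD_f^2)\,ds$. At this point in the paper's scheme $\|\zeta(t)\|_{H^4(\Omega)}^2$ has not yet been estimated (Proposition \ref{PropEstZeta} comes after), so the lemma cannot be closed in the stated form. The paper instead applies Young's inequality with an $\varepsilon^3$ weight, then the interpolation \eqref{EstJensen}, to split this coupling into $\varepsilon^3(\|\zeta\|_{H^4(\Omega)}^2+\|u_3\|_{H^5(\Omega)}^2)+\varepsilon^{-27}\|u_3\|_{L^2(\Omega)}^2$, which keeps the entire right-hand side as time integrals and matches the claimed inequality exactly.

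Two further points. First, for $|\beta|=4$ the nonlinear sources $\partial_h^\beta\cQ^2$ and $\partial_h^\beta\cQ^5$ are not controlled in $L^2$ by the available regularity; the paper writes $\beta=\beta_-+\beta_+$ with $|\beta_-|=3$, $|\beta_+|=5$ and integrates one horizontal derivative by parts so that only $\|\cQ^2\|_{H^3(\Omega)}$ and $\|\cQ^5\|_{H^{7/2}(\Gamma)}$ are needed, via Lemma \ref{LemEstTermsQ}. Your outline does not address this. Second, for \eqref{EstPartial_tU_H^2} you propose to differentiate \eqref{EqPertur} in $t$ and produce sources of the form $F^{i,1}$; in fact the $F^{i,l}$ belong to the geometric formulation \eqref{1stEqPerturGeometric}, used in Proposition \ref{PropEstPartial_t^lU_L2}. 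Here the paper differentiates the perturbed (Eulerian) system \eqref{EqHorizontal} directly, obtaining \eqref{EqDtHorizontal} with sources $\partial_t\partial_h^\beta\cQ^i$ and $-g\partial_h^\beta\cQ^1e_3$. The time-integration-by-parts idea you describe \emph{is} what the paper does at this stage: after eliminating $\partial_t\zeta,\partial_t\eta$ the destabilizing bulk term becomes $-\frac12\frac{d}{dt}\int_\Omega g\rho_0'|\partial_h^\beta u_3|^2$, and this is acceptable precisely because $\|\partial_h^\beta u_3(t)\|_{L^2(\Omega)}^2$ has already been controlled by \eqref{EstU_tH^4}. That ordering — direct $\varepsilon$-absorption for the zeroth temporal order, time-derivative trick only once $\|\partial_h^\beta u_3(t)\|_{L^2}^2$ is available — is what your proposal misses.
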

To prove Proposition \ref{PropEstHorizonU}, we need the following lemma. 
\begin{lemma}\label{LemEstTermsQ}
The following inequalities hold
\begin{equation}\label{1stIneLemHorizon}
\begin{split}
&\|\cQ^1\|_{H^2(\Omega)}+\|\partial_t\cQ^1\|_{L^2(\Omega)} +\|\partial_t\cQ^2\|_{L^2(\Omega)}+\|\cQ^2\|_{H^2(\Omega)}+\|\partial_t\cQ^3\|_{H^1(\Omega)}\\
&\qquad\qquad+\|\cQ^3\|_{H^3(\Omega)} +\|\cQ^4\|_{H^{7/2}(\Gamma)}+\|\cQ^5\|_{H^{5/2}(\Gamma)}+ \|\partial_t \cQ^5\|_{H^{1/2}(\Gamma)} \lesssim \cE_f^2,
\end{split}
\end{equation}
and 
\begin{equation}\label{EstTildeQ1}
\begin{split}
\|\cQ^2\|_{H^3(\Omega)}+ \|\partial_t\cQ^2\|_{H^1(\Omega)}&+ \|\cQ^3\|_{H^4(\Omega)} + \| \partial_t\cQ^3\|_{H^2(\Omega)} \\
&+\|\cQ^5\|_{H^{7/2}(\Gamma)}+\|\partial_t\cQ^5\|_{H^{3/2}(\Gamma)}\lesssim  \cE_f(\cE_f+\cD_f).
\end{split}
\end{equation}
\end{lemma}
\begin{proof}
For \eqref{1stIneLemHorizon}, we only present estimates for some terms of the l.h.s, precisely, 
\[
\|\partial_t\cQ^3\|_{H^1(\Omega)}+\|\cQ^4\|_{H^{7/2}(\Gamma)}+\|\cQ_1^5\|_{H^{5/2}(\Gamma)} \lesssim \cE_f^2,
\]
the estimates of the other terms in the l.h.s of \eqref{1stIneLemHorizon} follow the same way.  To get $\|\partial_t\cQ^3\|_{H^1(\Omega)}\lesssim \cE_f^2$, we use  \eqref{ProductEst} and \eqref{CoefEst_22}, \eqref{CoefEst_23}  to bound each term of $\partial_t\cQ^3$ \eqref{TermQ34}. Indeed, we have 
\begin{equation}\label{1_BoundD_tQ^3}
\begin{split}
\|\partial_t ((1 -K)\partial_3u_3)\|_{H^1(\Omega)} &\lesssim  \| \partial_t K\|_{H^1(\Omega)}\|\partial_3 u_3\|_{H^3(\Omega)} + \|K-1\|_{H^3(\Omega)}\|\partial_t \partial_3 u_3\|_{H^1(\Omega)}\\
&\lesssim \|\partial_t\eta\|_{H^{3/2}(\Gamma)} \|u_3\|_{H^4(\Omega)}+ \|\eta\|_{H^{7/2}(\Gamma)} \|\partial_t u_3\|_{H^2(\Omega)}\\
&\lesssim \cE_f^2,
\end{split}
\end{equation}
and
\begin{equation}\label{2_BoundD_tQ^3}
\begin{split}
\|\partial_t&(AK \partial_3u_1+BK \partial_3u_2 )\|_{H^1(\Omega)} \\
&\lesssim \|\partial_t(AK,BK)\|_{H^1(\Omega)} \|\partial_3 u\|_{H^3(\Omega)} + \|(AK,BK)\|_{H^3(\Omega)} \|\partial_t\partial_3u\|_{H^1(\Omega)}\\
&\lesssim \|\partial_t\eta\|_{H^{3/2}(\Gamma)}\|u\|_{H^4(\Omega)}+\| \eta\|_{H^{7/2}(\Gamma)} \|\partial_t u\|_{H^2(\Omega)}\\
&\lesssim \cE_f^2.
\end{split}
\end{equation}
Hence, $\|\partial_t\cQ^3\|_{H^1(\Omega)}\lesssim \cE_f^2$ follows from \eqref{1_BoundD_tQ^3} and \eqref{2_BoundD_tQ^3}. We apply the product estimate \eqref{ProductEst} and the trace theorem to have that 
\[\begin{split}
\|\cQ^4\|_{H^{7/2}(\Gamma)} &\lesssim \|u_1\|_{H^{7/2}(\Gamma)}\|\partial_1\eta\|_{H^{7/2}(\Gamma)}+  \|u_2\|_{H^{7/2}(\Gamma)}\|\partial_2\eta\|_{H^{7/2}(\Gamma)}\\
&\lesssim \|u\|_{H^4(\Omega)} \|\eta\|_{H^{9/2}(\Gamma)}\\
&\lesssim \cE_f^2.
\end{split}\]
Moreover, using \eqref{ProductEst}, \eqref{CoefEst_22}, \eqref{CoefEst_23} again and the trace theorem, we show 
\[
\| \cQ_1^5\|_{H^{5/2}(\Gamma)} \lesssim \cE_f^2.
\]
 From the expression of $\cQ_1^5$ \eqref{TermQ5}, we have that
\begin{equation}\label{1_2ndIneLemHorizon}
\begin{split}
&\|\partial_1\eta (q- g\rho_+\eta-2\mu(\partial_1 u_1-AK \partial_3 u_1))\|_{H^{5/2}(\Gamma)}\\
&\qquad\lesssim \|\partial_1\eta\|_{H^{5/2}(\Gamma)}( \|(q,\eta, \partial_1 u_1)\|_{H^{5/2}(\Gamma)}+\|AK\|_{H^{5/2}(\Gamma)} \|\partial_3u_1\|_{H^{5/2}(\Gamma)})\\
&\qquad \lesssim \|\eta\|_{H^{7/2}(\Gamma)} (\|q\|_{H^3(\Omega)}+\|u_1\|_{H^4(\Omega)}+ \|\eta\|_{H^{5/2}(\Gamma)}+\|AK\|_{H^3(\Omega)}\|u_1\|_{H^4(\Omega)})\\
&\qquad\lesssim \|\eta\|_{H^{7/2}(\Gamma)}(\|q\|_{H^3(\Omega)}+\|u_1\|_{H^4(\Omega)}+ \|\eta\|_{H^{5/2}(\Gamma)}+ \|\eta\|_{H^{7/2}(\Gamma)}\|u_1\|_{H^4(\Omega)}),
\end{split}
\end{equation}
that
\begin{equation}\label{3_2ndIneLemHorizon}
\begin{split}
\|\partial_2 &\eta  (\partial_1 u_2+\partial_2u_1-AK\partial_3u_2-BK\partial_3u_1)\|_{H^{5/2}(\Gamma)} \\
&\lesssim \|\partial_2\eta\|_{H^{5/2}(\Gamma)}\|u\|_{H^{7/2}(\Gamma)}(1+ \|(AK,BK)\|_{H^{5/2}(\Gamma)}) \\
&\lesssim  \|\partial_2\eta\|_{H^{5/2}(\Gamma)}\|u\|_{H^{7/2}(\Gamma)}(1+ \|(AK,BK)\|_{H^3(\Omega)}) \\
& \lesssim \|\eta\|_{H^{7/2}(\Gamma)} \|u\|_{H^4(\Omega)}(1+\|\eta\|_{H^{7/2}(\Gamma)} ), 
\end{split}
\end{equation}
and that
\begin{equation}\label{5_2ndIneLemHorizon}
\begin{split}
\|(1-K)\partial_3 u_1+AK \partial_3 u_3\|_{H^{5/2}(\Gamma)} &\lesssim \|(K-1,AK)\|_{H^{5/2}(\Gamma)} \|\partial_3 u\|_{H^{5/2}(\Gamma)}\\
&\lesssim \|(K-1,AK)\|_{H^3(\Omega)} \|u\|_{H^4(\Omega)}\\
&\lesssim \|\eta\|_{H^{7/2}(\Gamma)}\| u\|_{H^4(\Omega)}
\end{split}
\end{equation}
Hence, the inequality $\| \cQ_1^5\|_{H^{5/2}(\Gamma)} \lesssim \cE_f^2$ follows from the three above estimates \eqref{1_2ndIneLemHorizon}, \eqref{3_2ndIneLemHorizon} and \eqref{5_2ndIneLemHorizon}.

Similarly, for \eqref{EstTildeQ1}, we show only
\[
 \|\partial_t\cQ_1^2\|_{H^1(\Omega)} + \| \cQ^3\|_{H^4(\Omega)}\lesssim \cE_f(\cE_f+\cD_f).
\]
The inequality $\|\cQ^3\|_{H^4(\Omega)}\lesssim \cE_f\cD_f$ (see $\cQ^3$ in \eqref{TermQ34}) is proven by  using \eqref{ProductEst} and \eqref{CoefEst_22}, \eqref{CoefEst_23}, 
\[
\begin{split}
\|\cQ^3\|_{H^4(\Omega)} \lesssim \|(AK,BK,K-1)\|_{H^4(\Omega)}
\|\partial_3 u\|_{H^4(\Omega)} \lesssim \|\eta\|_{H^{9/2}(\Gamma)}\|\nabla u\|_{H^4(\Omega)}.
\end{split}
\]
Let us prove $\|\partial_t \cQ_1^2\|_{H^1(\Omega)}\lesssim \cE_f(\cE_f+\cD_f)$ (see $\cQ_1^2$ in \eqref{TermQ2_1}). In view of \eqref{ProductEst} and Lemma \ref{LemEstNablaQ_Pf}, we obtain that
\begin{equation}\label{1_BoundD_tQ^1}
\begin{split}
\|\partial_t((\zeta+\rho_0'\theta)\partial_tu_1)\|_{H^1(\Omega)} &\lesssim \|(\partial_t\zeta,\partial_t\theta)\|_{H^1(\Omega)}\|\partial_t u_1\|_{H^3(\Omega)}+ \|(\zeta,\theta)\|_{H^3(\Omega)} \|\partial_t^2 u_1\|_{H^1(\Omega)}\\
&\lesssim (\|\partial_t\zeta\|_{H^1(\Omega)}+ \|\partial_t\eta\|_{H^{1/2}(\Gamma)}) \|\partial_t u_1\|_{H^3(\Omega)}  \\
&\qquad+ (\|\zeta\|_{H^3(\Omega)}+ \|\eta\|_{H^{5/2}(\Gamma)})\|\partial_t^2 u_1\|_{H^1(\Omega)}\\
&\lesssim \cE_f(\cE_f+\|\nabla\partial_t u_1\|_{H^2(\Omega)}+\|\nabla \partial_t^2 u_1\|_{L^2(\Omega)}).
\end{split}
\end{equation}
We further use \eqref{ProductEst}  to have
\[
\begin{split}
&\| \partial_t ((\rho_0+ \rho_0'\theta+\zeta)Ku_3\partial_3 u_1) \|_{H^1(\Omega)} \\
&\lesssim (1+\|(\theta,\zeta)\|_{H^3(\Omega)}) \| \partial_t (Ku_3\partial_3 u_1)\|_{H^1(\Omega)} +\|(\partial_t\zeta,\partial_t\theta)\|_{H^1(\Omega)} \| K u_3\partial_3u_1\|_{H^3(\Omega)}\\
&\lesssim (1+\|(\theta,\zeta)\|_{H^3(\Omega)}) \|\partial_t K\|_{H^1(\Omega)}\|u_3\|_{H^3(\Omega)}\|\partial_3 u_1\|_{H^3(\Omega)}  \\
&\quad+ (1+\|(\theta,\zeta)\|_{H^3(\Omega)})(\|K-1\|_{H^3(\Omega)}+1) ( \|\partial_t u_3\|_{H^1(\Omega)}\|\partial_3 u_1\|_{H^3(\Omega)}+ \|\partial_t\partial_3 u_1\|_{H^1(\Omega)}\|u_3\|_{H^3(\Omega)}) \\
&\quad+ \|(\partial_t\zeta,\partial_t\theta)\|_{H^1(\Omega)}(\|K-1\|_{H^3(\Omega)}+1)\|u_3\|_{H^3(\Omega)}\|\partial_3u_1\|_{H^3(\Omega)}.
\end{split}
\]
Thanks to Lemma \ref{LemEstNablaQ_Pf} and \eqref{CoefEst_22}, we deduce
\begin{equation}\label{2_BoundD_tQ^1}
\begin{split}
&\| \partial_t ((\rho_0+ \rho_0'\theta+\zeta)Ku_3\partial_3 u_1) \|_{H^1(\Omega)} \\
&\lesssim (1+\|\zeta\|_{H^3(\Omega)}+\|\eta\|_{H^{5/2}(\Gamma)}) \|\partial_t\eta\|_{H^{3/2}(\Gamma)}\|u\|_{H^4(\Omega)}^2\\
&\quad +(1+\|\zeta\|_{H^3(\Omega)}+\|\eta\|_{H^{5/2}(\Gamma)}) (\|\eta\|_{H^{7/2}(\Gamma)}+1) \|\partial_t u\|_{H^2(\Omega)} \|u\|_{H^4(\Omega)}\\
&\quad+ (\|\partial_t\zeta\|_{H^1(\Omega)}+\|\partial_t\eta\|_{H^{1/2}(\Gamma)}) (\|\eta\|_{H^{7/2}(\Gamma)}+1)\|u\|_{H^4(\Omega)}^2\\
&\lesssim \cE_f^2.
\end{split}
\end{equation}
Since $K^2-1=-J^{-2}(2\partial_3\theta +(\partial_3\theta)^2)$, let us use \eqref{ProductEst} to obtain 
\[
\begin{split}
&\|\partial_t ((K^2+A^2+B^2-1)\partial_{33}^2 u_1  - 2AK \partial_{13}^2 u_1-2BK \partial_{23}^2 u_1)\|_{H^1(\Omega)} \\
&\lesssim (\|(A^2,B^2,AK,BK)\|_{H^3(\Omega)}+\|K^2-1\|_{H^3(\Omega)}) \|\partial_t u_1\|_{H^3(\Omega)} \\
&\quad+ \|\partial_t(A^2,B^2,K^2-1, AK,BK)\|_{H^1(\Omega)}\|\nabla^2  u_1\|_{H^3(\Omega)}\\
&\lesssim (\|(A,B)\|_{H^3(\Omega)}^2 + \|(AK,BK)\|_{H^3(\Omega)} + \|\partial_3\theta\|_{H^3(\Omega)}(1+ \|\partial_3\theta\|_{H^3(\Omega)}))\|\partial_t u_1\|_{H^3(\Omega)}\\
&\quad+ (\|(A,B)\|_{H^3(\Omega)}\|(\partial_t A,\partial_tB)\|_{H^1(\Omega)} + \|\partial_t\partial_3\theta\|_{H^1(\Omega)}(1+\|\partial_3\theta\|_{H^3(\Omega)}) ) \|\nabla^2u_1\|_{H^3(\Omega)}\\
&\quad+ \|\partial_t(AK,BK)\|_{H^1(\Omega)} \|\nabla^2u_1\|_{H^3(\Omega)}.
\end{split}
\]
Owing to  \eqref{CoefEst_21} and \eqref{CoefEst_23}, we deduce 
\begin{equation}\label{4_BoundD_tQ^1}
\begin{split}
&\|\partial_t ((K^2+A^2+B^2-1)\partial_{33}^2 u_1  - 2AK \partial_{13}^2 u_1-2BK \partial_{23}^2 u_1)\|_{H^1(\Omega)} \\
&\lesssim \|\eta\|_{H^{7/2}(\Gamma)}(1+\|\eta\|_{H^{7/2}(\Gamma)})  \|\partial_t u_1\|_{H^3(\Omega)} + \|\partial_t\eta\|_{H^{3/2}(\Gamma)}(1+\|\eta\|_{H^{7/2}(\Gamma)})  \|\nabla^2 u_1\|_{H^3(\Omega)} \\
&\lesssim \cE_f (\cE_f+\|\nabla \partial_t u_1\|_{H^2(\Omega)}+ \|\nabla u_1\|_{H^4(\Omega)}).
\end{split}
\end{equation}
We continue using \eqref{ProductEst}, Lemma \ref{LemEstNablaQ_Pf} and \eqref{CoefEst_23} to get
\begin{equation}\label{3_BoundD_tQ^1}
\begin{split}
\|\partial_t &(AK(\partial_3 q- g\rho_0'\theta))\|_{H^1(\Omega)} \\
&\lesssim \|\partial_t(AK)\|_{H^1(\Omega)} \|(q,\theta)\|_{H^3(\Omega)} +  \|AK\|_{H^3(\Omega)} (\|\partial_t \partial_3 q\|_{H^1(\Omega)} +\|\partial_t\theta\|_{H^1(\Omega)})\\
&\lesssim \|\partial_t\eta\|_{H^{3/2}(\Gamma)} (\|q\|_{H^3(\Omega)}+\|\eta\|_{H^{5/2}(\Gamma)})+ \|\eta\|_{H^{7/2}(\Gamma)}( \|\partial_tq\|_{H^2(\Omega)} + \|\partial_t\eta\|_{H^{1/2}(\Gamma)})\\
&\lesssim \cE_f(\cE_f+ \|\partial_tq\|_{H^2(\Omega)}).
\end{split}
\end{equation}
From the product estimate \eqref{ProductEst}, we obtain also
\begin{equation}\label{0_5_BoundD_tQ^1}
\begin{split}
&\|\partial_t ((K\partial_3K (A^2+B^2+1) -\partial_1(AK)-\partial_2(BK)-A\partial_1K-B\partial_2K)\partial_3 u_1)\|_{H^1(\Omega)} \\
&\lesssim (\|K\partial_3K(A^2+B^2+1)\|_{H^3(\Omega)} +\|\nabla(AK,BK)\|_{H^3(\Omega)} )\|\partial_t\partial_3 u_1\|_{H^1(\Omega)}\\
&\qquad +(\|A\partial_1K\|_{H^3(\Omega)}+\|B\partial_2K\|_{H^3(\Omega)})\|\partial_t\partial_3 u_1\|_{H^1(\Omega)} \\
&\qquad +(\|\partial_t(K\partial_3K)(A^2+B^2+1)\|_{H^1(\Omega)}+\|\nabla\partial_t(AK,BK)\|_{H^1(\Omega)})\|\partial_3u_1\|_{H^3(\Omega)}\\
&\qquad +(\|(\partial_tA \partial_1K,\partial_tB\partial_2K)\|_{H^1(\Omega)}+ \|(A\partial_t\partial_1K,B\partial_t\partial_2K)\|_{H^1(\Omega)})\|\partial_3u_1\|_{H^3(\Omega)}.
\end{split}
\end{equation}
We will bound each term in the r.h.s of \eqref{0_5_BoundD_tQ^1}. Thanks to the product estimate \eqref{ProductEst} again and \eqref{CoefEst_21}, \eqref{CoefEst_22}, we have 
\begin{equation}\label{1_5_BoundD_tQ^1}
\begin{split}
\|K\partial_3K(A^2+B^2+1)\|_{H^3(\Omega)} &\lesssim \|K\partial_3K\|_{H^3(\Omega)}(1+\|(A,B)\|_{H^3(\Omega)}^2) \\
&\lesssim (\|K-1\|_{H^3(\Omega)}+1)\|\partial_3K\|_{H^3(\Omega)}(1+\|(A,B)\|_{H^3(\Omega)}^2) \\
&\lesssim (1+\|\eta\|_{H^{7/2}(\Gamma)}) \|\eta\|_{H^{9/2}(\Gamma)}(1+\|\eta\|_{H^{7/2}(\Gamma)}^2) \\
&\lesssim \cE_f.
\end{split}
\end{equation}
In a same way, we have
\begin{equation}\label{2_5_BoundD_tQ^1}
\begin{split}
\|A\partial_1K\|_{H^3(\Omega)}+\|B\partial_2K\|_{H^3(\Omega)}  &\lesssim \|(A,B)\|_{H^3(\Omega)}\|\nabla K\|_{H^3(\Omega)} \lesssim \|\eta\|_{H^{7/2}(\Gamma)}\|\eta\|_{H^{9/2}(\Gamma)}\lesssim \cE_f^2,
\end{split}
\end{equation}
and
\begin{equation}\label{3_5_BoundD_tQ^1}
\begin{split}
\|(\partial_tA \partial_1K,\partial_tB\partial_2K)\|_{H^1(\Omega)} &\lesssim \|(\partial_tA,\partial_tB)\|_{H^1(\Omega)} \|\nabla K\|_{H^3(\Omega)} \lesssim \|\partial_t\eta\|_{H^{3/2}(\Gamma)}\|\eta\|_{H^{9/2}(\Gamma)}\lesssim \cE_f^2. 
\end{split}
\end{equation}
Using also \eqref{EstEta_H^1/2}, we deduce 
\begin{equation}\label{0_5_BoundD_tQ^1}
\begin{split}
 \|(A\partial_t\partial_1K,B\partial_t\partial_2K)\|_{H^1(\Omega)} &\lesssim \|(A,B)\|_{H^3(\Omega)} \|\partial_t \nabla K\|_{H^1(\Omega)} \lesssim \|\eta\|_{H^{7/2}(\Gamma)}\|\partial_t \eta\|_{H^{5/2}(\Gamma)} \lesssim \cE_f^2,
\end{split}
\end{equation}
and
\begin{equation}\label{4_5_BoundD_tQ^1}
\begin{split}
\|\nabla(AK,BK)\|_{H^3(\Omega)}+ \|\nabla\partial_t(AK,BK)\|_{H^1(\Omega)} &\lesssim \|\eta\|_{H^{9/2}(\Gamma)} + \|\partial_t\eta\|_{H^{5/2}(\Gamma)} \lesssim \cE_f.
\end{split}
\end{equation}
Thanks to \eqref{ProductEst}, \eqref{CoefEst_22} and \eqref{EstEta_H^1/2} again, let us estimate the term $\|\partial_t(K\partial_3K)\|_{H^1(\Omega)}$ as follows
\[\begin{split}
\|\partial_t(K\partial_3K)\|_{H^1(\Omega)} &\lesssim \|\partial_tK\|_{H^1(\Omega)}\|\partial_3K\|_{H^3(\Omega)} + (1+\|K-1\|_{H^3(\Omega)})\|\partial_t\partial_3K\|_{H^1(\Omega)} \\
&\lesssim \|\partial_t\eta\|_{H^{3/2}(\Gamma)} \|\eta\|_{H^{9/2}(\Gamma)}+ (1+ \|\eta\|_{H^{7/2}(\Gamma)})\|\partial_t\eta\|_{H^{5/2}(\Gamma)}\\
&\lesssim \cE_f.
\end{split}\]
Owing to \eqref{CoefEst_21}, this yields
\begin{equation}\label{5_5_BoundD_tQ^1}
\begin{split}
\|\partial_t(K\partial_3K)(A^2+B^2+1)\|_{H^1(\Omega)} &\lesssim \|\partial_t(K\partial_3K)\|_{H^1(\Omega)} (1+\|(A,B)\|_{H^3(\Omega)}^2) \\
&\lesssim  \cE_f(1+\|\eta\|_{H^{7/2}(\Gamma)}^2)\lesssim \cE_f.
\end{split}
\end{equation}
The five inequalities above \eqref{1_5_BoundD_tQ^1}, \eqref{2_5_BoundD_tQ^1}, \eqref{3_5_BoundD_tQ^1}, \eqref{4_5_BoundD_tQ^1} and \eqref{5_5_BoundD_tQ^1} help us to obtain from \eqref{0_5_BoundD_tQ^1} that
\begin{equation}\label{5_BoundD_tQ^1}
\begin{split}
&\|\partial_t ((K\partial_3K (A^2+B^2+1) -\partial_1(AK)-\partial_2(BK)-A\partial_1K-B\partial_2K)\partial_3 u_1)\|_{H^1(\Omega)}\\
&\lesssim \cE_f(\|\partial_t u_1\|_{H^2(\Omega)}+ \|u_1\|_{H^4(\Omega)}) \\
&\lesssim \cE_f^2.
\end{split}
\end{equation}
Combining \eqref{1_BoundD_tQ^1}, \eqref{2_BoundD_tQ^1}, \eqref{4_BoundD_tQ^1}, \eqref{3_BoundD_tQ^1} and \eqref{5_BoundD_tQ^1}, we conclude 
\[
\begin{split}
\|\partial_t\cQ_1^2\|_{H^1(\Omega)} &\lesssim \cE_f(\cE_f+ \|\nabla \partial_t^2 u_1\|_{L^2(\Omega)}+\|\partial_tq\|_{H^2(\Omega)}+\|\nabla\partial_t u_1\|_{H^2(\Omega)})\lesssim \cE_f(\cE_f+\cD_f).
\end{split}
\]
\end{proof}

We are in position to show Proposition \ref{PropEstHorizonU}.
\begin{proof}[Proof of Proposition \ref{PropEstHorizonU}]
For any $\beta \in \N^2$ such that $1\leq |\beta|\leq 4$, multiplying by $\partial_h^\beta u$ on both sides of $\eqref{EqHorizontal}_2$ and integrating over $\Omega$, one has the identity
\[\begin{split}
\frac12 \frac{d}{dt} \int_\Omega \rho_0|\partial_h^\beta u|^2 + \int_\Omega(\nabla\partial_h^\beta q - \mu \Delta\partial_h^\beta u)\cdot \partial_h^\beta u +\int_\Omega g\rho_0'\partial_h^\beta \zeta \partial_h^\beta u_3 =\int_\Omega \partial_h^\beta u\cdot \partial_h^\beta \cQ^2.
\end{split}\]
Using the integration by parts and $\eqref{EqHorizontal}_{3,5}$, one has 
\begin{equation}\label{1stEstU_tH^4}
\begin{split}
\frac12 \frac{d}{dt} \int_\Omega \rho_0|\partial_h^\beta u|^2 + \frac12 \int_\Omega \mu|\bS \partial_h^\beta u|^2 
&=-\int_\Omega g\rho_0' \partial_h^\beta \zeta \partial_h^\beta u_3+ \int_\Omega \partial_h^\beta u\cdot \partial_h^\beta \cQ^2 - \int_\Gamma g\rho_+\partial_h^\beta\eta \partial_h^\beta u_3 \\
&\qquad+ \int_\Omega \partial_h^\beta q\partial_h^\beta \cQ^3 - \int_\Gamma\partial_h^\beta u\cdot\partial_h^\beta \cQ^5.
\end{split}
\end{equation}
We estimate each integral in the r.h.s of \eqref{1stEstU_tH^4}.  For the first integral, we use Young's inequality and \eqref{EstJensen} to get that
\begin{equation}\label{0_1stEstU_tH^4}
\begin{split}
\int_\Omega g\rho_0' \partial_h^\beta \zeta \partial_h^\beta u_3  \lesssim \|\zeta\|_{H^4(\Omega)}\|u_3\|_{H^4(\Omega)} 
&\lesssim  \varepsilon^3  \|\zeta\|_{H^4(\Omega)}^2+ \varepsilon^{-3} \|u_3\|_{H^4(\Omega)}^2 \\
&\lesssim \varepsilon^3 ( \|\zeta\|_{H^4(\Omega)}^2 +\|u_3\|_{H^5(\Omega)}^2 ) +  \varepsilon^{-27}\|u_3\|_{L^2(\Omega)}^2.
\end{split}
\end{equation}
For the third integral, it follows from the trace theorem and Young's inequality  that
\[
\begin{split}
\int_\Gamma g\rho_+\partial_h^\beta\eta \partial_h^\beta u_3 \lesssim \|\partial_h^\beta \eta\|_{H^{-1/2}(\Gamma)} \|\partial_h^\beta u_3\|_{H^{1/2}(\Gamma)}  &\lesssim \|\eta\|_{H^{|\beta|-1/2}(\Gamma)} \|u_3\|_{H^{|\beta|+1/2}(\Gamma)}\\
&\lesssim \|\eta\|_{H^{7/2}(\Gamma)} \|u_3\|_{H^5(\Omega)} \\
&\lesssim \varepsilon^3 \|u_3\|_{H^5(\Omega)}^2 + \varepsilon^{-3}\|\eta\|_{H^{7/2}(\Gamma)}^2.
\end{split}
\]
Thanks to \eqref{EstJensen} again, we have
\[
\|\eta\|_{H^{7/2}(\Gamma)}^2 \lesssim \varepsilon^6 \|\eta\|_{H^{9/2}(\Gamma)}^2 +\varepsilon^{-21} \|\eta\|_{L^2(\Gamma)}^2. 
\]
Hence,
\begin{equation}\label{1_1stEstU_tH^4}
\begin{split}
\int_\Gamma g\rho_+\partial_h^\beta\eta \partial_h^\beta u_3 \lesssim  \varepsilon^3 (\|\eta\|_{H^{9/2}(\Gamma)}^2 + \| u_3\|_{H^5(\Omega)}^2 )+ \varepsilon^{-24} \|\eta\|_{L^2(\Gamma)}^2.
\end{split}
\end{equation}
For the fourth integral, we use Cauchy-Schwarz's inequality and \eqref{EstTildeQ1} to have 
\begin{equation}\label{2_1stEstU_tH^4}
 \int_\Omega \partial_h^\beta q\partial_h^\beta \cQ^3 \lesssim \|q\|_{H^4(\Omega)} \|\cQ^3\|_{H^4(\Omega)} \lesssim \cE_f\cD_f(\cE_f+\cD_f).
\end{equation}
For the second and fifth integral, we split into two cases.  For $\beta \in \N^2$ such that $1\leq |\beta|\leq 3 $, we use trace theorem and \eqref{EstTildeQ1} to bound
\begin{equation}\label{3rdEstU_tH^4}
\begin{split}
\int_\Omega \partial_h^\beta u\cdot \partial_h^\beta \cQ^2 -\int_\Gamma \partial_h^\beta u\cdot\partial_h^\beta \cQ^5 &\lesssim \|u\|_{H^3(\Omega)} \|\cQ^2\|_{H^3(\Omega)}+\|u\|_{H^3(\Gamma)} \|\cQ^5\|_{H^3(\Gamma)} \\
&\lesssim  \|u\|_{H^3(\Omega)} \|\cQ^2\|_{H^3(\Omega)}+\|u\|_{H^4(\Omega)} \|\cQ^5\|_{H^3(\Gamma)}\\
&\lesssim \cE_f^2(\cE_f+\cD_f).
\end{split}
\end{equation}
For $\beta=(\beta_1,\beta_2) \in \N^2$ such that $|\beta|=4$, we assume $\beta_1\geq 1$ and  write 
\begin{equation}\label{BetaPM}
\beta_- =(\beta_1-1,\beta_2) \quad\text{and}\quad \beta_+=(\beta_1+1, \beta_2).
\end{equation}
Thanks to \eqref{EstTildeQ1} again, we estimate that 
\begin{equation}\label{4thEstU_tH^4}
\begin{split}
\Big|\int_\Omega  \partial_h^\beta u\cdot\partial_h^\beta \cQ^2\Big|= \Big| \int_\Omega \partial_h^{\beta_+} u\cdot \partial_h^{\beta_-} \cQ^2\Big| &\lesssim \|\partial_h^{\beta_+} u\|_{L^2(\Omega)} \|\partial_h^{\beta_-} \cQ^2\|_{L^2(\Omega)} \\
& \lesssim \|u\|_{H^5(\Omega)} \| \cQ^2\|_{H^3(\Omega)}\lesssim \cE_f(\cE_f^2+\cD_f^2).
\end{split}
\end{equation}
Using the trace theorem also, we have
\begin{equation}\label{5thEstU_tH^4}
\begin{split}
\Big| \int_\Gamma \partial_h^\beta u\cdot  \partial_h^\beta \cQ^5 \Big|= \Big|\int_\Gamma  \partial_h^{\beta_+} u\cdot  \partial_h^{\beta_-}\cQ^5 \Big| &\lesssim \|\partial_h^{\beta_+} u\|_{H^{-1/2}(\Gamma)} \|\partial_h^{\beta_-} \cQ^5\|_{H^{1/2}(\Gamma)}\\
&\lesssim \|u\|_{H^{|\beta_+|-1/2}(\Gamma)} \|\cQ^5\|_{H^{|\beta_-|+1/2}(\Gamma)}\\
&\lesssim \|u\|_{H^5(\Omega)} \|\cQ^5\|_{H^{7/2}(\Gamma)}\\
&\lesssim \cE_f(\cE_f^2+\cD_f^2).
\end{split}
\end{equation}
Substituting \eqref{0_1stEstU_tH^4}, \eqref{1_1stEstU_tH^4}, \eqref{2_1stEstU_tH^4},   \eqref{3rdEstU_tH^4}, \eqref{4thEstU_tH^4}, \eqref{5thEstU_tH^4} into \eqref{1stEstU_tH^4},  we obtain 
\begin{equation}\label{6thEstU_tH^4}
\begin{split}
\frac{d}{dt} \int_\Omega \rho_0|\partial_h^\beta u|^2 +  \int_\Omega \mu|\bS \partial_h^\beta u|^2 &\lesssim \varepsilon^3 (\cE_f^2 + \|\nabla u_3\|_{H^4(\Omega)}^2) + \cE_f(\cE_f^2+\cD_f^2)  \\
&\qquad+\varepsilon^{-27} (\|u_3\|_{L^2(\Omega)}^2+ \|\eta\|_{L^2(\Gamma)}^2).
\end{split}
\end{equation}
By Korn's inequality \eqref{KornIne}, one has 
\[
\int_\Omega \mu|\bS \partial_h^\beta u|^2 \gtrsim \|\nabla \partial_h^\beta u\|_{L^2(\Omega)}^2.
\]
Hence, we deduce from \eqref{6thEstU_tH^4} that
\[
\begin{split}
\frac{d}{dt} \int_\Omega \rho_0|\partial_h^\beta u|^2 +  \|\nabla \partial_h^\beta u\|_{L^2(\Omega)}^2 &\lesssim \varepsilon^3 (\cE_f^2 + \|\nabla u_3\|_{H^4(\Omega)}^2)+\varepsilon^{-27} (\|u_3\|_{L^2(\Omega)}^2+ \|\eta\|_{L^2(\Gamma)}^2) \\
&\qquad+ \cE_f(\cE_f^2+\cD_f^2).
\end{split}
\]
Integrating the resulting inequality in time, we obtain \eqref{EstU_tH^4}.

To prove \eqref{EstPartial_tU_H^2}, we  compute  from \eqref{EqHorizontal} that
\begin{equation}\label{EqDtHorizontal}
\begin{cases}
\rho_0 \partial_t^2 \partial_h^\beta u+ \nabla\partial_t \partial_h^\beta q -\mu\Delta\partial_t \partial_h^\beta u -g\rho_0'\partial_h^\beta u_3 e_3 =-g\partial_h^\beta \cQ^1 e_3+ \partial_t\partial_h^\beta \cQ^2 &\quad\text{in }\Omega, \\
\text{div}\partial_t\partial_h^\beta u =\partial_t\partial_h^\beta \cQ^3 &\quad\text{in }\Omega, \\
(\partial_t \partial_h^\beta q\text{Id} -\mu\bS \partial_t\partial_h^\beta u)e_3 = g\rho_+\partial_h^\beta u_3 e_3+ g\rho_+\partial_h^\beta \cQ^4 e_3+\partial_h^\beta \cQ^5 &\quad\text{on }\Gamma.
\end{cases}
\end{equation}
For any $\beta \in \N^2$ with $ |\beta|=1$ or 2, multiplying by $\partial_t\partial_h^\beta u$ on both sides of $\eqref{EqDtHorizontal}_1$ and integrating over $\Omega$, one has the identity 
\[\begin{split}
&\frac12 \frac{d}{dt} \int_\Omega( \rho_0|\partial_t \partial_h^\beta u|^2 -g\rho_0'|\partial_h^\beta u_3|^2)+ \int_\Omega(\nabla\partial_t\partial_h^\beta q - \mu \Delta\partial_t\partial_h^\beta u)\cdot \partial_t\partial_h^\beta u \\
&=\int_\Omega (-g \partial_h^\beta\cQ^1 \partial_t\partial_h^\beta u_3 +  \partial_t\partial_h^\beta \cQ^2 \cdot \partial_t \partial_h^\beta u).
\end{split}\]
Using the integration by parts, one has 
\[\begin{split}
&\frac12 \frac{d}{dt}\Big( \int_\Omega \rho_0|\partial_t \partial_h^\beta u|^2 - \int_\Omega g\rho_0'|\partial_h^\beta u_3|^2 \Big)  + \int_\Gamma(\partial_t\partial_h^\beta q\text{Id}-\mu\bS\partial_t\partial_h^\beta u)e_3 \cdot \partial_t\partial_h^\beta u \\
&= \int_\Omega \partial_t\partial_h^\beta q\text{div}\partial_t\partial_h^\beta u- \frac{\mu}2\int_\Omega |\bS\partial_t\partial_h^\beta u|^2 + \int_\Omega (-g \partial_h^\beta\cQ^1 \partial_t\partial_h^\beta u_3 +  \partial_t\partial_h^\beta \cQ^2 \cdot \partial_t \partial_h^\beta u)
\end{split}\]
By $\eqref{EqDtHorizontal}_{2,3}$, we observe
\begin{equation}\label{1stEstD_tU_H^2}
\begin{split}
&\frac12 \frac{d}{dt}\Big(\int_\Omega \rho_0| \partial_t\partial_h^\beta  u|^2 +\int_\Gamma g\rho_+ |\partial_h^\beta u_3|^2 - \int_\Omega g\rho_0'|\partial_h^\beta u_3|^2 \Big) + \frac12 \int_\Omega\mu|\bS\partial_t\partial_h^\beta u|^2 \\
&= \int_\Omega (-g \partial_h^\beta\cQ^1 \partial_t\partial_h^\beta u_3 +  \partial_t\partial_h^\beta \cQ^2 \cdot \partial_t \partial_h^\beta u) +\int_\Omega \partial_t\partial_h^\beta q \partial_t\partial_h^\beta \cQ^3 \\
&\qquad-\int_\Gamma (\partial_t\partial_h^\beta \cQ^5+g\rho_+\partial_h^\beta  \cQ^4e_3)\cdot \partial_t\partial_h^\beta u.
\end{split}
\end{equation}
We now estimate each integral in the r.h.s of \eqref{1stEstD_tU_H^2}. For the first and third integral, we use Cauchy-Schwarz's inequality and \eqref{1stIneLemHorizon}, \eqref{EstTildeQ1} to have
\begin{equation}\label{0_EstD_tU_H^2}
\begin{split}
 \int_\Omega  \partial_h^\beta\cQ^1 \partial_t\partial_h^\beta u_3 +\int_\Omega \partial_t\partial_h^\beta q \partial_t\partial_h^\beta \cQ^3 &\lesssim \|\partial_t u_3\|_{H^2(\Omega)} \|\cQ^1\|_{H^2(\Omega)} + \|\partial_tq\|_{H^2(\Omega)}\|\partial_t \cQ^3\|_{H^2(\Omega)} \\
&\lesssim \cE_f(\cE_f^2+\cD_f^2).
\end{split}
\end{equation}
With the same notations $\beta_\pm$ \eqref{BetaPM}, we use Cauchy-Schwarz's inequality again and \eqref{EstTildeQ1} to bound the second integral as
\begin{equation}\label{1_EstD_tU_H^2}
\begin{split}
\Big|\int_\Omega  \partial_t\partial_h^\beta u\cdot  \partial_t\partial_h^\beta  \cQ^2 \Big| =\Big|\int_\Omega  \partial_t\partial_h^{\beta_+} u\cdot \partial_t\partial_h^{\beta_-} \cQ^2  \Big| &\lesssim \|\partial_t u\|_{H^{|\beta_+|}(\Gamma)}\|\partial_t\cQ^2\|_{H^{|\beta_-|}(\Omega)} \\
&\lesssim \|\partial_t u\|_{H^3(\Omega)} \|\partial_t\cQ^2\|_{H^1(\Omega)}\\
&\lesssim \cE_f(\cE_f^2+\cD_f^2),
\end{split}
\end{equation}
and to bound the fourth integral as
\begin{equation}\label{3rdEstD_tU_H^2}
\begin{split}
\Big|\int_\Gamma  \partial_t\partial_h^\beta u\cdot  \partial_t\partial_h^\beta \cQ^5 \Big|=\Big|\int_\Gamma \partial_t\partial_h^{\beta_+} u\cdot \partial_t\partial_h^{\beta_-} \cQ^5\Big| &\lesssim \|\partial_t\partial_h^{\beta_+} u\|_{H^{-1/2}(\Gamma)} \| \partial_t\partial_h^{\beta_-} \cQ^5\|_{H^{1/2}(\Gamma)} \\
&\lesssim  \|\partial_t u\|_{H^{|\beta_+|-1/2}(\Gamma)}\|\partial_t\cQ^5\|_{H^{|\beta_-|+1/2}(\Gamma)}\\
&\lesssim \|\partial_t u\|_{H^3(\Omega)} \|\partial_t\cQ^5\|_{H^{3/2}(\Gamma)}\\
&\lesssim \cE_f(\cE_f^2+\cD_f^2).
\end{split}
\end{equation}
Thanks to the trace theorem and \eqref{1stIneLemHorizon}, we bound the fifth integral as
\begin{equation}\label{2ndEstD_tU_H^2}
\begin{split}
\int_\Gamma\partial_t\partial_h^\beta u_3 \partial_h^\beta \cQ^4 \lesssim \|\partial_t\partial^\beta u_3\|_{H^{-1/2}(\Gamma)} \|\partial^\beta \cQ^4\|_{H^{1/2}(\Gamma)} 
&\lesssim \|\partial_t u_3\|_{H^{|\beta|-1/2}(\Omega)} \| \cQ^4\|_{H^{|\beta|+1/2}(\Gamma)}\\
&\lesssim \|\partial_t u_3\|_{H^2(\Omega)} \|\cQ^4\|_{H^{5/2}(\Gamma)}\\
&\lesssim \cE_f^3.
\end{split}
\end{equation}
In view of \eqref{0_EstD_tU_H^2}, \eqref{1_EstD_tU_H^2}, \eqref{3rdEstD_tU_H^2} and \eqref{2ndEstD_tU_H^2}, we get
\[
\begin{split}
\frac{d}{dt}\Big(\int_\Omega \rho_0| \partial_t\partial_h^\beta  u|^2 +\int_\Gamma g\rho_+ |\partial_h^\beta u_3|^2 - \int_\Omega g\rho_0'|\partial_h^\beta u_3|^2 \Big) + \int_\Omega |\bS\partial_t\partial_h^\beta u|^2 \lesssim \cE_f(\cE_f^2+\cD_f^2).
\end{split}
\]
Integrating in time and using Korn's inequality \eqref{KornIne}, we obtain
\[\begin{split}
&\| \partial_t \partial_h^\beta u(t)\|_{L^2(\Omega)}^2 +\int_0^t \|\nabla \partial_t \partial_h^\beta  u(s)\|_{L^2(\Omega)}^2 ds  \lesssim \cE_f^2(0) +\|\partial_h^\beta u_3(t)\|_{L^2(\Omega)}^2 + \int_0^t \cE_f(\cE_f^2+\cD_f^2)(s)ds.
\end{split}\]
Combining the resulting inequality and \eqref{EstU_tH^4}, the inequality \eqref{EstPartial_tU_H^2} follows. Proof of Proposition \ref{PropEstHorizonU} is complete.
\end{proof}

\subsection{Estimates of the perturbation density} 

 We continue deriving the energy evolution of the  space-time derivatives of $\zeta$. We rewrite $\eqref{EqNS_Lagrangian}_1$ as
\begin{equation}\label{EqZetaTilde}
\partial_t\zeta = K\partial_t\theta\partial_3\zeta - u_j\cA_{jk}\partial_k\zeta -\rho_0' u_3 -\rho_0 \cQ^3 +  \tilde\cQ^1,
\end{equation}
where 
\begin{equation}\label{TildeQ1}
\tilde \cQ^1=\rho_0'' K\theta\partial_t\theta -q\cA_{lk}\partial_k u_l-\cA_{lk}\partial_k(\rho_0'\theta u_l)-(\cA_{lk}-\delta_{lk})\partial_k(\rho_0 u_l).
\end{equation}
We first present the estimate of  $\tilde \cQ^1$.
\begin{lemma}\label{LemH^4TildeQ1}
There holds 
\begin{equation}\label{H^4TildeQ1}
\|\tilde \cQ^1\|_{H^4(\Omega)} \lesssim \cE_f(\cE_f+\cD_f).
\end{equation}
\end{lemma}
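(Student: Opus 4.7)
\medskip

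\noindent\textbf{Proof proposal.} The plan is to estimate each of the four summands of $\tilde\cQ^1$ in \eqref{TildeQ1} separately in $H^4(\Omega)$, using the Moser-type product estimate \eqref{ProductEst}, Sobolev embedding $H^2(\Omega)\hookrightarrow L^\infty(\Omega)$, the Poisson-extension bound $\|\theta\|_{H^{s+1/2}(\Omega)}\lesssim\|\eta\|_{H^s(\Gamma)}$ from Lemma \ref{LemEstNablaQ_Pf}, and the coefficient estimates \eqref{CoefEstimates}--\eqref{CoefEst_24}. The guiding principle is that $\cE_f$ controls $u,\zeta$ in $H^4(\Omega)$, $q$ in $H^3(\Omega)$, and $\eta$ in $H^{9/2}(\Gamma)$, whereas $\cD_f$ is needed only when a factor forces $u$ into $H^5(\Omega)$ or $q$ into $H^4(\Omega)$.

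For the first term $\rho_0''K\theta\partial_t\theta$, since $\rho_0''$ is smooth with compact support, one simply writes $K=1+(K-1)$ and applies the algebra property of $H^4(\Omega)$ together with Lemma \ref{LemEstNablaQ_Pf} to bound this by a product of $\|\theta\|_{H^4(\Omega)}\lesssim\|\eta\|_{H^{7/2}(\Gamma)}$ and $\|\partial_t\theta\|_{H^4(\Omega)}\lesssim\|\partial_t\eta\|_{H^{7/2}(\Gamma)}$, yielding an estimate of order $\cE_f^2$ (note that $\|\partial_t\eta\|_{H^{7/2}(\Gamma)}\lesssim\cE_f+\cE_f^2$ by \eqref{EstEta_H^1/2}). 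For the third term $\cA_{lk}\partial_k(\rho_0'\theta u_l)$, expanding $\partial_k(\rho_0'\theta u_l)$ into three pieces and distributing the $\cA$-factor as $\cA=\mathrm{Id}+(\cA-\mathrm{Id})$, every resulting product involves only $\theta,u$ and their first derivatives; all factors are controlled in $H^4(\Omega)$ by $\cE_f$ via Lemma \ref{LemEstNablaQ_Pf} and \eqref{CoefEst_24}, so the algebra estimate gives a bound $\lesssim\cE_f^2$.

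The two remaining terms carry the actual difficulty, since they involve $q$ and $\nabla u$ respectively, both of which can only be placed at top regularity in $\cD_f$. For $q\cA_{lk}\partial_k u_l$, the trick is to apply the Moser form of \eqref{ProductEst}, namely $\|fg\|_{H^4}\lesssim\|f\|_{H^4}\|g\|_{L^\infty}+\|f\|_{L^\infty}\|g\|_{H^4}$, with the split $f=q$, $g=\cA\nabla u$ and then $f=\nabla u$, $g=q\cA$: Sobolev embedding $H^2\hookrightarrow L^\infty$ gives $\|q\|_{L^\infty}\lesssim\|q\|_{H^3}\lesssim\cE_f$ and $\|\nabla u\|_{L^\infty}\lesssim\|u\|_{H^3}\lesssim\cE_f$, while the top-order factors contribute $\|q\|_{H^4}\lesssim\cD_f$, $\|u\|_{H^5}\lesssim\cD_f$ and $\|\cA\|_{H^4}\lesssim 1+\cE_f$; multiplying we obtain $\lesssim\cE_f\cD_f+\cE_f^2\cD_f\lesssim\cE_f(\cE_f+\cD_f)$. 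The very same Moser splitting applied to $(\cA_{lk}-\delta_{lk})\partial_k(\rho_0 u_l)$ yields $\|\cA-\mathrm{Id}\|_{L^\infty}\|\nabla u\|_{H^4}+\|\cA-\mathrm{Id}\|_{H^4}\|\nabla u\|_{L^\infty}\lesssim\cE_f\cD_f+\cE_f^2$, using that $\|\cA-\mathrm{Id}\|_{L^\infty}\lesssim\|\cA-\mathrm{Id}\|_{H^2}\lesssim\|\eta\|_{H^{5/2}(\Gamma)}\lesssim\cE_f$ by \eqref{CoefEst_24}.

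The main obstacle, as indicated, is the pressure--velocity term $q\cA\nabla u$, where a naive application of the Banach-algebra property would produce $\cD_f^2$ and lose the required linear factor of $\cE_f$. Resolving it hinges precisely on the Moser-type distribution of derivatives above, which assigns the top-order norm to one factor and only the $L^\infty$ norm (bounded by a $H^2$ Sobolev embedding, hence by $\cE_f$) to the other. Summing the four estimates concludes \eqref{H^4TildeQ1}.
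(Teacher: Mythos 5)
Your approach follows the same blueprint as the paper's: split $\tilde\cQ^1$ into its four summands, bound each in $H^4(\Omega)$ via the product estimate \eqref{ProductEst}, the Gagliardo--Nirenberg splitting \eqref{GN_ine}, the Poisson bound of Lemma \ref{LemEstNablaQ_Pf}, and the coefficient estimates \eqref{CoefEst_22}--\eqref{CoefEst_24}. Your treatment of $\rho_0''K\theta\partial_t\theta$, of $q\cA_{lk}\partial_k u_l$, and of $(\cA_{lk}-\delta_{lk})\partial_k(\rho_0 u_l)$ matches the paper essentially verbatim, and your observation that the $q\cA\nabla u$ term is the one that forces a Moser-type $L^\infty$/$H^4$ split rather than a naive algebra bound is exactly the point.

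However, your classification of the third term $\cA_{lk}\partial_k(\rho_0'\theta u_l)$ as an ``easy'' term bounded by $\cE_f^2$ is incorrect. You assert that after expanding the derivative ``every resulting product involves only $\theta,u$ and their first derivatives; all factors are controlled in $H^4(\Omega)$ by $\cE_f$,'' but the piece $\rho_0'\theta\,\partial_k u_l$ contains $\nabla u$, and $\|\nabla u\|_{H^4(\Omega)}\lesssim\|u\|_{H^5(\Omega)}$ lives in $\cD_f$, not $\cE_f$ (the energy $\cE_f$ only controls $u$ in $H^4(\Omega)$). The paper's own inequality \eqref{3_BoundQ^1} reads
\[
\|\cA_{lk}\partial_k(\rho_0'\theta u_l)\|_{H^4(\Omega)} \lesssim (1+\|\eta\|_{H^{9/2}(\Gamma)})\|\eta\|_{H^{9/2}(\Gamma)}\|u\|_{H^5(\Omega)} \lesssim \cE_f(\cE_f+\|\nabla u\|_{H^4(\Omega)}),
\]
i.e.\ $\lesssim\cE_f(\cE_f+\cD_f)$, not $\cE_f^2$. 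This mis-accounting does not break the lemma, since $\cE_f(\cE_f+\cD_f)\ge\cE_f^2$ and the Moser-type split you deploy on the other two hard terms applies here equally well, but it does indicate you should recheck which top-order norms of $u$ are carried by $\cE_f$ versus $\cD_f$ before declaring a term harmless.
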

\begin{proof}
We use  \eqref{ProductEst}, \eqref{CoefEst_22} and Lemma \ref{LemEstNablaQ_Pf} to have that 
\[
\begin{split}
\|\rho_0''K\theta\partial_t \theta\|_{H^4(\Omega)} &\lesssim (1+\|K-1\|_{H^4(\Omega)}) \|\theta\|_{H^4(\Omega)}\|\partial_t\theta\|_{H^4(\Omega)} \\
&\lesssim (1+\|\eta\|_{H^{9/2}(\Gamma)})\|\eta\|_{H^{7/2}(\Gamma)} \|\partial_t\eta\|_{H^{7/2}(\Gamma)}.
\end{split}
\]
Combining  \eqref{EstEta_H^1/2} and the resulting inequality, we have
\begin{equation}\label{1_BoundQ^1}
\|\rho_0''K\theta\partial_t \theta\|_{H^4(\Omega)}  \lesssim \cE_f^2. 
\end{equation}
Using  Lemma \ref{LemEstNablaQ_Pf} again and \eqref{ProductEst}, \eqref{CoefEst_24}, one has
\begin{equation}\label{3_BoundQ^1}
\begin{split}
\|\cA_{lk}\partial_k(\rho_0'\theta u_l)\|_{H^4(\Omega)} &\lesssim (1+\|\cA-\text{Id}\|_{H^4(\Omega)} )\|\theta\|_{H^5(\Omega)} \|u\|_{H^5(\Omega)}\\ &\lesssim (1+\|\eta\|_{H^{9/2}(\Gamma)})\|\eta\|_{H^{9/2}(\Gamma)}\|u\|_{H^5(\Omega)}\\
&\lesssim \cE_f(\cE_f+\|\nabla u\|_{H^4(\Omega)})
\end{split}
\end{equation}
and 
\begin{equation}\label{4_BoundQ^1}
\|(\cA_{lk}-\delta_{lk})\partial_k(\rho_0 u_l)\|_{H^4(\Omega)} \lesssim \|\cA-\text{Id}\|_{H^4(\Omega)} \|u\|_{H^5(\Omega)} \lesssim  \cE_f(\cE_f+\|\nabla u\|_{H^4(\Omega)}).
\end{equation}
Thanks to  Gagliardo-Nireberg's inequality also and  \eqref{CoefEst_24}, we obtain
\begin{equation}\label{2_BoundQ^1}
\begin{split}
\|q\cA_{lk}\partial_k u_l\|_{H^4(\Omega)}&\lesssim (1+\|\cA-\text{Id}\|_{H^4(\Omega)} )(\|q\|_{H^2(\Omega)} \|\nabla u\|_{H^4(\Omega)}+\|q\|_{H^4(\Omega)}\|\nabla u\|_{H^2(\Omega)})\\
&\lesssim \cE_f(\cE_f+\|q\|_{H^4(\Omega)}+\|\nabla u\|_{H^4(\Omega)}),
\end{split}
\end{equation}
Those above estimates, \eqref{1_BoundQ^1}, \eqref{3_BoundQ^1},  \eqref{4_BoundQ^1} and \eqref{2_BoundQ^1} imply 
\[
\|\tilde\cQ^1\|_{H^4(\Omega)}\lesssim \cE_f(\cE_f+\|q\|_{H^4(\Omega)}+\|\nabla u\|_{H^4(\Omega)})\lesssim \cE_f(\cE_f+\cD_f).
\]
Lemma \ref{LemH^4TildeQ1} is proven.
\end{proof}
We derive the following proposition. 
\begin{proposition}\label{PropEstZeta}
The following inequality holds
\begin{equation}\label{EstZeta_H^4}
\begin{split}
\| \zeta(t)\|_{H^4(\Omega)}^2 &\leq C_9\Big( \cE_f^2(0) +  \varepsilon^3 \int_0^t (\|\zeta(s)\|_{H^4(\Omega)}^2 +  \|u_3(s)\|_{H^5(\Omega)}^2) ds \Big)\\
&\qquad + C_9\Big( \varepsilon^{-27}\int_0^t \|(u_3,\zeta)(s)\|_{L^2(\Omega)}^2ds+ \int_0^t \cE_f(\cE_f^2+\cD_f^2)(s) ds\Big).
\end{split}
\end{equation}
\end{proposition}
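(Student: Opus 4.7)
The plan is to derive the $H^4(\Omega)$ energy estimate directly from the transport-type identity \eqref{EqZetaTilde}. For every multi-index $\alpha \in \N^3$ with $|\alpha|\leq 4$, I would apply $\partial^\alpha$ to \eqref{EqZetaTilde}, multiply by $\partial^\alpha\zeta$ and integrate over $\Omega$. The principal left-hand side reads
\[
\tfrac12 \tfrac{d}{dt}\|\partial^\alpha \zeta\|_{L^2(\Omega)}^2 - \int_\Omega K\partial_t\theta\,\partial_3\partial^\alpha\zeta \,\partial^\alpha\zeta + \int_\Omega u_j\cA_{jk}\partial_k\partial^\alpha\zeta\,\partial^\alpha\zeta,
\]
while the right-hand side carries the source $-\rho_0'\partial^\alpha u_3 - \rho_0\partial^\alpha\cQ^3 + \partial^\alpha\tilde\cQ^1$ together with the two families of commutators $[\partial^\alpha, K\partial_t\theta]\partial_3\zeta$ and $[\partial^\alpha, u_j\cA_{jk}]\partial_k\zeta$.

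The two transport terms on the left are symmetrised via integration by parts, producing volume contributions involving $\partial_3(K\partial_t\theta)$ and $\partial_k(u_j\cA_{jk})$ (which are $\cE_f$-small by Lemma \ref{LemEstNablaQ_Pf} and the coefficient estimates \eqref{CoefEst_21}--\eqref{CoefEst_24}), plus boundary terms on $\Gamma$ of the form $\tfrac12\int_\Gamma K\partial_t\theta\,|\partial^\alpha\zeta|^2$ and $\tfrac12\int_\Gamma(u\cdot\cN)|\partial^\alpha\zeta|^2$. These boundary contributions are absorbed by the trace theorem together with the regularity of $\eta$ and $\partial_t\eta$ already provided by Propositions \ref{PropEstTransportEtaH^4}--\ref{PropEstEta^9/2} and Lemma \ref{LemEstEta_H^1/2}. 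The commutators are estimated by the product inequality \eqref{ProductEst}, splitting the worst case (all derivatives on $\zeta$) from mixed cases; combined with Lemma \ref{LemEstNablaQ_Pf} and \eqref{CoefEst_22}--\eqref{CoefEst_24}, this produces a global bound of order $\cE_f(\cE_f^2+\cD_f^2)$. The source contributions $\rho_0\partial^\alpha\cQ^3$ and $\partial^\alpha\tilde\cQ^1$ are handled by $\|\cQ^3\|_{H^4(\Omega)} \lesssim \cE_f(\cE_f+\cD_f)$ from \eqref{EstTildeQ1} and by $\|\tilde\cQ^1\|_{H^4(\Omega)}\lesssim \cE_f(\cE_f+\cD_f)$ from Lemma \ref{LemH^4TildeQ1}, each absorbed into the desired $\cE_f(\cE_f^2+\cD_f^2)$ budget via Cauchy--Schwarz.

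The main obstacle is the linear source $-\rho_0'\partial^\alpha u_3$: since $\cE_f$ only controls $\|u\|_{H^4(\Omega)}$ and $\cD_f$ reaches $\|u\|_{H^5(\Omega)}$, the natural bound $\|\rho_0'\partial^\alpha u_3\|_{L^2(\Omega)}\|\partial^\alpha\zeta\|_{L^2(\Omega)} \lesssim \|u_3\|_{H^4(\Omega)}\|\zeta\|_{H^4(\Omega)}$ cannot be placed directly inside the $\cD_f$ budget, and in addition the $\|\zeta\|_{H^4(\Omega)}^2$ factor it generates is precisely the energy we wish to control. I would therefore apply Young's inequality with a small parameter followed by the interpolation inequality \eqref{EstJensen}, yielding
\[
\|u_3\|_{H^4(\Omega)}^2 \lesssim \varepsilon^3\|u_3\|_{H^5(\Omega)}^2 + \varepsilon^{-27}\|u_3\|_{L^2(\Omega)}^2,\qquad \|\zeta\|_{H^4(\Omega)}\|u_3\|_{H^4(\Omega)} \lesssim \varepsilon^3 \|\zeta\|_{H^4(\Omega)}^2 + \varepsilon^{-3}\|u_3\|_{H^4(\Omega)}^2,
\]
so that, after one more use of \eqref{EstJensen}, every term falls into one of the four buckets on the right-hand side of \eqref{EstZeta_H^4}. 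Integrating in time and summing over $|\alpha|\leq 4$ finishes the argument; the residual $\varepsilon^3\int_0^t\|\zeta(s)\|_{H^4(\Omega)}^2ds$ on the right is kept as a smallness parameter to be absorbed at the end of the proof of Proposition \ref{PropAprioriEnergy} (when summed with the analogous $\varepsilon^3$ contributions from Propositions \ref{PropEstTransportEtaH^4}--\ref{PropEstHorizonU}).
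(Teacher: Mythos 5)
Your outline matches the paper's structure up to the decisive step, where it fails. After applying $\partial^\alpha$ to \eqref{EqZetaTilde} and symmetrising the transport terms by parts, you obtain the boundary contribution
\[
\int_\Gamma \bigl(K\partial_t\theta - u_j\cA_{j3}\bigr)\,|\partial^\alpha\zeta|^2\,dx_h,
\]
and you propose to control it ``by the trace theorem together with the regularity of $\eta$ and $\partial_t\eta$.'' This cannot work. When $|\alpha|=4$ (e.g.\ $\partial^\alpha=\partial_3^4$) the integrand requires a trace of $\partial^\alpha\zeta$, hence $\zeta\in H^{9/2}(\Omega)$; but $\cE_f$ only places $\zeta$ in $H^4(\Omega)$, and $\zeta$ does not appear in $\cD_f$ at all. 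No amount of smoothness on $\eta,\partial_t\eta$ (which only controls the coefficient $K\partial_t\theta$) can make up for the uncontrolled trace of $\partial^\alpha\zeta$. Furthermore, your second boundary integrand $(u\cdot\cN)|\partial^\alpha\zeta|^2$ is mis-stated: from $u_j\cA_{jk}\partial_k|\partial^\alpha\zeta|^2$ the boundary factor is $u_j\cA_{j3}=K(u\cdot\cN)$, not $u\cdot\cN$, which suggests you did not track the structure carefully enough to notice the cancellation.

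The paper's argument avoids this entirely by observing that the boundary term vanishes identically: on $\Gamma$ one has $\theta=\eta$, so $K\partial_t\theta|_\Gamma=K\partial_t\eta=K(u\cdot\cN)$ by the kinematic boundary condition $\eqref{EqNS_Lagrangian}_4$, while $u_j\cA_{j3}=-AKu_1-BKu_2+Ku_3=K(u\cdot\cN)$ by the formula \eqref{MatrixA} for $\cA$ and the fact that $A=\partial_1\eta$, $B=\partial_2\eta$ on $\Gamma$. Thus $K\partial_t\theta-u_j\cA_{j3}\equiv 0$ on $\Gamma$, and only the interior terms $-\int_\Omega(\partial_3(K\partial_t\theta)-\partial_k(u_j\cA_{jk}))|\partial^\alpha\zeta|^2$ remain, which are harmlessly $O(\cE_f^3)$. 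This exact cancellation is the heart of the lemma, and your proof is incomplete without it. The remainder of your outline (commutator estimates via \eqref{ProductEst} and \eqref{CoefEst_22}--\eqref{CoefEst_24}, sources $\rho_0\cQ^3$ and $\tilde\cQ^1$ via \eqref{EstTildeQ1} and Lemma~\ref{LemH^4TildeQ1}, and the interpolation trick \eqref{EstJensen} with Young's inequality for $-\rho_0'\partial^\alpha u_3$) is consistent with the paper's proof and would go through once this hole is filled.
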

\begin{proof}
 It can be seen from $\eqref{EqPertur}_1$   that 
\[
\frac12\frac{d}{dt}\|\zeta\|_{L^2(\Omega)}^2 = -\int_\Omega \rho_0' u_3\zeta +\int_\Omega \cQ^1\zeta \lesssim (\|u_3\|_{L^2(\Omega)}+\|\cQ^1\|_{L^2(\Omega)})\|\zeta\|_{L^2(\Omega)}.
\]
Due to \eqref{1stIneLemHorizon}, we thus have
\[
\frac{d}{dt}\|\zeta\|_{L^2(\Omega)}^2 \lesssim \|u_3\|_{L^2(\Omega)}\|\zeta\|_{L^2(\Omega)}+ \cE_f^3.
\]
This yields
\begin{equation}\label{-1_InePropEstZeta}
\begin{split}
\|\zeta(t)\|_{L^2(\Omega)}^2 &\lesssim \cE_f^2(0)+ \int_0^t \|(u_3,\zeta)(s)\|_{L^2(\Omega)}^2 ds + \int_0^t\cE_f^3(s)ds \\
&\lesssim  \cE_f^2(0)+ \int_0^t (\varepsilon^3 \|u_3(s)\|_{L^2(\Omega)}^2 +\varepsilon^{-3}\|\zeta(s)\|_{L^2(\Omega)}^2) ds + \int_0^t\cE_f^3(s)ds
\end{split}
\end{equation}

For $\alpha\in \N^3, 1\leq |\alpha|\leq 4$,  we have from \eqref{EqZetaTilde} that
\begin{equation}\label{EqPerturZeta}
\begin{split}
\partial_t\partial^\alpha\zeta &= K\partial^\alpha(\partial_t\theta\partial_3\zeta) + \sum_{0\neq \beta \leq \alpha}\partial^\beta K \partial^{\alpha-\beta}(\partial_t\theta\partial_3\zeta) - \partial^\alpha(u_j\cA_{jk}\partial_k\zeta) +\partial^\alpha(-\rho_0' u_3-\rho_0\cQ^3 + \tilde \cQ^1)\\
&= (K\partial_t\theta\partial_3\partial^\alpha\zeta - u_j\cA_{jk}\partial_k\partial^\alpha \zeta ) +\sum_{0\neq \beta\leq \alpha} K \partial^\beta \partial_t\theta \partial^{\alpha-\beta}\partial_3\zeta +  \sum_{0\neq \beta\leq \alpha}\partial^\beta K\partial^{\alpha-\beta}(\partial_t\theta\partial_3\zeta)   \\
&\qquad -\sum_{0\neq \beta\leq \alpha}\partial^\beta ( u_j\cA_{jk})\partial^{\alpha-\beta}\partial_k\zeta+\partial^\alpha (-\rho_0' u_3-\rho_0\cQ^3 + \tilde \cQ^1).
\end{split}
\end{equation}
We deduce from \eqref{EqPerturZeta} that
\begin{equation}\label{0thInePropEstZeta}
\begin{split}
\frac12 \frac{d}{dt}\|\partial^{\alpha}\zeta\|_{L^2(\Omega)}^2 &= \int_{\Omega}(K\partial_t\theta\partial_3\partial^\alpha\zeta - u_j\cA_{jk}\partial_k\partial^\alpha \zeta) \partial^\alpha \zeta + \sum_{0\neq \beta\leq \alpha} \int_\Omega K \partial^\beta \partial_t\theta \partial^{\alpha-\beta}\partial_3\zeta \partial^\alpha\zeta \\
&\quad +  \sum_{0\neq \beta\leq \alpha}\int_\Omega \partial^\beta K\partial^{\alpha-\beta}(\partial_t\theta\partial_3\zeta) \partial^\alpha\zeta  - \sum_{0\neq \beta\leq \alpha}\int_\Omega\partial^\beta ( u_j\cA_{jk})\partial^{\alpha-\beta}\partial_k\zeta\partial^\alpha\zeta \\ 
&\quad+ \int_\Omega \partial^\alpha (-\rho_0' u_3-\rho_0\cQ^3 + \tilde \cQ^1)\partial^\alpha\zeta.
\end{split}
\end{equation}
We bound each integral in the r.h.s of \eqref{0thInePropEstZeta}. For the first integral, using the integration by parts, one has 
\[
\begin{split}
&2\int_{\Omega}(K\partial_t\theta\partial_3\partial^\alpha\zeta - u_j\cA_{jk}\partial_k\partial^\alpha \zeta) \partial^\alpha \zeta \\
& = \int_\Omega ( K\partial_t\theta \partial_3|\partial^\alpha \zeta|^2 - u_j\cA_{jk}\partial_k|\partial^\alpha\zeta|^2) \\
&= \int_\Gamma (K\partial_t\theta -u_j\cA_{j3})|\partial^\alpha\zeta|^2 - \int_\Omega (\partial_3(K\theta) -\partial_k(u_j\cA_{jk}))|\partial^\alpha\zeta|^2.
\end{split}
\]
On $\Gamma$, we have  $K\partial_t\theta -u_j\cA_{j3}  = 0$ by the definition of $\cA$ \eqref{MatrixA} and by $\eqref{EqNS_Lagrangian}_4$. This yields
\[
2\int_{\Omega}(K\partial_t\theta\partial_3\partial^\alpha\zeta - u_j\cA_{jk}\partial_k\partial^\alpha \zeta) \partial^\alpha \zeta  = - \int_\Omega (\partial_3(K\theta) -\partial_k(u_j\cA_{jk}))|\partial^\alpha\zeta|^2.
\]
Due to Sobolev embedding and the product estimate \eqref{ProductEst}, it can be seen that
\[\begin{split}
\int_\Omega &(\partial_3(K\theta) -\partial_k(u_j\cA_{jk}))|\partial^\alpha\zeta|^2\\
 &\lesssim \|\partial_3(K\theta) -\partial_k(u_j\cA_{jk})\|_{H^2(\Omega)} \|\zeta\|_{H^4(\Omega)}^2 \\
&\lesssim ((\|K-1\|_{H^3(\Omega)}+1)\|\theta\|_{H^3(\Omega)}+ \|u\|_{H^3(\Omega)}(\|\cA-\text{Id}\|_{H^3(\Omega)}+1))\|\zeta\|_{H^4(\Omega)}^2.
\end{split}\]
Owing to Lemma \ref{LemEstNablaQ_Pf} and \eqref{CoefEst_22}, \eqref{CoefEst_24}, we have
\begin{equation}\label{1stInePropEstZeta}
\begin{split}
\int_\Omega(\partial_3(K\theta) -\partial_k(u_j\cA_{jk}))|\partial^\alpha\zeta|^2 &\lesssim (1+ \|\eta\|_{H^{7/2}(\Gamma)})(\|\eta\|_{H^{5/2}(\Gamma)} + \|u\|_{H^3(\Omega)}) \|\zeta\|_{H^4(\Omega)}^2 \lesssim \cE_f^3.
\end{split}
\end{equation}
For the second integral in the r.h.s of \eqref{0thInePropEstZeta}, we use Cauchy-Schwarz's inequality, \eqref{CoefEstimates} and Lemma \ref{LemEstNablaQ_Pf} to obtain
\[
\begin{split}
 \sum_{0\neq \beta\leq \alpha} \int_\Omega K \partial^\beta \partial_t\theta \partial^{\alpha-\beta}\partial_3\zeta \partial^\alpha\zeta &\lesssim  (1+\|K-1\|_{L^\infty(\Omega)}) \|\partial_t\theta\|_{H^3(\Omega)}  \|\partial_3\zeta\|_{H^3(\Omega)} \|\zeta\|_{H^4(\Omega)}\\
 &\lesssim \|\partial_t\eta\|_{H^{5/2}(\Gamma)}\|\zeta\|_{H^4(\Omega)}^2.
\end{split}
\]
In view of \eqref{EstEta_H^1/2}, one has 
\begin{equation}\label{2ndInePropEstZeta}
 \sum_{0\neq \beta\leq \alpha} \int_\Omega K \partial^\beta \partial_t\theta \partial^{\alpha-\beta}\partial_3\zeta \partial^\alpha\zeta  \lesssim \cE_f^3.
\end{equation}
Next, for the third integral, we use Cauchy-Schwarz's inequality and the product estimate \eqref{ProductEst} to have
\[
\begin{split}
 \sum_{0\neq \beta\leq \alpha}\int_\Omega \partial^\beta K\partial^{\alpha-\beta}(\partial_t\theta\partial_3\zeta) \partial^\alpha\zeta  &\lesssim \|\nabla K\|_{H^2(\Omega)} \|\partial_t\theta\partial_3\zeta\|_{H^3(\Omega)} \|\zeta\|_{H^4(\Omega)}\\
 &\lesssim  \|\nabla K\|_{H^2(\Omega)} \|\partial_t\theta\|_{H^3(\Omega)}\|\zeta\|_{H^4(\Omega)}^2.
\end{split}
\]
Owing to \eqref{CoefEst_22}, Lemma \ref{LemEstNablaQ_Pf}, and \eqref{EstEta_H^1/2} again, we deduce
\begin{equation}\label{3rdInePropEstZeta}
\begin{split}
 \sum_{0\neq \beta\leq \alpha}\int_\Omega \partial^\beta K\partial^{\alpha-\beta}(\partial_t\theta\partial_3\zeta) \partial^\alpha\zeta &\lesssim \|\eta\|_{H^{7/2}(\Gamma)}\|\partial_t\eta\|_{H^{5/2}(\Gamma)} \|\zeta\|_{H^4(\Omega)}^2\lesssim \cE_f^4.
\end{split}
\end{equation}
For the fourth integral, we continue using Cauchy-Schwarz's inequality and \eqref{CoefEst_24} also to observe
\begin{equation}\label{4thInePropEstZeta}
\begin{split}
 \sum_{0\neq \beta\leq \alpha}\int_\Omega\partial^\beta ( u_j\cA_{jk})\partial^{\alpha-\beta}\partial_k\zeta \partial^\alpha\zeta &\lesssim (\|\cA-\text{Id}\|_{H^4(\Omega)}+1) \|u\|_{H^4(\Omega)}\|\zeta\|_{H^4(\Omega)}^2\\
&\lesssim (1+\|\eta\|_{H^{9/2}(\Gamma)})\|u\|_{H^4(\Omega)}\|\zeta\|_{H^4(\Omega)}^2\lesssim \cE_f^3.
\end{split}
\end{equation}
Let us bound the fifth integral. Thanks  to Young's inequality, we have
\[\begin{split}
\int_\Omega \partial^\alpha(\rho_0' u_3)\partial^\alpha\zeta  &\lesssim \|\partial^\alpha\zeta\|_{L^2(\Omega)}\|u_3\|_{H^{|\alpha|}(\Omega)} \lesssim \varepsilon^3  \|\partial^\alpha\zeta\|_{L^2(\Omega)}^2+ \varepsilon^{-3} \|u_3\|_{H^4(\Omega)}^2\\
\end{split}\]
By Young's inequality again and \eqref{EstJensen}, this yields
\begin{equation}\label{1_5thInePropEstZeta}
 \begin{split}
 \int_\Omega \partial^\alpha(\rho_0' u_3)\partial^\alpha\zeta   & \lesssim \varepsilon^3 \|\partial^\alpha\zeta\|_{L^2(\Omega)}^2+ \varepsilon^{-3} (\varepsilon^6 \|u_3\|_{H^5(\Omega)}^2 + \varepsilon^{-24} \|u_3\|_{L^2(\Omega)}^2) \\
&\lesssim \varepsilon^3 ( \|\partial^\alpha\zeta\|_{L^2(\Omega)}^2+\| u_3\|_{H^5(\Omega)}^2 )+ \varepsilon^{-27} \|u_3\|_{L^2(\Omega)}^2.
\end{split}
\end{equation}
Thanks to \eqref{EstTildeQ1} and \eqref{H^4TildeQ1}, we have
\begin{equation}\label{2_5thInePropEstZeta}
\begin{split}
\int_\Omega \partial^\alpha (-\rho_0\cQ^3 + \tilde \cQ^1)\partial^\alpha\zeta &\lesssim( \|\cQ^3\|_{H^4(\Omega)}+\|\tilde \cQ^1\|_{H^4(\Omega)})\|\zeta\|_{H^4(\Omega)} 
%&\lesssim \cE_f^2(\cE_f+\|\nabla u\|_{H^4(\Omega)}).
\lesssim \cE_f^2(\cE_f+\cD_f).
\end{split}
\end{equation}

We substitute \eqref{1stInePropEstZeta}, \eqref{2ndInePropEstZeta} \eqref{3rdInePropEstZeta}, \eqref{4thInePropEstZeta}, \eqref{1_5thInePropEstZeta} and \eqref{2_5thInePropEstZeta} into \eqref{0thInePropEstZeta} to have
\[
\frac{d}{dt}\|\partial^{\alpha}\zeta\|_{L^2(\Omega)}^2 \lesssim  \varepsilon^3 (\|\zeta\|_{H^4(\Omega)}^2 +  \|u_3\|_{H^5(\Omega)}^2) +\varepsilon^{-27} \|u_3\|_{L^2(\Omega)}^2+ \cE_f(\cE_f^2+\cD_f^2).
\]
Integrating the resulting inequality from 0 to $t$, together with \eqref{-1_InePropEstZeta}, one has \eqref{EstZeta_H^4}.  Proof of Proposition \ref{PropEstZeta} is complete.
\end{proof}

In addition, we have the following estimate.
\begin{proposition}\label{PropEstD_tZetaH^2}
There holds 
\begin{equation}\label{EstD_tZetaH^2}
\|\partial_t\zeta \|_{H^2(\Omega)}^2 +\|\partial_t^2\zeta\|_{L^2(\Omega)}^2\leq C_{10}(\|u_3\|_{H^2(\Omega)}^2 + \|\partial_t u_3\|_{L^2(\Omega)}^2 +\cE_f^4).
\end{equation}
\end{proposition}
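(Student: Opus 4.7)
The approach is direct and short: both quantities on the left-hand side are temporal derivatives of $\zeta$, and the equation $\eqref{EqPertur}_1$ expresses $\partial_t\zeta$ explicitly in terms of $u_3$ and the nonlinear term $\cQ^1$. The plan is simply to read off $\partial_t\zeta$ from the equation, take the appropriate norm, and then differentiate once more in time for $\partial_t^2\zeta$. All nonlinear contributions will be absorbed into $\cE_f^4$ via the estimates already proven in Lemma \ref{LemEstTermsQ}.

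First, I would use $\eqref{EqPertur}_1$, namely
\[
\partial_t\zeta = -\rho_0' u_3 + \cQ^1,
\]
to obtain, via the triangle inequality and the fact that $\rho_0' \in C_0^\infty(\R_-)$ (so multiplication by $\rho_0'$ is bounded on $H^2$),
\[
\|\partial_t\zeta\|_{H^2(\Omega)} \lesssim \|u_3\|_{H^2(\Omega)} + \|\cQ^1\|_{H^2(\Omega)}.
\]
The bound $\|\cQ^1\|_{H^2(\Omega)} \lesssim \cE_f^2$ is already furnished by \eqref{1stIneLemHorizon} in Lemma \ref{LemEstTermsQ}. Squaring yields
\[
\|\partial_t\zeta\|_{H^2(\Omega)}^2 \lesssim \|u_3\|_{H^2(\Omega)}^2 + \cE_f^4.
\]

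Next, differentiating $\eqref{EqPertur}_1$ once in time gives
\[
\partial_t^2\zeta = -\rho_0' \partial_t u_3 + \partial_t\cQ^1,
\]
so that
\[
\|\partial_t^2\zeta\|_{L^2(\Omega)} \lesssim \|\partial_t u_3\|_{L^2(\Omega)} + \|\partial_t\cQ^1\|_{L^2(\Omega)}.
\]
Again the estimate $\|\partial_t\cQ^1\|_{L^2(\Omega)} \lesssim \cE_f^2$ is contained in \eqref{1stIneLemHorizon}. Squaring and summing the two inequalities gives \eqref{EstD_tZetaH^2}.

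There is no real obstacle here: all the work has been packed into the bounds on $\cQ^1$ and $\partial_t\cQ^1$ established previously. The only thing to double-check is that the differentiation of $\cQ^1$ in time does not introduce any temporal derivative of order too high to be controlled by $\cE_f$ (the definition of $\cE_f$ in \eqref{EnergyFull} includes up to $\partial_t^2$ of $\zeta, u$ and $\partial_t\eta$ of high enough regularity), but this has already been verified in the proof of Lemma \ref{LemEstTermsQ}. So the proposition follows immediately from the equation and the nonlinear estimates previously established.
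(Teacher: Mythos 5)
Your proof is correct and follows exactly the same route as the paper: read $\partial_t\zeta$ off $\eqref{EqPertur}_1$, differentiate in time once for $\partial_t^2\zeta$, and invoke the bounds $\|\cQ^1\|_{H^2(\Omega)}+\|\partial_t\cQ^1\|_{L^2(\Omega)}\lesssim\cE_f^2$ from \eqref{1stIneLemHorizon}.
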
 
\begin{proof}
It follows directly from $\eqref{EqPertur}_1$ and \eqref{1stIneLemHorizon} that
\begin{equation}\label{EstPartial_tZeta_H^2}
\|\partial_t\zeta\|_{H^2(\Omega)}^2 \lesssim \| u_3\|_{H^2(\Omega)}^2+\|\cQ^1\|_{H^2(\Omega)}^2 \lesssim \| u_3\|_{H^2(\Omega)}^2+ \cE_f^4
\end{equation}
and
\begin{equation}\label{EstPartial_t^2Zeta_L^2}
\|\partial_t^2 \zeta\|_{L^2(\Omega)}^2 \lesssim \|\partial_t u_3\|_{L^2(\Omega)}^2 + \|\partial_t\cQ^1\|_{L^2(\Omega)}^2 \lesssim \|\partial_t u_3\|_{L^2(\Omega)}^2 +\cE_f^4.
\end{equation}
Hence, we obtain  \eqref{EstD_tZetaH^2} by combining \eqref{EstPartial_tZeta_H^2}  and \eqref{EstPartial_t^2Zeta_L^2}. 
\end{proof}

\subsection{Elliptic  estimates}
We use the elliptic  estimate \eqref{EllipticEst} to derive some inequalities.
\begin{proposition}\label{PropCompareE}
 There holds
\begin{equation}\label{EstCompareE_fh}
\begin{split}
&\|u\|_{H^4(\Omega)}^2 +\| q\|_{H^3(\Omega)}^2 + \|\partial_t u\|_{H^2(\Omega)}^2 +\| \partial_t q\|_{H^1(\Omega)}^2  \\
&\quad \leq C_{11} \Big(\|\partial_t^2 u\|_{L^2(\Omega)}^2+\|u_3\|_{L^2(\Omega)}^2 +\|\zeta\|_{H^2(\Omega)}^2 + \|\eta\|_{H^{5/2}(\Gamma)}^2+ \|\partial_t\eta\|_{H^{1/2}(\Gamma)}^2 +\cE_f^4\Big).
\end{split}
\end{equation}
\end{proposition}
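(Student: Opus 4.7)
The plan is to view \eqref{EqPertur} and its time-differentiated version as two decoupled stationary Stokes-type problems with stress boundary conditions on $\Gamma$ and decay at $-\infty$, and to invoke the standard elliptic regularity for such problems (at two different levels of regularity) together with the nonlinear bounds from Lemma \ref{LemEstTermsQ} and the transport equation $\eqref{EqPertur}_1$.

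First, rewrite $\eqref{EqPertur}_{2,3,5}$ as the Stokes system
\begin{equation*}
\begin{cases}
-\mu \Delta u + \nabla q = \cQ^2 -\rho_0 \partial_t u - g\zeta e_3 &\text{in }\Omega,\\
\text{div}\, u = \cQ^3 &\text{in }\Omega,\\
(q \text{Id} - \mu \bS u)e_3 = g\rho_+\eta e_3 + \cQ^5 &\text{on }\Gamma,
\end{cases}
\end{equation*}
together with decay at $-\infty$. The standard $H^4$--$H^3$ elliptic regularity for this Stokes problem in $\Omega = \fT^2 \times \R_-$ (see for instance the framework used in \cite{GT13,WT12}) yields
\begin{equation*}
\|u\|_{H^4(\Omega)} + \|q\|_{H^3(\Omega)} \lesssim \|\partial_t u\|_{H^2(\Omega)} + \|\zeta\|_{H^2(\Omega)} + \|\cQ^2\|_{H^2(\Omega)} + \|\cQ^3\|_{H^3(\Omega)} + \|\eta\|_{H^{5/2}(\Gamma)} + \|\cQ^5\|_{H^{5/2}(\Gamma)}.
\end{equation*}
Inserting the bounds $\|\cQ^2\|_{H^2(\Omega)} + \|\cQ^3\|_{H^3(\Omega)} + \|\cQ^5\|_{H^{5/2}(\Gamma)} \lesssim \cE_f^2$ from \eqref{1stIneLemHorizon} produces
\begin{equation}\label{StepA}
\|u\|_{H^4(\Omega)}^2 + \|q\|_{H^3(\Omega)}^2 \lesssim \|\partial_t u\|_{H^2(\Omega)}^2 + \|\zeta\|_{H^2(\Omega)}^2 + \|\eta\|_{H^{5/2}(\Gamma)}^2 + \cE_f^4.
\end{equation}

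Next, apply $\partial_t$ to \eqref{EqPertur} and rewrite the system as
\begin{equation*}
\begin{cases}
-\mu \Delta \partial_t u + \nabla \partial_tq = \partial_t \cQ^2 -\rho_0 \partial_t^2 u - g\partial_t\zeta e_3 &\text{in }\Omega,\\
\text{div}\,\partial_t u = \partial_t\cQ^3 &\text{in }\Omega,\\
(\partial_tq \text{Id} - \mu \bS \partial_tu)e_3 = g\rho_+\partial_t\eta e_3 + \partial_t\cQ^5 &\text{on }\Gamma,
\end{cases}
\end{equation*}
again with decay at $-\infty$. The corresponding $H^2$--$H^1$ Stokes estimate gives
\begin{equation*}
\|\partial_t u\|_{H^2(\Omega)} + \|\partial_t q\|_{H^1(\Omega)} \lesssim \|\partial_t^2 u\|_{L^2(\Omega)} + \|\partial_t\zeta\|_{L^2(\Omega)} + \|\partial_t\cQ^2\|_{L^2(\Omega)} + \|\partial_t\cQ^3\|_{H^1(\Omega)} + \|\partial_t\eta\|_{H^{1/2}(\Gamma)} + \|\partial_t\cQ^5\|_{H^{1/2}(\Gamma)}.
\end{equation*}
Using $\eqref{EqPertur}_1$ and \eqref{1stIneLemHorizon}, one has $\|\partial_t\zeta\|_{L^2(\Omega)} \lesssim \|u_3\|_{L^2(\Omega)} + \|\cQ^1\|_{L^2(\Omega)} \lesssim \|u_3\|_{L^2(\Omega)} + \cE_f^2$, while \eqref{1stIneLemHorizon} provides $\|\partial_t\cQ^2\|_{L^2(\Omega)} + \|\partial_t\cQ^3\|_{H^1(\Omega)} + \|\partial_t\cQ^5\|_{H^{1/2}(\Gamma)} \lesssim \cE_f^2$. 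Therefore
\begin{equation}\label{StepB}
\|\partial_t u\|_{H^2(\Omega)}^2 + \|\partial_t q\|_{H^1(\Omega)}^2 \lesssim \|\partial_t^2 u\|_{L^2(\Omega)}^2 + \|u_3\|_{L^2(\Omega)}^2 + \|\partial_t\eta\|_{H^{1/2}(\Gamma)}^2 + \cE_f^4.
\end{equation}

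Combining \eqref{StepA} and \eqref{StepB} yields the claimed bound \eqref{EstCompareE_fh}, since the $\|\partial_t u\|_{H^2(\Omega)}^2$ term on the right of \eqref{StepA} is absorbed by \eqref{StepB}. The main point requiring care is ensuring that the Stokes elliptic estimate in the unbounded strip $\fT^2\times \R_-$ with stress boundary conditions at $\Gamma$ and decay at $-\infty$ is available at the two regularity levels used; this is by now classical in the viscous surface wave literature (cf.\ \cite{GT13,WT12,Wang19}), and is the only non-routine ingredient. All remaining work reduces to unpacking the $\cQ^i$'s via Lemma \ref{LemEstTermsQ}, which has already been done.
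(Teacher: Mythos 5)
Your proof is correct and follows essentially the same strategy as the paper: rewrite the perturbation equations and their time-differentiated version as two Stokes problems with stress boundary conditions, apply the elliptic estimate \eqref{EllipticEst} at the $H^4$--$H^3$ and $H^2$--$H^1$ levels, control the $\cQ^i$-forcing terms via \eqref{1stIneLemHorizon}, and eliminate $\partial_t\zeta$ through $\eqref{EqPertur}_1$. The paper merely treats the time-differentiated system first and then the undifferentiated one; the order is immaterial and the chaining argument is identical.
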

\begin{proof}
We derive from \eqref{EqPertur} that
\begin{equation}\label{EllipticEqD_tU}
\begin{cases}
-\mu\Delta \partial_t u +\nabla \partial_t q = -\rho_0\partial_t^2 u-g\partial_t\zeta  e_3+ \partial_t\cQ^2 &\quad\text{in }\Omega,\\
\text{div} \partial_t u = \partial_t \cQ^3  &\quad\text{in }\Omega,\\
(\partial_t q \text{Id}-\mu\bS\partial_t u) e_3=g\rho_+\partial_t\eta e_3+ \partial_t \cQ^5  &\quad\text{on }\Gamma.
\end{cases}
\end{equation}
Applying the elliptic estimate \eqref{EllipticEst} to \eqref{EllipticEqD_tU}, it tells us that 
\[
\begin{split}
\|\partial_t u\|_{H^2(\Omega)}^2 +\| \partial_t q\|_{H^1(\Omega)}^2 &\lesssim \|\partial_t^2 u\|_{L^2(\Omega)}^2 +\|\partial_t\zeta\|_{L^2(\Omega)}^2+\|\partial_t\eta\|_{H^{1/2}(\Gamma)}^2 \\
&\qquad+\|\partial_t\cQ^2\|_{L^2(\Omega)}^2 + \|\partial_t\cQ^3\|_{H^1(\Omega)}^2+\|\partial_t\cQ^5\|_{H^{1/2}(\Gamma)}^2.
\end{split}\]
Note that from $\eqref{EqPertur}_1$  that , 
\[
\|\partial_t\zeta\|_{L^2(\Omega)}^2 \lesssim \|u_3\|_{L^2(\Omega)}^2+\|\cQ^1\|_{L^2(\Omega)}^2.
\]
Hence, we have
\[
\begin{split}
\|\partial_t u\|_{H^2(\Omega)}^2 +\| \partial_t q\|_{H^1(\Omega)}^2 &\lesssim \|\partial_t^2 u\|_{L^2(\Omega)}^2 + \|\partial_t\eta\|_{H^{1/2}(\Gamma)}^2+\|u_3\|_{L^2(\Omega)}^2+\|\cQ^1\|_{L^2(\Omega)}^2\\
&\qquad+  \|\partial_t\cQ^2\|_{L^2(\Omega)}^2+ \|\partial_t\cQ^3\|_{H^1(\Omega)}^2+\|\partial_t\cQ^5\|_{H^{1/2}(\Gamma)}^2.
\end{split}
\]
Due to \eqref{1stIneLemHorizon}, this yields
\begin{equation}\label{EstEllipticU_tH^2}
\begin{split}
\|\partial_t u\|_{H^2(\Omega)}^2 +\| \partial_t q\|_{H^1(\Omega)}^2 &\lesssim \|\partial_t^2 u\|_{L^2(\Omega)}^2+ \|u_3\|_{L^2(\Omega)}^2 +\|\partial_t\eta\|_{H^{1/2}(\Gamma)}^2 +\cE_f^4.
\end{split}
\end{equation}
%In view of Proposition \ref{EstPartial_t^lU_L2}, we further get 
%\begin{equation}\label{EstEllipticU_tH^2}
%\begin{split}
%\|\partial_t u(t)\|_{H^2(\Omega)}^2 +\|\partial_t q(t)\|_{H^1(\Omega)}^2 &\lesssim \cE_f^2(0)+\cE_h^2(t)+ \cE_f^4(t)\\
%&\quad+\int_0^t \|(\zeta,u)(s)\|_{L^2(\Omega)}^2ds +\int_0^t \cE_f^3(s)ds.
%\end{split}
%\end{equation}

%We then prove 
%\begin{equation}\label{EstEllipticU_H^4}
%\begin{split}
%\|u\|_{H^4(\Omega)} +\|q\|_{H^3(\Omega)}&\lesssim \cE_h+\cE_f^2.
%\end{split}
%\end{equation}
Meanwhile,  we obtain from  \eqref{EqPertur} that
\begin{equation}\label{EllipticEqU}
\begin{cases}
-\Delta u+\nabla q=-\rho_0\partial_t u -g\zeta  e_3+ \cQ^2 &\quad\text{in }\Omega,\\
\text{div} u=\cQ^3 &\quad\text{in }\Omega,\\
(q \text{Id}-\mu \bS u) e_3=g\rho_+\eta e_3+ \cQ^5 &\quad\text{on }\Gamma.
\end{cases}
\end{equation}
Owing to \eqref{1stIneLemHorizon}  and by applying the elliptic estimate \eqref{EllipticEst} again to \eqref{EllipticEqU}, we observe that
\begin{equation}\label{EstEllipticU_H^4}
\begin{split}
\|u\|_{H^4(\Omega)}^2 +\| q\|_{H^3(\Omega)}^2 &\lesssim \|\partial_t u\|_{H^2(\Omega)}^2 +\|\zeta\|_{H^2(\Omega)}^2+\|\cQ^2\|_{H^2(\Omega)}^2+ \|\cQ^3\|_{H^3(\Omega)}^2\\
&\qquad\quad+ \|\eta\|_{H^{5/2}(\Gamma)}^2+\|\cQ^5\|_{H^{5/2}(\Gamma)}^2\\
&\lesssim \|\partial_t u\|_{H^2(\Omega)}^2+\|\zeta\|_{H^2(\Omega)}^2 +\|\eta\|_{H^{5/2}(\Gamma)}^2+\cE_f^4.
\end{split}
\end{equation}
Combining \eqref{EstEllipticU_tH^2} and  \eqref{EstEllipticU_H^4}, one has \eqref{EstCompareE_fh}. Proof of Proposition \ref{PropCompareE} is complete.
\end{proof}

Let us define the "horizontal" dissipation $\cD_h>0$ as follows,
\begin{equation}\label{DissipationHor}
\cD_h^2 :=  \sum_{\beta\in \N^2, |\beta|\leq   4} \|\nabla \partial_h^\beta u\|_{L^2(\Omega)}^2+ \sum_{\beta\in \N^2, |\beta|\leq 2} \| \nabla \partial_h^\beta \partial_t u\|_{L^2(\Omega)}^2 +\|\nabla \partial_t^2 u\|_{L^2(\Omega)}^2.
\end{equation}
The next proposition is to compare $\cD_f$ \eqref{DissipationFull} and $\cD_h$ \eqref{DissipationHor}. 
\begin{proposition}\label{PropCompareD}
Assuming $\delta_0$ sufficiently small, there holds
\begin{equation}\label{EstCompareD_fh}
\begin{split}
\cD_f^2 \leq C_{12}\Big(\cD_h^2  + \varepsilon^3 \cE_f^2 + \varepsilon^{-9}(\|(\zeta,u)\|_{L^2(\Omega)}^2 + \|\eta\|_{L^2(\Gamma)}^2) +\cE_f^3\Big).
\end{split}
\end{equation}
\end{proposition}
To prove Proposition \ref{PropCompareD}, we use the two lemmas below. 
\begin{lemma}
For any $s\geq 0$, there holds
\begin{equation}\label{FourierIne}
\|f\|_{H^{s+1/2}(\Gamma)}^2 \lesssim \|f\|_{H^{1/2}(\Gamma)}^2 +\sum_{\beta \in \N^2, |\beta|=s} \|\partial_h^\beta f\|_{H^{1/2}(\Gamma)}^2. 
\end{equation}
\end{lemma}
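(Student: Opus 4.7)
The plan is to argue directly at the level of Fourier series on the torus $\fT^2 = 2\pi L_1 \bT \times 2\pi L_2\bT$. Since $\Gamma = \fT^2 \times \{0\}$, every sufficiently regular $f$ admits a Fourier expansion $f(x_h) = \sum_\vk \hat f(\vk) e^{i\vk\cdot x_h}$ indexed by $\vk = (k_1, k_2) \in L_1^{-1}\bZ \times L_2^{-1}\bZ$, and for any real $\sigma$ the Sobolev norm is
\[
\|f\|_{H^\sigma(\Gamma)}^2 \;\simeq\; \sum_\vk (1+|\vk|^2)^\sigma |\hat f(\vk)|^2,
\]
while $\widehat{\partial_h^\beta f}(\vk) = (ik_1)^{\beta_1}(ik_2)^{\beta_2}\hat f(\vk)$ for any $\beta = (\beta_1,\beta_2) \in \N^2$. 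Thus the inequality \eqref{FourierIne} reduces to the pointwise (in $\vk$) estimate
\[
(1+|\vk|^2)^{s+\frac12} \lesssim (1+|\vk|^2)^{\frac12} + (1+|\vk|^2)^{\frac12} \sum_{\beta\in\N^2,\,|\beta|=s} k_1^{2\beta_1}k_2^{2\beta_2}.
\]

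First I would reduce to the algebraic inequality $(1+|\vk|^2)^s \lesssim 1 + |\vk|^{2s}$, which is trivial (split on $|\vk| \leq 1$ and $|\vk|\geq 1$ and observe $(1+r)^s \leq 2^s(1+r^s)$ for $r\geq 0$). Next, I would expand
\[
|\vk|^{2s} = (k_1^2+k_2^2)^s = \sum_{j=0}^s \binom{s}{j} k_1^{2j}k_2^{2(s-j)},
\]
which shows that $|\vk|^{2s}$ is bounded by a constant multiple of $\sum_{|\beta|=s} k_1^{2\beta_1}k_2^{2\beta_2}$ (each term on the right of the binomial expansion corresponds to a specific $\beta = (j,s-j)$). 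Combining these two steps yields the desired pointwise bound.

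Multiplying by $|\hat f(\vk)|^2$ and summing over $\vk$, the first term reproduces $\|f\|_{H^{1/2}(\Gamma)}^2$, while the second term reproduces $\sum_{|\beta|=s}\|\partial_h^\beta f\|_{H^{1/2}(\Gamma)}^2$ thanks to the identity $|\widehat{\partial_h^\beta f}(\vk)|^2 = k_1^{2\beta_1}k_2^{2\beta_2}|\hat f(\vk)|^2$. Since $s$ enters only through the number of multi-indices $\beta$ with $|\beta|=s$ (a finite combinatorial constant), no genuine obstacle arises; the one point deserving care is to observe that \eqref{FourierIne} is used in practice only for non-negative integer $s$, so that $\partial_h^\beta$ is a classical differential operator and the Fourier identity above makes literal sense.
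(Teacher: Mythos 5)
Your proof is correct and follows essentially the same route as the paper: both expand in Fourier series and reduce the claim to a pointwise multiplier estimate $(1+|\vk|^2)^{s}\lesssim 1+\sum_{|\beta|=s}k_1^{2\beta_1}k_2^{2\beta_2}$. The paper attributes this pointwise bound tersely to ``Cauchy-Schwarz's inequality''; your version, splitting off $(1+r)^s\leq 2^s(1+r^s)$ and then invoking the binomial expansion of $(k_1^2+k_2^2)^s$, makes the algebra explicit and is arguably the cleaner way to phrase the same step.
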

\begin{proof}
Since $\Gamma= \fT^2 \times\{0\}$, we exploit the definition of the Sobolev norm on $\fT^2$ to have that 
\[
\|f\|_{H^{s+1/2}(\Gamma)}^2 \approx \sum_{n\in (L^{-1}\bZ)^2}(1+|n|^2)^{s+1/2}|\hat f(n)|^2,
\]
where $\hat f$ is the Fourier series of $f$. By Cauchy-Schwarz's inequality, one has
\[
\|f\|_{H^{s+1/2}(\Gamma)}^2 \lesssim \sum_{n\in (L^{-1}\bZ)^2}(1+|n|^2)^{1/2}|\hat f(n)|^2 +\sum_{\beta\in \N^2, |\beta|=s} \sum_{n\in (L^{-1}\bZ)^2}(1+|n|^2)^{1/2}|n^\beta\hat f(n)|^2,
\]
which immediately yields \eqref{FourierIne}.
\end{proof}
\begin{lemma}\label{LemBoundDtU_L2}
Let us write 
\[
\sW := \sqrt{ \|\zeta\|_{L^2(\Omega)}^2 + \|u\|_{H^1(\Omega)}^2+\|\eta\|_{L^2(\Gamma)}^2+\cE_f^3}.
\]
The following inequalities holds
\begin{equation}\label{BoundDtU_L2}
\|\partial_t u\|_{L^2(\Omega)} \lesssim \|\nabla \partial_t u\|_{L^2(\Omega)}+\sW,
\end{equation}
and
\begin{equation}\label{BoundDt^2U_L2}
\|\partial_t^2 u\|_{L^2(\Omega)} \lesssim  \|\nabla \partial_t u\|_{L^2(\Omega)}+\|\nabla\partial_t^2 u\|_{L^2(\Omega)}+\sW.
\end{equation}
\end{lemma}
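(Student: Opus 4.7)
The plan is to test the momentum equation $\eqref{EqPertur}_2$ against $\partial_t u$ to prove \eqref{BoundDtU_L2}, and to test its time-differentiated form against $\partial_t^2 u$ to prove \eqref{BoundDt^2U_L2}. The crucial tool for handling the boundary contributions on $\Gamma$ is the elementary half-space trace inequality
\[
\|f\|_{L^2(\Gamma)}^2 \leq 2 \|f\|_{L^2(\Omega)}\|\partial_3 f\|_{L^2(\Omega)},
\]
valid on $\Omega = \fT^2 \times \R_-$ by integrating $\partial_3|f|^2$ from $-\infty$ to $0$ and using Cauchy--Schwarz. Nonlinear contributions from the $\cQ^i$ are controlled by Lemma~\ref{LemEstTermsQ}.

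For \eqref{BoundDtU_L2}, I rewrite $\eqref{EqPertur}_2$ using $\Delta u=\text{div}\,\bS u-\nabla\,\text{div}\,u$ together with $\text{div}\,u=\cQ^3$ as $\rho_0\partial_t u=-\nabla(q+\mu\cQ^3)+\mu\,\text{div}\,\bS u-g\zeta e_3+\cQ^2$. Testing against $\partial_t u$, integrating by parts and substituting $\mu\bS u\,e_3=(q-g\rho_+\eta)e_3-\cQ^5$ from $\eqref{EqPertur}_5$ cancels the interior pressure at the boundary, producing
\[
\int_\Omega \rho_0|\partial_t u|^2 = \int_\Omega (q+\mu\cQ^3)\partial_t\cQ^3 - \int_\Gamma(\mu\cQ^3+g\rho_+\eta)\partial_t u_3 - \int_\Gamma \cQ^5\cdot\partial_t u - \frac{\mu}{2}\int_\Omega \bS u:\bS\partial_t u - g\int_\Omega \zeta\,\partial_t u_3 + \int_\Omega \cQ^2\cdot\partial_t u.
\]
Each volume integral is bounded by Cauchy--Schwarz: for instance $\frac{\mu}{2}\int_\Omega \bS u:\bS\partial_t u\lesssim \|u\|_{H^1(\Omega)}\|\nabla\partial_t u\|_{L^2(\Omega)}\lesssim \sW\|\nabla\partial_t u\|_{L^2(\Omega)}$, whereas the boundary integrals are split via the trace inequality into products of the form $\|\cdot\|_{L^2(\Gamma)}\|\partial_t u\|_{L^2(\Omega)}^{1/2}\|\nabla\partial_t u\|_{L^2(\Omega)}^{1/2}$. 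Applying Young's inequality to extract $\epsilon(\|\partial_t u\|_{L^2}^2+\|\nabla\partial_t u\|_{L^2}^2)+C_\epsilon\sW^2$, invoking Lemma~\ref{LemEstTermsQ} so that nonlinear remainders are bounded by $\cE_f^3$ or $\cE_f^4\leq \cE_f^3\leq \sW^2$ (valid under $\cE_f\leq 1$), and absorbing the $\epsilon\|\partial_t u\|_{L^2}^2$ term into the left-hand side via $\rho_0\geq\rho_->0$, yields \eqref{BoundDtU_L2}.

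The estimate \eqref{BoundDt^2U_L2} follows by the same scheme applied to the time-differentiated momentum equation $\rho_0\partial_t^2 u=-\nabla\partial_t q+\mu\Delta\partial_t u-g\partial_t\zeta\,e_3+\partial_t\cQ^2$ tested against $\partial_t^2 u$, using the time-derivative of $\eqref{EqPertur}_5$ at the boundary. The key new volume contribution $\frac{\mu}{2}\int_\Omega \bS\partial_t u:\bS\partial_t^2 u\lesssim \|\nabla\partial_t u\|_{L^2(\Omega)}\|\nabla\partial_t^2 u\|_{L^2(\Omega)}$ explains the appearance of $\|\nabla\partial_t u\|_{L^2(\Omega)}$ on the right-hand side of \eqref{BoundDt^2U_L2}, while the boundary integral $\int_\Gamma g\rho_+\partial_t\eta\,\partial_t^2 u_3$ is controlled by noting that $\partial_t\eta=u_3+\cQ^4$ gives $\|\partial_t\eta\|_{L^2(\Gamma)}\lesssim \sW$. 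The main obstacle will be preventing the boundary contributions involving $\partial_t u$ and $\partial_t^2 u$ from feeding back destructively into the left-hand side; the resolution is the systematic use of the half-space trace inequality combined with Young's inequality with small parameter. A subsidiary check is that every nonlinear remainder fits into the $\cE_f^3\subset \sW^2$ budget, which relies crucially on the smallness hypothesis $\cE_f\leq \delta\leq 1$.
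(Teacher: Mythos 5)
Your proof is correct and follows the paper's energy approach essentially verbatim: test $\eqref{EqPertur}_2$ (and its time derivative) against $\partial_t u$ (resp.\ $\partial_t^2 u$), integrate by parts, substitute the boundary and divergence constraints, and close with Young's inequality and Lemma~\ref{LemEstTermsQ}. The only cosmetic variations are your use of the sharp half-space trace interpolation $\|f\|_{L^2(\Gamma)}^2\leq 2\|f\|_{L^2(\Omega)}\|\partial_3 f\|_{L^2(\Omega)}$ in place of the paper's coarser $\|f\|_{L^2(\Gamma)}\lesssim\|f\|_{H^1(\Omega)}$, your explicit bookkeeping of the $\nabla\,\text{div}\,u=\nabla\cQ^3$ remainders from the Stokes integration by parts (which the paper absorbs silently into the $\cE_f^3$ budget), and your keeping $\partial_t\zeta$, $\partial_t\eta$ rather than eliminating them beforehand via $\eqref{EqPertur}_{1,4}$ as the paper does in \eqref{EqDt^2U} --- all three are harmless and lead to the same bound.
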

\begin{proof}
Let us show \eqref{BoundDtU_L2} first. Multiplying by $\partial_t u$ on both sides of $\eqref{EqPertur}_2$, we obtain  
\[
\begin{split}
\int_\Omega\rho_0|\partial_t u|^2 &= - \int_\Omega \nabla q \cdot \partial_t u + \mu \int_\Omega \Delta u \cdot \partial_t u -\int_\Omega g\zeta \partial_t u_3  +  \int_\Omega \cQ^2 \cdot \partial_t u\\
&=-\int_\Gamma (q\text{Id}-\mu\bS u)e_3 \cdot \partial_t u + \int_\Omega q \text{div}\partial_t u  - \frac{\mu}2 \int_\Omega \bS u:\bS \partial_t u \\
&\qquad \quad-\int_\Omega g\zeta \partial_t u_3 +  \int_\Omega \cQ^2 \cdot \partial_t u,
\end{split}
\]
after using the integration by parts. Using $\eqref{EqPertur}_{3,5}$, this yields
\begin{equation}\label{1_BoundDtU_L2}
\begin{split}
\int_\Omega\rho_0|\partial_t u|^2&=- \int_\Gamma g\rho_+ \eta \partial_t u_3 - \int_\Gamma \cQ^5\cdot\partial_t u  + \int_\Omega q \partial_t \cQ^3 + \mu \int_\Omega\bS u:\bS \partial_t u \\
&\qquad\quad-\int_\Omega g\zeta \partial_t u_3 + \int_\Omega \cQ^2\cdot \partial_t u.
\end{split}
\end{equation} 
By Cauchy-Schwarz's inequality, we have 
\begin{equation}\label{2_BoundDtU_L2}
\begin{split}
\frac{\mu}2  \int_\Omega \bS u:\bS \partial_t u-\int_\Omega g\zeta \partial_t u_3 &\lesssim \|u\|_{H^1(\Omega)}\|\partial_t u\|_{H^1(\Omega)}+ \|\zeta\|_{L^2(\Omega)}\|\partial_t u_3\|_{L^2(\Omega)}. 
 \end{split}
\end{equation}
Using also the trace theorem, we have
\begin{equation}\label{3_BoundDtU_L2}
\begin{split}
\int_\Gamma g\rho_+ \eta \partial_t u_3  \lesssim \|\eta\|_{L^2(\Gamma)} \|\partial_t u\|_{H^1(\Omega)}.
\end{split}
\end{equation}
Because of  \eqref{1stIneLemHorizon} and the trace theorem again, one has 
\begin{equation}\label{4_BoundDtU_L2}
\begin{split}
\int_\Gamma& \cQ^5 \cdot \partial_t u +  \int_\Omega q \partial_t \cQ^3+ \int_\Omega \cQ^2\cdot \partial_t u \\
&\lesssim (\|\cQ^5\|_{L^2(\Gamma)}+ \|\cQ^2\|_{L^2(\Omega)}) \|\partial_t u\|_{H^1(\Omega)} + \|\partial_t\cQ^3\|_{L^2(\Omega)} \|q\|_{L^2(\Omega)}\lesssim \cE_f^3.
 \end{split}
\end{equation}
Combining \eqref{2_BoundDtU_L2}, \eqref{3_BoundDtU_L2} and \eqref{4_BoundDtU_L2}, we obtain  from \eqref{1_BoundDtU_L2} that
\[
\begin{split}
\|\partial_t u\|_{L^2(\Omega)}^2 &\lesssim  \|\partial_t u\|_{L^2(\Omega)} (\|\eta\|_{L^2(\Gamma)}+ \|u\|_{H^1(\Omega)}+\|\zeta\|_{L^2(\Omega)})\\
&\qquad+\|\nabla \partial_t u\|_{L^2(\Omega)} (\|\eta\|_{L^2(\Gamma)}+\|u\|_{H^1(\Omega)})  +\cE_f^3.
\end{split}
\]
Using Young's inequality, we get further that for any $\nu>0$,
\begin{equation}\label{5_BoundDtU_L2}
\begin{split}
\|\partial_t u\|_{L^2(\Omega)}^2 &\lesssim  \nu \|\partial_t u\|_{L^2(\Omega)}^2+\|\nabla \partial_t u\|_{L^2(\Omega)}^2 +(1+\nu^{-1})\sW^2.
\end{split}
\end{equation}
Let $\nu>0$ be sufficiently small, the inequality \eqref{BoundDtU_L2} follows from \eqref{5_BoundDtU_L2}.

To prove \eqref{BoundDt^2U_L2}, we differentiate   $\eqref{EqPertur}_{2,5}$ with respect to $t$ and then eliminate the  terms $\partial_t\zeta, \partial_t \eta$ by using $\eqref{EqPertur}_{1,4}$ to  deduce  that 
\begin{equation}\label{EqDt^2U}
\begin{cases}
\rho_0 \partial_t^2 u+\nabla \partial_tq -\mu\Delta\partial_t u-g\rho_0'u_3 e_3 = \partial_t\cQ^2 -g\cQ^1 e_3 &\quad\text{in }\Omega,\\
\text{div}\partial_t u= \partial_t\cQ^3 &\quad\text{in }\Omega,\\
(\partial_t q \text{Id}-\mu \bS \partial_t u)e_3 =g\rho_+ u e_3  + g\rho_+\cQ^4e_3+ \partial_t\cQ^5&\quad\text{on }\Gamma.
\end{cases}
\end{equation}
Multiplying both sides of $\eqref{EqDt^2U}_1$ by $\partial_t^2 u$,  we obtain that 
\begin{equation}\label{1_EqDt^2U}
\begin{split}
& \int_\Omega\rho_0|\partial_t^2 u|^2 + \int_\Omega (\nabla\partial_t q-\mu \Delta \partial_t u)\cdot\partial_t^2 u -\int_\Omega g\rho_0' u_3\partial_t^2 u_3 =  \int_\Omega (\partial_t\cQ^2 -g\cQ^1e_3) \cdot\partial_t^2 u.
\end{split}
\end{equation}
Using the integration by parts, we have that
\[\begin{split}
 \int_{\Omega}\rho_0|\partial_t^2 u|^2 &=- \int_\Gamma (\partial_t q \text{Id}-\mu \bS \partial_t u)e_3 \cdot \partial_t u + \int_\Omega \partial_t q \text{div}\partial_t u  -\frac{\mu}2 \int_\Omega \bS\partial_t u : \bS\partial_t^2 u \\
&\qquad+\int_\Omega g\rho_0' u_3\partial_t^2 u_3+   \int_\Omega (\partial_t\cQ^2 -g\cQ^1 e_3) \cdot\partial_t^2 u.
\end{split}\]
Substituting $\eqref{EqDt^2U}_{2,3}$ into the resulting equality yields
\begin{equation}\label{2_EqDt^2U}
\begin{split}
 \int_{\Omega}\rho_0|\partial_t^2 u|^2 &= - \int_\Gamma g\rho_+ u_3 \partial_t^2 u_3 - \int_\Gamma( g\rho_+\cQ^4e_3+\partial_t\cQ^5) \cdot\partial_t^2 u + \int_\Omega\partial_tq \partial_t\cQ^3\\
&\qquad  -\frac{\mu}2 \int_\Omega \bS\partial_t u : \bS\partial_t^2 u  +\int_\Omega g\rho_0' u_3\partial_t^2 u_3+  \int_\Omega (\partial_t\cQ^2 -g\cQ^1 e_3) \cdot\partial_t^2 u.
\end{split}
\end{equation}
We estimate each integral in the r.h.s of \eqref{2_EqDt^2U}. For the first integral, we use Cauchy-Schwarz's inequality and the trace theorem to have
\begin{equation}\label{3_EqDt^2U}
- \int_\Gamma g\rho_+ u_3 \partial_t^2 u_3 \lesssim \|u_3\|_{L^2(\Gamma)}\|\partial_t^2u_3\|_{L^2(\Gamma)}\lesssim  \|u_3\|_{H^1(\Omega)}\|\partial_t^2 u_3\|_{H^1(\Omega)}.
\end{equation}
For the fourth and fifth integral, we bound as
\begin{equation}\label{4_EqDt^2U}
\begin{split}
-\frac{\mu}2 \int_\Omega \bS\partial_t u : \bS\partial_t^2 u  +\int_\Omega g\rho_0' u_3\partial_t^2 u_3 &\lesssim \|\partial_t u\|_{H^1(\Omega)} \|\partial_t^2 u\|_{H^1(\Omega)} + \|u_3\|_{L^2(\Omega)}\|\partial_t^2 u_3\|_{L^2(\Omega)}.
\end{split}
\end{equation}
For the other integrals, we use \eqref{1stIneLemHorizon} to obtain
\begin{equation}\label{5_EqDt^2U}
\begin{split}
\int_\Omega & (\partial_t\cQ^2 -g\cQ^1e_3) \cdot\partial_t^2 u +\int_\Omega \partial_t q\partial_t \cQ^3 \\
&\lesssim (\|\partial_t\cQ^2\|_{L^2(\Omega)}+\|\cQ^1\|_{L^2(\Omega)}) \|\partial_t^2 u\|_{L^2(\Omega)} + \|\partial_t q\|_{L^2(\Omega)}\|\partial_t\cQ^3\|_{L^2(\Omega)} \lesssim \cE_f^3,
\end{split}
\end{equation} 
and use the trace theorem also to obtain
\begin{equation}\label{6_EqDt^2U}
\begin{split}
\int_\Gamma( g\rho_+\cQ^4e_3+\partial_t\cQ^5) \cdot\partial_t^2 u &\lesssim (\|\cQ^4\|_{L^2(\Gamma)}+\|\partial_t\cQ^5\|_{L^2(\Gamma)}) \|\partial_t^2 u\|_{H^1(\Omega)}\lesssim \cE_f^2 \|\partial_t^2 u\|_{H^1(\Omega)}.
\end{split}
\end{equation}
Substituting \eqref{3_EqDt^2U}, \eqref{4_EqDt^2U}, \eqref{5_EqDt^2U} and \eqref{6_EqDt^2U} into \eqref{2_EqDt^2U} yields
\begin{equation}\label{7_EqDt^2U}
\begin{split}
\|\partial_t^2 u\|_{L^2(\Omega)}^2 &\lesssim (\|u\|_{H^1(\Omega)}+\|\partial_t u\|_{H^1(\Omega)}+\cE_f^2) \|\partial_t^2 u\|_{H^1(\Omega)}+\cE_f^3\\
&\lesssim  (\|u\|_{H^1(\Omega)}+\|\partial_t u\|_{H^1(\Omega)}+\cE_f^2)( \|\nabla \partial_t^2 u\|_{L^2(\Omega)}+\cE_f).
\end{split}
\end{equation}
Combining \eqref{7_EqDt^2U} and \eqref{BoundDtU_L2} gives us that 
\begin{equation}\label{8_EqDt^2U}
\|\partial_t^2 u\|_{L^2(\Omega)}^2 \lesssim (\|\nabla \partial_t u\|_{L^2(\Omega)}+\sW) (\|\nabla\partial_t^2 u\|_{L^2(\Omega)} +\sW).
\end{equation}
Thanks to Cauchy-Schwarz's inequality, the resulting inequality \eqref{8_EqDt^2U} implies \eqref{BoundDt^2U_L2}. Lemma \ref{LemBoundDtU_L2} is shown.
\end{proof}

We are able to show Proposition \ref{PropCompareD}.
\begin{proof}[Proof of Proposition \ref{PropCompareD}]
We apply the elliptic estimate \eqref{EstElliptic} to 
\[
\begin{cases}
-\mu\Delta \partial_t u +\nabla \partial_t q = -\rho_0\partial_t^2 u+ g(\rho_0'u_3-\cQ^1)+ \partial_t\cQ^2 &\quad\text{in }\Omega,\\
\text{div} \partial_t u = \partial_t \cQ^3  &\quad\text{in }\Omega,\\
\partial_t u =\partial_t u &\quad\text{on }\Gamma,
\end{cases}
\]
to have that
\[
\begin{split}
\| \partial_t u\|_{H^3(\Omega)}^2+\| \partial_t q\|_{H^2(\Omega)}^2 &\lesssim  \|\partial_t^2 u\|_{H^1(\Omega)}^2 +\| u_3 \|_{H^1(\Omega)}^2+\|(\cQ^1,\partial_t\cQ^2)\|_{H^1(\Omega)}^2\\
&\qquad+ \| \partial_t\cQ^3\|_{H^2(\Omega)}^2+\| \partial_tu \|_{H^{5/2}(\Gamma)}^2.
\end{split}
\]
This yields 
\begin{equation}\label{0_CompareD_fh}
\begin{split}
\| \partial_t u\|_{H^3(\Omega)}^2+\|\partial_t q\|_{H^2(\Omega)}^2 &\lesssim  \|\partial_t^2 u\|_{H^1(\Omega)}^2 +\|u_3 \|_{H^1(\Omega)}^2 + \| \partial_tu  \|_{H^{5/2}(\Gamma)}^2+\cE_f^2(\cE_f^2+\cD_f^2),
\end{split}
\end{equation}
due to \eqref{EstTildeQ1} also. It follows from  \eqref{FourierIne} and the trace theorem that
\begin{equation}\label{4_CompareD_fh}
\begin{split}
\| \partial_t u \|_{H^{5/2}(\Gamma)}^2 &\lesssim \| \partial_t u \|_{H^{1/2}(\Gamma)}^2 +\sum_{\beta\in \N^2, |\beta|=2} \|  \partial_h^\beta \partial_t u \|_{H^{1/2}(\Gamma)}^2 \lesssim \|\partial_t u\|_{H^1(\Omega)}^2+ \sum_{\beta\in \N^2, |\beta|=2} \| \partial_h^\beta \partial_t u\|_{H^1(\Omega)}^2.
\end{split}
\end{equation}
Combining \eqref{0_CompareD_fh} and \eqref{4_CompareD_fh} gives us that
\begin{equation}\label{5_CompareD_fh}
\begin{split}
\| \partial_t u\|_{H^3(\Omega)}^2+\|\partial_t q\|_{H^2(\Omega)}^2 &\lesssim \|(u,\partial_t u,\partial_t^2 u)\|_{H^1(\Omega)}^2 + \sum_{\beta\in \N^2, |\beta|=2} \| \partial_h^\beta \partial_t u\|_{H^1(\Omega)}^2  +\cE_f^2(\cE_f^2+\cD_f^2).
\end{split}
\end{equation}
Thanks to the interpolation inequality \eqref{EstJensen}, we  get that, for $\nu>0$, 
\[
\|\partial_h^\beta \partial_t u\|_{L^2(\Omega)}^2 \lesssim \|\partial_t u\|_{H^2(\Omega)}^2 \lesssim \nu \|\partial_tu\|_{H^3(\Omega)}^2 + \nu^{-2}\|\partial_t u\|_{L^2(\Omega)}^2. 
\]
Hence, it follows from \eqref{5_CompareD_fh} that
\[
\begin{split}
\| \partial_t u\|_{H^3(\Omega)}^2+\|\partial_t q\|_{H^2(\Omega)}^2 &\lesssim \|(u,\partial_t u,\partial_t^2 u)\|_{H^1(\Omega)}^2+\nu \|\partial_tu\|_{H^3(\Omega)}^2 + \sum_{\beta\in \N^2, |\beta|=2} \| \nabla \partial_h^\beta \partial_t u\|_{L^2(\Omega)}^2 \\
&\qquad+\cE_f^2(\cE_f^2+\cD_f^2).
\end{split}
\]
Let $\nu>0$ be sufficiently small, one has
\begin{equation}\label{6_CompareD_fh}
\begin{split}
\| \partial_t u\|_{H^3(\Omega)}^2+\|\partial_t q\|_{H^2(\Omega)}^2 &\lesssim \|(u,\partial_t u,\partial_t^2 u)\|_{H^1(\Omega)}^2  + \sum_{\beta\in \N^2, |\beta|=2} \| \nabla \partial_h^\beta \partial_t u\|_{L^2(\Omega)}^2 \\
&\qquad\qquad+\cE_f^2(\cE_f^2+\cD_f^2).
\end{split}
\end{equation}

Meanwhile, applying the elliptic estimate  \eqref{EstElliptic} to
\[
\begin{cases}
-\mu\Delta u +\nabla q = -\rho_0\partial_t u -g\zeta e_3+ \cQ^2 &\quad\text{in }\Omega,\\
\text{div}  u = \cQ^3  &\quad\text{in }\Omega,\\
 u = u &\quad\text{on }\Gamma,
\end{cases}
\]
 we have
\[
\begin{split}
\|u\|_{H^5(\Omega)}^2+\|q\|_{H^4(\Omega)}^2 &\lesssim \|\partial_t u\|_{H^3(\Omega)}^2+\| \zeta\|_{H^3(\Omega)}^2+\| \cQ^2\|_{H^3(\Omega)}^2 +\|\cQ^3\|_{H^4(\Omega)}^2 +\|u\|_{H^{9/2}(\Gamma)}^2.
\end{split}
\]
Using \eqref{6_CompareD_fh} and \eqref{EstTildeQ1}, we further obtain 
\begin{equation}\label{7_CompareD_fh}
\begin{split}
\|u\|_{H^5(\Omega)}^2+\| q\|_{H^4(\Omega)}^2 &\lesssim  \| (u,\partial_t u, \partial_t^2 u)\|_{H^1(\Omega)}^2 + \|\zeta\|_{H^3(\Omega)}^2 +\|u\|_{H^{9/2}(\Gamma)}^2\\
&\qquad+ \sum_{\beta\in \N^2, |\beta|=2} \| \nabla \partial_h^\beta \partial_t u\|_{L^2(\Omega)}^2+ \cE_f^2(\cE_f^2+\cD_f^2).
\end{split}
\end{equation}
Using  \eqref{FourierIne} again and the trace theorem, we obtain that
\begin{equation}\label{8_CompareD_fh}
\begin{split}
\|u \|_{H^{9/2}(\Gamma)}^2 &\lesssim \|u \|_{H^{1/2}(\Gamma)}^2 +\sum_{\beta\in \N^2, |\beta|=4} \|  \partial_h^\beta  u \|_{H^{1/2}(\Gamma)}^2 \lesssim \| u\|_{H^1(\Omega)}^2+ \sum_{\beta\in \N^2, |\beta|=4} \| \partial_h^\beta u\|_{H^1(\Omega)}^2.
\end{split}
\end{equation}
Notice from \eqref{EstJensen} again that,  for $\nu>0$, 
\begin{equation}\label{9_CompareD_fh}
\|\partial_h^\beta u\|_{L^2(\Omega)}^2 \lesssim \|u\|_{H^4(\Omega)}^2 \lesssim \nu \|u\|_{H^5(\Omega)}^2+\nu^{-4}\|u\|_{L^2(\Omega)}^2.
\end{equation}
In view of \eqref{8_CompareD_fh} and \eqref{9_CompareD_fh}, we deduce from \eqref{7_CompareD_fh} that
\begin{equation}\label{10_CompareD_fh}
\begin{split}
\|u\|_{H^5(\Omega)}^2+\| q\|_{H^4(\Omega)}^2 &\lesssim  \| (u,\partial_t u, \partial_t^2 u)\|_{H^1(\Omega)}^2 + \|\zeta\|_{H^3(\Omega)}^2 +\nu \|u\|_{H^5(\Omega)}^2+ \sum_{\beta\in \N^2, |\beta|=4} \| \nabla \partial_h^\beta  u\|_{L^2(\Omega)}^2\\
&\qquad  + \sum_{\beta\in \N^2, |\beta|=2} \| \nabla \partial_h^\beta \partial_t u\|_{L^2(\Omega)}^2 + \cE_f^2(\cE_f^2+\cD_f^2).
\end{split}
\end{equation}
Let $\nu>0$ be sufficiently small, the inequality \eqref{10_CompareD_fh} implies that 
\begin{equation}\label{11_CompareD_fh}
\begin{split}
\|u\|_{H^5(\Omega)}^2+\| q\|_{H^4(\Omega)}^2 &\lesssim  \| (u,\partial_t u, \partial_t^2 u)\|_{H^1(\Omega)}^2+ \|\zeta\|_{H^3(\Omega)}^2 + \sum_{\beta\in \N^2, |\beta|=2} \| \nabla \partial_h^\beta \partial_t u\|_{L^2(\Omega)}^2\\
&\qquad + \sum_{\beta\in \N^2, |\beta|=4} \| \nabla \partial_h^\beta u\|_{L^2(\Omega)}^2 + \cE_f^2(\cE_f^2+\cD_f^2).
\end{split}
\end{equation}
Keep in mind the definition of $\cD_h$ \eqref{DissipationHor}, we obtain from \eqref{6_CompareD_fh} and \eqref{11_CompareD_fh} that 
\[
\begin{split}
\| u\|_{H^5(\Omega)}^2&+\| \partial_t u\|_{H^3(\Omega)}^2+ \|q\|_{H^4(\Omega)}^2 +\|\partial_t q\|_{H^2(\Omega)}^2 \\
 &\lesssim \cD_h^2 +   \| (u,\partial_t u, \partial_t^2 u)\|_{H^1(\Omega)}^2+ \|\zeta\|_{H^3(\Omega)}^2 +\cE_f^2(\cE_f^2+\cD_f^2).
\end{split}
\]
Thanks to the definition of $\cD_f$ \eqref{DissipationFull}, that implies 
\begin{equation}\label{12_CompareD_fh}
\cD_f^2 \lesssim \cD_h^2 + \| (u,\partial_t u, \partial_t^2 u)\|_{L^2(\Omega)}^2+ \|\zeta\|_{H^3(\Omega)}^2 + \cE_f^2(\cE_f^2+\cD_f^2).
\end{equation}
Thanks to Lemma \ref{LemBoundDtU_L2}, we deduce from \eqref{12_CompareD_fh} that
\[
\cD_f^2 \lesssim  \cD_h^2+ \|u\|_{H^1(\Omega)}^2 + \|\zeta\|_{H^3(\Omega)}^2 +\|\eta\|_{L^2(\Gamma)}^2  + \cE_f^3 +\cE_f^2\cD_f^2.
\]
We continue using  \eqref{EstJensen} to further have
%\[
%\begin{split}
%\cD_f^2 &\lesssim \cD_h^2 + \varepsilon^3 (\|u\|_{H^2(\Omega)}^2 +\|\zeta\|_{H^4(\Omega)}^2)  +\varepsilon^{-3}\|u\|_{L^2(\Omega)}^2+ \varepsilon^{-9} \|\zeta\|_{L^2(\Omega)}^2 \\
%&\qquad\qquad +\|\eta\|_{L^2(\Gamma)}^2 + \cE_f^3 + \cE_f^2\cD_f^2.
%\end{split}
%\]
%That means 
\begin{equation}\label{13_CompareD_fh}
\cD_f^2 \leq C_{13}\Big(\cD_h^2 + \varepsilon^3 \cE_f^2 + \varepsilon^{-9}(\|(\zeta,u)\|_{L^2(\Omega)}^2+\|\eta\|_{L^2(\Gamma)}^2) +\cE_f^3 + \cE_f^2\cD_f^2\Big).
\end{equation}
Restricting further $C_{13}\delta_0^2 \leq \frac12$,
we obtain \eqref{EstCompareD_fh} from \eqref{13_CompareD_fh}. Proposition \ref{PropCompareD} is proven.
\end{proof}

\subsection{Proof of Proposition \ref{PropAprioriEnergy}}
Let us denote
\[\begin{cases}
C_{1,\varepsilon} = (C_1+C_2+C_3+C_4) \varepsilon^3+C_5+C_6+C_7+C_8+C_9,\\
C_{14}=\sum_{i=1}^9 C_i,\\
C_{2,\varepsilon} = (C_1+C_2+C_3+C_4)\varepsilon+(C_7+C_8+C_9)\varepsilon^3,\\
C_{3,\varepsilon}= C_5+C_6+(C_7+C_8+C_9)\varepsilon^{-27}.
\end{cases}\]
%  We obtain from Propositions \ref{PropEstTransportEtaH^4}, \ref{PropEstEta^9/2}, \ref{PropEstPartial_t^lU_L2}, \ref{PropEstHorizonU},  \ref{PropEstZeta}  that 
%\begin{equation}\label{-1_HorizonEnergy}
%\begin{split}
%&\varepsilon^2 \Big(\|\eta(t)\|_{H^4(\Gamma)}^2 +\|\partial_t\eta(t)\|_{H^2(\Gamma)}^2  +\|\partial_t^2 \eta(t)\|_{L^2(\Gamma)}^2 +\|\eta(t)\|_{H^{9/2}(\Gamma)}^2\Big)\\ 
%&\qquad+\|\zeta(t)\|_{H^4(\Omega)}^2 + \|(u,\partial_t u, \partial_t^2u)(t)\|_{L^2(\Omega)}^2+ \int_0^t \|\nabla (u,\partial_t u,\partial_t^2 u)(s)\|_{L^2(\Omega)}^2ds \\
%&\qquad+\sum_{\beta\in \N^2, 1\leq |\beta|\leq 4} \int_0^t \|\nabla \partial_h^\beta u(s)\|_{L^2(\Omega)}^2 ds + \sum_{\beta\in \N^2, 1\leq |\beta|\leq 2} \int_0^t \|\nabla \partial_t \partial_h^\beta u(s)\|_{L^2(\Omega)}^2 ds\\
%&\leq C_{1,\varepsilon}  \cE_f^2(0)+C_6\cE_f^3(t)+ C_{14} \varepsilon^3  \int_0^t \cE_f^2(s)ds +C_{2,\varepsilon}  \int_0^t \cD_f^2(s)ds \\
%&\qquad+ C_{3,\varepsilon} \int_0^t (\|(u,\zeta)(s)\|_{L^2(\Omega)}^2+\|\eta(s)\|_{L^2(\Gamma)}^2) ds +  C_{1,\varepsilon} \int_0^t \cE_f(\cE_f^2+\cD_f^2)(s)ds.
%\end{split}
%\end{equation}
Keep in mind the definition of $\cD_h$ \eqref{DissipationHor}, we obtain from Propositions \ref{PropEstTransportEtaH^4}, \ref{PropEstEta^9/2}, \ref{PropEstPartial_t^lU_L2}, \ref{PropEstHorizonU},  \ref{PropEstZeta}  that 
%  It follows from \eqref{-1_HorizonEnergy} that 
\begin{equation}\label{0_HorizonEnergy}
\begin{split}
&\varepsilon^2 \Big(\sum_{l=0}^2 \|\partial_t^l \eta(t)\|_{H^{4-2l}(\Gamma)}^2 +\|\eta(t)\|_{H^{9/2}(\Gamma)}^2\Big)+\|\zeta(t)\|_{H^4(\Omega)}^2 \\
&\qquad\qquad+ \|(u,\partial_t u, \partial_t^2u)(t)\|_{L^2(\Omega)}^2+ \int_0^t \cD_h^2(s) ds\\
&\leq C_{1,\varepsilon}  \cE_f^2(0)+C_6\cE_f^3(t)+ C_{14} \varepsilon^3  \int_0^t \cE_f^2(s)ds +C_{2,\varepsilon}  \int_0^t \cD_f^2(s)ds \\
&\qquad+ C_{3,\varepsilon} \int_0^t (\|(u,\zeta)(s)\|_{L^2(\Omega)}^2+\|\eta(s)\|_{L^2(\Gamma)}^2) ds +  C_{1,\varepsilon} \int_0^t \cE_f(\cE_f^2+\cD_f^2)(s)ds. 
\end{split}
\end{equation}
Chaining \eqref{0_HorizonEnergy} with   \eqref{EstCompareD_fh} in Proposition \ref{PropCompareD}, we get that
\begin{equation}\label{EstHorizonEnergy}
\begin{split}
&\varepsilon^2 \Big(\sum_{l=0}^2 \|\partial_t^l \eta(t)\|_{H^{4-2l}(\Gamma)}^2 +\|\eta(t)\|_{H^{9/2}(\Gamma)}^2 + \|\zeta(t)\|_{H^4(\Omega)}^2\Big) \\
&\qquad\quad+\|(u,\partial_t u,\partial_t^2 u)(t)\|_{L^2(\Omega)}^2 + \frac1{C_{12}} \int_0^t \cD_f^2(s)ds   \\
&\leq C_{1,\varepsilon}  \cE_f^2(0)+C_6\cE_f^3(t)+(C_{14}+1) \varepsilon^3 \int_0^t \cE_f^2(s)ds + C_{2,\varepsilon}  \int_0^t \cD_f^2(s)ds \\
&\quad+  C_{4,\varepsilon} \int_0^t  (\|(u,\zeta)(s)\|_{L^2(\Omega)}^2+\|\eta(s)\|_{L^2(\Gamma)}^2) ds +  (C_{1,\varepsilon}+1) \int_0^t \cE_f(\cE_f^2+\cD_f^2)(s)ds,
\end{split}
\end{equation}
where  $C_{4,\varepsilon}= C_{3,\varepsilon}+\varepsilon^{-9}$. Let $0<\varepsilon\leq 1$ be sufficiently small such that  $C_{2,\varepsilon} \leq \frac1{2C_{12}}$. So that, the inequality \eqref{EstHorizonEnergy} implies 
%\begin{equation}\label{EstHorizonEnergy_result}
%\begin{split}
%&\varepsilon^2 \Big(\sum_{l=0}^2 \|\partial_t^l \eta(t)\|_{H^{4-2l}(\Gamma)}^2  +\|\eta(t)\|_{H^{9/2}(\Gamma)}^2 + \|\zeta(t)\|_{H^4(\Omega)}^2\Big) +\|(u,\partial_t u,\partial_t^2 u)(t)\|_{L^2(\Omega)}^2 + \frac1{2C_{12}} \int_0^t \cD_f^2(s)ds  \\
%&\leq  C_{1,\varepsilon}  \cE_f^2(0)+C_6\cE_f^3(t)+ (C_{14}+1) \varepsilon^3 \int_0^t \cE_f^2(s)ds +  (C_{1,\varepsilon}+1) \int_0^t \cE_f(\cE_f^2+\cD_f^2)(s)ds \\
%&\qquad+C_{4,\varepsilon}  \int_0^t  (\|(u,\zeta)(s)\|_{L^2(\Omega)}^2+\|\eta(s)\|_{L^2(\Gamma)}^2) ds.
%\end{split}
%\end{equation}
%By dividing both sides of \eqref{EstHorizonEnergy_result} by $\varepsilon^2$, we have
\begin{equation}\label{1_HorizonEnergy}
\begin{split}
&\sum_{l=0}^2 \|\partial_t^l \eta(t)\|_{H^{4-2l}(\Gamma)}^2 + \|\eta(t)\|_{H^{9/2}(\Gamma)}^2  +\|\zeta(t)\|_{H^4(\Omega)}^2 +\|(u,\partial_t u,\partial_t^2 u)(t)\|_{L^2(\Omega)}^2+\int_0^t\cD_f^2(s)ds \\
&\leq C_{15}\Big( \varepsilon^{-2}  \cE_f^2(0)+\varepsilon \int_0^t \cE_f^2(s)ds+  \varepsilon^{-2} \int_0^t \cE_f(\cE_f^2+\cD_f^2)(s)ds\Big)\\
&\qquad\quad  +C_{15}\varepsilon^{-29} \int_0^t (\|(\zeta,u)(s)\|_{L^2(\Omega)}^2 +\|\eta(s)\|_{L^2(\Gamma)}^2 )ds+C_{15}\varepsilon^{-2}\cE_f^3(t).
\end{split}
\end{equation}
Combining   \eqref{EstCompareE_fh} and \eqref{1_HorizonEnergy}, one has 
\begin{equation}\label{EstHorizonEnergy_1}
\begin{split}
& \varepsilon^{1/4}\Big(\|u(t)\|_{H^4(\Omega)}^2 +\| q(t)\|_{H^3(\Omega)}^2 + \|\partial_t u(t)\|_{H^2(\Omega)}^2 +\| \partial_t q(t)\|_{H^1(\Omega)}^2 \Big) + \sum_{l=0}^2 \|\partial_t^l \eta(t)\|_{H^{4-2l}(\Gamma)}^2   \\
&\qquad+ \|\eta(t)\|_{H^{9/2}(\Gamma)}^2  + \|\zeta(t)\|_{H^4(\Omega)}^2+\|u(t)\|_{L^2(\Omega)}^2+ \|\partial_t^2 u(t)\|_{L^2(\Omega)}^2+\int_0^t\cD_f^2(s)ds \\
&\leq C_{11}\varepsilon^{1/4} \Big(\|(\partial_t^2 u, u_3)(t)\|_{L^2(\Omega)}^2+\|\zeta(t)\|_{H^2(\Omega)}^2 + \|\eta(t)\|_{H^{5/2}(\Gamma)}^2+ \|\partial_t\eta(t)\|_{H^{1/2}(\Gamma)}^2 \Big) \\
&\qquad+ C_{15}\Big( \varepsilon^{-2}  \cE_f^2(0)+\varepsilon \int_0^t \cE_f^2(s)ds+  \varepsilon^{-2} \int_0^t \cE_f(\cE_f^2+\cD_f^2)(s) ds\Big)\\
&\qquad +C_{15}\varepsilon^{-29} \int_0^t (\|(\zeta,u)(s)\|_{L^2(\Omega)}^2 +\|\eta(s)\|_{L^2(\Gamma)}^2 )ds+C_{15}\varepsilon^{-2}\cE_f^3(t)+C_{11}\varepsilon^{1/4} \cE_f^4(t).
\end{split}
\end{equation}
Let us refine $\varepsilon$ so that $C_{11}\varepsilon^{1/4} \leq \frac12$,
it follows from \eqref{EstHorizonEnergy_1} that
%\begin{equation}\label{EstHorizonEnergy_2}
%\begin{split}
%&\varepsilon^{1/4}\Big(\|u(t)\|_{H^4(\Omega)}^2 +\| q(t)\|_{H^3(\Omega)}^2 + \|\partial_t u(t)\|_{H^2(\Omega)}^2 +\| \partial_t q(t)\|_{H^1(\Omega)}^2 \Big)+ \|\eta\|_{H^4(\Gamma)}^2+ \|\partial_t^2 \eta(t)\|_{L^2(\Gamma)}^2  \\
%&\qquad+\int_0^t\cD_f^2(s)ds+  \frac12 \Big( \|(\partial_t^2 u,u)(t)\|_{L^2(\Omega)}^2 +\|\zeta(t)\|_{H^4(\Omega)}^2 + \|\eta(t)\|_{H^{9/2}(\Gamma)}^2 +\|\partial_t\eta(t)\|_{H^2(\Gamma)}^2 \Big)\\
%&\leq  C_{15}\Big( \varepsilon^{-2}  \cE_f^2(0)+\varepsilon \int_0^t \cE_f^2(s)ds+  \varepsilon^{-2} \int_0^t \cE_f(\cE_f^2+\cD_f^2)(s) ds\Big)\\
%&\qquad +C_{15}\varepsilon^{-29} \int_0^t (\|(\zeta,u)(s)\|_{L^2(\Omega)}^2 +\|\eta(s)\|_{L^2(\Gamma)}^2 )ds+C_{15}\varepsilon^{-2}\cE_f^3(t)+C_{11}\varepsilon^{1/4} \cE_f^4(t).
%\end{split}
%\end{equation}
%Dividing both sides of  \eqref{EstHorizonEnergy_2} by $\varepsilon^{1/4}$, one has
\begin{equation}\label{EstHorizonEnergy_3}
\begin{split}
&\sum_{j=0}^2( \|\partial_t^j u(t)\|_{H^{4-2j}(\Omega)}^2 + \|\partial_t^j \eta(t)\|_{H^{4-2j}(\Gamma)}^2 ) +\|\zeta(t)\|_{H^4(\Omega)}^2 +\|\eta\|_{H^{9/2}(\Gamma)}^2\\
&\qquad +\| q(t)\|_{H^3(\Omega)}^2 +\| \partial_t q(t)\|_{H^1(\Omega)}^2 + \int_0^t\cD_f^2(s)ds  \\
&\leq C_{16}\Big( \varepsilon^{-9/4} \cE_f^2(0)+  \varepsilon^{3/4} \int_0^t \cE_f^2(s)ds+  \varepsilon^{-9/4}  \int_0^t \cE_f(\cE_f^2+\cD_f^2)(s)ds \Big)\\
&\qquad\quad  + C_{16} \varepsilon^{-117/4} \int_0^t (\|(\zeta,u)(s)\|_{L^2(\Omega)}^2 +\|\eta(s)\|_{L^2(\Gamma)}^2 )ds+C_{16}\varepsilon^{-9/4}\cE_f^3(t).
\end{split}
\end{equation}
Combining \eqref{EstHorizonEnergy_3} and  \eqref{EstD_tZetaH^2} in Proposition \ref{PropEstD_tZetaH^2}, we obtain
\begin{equation}\label{EstHorizonEnergy_4}
\begin{split}
&\sum_{j=0}^2( \|\partial_t^j u(t)\|_{H^{4-2j}(\Omega)}^2 + \|\partial_t^j \eta(t)\|_{H^{4-2j}(\Gamma)}^2 ) +\|\zeta(t)\|_{H^4(\Omega)}^2 +\|\eta\|_{H^{9/2}(\Gamma)}^2\\
&\qquad +\| q(t)\|_{H^3(\Omega)}^2 +\| \partial_t q(t)\|_{H^1(\Omega)}^2 + \int_0^t\cD_f^2(s)ds  +\varepsilon^{1/4}(\|\partial_t\zeta\|_{H^2(\Omega)}^2+\|\partial_t^2\zeta\|_{L^2(\Omega)}^2) \\
&\leq C_{16}\Big( \varepsilon^{-9/4} \cE_f^2(0)+  \varepsilon^{3/4} \int_0^t \cE_f^2(s)ds+  \varepsilon^{-9/4}  \int_0^t \cE_f(\cE_f^2+\cD_f^2)(s)ds \Big)\\
&\qquad + C_{16} \varepsilon^{-117/4} \int_0^t (\|(\zeta,u)(s)\|_{L^2(\Omega)}^2 +\|\eta(s)\|_{L^2(\Gamma)}^2 )ds +C_{16}\varepsilon^{-9/4}\cE_f^3(t)\\
&\qquad+ C_{10}\varepsilon^{1/4}(\|u_3\|_{H^2(\Omega)}^2+\|\partial_t u_3\|_{L^2(\Omega)}^2+\cE_f^4(t)).
\end{split}
\end{equation}
We continue refining $\varepsilon$ so that  $C_{10}\varepsilon^{1/4} \leq \frac12$.
It follows from \eqref{EstHorizonEnergy_4} that
\begin{equation}\label{EstHorizonEnergy_5}
\begin{split}
&\sum_{j=0}^2( \|\partial_t^j u(t)\|_{H^{4-2j}(\Omega)}^2 + \|\partial_t^j \eta(t)\|_{H^{4-2j}(\Gamma)}^2 ) +\|\zeta(t)\|_{H^4(\Omega)}^2 +\|\eta\|_{H^{9/2}(\Gamma)}^2\\
&\quad +\| q(t)\|_{H^3(\Omega)}^2 +\| \partial_t q(t)\|_{H^1(\Omega)}^2 + \int_0^t\cD_f^2(s)ds  +\varepsilon^{1/4}(\|\partial_t\zeta\|_{H^2(\Omega)}^2+\|\partial_t^2\zeta\|_{L^2(\Omega)}^2) \\
&\leq C_{17}\Big( \varepsilon^{-9/4} \cE_f^2(0)+  \varepsilon^{3/4} \int_0^t \cE_f^2(s)ds+  \varepsilon^{-9/4}  \int_0^t \cE_f(\cE_f^2+\cD_f^2)(s)ds \Big)\\
&\quad + C_{17} \varepsilon^{-117/4} \int_0^t (\|(\zeta,u)(s)\|_{L^2(\Omega)}^2 +\|\eta(s)\|_{L^2(\Gamma)}^2 )ds + C_{17}(\varepsilon^{-9/4} \cE_f^3(t)+\varepsilon^{1/4}\cE_f^4(t)).
\end{split}
\end{equation}
Let us recall the definition of $\cE_f$ \eqref{EnergyFull} and divide both sides of  \eqref{EstHorizonEnergy_5} by $\varepsilon^{1/4}$  to deduce 
\begin{equation}\label{EstHorizonEnergy_6}
\begin{split}
\cE_f^2(t)+ \int_0^t\cD_f^2(s)ds  &\leq C_{18}\Big( \varepsilon^{-5/2} \cE_f^2(0)+  \varepsilon^{1/2} \int_0^t \cE_f^2(s)ds+  \varepsilon^{-5/2}  \int_0^t \cE_f(\cE_f^2+\cD_f^2)(s)ds \Big)\\
&\qquad\quad  + C_{18} \varepsilon^{-59/2} \int_0^t (\|(\zeta,u)(s)\|_{L^2(\Omega)}^2 +\|\eta(s)\|_{L^2(\Gamma)}^2 )ds + C_{18}\varepsilon^{-5/2} \cE_f^3(t).
\end{split}
\end{equation}
Switching $\varepsilon^{1/2}$ by $\varepsilon$ in \eqref{EstHorizonEnergy_3}, one has \eqref{EstAprioriEnergy}. Proof of Proposition \ref{PropAprioriEnergy} is finished.

\section{Nonlinear instability}\label{SectNonlinear}

Note again that, we compactly write $U=(\zeta, u,  q,\eta)$ throughout this paper. 

Owing to Proposition \ref{PropSolEqLinear}, there exist infinitely many normal modes $e^{\lambda_n(\vk)}V_n(\vk, x)$ $(n\geq 1)$ to the linearized equations \eqref{EqLinearized}, being arranged as \eqref{AssumeLambdaN}.  Let us fix a $\vk=\vk_0 \in S_\Lambda$ such that \eqref{AssumeLambdaN} holds. Dropping $\vk_0$ in the notations since it is fixed, we recall \eqref{GeneralInitialCond_Ido}
\[
U^\sN(t, x) :=  \sum_{j=1}^\sN \Csf_j e^{\lambda_j t}V_j( x)
\]
to approximate the nonlinear equations \eqref{EqPertur} and require that the coefficients $\Csf_j$ satisfying \eqref{NormalizedCond_1}-\eqref{NormalizedCond_2}. Due to the compatibility conditions \eqref{CompaCond}, we cannot set $U^\sN(0,x)$ as the initial data for the nonlinear equations \eqref{EqPertur}. With the help of an abstract argument in \cite[Section 5C]{JT13}, we obtain the modified initial data $U_0^{\delta,\sN}(x)$.
\begin{proposition}\label{PropModifyData}
There exist a number $\delta_0>0$ and a family of initial data 
\[
U_0^{\delta,\sN}(x) = \delta U^\sN(0,x)+ \delta^2 U_\star^{\delta,\sN}(x)
\]
for $\delta \in (0,\delta_0)$ such that 
\begin{enumerate}
\item   $U_0^{\delta,\sN}$ satisfies the compatibility conditions \eqref{CompaCond} and $\cE_f(U_\star^{\delta,\sN}) \leq C_\sN^\star <\infty$ with $C_\sN^\star$ being independent of $\delta$,
\item  the nonlinear equations \eqref{EqPertur} with the above initial data  $U_0^{\delta, \sN}$ has a unique solution   $U^{\delta,\sN}$ satisfying that $\sup_{0\leq t<T^{\max}}\cE_f(U^{\delta,\sN}(t))<\infty$.
\end{enumerate}
\end{proposition}

\subsection{The difference functions} 

Set  $U^d(t,x)= U^{\delta,\sN}(t,x) - \delta U^\sN(t,x)$.
Since $U^{\delta,\sN}$ solves the nonlinear equations \eqref{EqPertur} and $U^\sN$ solves the linearized equations \eqref{EqLinearized}, we obtain that  $U^d$ satisfies 
\begin{equation}\label{EqDiff}
\begin{cases}
\partial_t \zeta^d+ \rho_0' u_3^d =\cQ^1(U^{\delta,\sN}) &\quad\text{in }\Omega,\\
\rho_0 \partial_t  u^d-\mu \Delta  u^d+ \nabla q^d+g\zeta^d e_3=\cQ^2(U^{\delta,\sN}) &\quad\text{in }\Omega,\\
\text{div}u^d= \cQ^3(U^{\delta,\sN}) &\quad\text{in }\Omega,\\
\partial_t\eta^d=u_3^d+\cQ^4(U^{\delta,\sN}) &\quad\text{on }\Gamma,\\
((q^d-g\rho_+\eta^d)\text{Id}-\mu\bS u^d)e_3= \cQ^5(U^{\delta,\sN}) &\quad\text{on }\Gamma.
\end{cases}
\end{equation}
The initial condition for \eqref{EqDiff} is 
\begin{equation}\label{InitialEqDiff}
U^d(0)= (\zeta^d, u^d,\eta^d,q^d)(0)=\delta^2 U_\star^{\delta,\sN}.
\end{equation}

Let $\|U\|_{\cE_f} := \cE_f(U)$, which is defined as in \eqref{EnergyFull}. 
Let $F_\sN(t) = \sum_{j=j_m}^\sN |\Csf_j| e^{\lambda_j t}$ and $0<\nu_0 \ll 1$ be fixed later \eqref{ChoiceEps}. There exists  a unique $T^{\delta}$ such that $\delta F_\sN(T^{\delta})=\nu_0$. 
Let
\begin{equation}\label{DefC_19}
\begin{split}
C_{19} &= \|U^\sN(0)\|_{\cE_f}, \quad
 C_{20} = \sqrt{\|(\zeta^\sN,u^\sN)(0)\|_{L^2(\Omega)}^2 +\|\eta^\sN(0)\|_{L^2(\Gamma)}^2}.
\end{split}
\end{equation}
We define
\begin{equation}\label{DefTstar}
\begin{split}
T^{\star} &:= \sup\Big\{t\in (0,T^{\max})| \|U^{\delta,\sN}(t)\|_{\cE_f}\leq 2C_{19} \delta_0\},\\
T^{\star\star} &:= \sup \{t\in (0,T^{\max})| \|(\zeta^{\delta,\sN},u^{\delta,\sN})(t)\|_{L^2(\Omega)}+\|\eta^{\delta,\sN}(t)\|_{L^2(\Gamma)} \leq 2 C_{20} \delta F_\sN(t)\Big\}.
\end{split}
\end{equation}
Note that 
\[
\|U^{\delta,\sN}(0)\|_{\cE_f} \leq  \delta \|U^\sN(0)\|_{\cE_f}+\|U^d(0)\|_{\cE_f}  \leq C_{19}\delta+ C_\sN^\star \delta^2 < 2C_{19} \delta_0,
\]
 we  then have $T^{\star}>0$. Similarly, we have $T^{\star\star}>0$.

The aim of this part is to derive the bound in time of $\|(\zeta^d,u^d)(t)\|_{L^2(\Omega)}+\|\eta^d(t)\|_{L^2(\Gamma)}$ in the following proposition. 
\begin{proposition}\label{PropL2_NormU^d}
For all $t \leq \min(T^{\delta},T^{\star}, T^{\star\star})$, there holds
\begin{equation}\label{L2_NormU^d}
\begin{split}
&\|(\zeta^d, u^d)(t)\|_{L^2(\Omega)}^2+\|\eta^d(t)\|_{L^2(\Gamma)}^2 \leq C_{21} \delta^3 \Big(\sum_{j=j_m}^\sP |\Csf_j| e^{\lambda_j t}+ \max(0,\sN-\sP)\max_{\sP+1\leq j\leq \sN}|\Csf_j| e^{\frac23 \Lambda t}\Big)^3.
\end{split}
\end{equation}
\end{proposition}

In order to prove Proposition \ref{PropL2_NormU^d}, we need the following bound in time of $\|U^{\delta,\sN}(t)\|_{\cE_f}$. 
 \begin{proposition}\label{PropNormU^delta_M}
For all $t\leq \min\{T^\delta,T^\star,T^{\star\star}\}$, there holds 
\begin{equation}\label{BoundNormU^delta_M}
\|U^{\delta,\sN}(t)\|_{\cE_f} \leq C_{22}\delta F_\sN(t)\quad\text{for all }t\leq \min\{T^{\delta}, T^{\star}, T^{\star\star}\}.
\end{equation}
\end{proposition}
\begin{proof}
 We fix a sufficiently small constant $\varepsilon$ such that $C_0\varepsilon \leq \frac{\lambda_\sN}4$ and  Proposition \ref{PropAprioriEnergy} holds. Hence, it follows from \eqref{EstAprioriEnergy} that 
\begin{equation}\label{4_HorizonEnergy}
\begin{split}
&\cE_f^2(U^{\delta,\sN}(t))+\int_0^t\cD_f^2(U^{\delta,\sN}(s))ds \\
&\leq \frac{\lambda_\sN}4  \int_0^t \cE_f^2(U^{\delta,\sN}(s))ds + C_{\lambda_\sN} \Big( \cE_f^2(U^{\delta,\sN}(0))+    \int_0^t \cE_f(\cE_f^2+\cD_f^2)(U^{\delta,\sN}(s))ds \Big)\\
&\qquad  + C_{\lambda_\sN} \Big( \cE_f^3(U^{\delta,\sN}(t))+ \int_0^t (\|(\zeta^{\delta,\sN},u^{\delta,\sN})(s)\|_{L^2(\Omega)}^2 +\|\eta^{\delta,\sN}(s)\|_{L^2(\Gamma)}^2 )ds\Big).
\end{split}
\end{equation}
Refining also $\delta_0$, we get  $C_{\lambda_\sN}\delta_0 \leq \frac12$ and $ C_{\lambda_\sN}\delta_0 \leq \frac{\lambda_\sN}4$,
one thus has
\begin{equation}\label{5_HorizonEnergy}
\begin{split}
\frac12 \cE_f^2(U^{\delta,\sN}(t))+\frac12 \int_0^t\cD_f^2(U^{\delta,\sN}(s))ds&\leq C_{\lambda_\sN} \cE_f^2(U^{\delta,\sN}(0)) + \Big(\frac{\lambda_\sN}4+  \delta C_{\lambda_\sN}\Big) \int_0^t \cE_f^2(U^{\delta,\sN}(s))ds \\
&\quad+ C_{\lambda_\sN} \int_0^t (\|(\zeta^{\delta,\sN},u^{\delta,\sN})(s)\|_{L^2(\Omega)}^2 +\|\eta^{\delta,\sN}(s)\|_{L^2(\Gamma)}^2 )ds \\
&\leq C_{\lambda_\sN} \cE_f^2(U^{\delta,\sN}(0)) + \frac{\lambda_\sN}2 \int_0^t \cE_f^2(U^{\delta,\sN}(s))ds \\
&\quad+ C_{\lambda_\sN} \int_0^t (\|(\zeta^{\delta,\sN},u^{\delta,\sN})(s)\|_{L^2(\Omega)}^2 +\|\eta^{\delta,\sN}(s)\|_{L^2(\Gamma)}^2 )ds.
\end{split}
\end{equation}
Consequently, for all $t\leq \min\{T^{\delta}, T^{\star}, T^{\star\star}\}$,
\[\begin{split}
\|U^{\delta,\sN}(t)\|_{\cE_f}^2  &\leq 2C_{\lambda_\sN} \|U^{\delta,\sN}(0)\|_{\cE_f}^2 +\lambda_\sN \int_0^t \|U^{\delta, \sN}(s)\|_{\cE_f}^2ds  \\
&\qquad+ 2C_{\lambda_\sN}  \int_0^t (\|(\zeta^{\delta,\sN},u^{\delta, \sN})(s)\|_{L^2(\Omega)}^2+\|\eta^{\delta,\sN}(s)\|_{L^2(\Gamma)}^2)ds \\
&\leq \lambda_\sN \int_0^t \|U^{\delta, \sN}(s)\|_{\cE_f}^2ds+ C_{23}\delta^2 F_\sN^2(t).
\end{split}\]
Applying Gronwall’s inequality, the resulting inequality tells us that
\begin{equation}\label{2ndEnergyNormDelta}
\|U^{\delta,\sN}(t)\|_{\cE_f}^2 \leq C_{23}\Big( \delta^2 F_\sN^2(t) + \delta^2\int_0^t e^{\lambda_\sN(t-s)} F_\sN^2(s) ds\Big).
\end{equation}
Note that $\lambda_\sN <\lambda_j$ for all $1\leq j\leq \sN-1$, we have
\begin{equation}\label{3rdEnergyNormDelta}
\begin{split}
\int_0^t e^{\lambda_\sN(t-s)} F_\sN^2(s) ds &\leq \sN^2 \sum_{j=j_m}^\sN \int_0^t e^{\lambda_\sN(t-s)} |\Csf_j|^2 e^{2\lambda_j s} ds \leq \sN^2  e^{\lambda_\sN t}\sum_{j=j_m}^\sN |\Csf_j|^2 \frac{e^{(2\lambda_j-\lambda_\sN)t}}{2\lambda_j-\lambda_\sN}.
\end{split}
\end{equation}
Substituting \eqref{3rdEnergyNormDelta} into \eqref{2ndEnergyNormDelta}, this yields \eqref{BoundNormU^delta_M}. We deduce Proposition \ref{PropNormU^delta_M}.
\end{proof}

We now prove Proposition \ref{PropL2_NormU^d}. 
\begin{proof}[Proof of Proposition \ref{PropL2_NormU^d}]
Differentiating   $\eqref{EqDiff}_{2,5}$ with respect to $t$ and then eliminating the  terms $\partial_t\zeta^d, \partial_t \eta^d$ by using $\eqref{EqDiff}_{1,4}$, we deduce from \eqref{EqDiff} that 
\begin{equation}\label{EqDtV_DtQ}
\begin{cases}
\rho_0 \partial_t^2 u^d+\nabla \partial_tq^d -\mu\Delta\partial_t u^d-g\rho_0'u_3^d e_3 = \partial_t\cQ^2(U^{\delta,\sN}) -g\cQ^1(U^{\delta,\sN}) e_3 &\quad\text{in }\Omega,\\
\text{div}\partial_t u^d= \partial_t\cQ^3(U^{\delta,\sN}) &\quad\text{in }\Omega,\\
(\partial_t q^d \text{Id}-\mu \bS \partial_t u^d)e_3 =g\rho_+ u^d e_3  + g\rho_+\cQ^4(U^{\delta,\sN})e_3+ \partial_t\cQ^5(U^{\delta,\sN})&\quad\text{on }\Gamma.
\end{cases}
\end{equation}
Multiplying both sides of $\eqref{EqDtV_DtQ}_1$ by $\partial_t u^d$,  we obtain that 
\begin{equation}\label{1EqD_tV_linearized}
\begin{split}
&\frac12 \frac{d}{dt} \int_\Omega\rho_0|\partial_t u^d|^2 + \int_\Omega (\nabla\partial_t q^d-\mu \Delta\partial_t u^d)\cdot\partial_t u^d -\int_\Omega g\rho_0' u_3^d\partial_t u_3^d \\
&=  \int_\Omega (\partial_t\cQ^2(U^{\delta,\sN}) -g\cQ^1(U^{\delta,\sN}) e_3) \cdot\partial_t u^d.
\end{split}
\end{equation}
Using the integration by parts, we deduce from the resulting equality that
\[\begin{split}
&\frac12 \frac{d}{dt}\Big( \int_{\Omega}\rho_0|\partial_t u^d|^2 -\int_{\Omega}g\rho_0'|u_3^d|^2)  + \int_\Gamma (\partial_t q \text{Id}-\mu \bS \partial_t u^d)e_3 \cdot \partial_t u^d \\
&= \int_\Omega \partial_t q^d \text{div}\partial_t u^d  -\frac{\mu}2 \int_\Omega|\bS\partial_t u^d|^2+   \int_\Omega (\partial_t\cQ^2(U^{\delta,\sN}) -g\cQ^1(U^{\delta,\sN}) e_3) \cdot\partial_t u^d.
\end{split}\]
Substituting $\eqref{EqDtV_DtQ}_{2,3}$ into \eqref{1EqD_tV_linearized}, we have 
\[\begin{split}
&\frac12 \frac{d}{dt}\Big( \int_{\Omega}\rho_0|\partial_t u^d|^2 -\int_{\Omega}g\rho_0'|u_3^d|^2 \Big)  + \int_\Gamma g\rho_+  \partial_t u_3^d \partial_t u_3^d +\int_\Gamma (\partial_t\cQ^5(U^{\delta,\sN}) + g\rho_+\cQ^4(U^{\delta,\sN}) e_3)\cdot \partial_t u^d \\
&= \int_\Omega \partial_tq^d \partial_t \cQ^3(U^{\delta,\sN}) -\frac{\mu}2 \int_{\Omega} |\bS\partial_t u^d|^2 +  \int_\Omega (\partial_t\cQ^2(U^{\delta,\sN}) -g\cQ^1(U^{\delta,\sN}) e_3) \cdot\partial_t u^d.
\end{split}\]
This yields
\begin{equation}\label{EqD_tV_linearized}
\begin{split}
&\frac12 \frac{d}{dt}\Big( \int_{\Omega}\rho_0|\partial_t u^d|^2 -\int_{\Omega}g\rho_0'|u_3^d|^2  +\int_{\Gamma}g\rho_+|u_3^d|^2 \Big) +\frac{\mu}2 \int_{\Omega} |\bS\partial_t u^d|^2 \\
&= \int_\Omega (\partial_t\cQ^2(U^{\delta,\sN}) -g\cQ^1(U^{\delta,\sN}) e_3) \cdot\partial_t u^d + \int_\Omega \partial_tq^d \partial_t \cQ^3(U^{\delta,\sN}) \\
&\qquad - \int_\Gamma (\partial_t\cQ^5(U^{\delta,\sN}) + g\rho_+\cQ^4(U^{\delta,\sN}) e_3)\cdot \partial_t u^d,
\end{split}
\end{equation}

Note that 
\[
\begin{split}
 &\int_\Omega (\partial_t\cQ^2(U^{\delta,\sN}) -g\cQ^1(U^{\delta,\sN}) e_3) \cdot\partial_t u^d \\
 &\qquad\lesssim   (\|\partial_t\cQ^2(U^{\delta,\sN})\|_{L^2(\Omega)}+\|\cQ^1(U^{\delta,\sN})\|_{L^2(\Omega)}) (\|\partial_t u^{\delta,\sN}\|_{L^2(\Omega)}+\delta \|\partial_t u^\sN\|_{L^2(\Omega)}).
 \end{split}
\]
 In view of \eqref{1stIneLemHorizon}, \eqref{PropNormU^delta_M} and the definition of $U^M$, we have
\begin{equation}\label{1stEqDtV_DtQ}
\begin{split}
\int_\Omega (\partial_t\cQ^2(U^{\delta,\sN}) -g\cQ^1(U^{\delta,\sN}) e_3) \cdot\partial_t u^d &\lesssim \|U^{\delta,\sN}\|_{\cE_f}^2 (\|U^{\delta,\sN}\|_{\cE_f} +\delta\|\partial_t u^\sN\|_{L^2(\Omega)}) \lesssim \delta^3F_\sN^3(t).
\end{split}
\end{equation} 
Similarly, we observe  
\begin{equation}\label{01stEqDtV_DtQ}
\begin{split}
\int_\Omega \partial_tq^d \partial_t \cQ^3(U^{\delta,\sN}) &\lesssim  \|\partial_t \cQ^3(U^{\delta,\sN})\|_{L^2(\Omega)} (\|\partial_t q^{\delta,\sN}\|_{L^2(\Omega)} +\delta \|\partial_t q^\sN\|_{L^2(\Omega)}) \lesssim \delta^3 F_\sN^3(t).
\end{split}
\end{equation}
We continue applying \eqref{1stIneLemHorizon}, \eqref{PropNormU^delta_M} and use the trace theorem to get
\begin{equation}\label{2ndEqDtV_DtQ}
\begin{split}
\int_\Gamma &(\partial_t\cQ^5(U^{\delta,\sN}) + g\rho_+\cQ^4(U^{\delta,\sN})e_3)\cdot \partial_t u^d\\
&\lesssim (\|\partial_t\cQ^5(U^{\delta,\sN})\|_{L^2(\Gamma)} + \|\cQ^4(U^{\delta,\sN})\|_{L^2(\Gamma)}) \|\partial_tu^d\|_{L^2(\Gamma)}\\
 &\lesssim (\|\partial_t\cQ^5(U^{\delta,\sN})\|_{H^{1/2}(\Gamma)} + \|\cQ^4(U^{\delta,\sN})\|_{H^{1/2}(\Gamma)}) \|\partial_tu^d\|_{H^1(\Omega)}\\
 &\lesssim \delta^3F_\sN^3(t).
\end{split}
\end{equation} 
Substituting \eqref{1stEqDtV_DtQ}, \eqref{01stEqDtV_DtQ} and  \eqref{2ndEqDtV_DtQ} into \eqref{EqD_tV_linearized}, we obtain that 
\begin{equation}\label{1stIneSharpV}
 \begin{split}
&\int_{\Omega}\rho_0|\partial_t u^d(t)|^2 + \int_0^t \int_{\Omega}\mu|\bS\partial_t u^d(s)|^2 ds\leq z_1+  \int_{\Omega}g\rho_0'|u_3^d(t)|^2 -\int_{\Gamma}g\rho_+|u_3^d(t)|^2 +C_{24}\delta^3F_\sN^3(t),
\end{split} 
\end{equation}
where 
\[
z_1 = \int_{\Omega}\rho_0|\partial_t u^d(0)|^2 - \int_{\Omega}g\rho_0'|u_3^d(0)|^2+\int_{\Gamma}g\rho_+|u_3^d(0)|^2.
\]
Note that $u^d$ is not divergence-free, we cannot apply Lemma \ref{LemIneLambdaSquare} directly. In view of \cite[Lemma A.9]{Wan19}, we thus  decompose $u^d$  as 
\[
u^d= \underline w+ \widehat w
\]
such that $ \underline  w$ is divergence-free and $\widehat w$ satisfies that 
\[
\text{div}\widehat w= \cQ^3(U^\delta,M) \quad\text{and}\quad \|\widehat w\|_{H^s(\Omega)} \lesssim \|\cQ^3(U^{\delta,M})\|_{H^{s-1}(\Omega)}\quad\text{for any }s\geq 1.
\]
We thus have
\begin{equation}\label{1-1IneSharpV}
\begin{split}
\int_{\Omega}g\rho_0'|u_3^d|^2 -\int_{\Gamma}g\rho_+|u_3^d|^2 &= \Big( \int_\Omega g\rho_0'|\underline w|^2 -\int_\Gamma g\rho_+|\underline w|^2\Big)  +2\int_\Omega g\rho_0' \underline w\cdot\widehat w +\int_\Omega g\rho_0'|\widehat w|^2  \\
&\qquad- 2\int_\Gamma g\rho_+ \underline w\cdot \widehat w- \int_\Gamma g\rho_+|\widehat w|^2,
%&\leq \Lambda^2 \int_\Omega\rho_0|\underline w|^2 + \frac12\Lambda\int_\Omega \mu|\bS \underline w|^2 + \int_\Omega g\rho_0'|\widehat w|^2 +\int_\Gamma g\rho_+|\widehat w|^2\\
%&\leq \Lambda^2 \int_\Omega\rho_0|u^d|^2 + \frac12\Lambda\int_\Omega \mu|\bS \underline u^d|^2 + \Lambda^2 \int_\Omega\rho_0|\widehat w|^2 + \frac12\Lambda\int_\Omega \mu|\bS \widehat w|^2 \\
%&\qquad + \int_\Omega g\rho_0'|\widehat w|^2 +\int_\Gamma g\rho_+|\widehat w|^2.
\end{split}
\end{equation}
and
\begin{equation}\label{2-1IneSharpV}
\begin{split}
\Lambda^2 \int_\Omega\rho_0|u_3^d|^2 + \frac{\Lambda}2\int_\Omega \mu|\bS  u_3^d|^2 &= \Lambda^2 \int_\Omega\rho_0|\widehat w|^2 + \frac12\Lambda\int_\Omega \mu|\bS \widehat w|^2+ \Lambda^2 \Big( 2\int_\Omega\rho_0 \underline w \cdot \widehat w+ \int_\Omega \rho_0 |\widehat w|^2\Big) \\
&\qquad+ \frac{\Lambda}2 \Big(2\int_\Omega \mu \bS \underline w: \bS \widehat w+\int \mu|\bS\widehat w|^2\Big).
\end{split}
\end{equation}
We apply Lemma \ref{LemIneLambdaSquare} to $\underline w$ to obtain 
\begin{equation}\label{3-1IneSharpV}
 \int_{\Omega}g\rho_0'|\underline w|^2 -\int_{\Gamma}g\rho_+|\underline w|^2 \leq \Lambda^2 \int_\Omega\rho_0|\underline w|^2 + \frac12\Lambda\int_\Omega \mu|\bS \underline w|^2. 
\end{equation}
Combining \eqref{1-1IneSharpV}, \eqref{2-1IneSharpV} and \eqref{3-1IneSharpV} gives us that
\begin{equation}\label{4-1IneSharpV}
\begin{split}
\int_{\Omega}g\rho_0'|u_3^d|^2 -\int_{\Gamma}g\rho_+|u_3^d|^2 &\leq \Lambda^2 \int_\Omega\rho_0|u_3^d|^2 + \frac12\Lambda\int_\Omega \mu|\bS u_3^d|^2 - \Lambda^2 \Big( 2\int_\Omega\rho_0 \underline w\cdot \widehat w+ \int_\Omega \rho_0 |\widehat w|^2\Big) \\
&\quad - \frac12\Lambda \Big(2\int_\Omega \mu \bS \underline w: \bS \widehat w+\int \mu|\bS\widehat w|^2\Big)+2\int_\Omega g\rho_0' \underline w\cdot\widehat w +\int_\Omega g\rho_0'|\widehat w|^2  \\
&\quad - 2\int_\Gamma g\rho_+ \underline w\cdot \widehat w- \int_\Gamma g\rho_+|\widehat w|^2.
\end{split}
\end{equation}
By the trace theorem, we have
\[
\begin{split}
-\Lambda^2  \int_\Omega\rho_0 \underline w\cdot \widehat w &- \frac12\Lambda \int_\Omega \mu \bS \underline w: \bS \widehat w + \int_\Omega g\rho_0' \underline w\cdot\widehat w -\int_\Gamma g\rho_+ \underline w\cdot \widehat w\\
&\lesssim \|\underline w\|_{H^1(\Omega)} \|\widehat w\|_{H^1(\Omega)}+\|\widehat w\|_{H^1(\Omega)}^2.
\end{split}
\]
Together with \eqref{1stIneLemHorizon}-\eqref{BoundNormU^delta_M}, that implies 
\begin{equation}\label{6-1IneSharpV}
\begin{split}
-\Lambda^2  \int_\Omega&\rho_0 \underline w\cdot \widehat w- \frac12\Lambda \int_\Omega \mu \bS \underline w: \bS \widehat w + \int_\Omega g\rho_0' \underline w\cdot\widehat w -\int_\Gamma g\rho_+ \underline w\cdot \widehat w\\
&\lesssim (\|u_3^d\|_{H^1(\Omega)}+ \|\widehat w\|_{H^1(\Omega)} ) \|\widehat w\|_{H^1(\Omega)}+ \|\widehat w\|_{H^1(\Omega)}^2\\
&\lesssim (\|u^{\delta,\sN}\|_{H^1(\Omega)}+ \delta \|u^\sN\|_{H^1(\Omega)})\|\cQ^3(U^{\delta,\sN})\|_{L^2(\Omega)} +\|\cQ^3(U^{\delta,\sN})\|_{L^2(\Omega)}^2 \\
&\lesssim \delta^3 F_\sN^3,
\end{split}
\end{equation}
and
\begin{equation}\label{5-1IneSharpV}
\begin{split}
-\Lambda^2 \int_\Omega\rho_0|\widehat w|^2 - \frac12\Lambda\int_\Omega \mu|\bS \widehat w|^2 +  \int_\Omega g\rho_0'|\widehat w|^2 +\int_\Gamma g\rho_+|\widehat w|^2&\lesssim \|\widehat w\|_{H^1(\Omega)}^2 \\
&\lesssim \|\cQ^3(U^{\delta,M})\|_{L^2(\Omega)}^2  \lesssim \delta^4 F_\sN^4,
\end{split}
\end{equation}
Plugging \eqref{6-1IneSharpV} and \eqref{5-1IneSharpV} into \eqref{4-1IneSharpV}, we arrive at
\begin{equation}\label{7-1IneSharpV}
\begin{split}
\int_{\Omega}g\rho_0'|u_3^d|^2 -\int_{\Gamma}g\rho_+|u_3^d|^2 \leq \Lambda^2 \int_\Omega\rho_0|u_3^d|^2 + \frac12\Lambda\int_\Omega \mu|\bS u_3^d|^2+ C_{25}\delta^3F_\sN^3.
\end{split}
\end{equation}
It follows from \eqref{7-1IneSharpV} and \eqref{1stIneSharpV} that 
\begin{equation}\label{2ndIneSharpV}
 \begin{split}
\int_{\Omega}\rho_0|\partial_t u^d(t)|^2  + \int_0^t \int_{\Omega}\mu|\bS\partial_t u^d(s)|^2 ds &\leq z_1+\Lambda^2 \int_{\Omega}\rho_0|u^d(t)|^2  + \frac12\Lambda \int_{\Omega}\mu|\bS u^d(t)|^2 \\
&\qquad+(C_{24}+C_{25})\delta^3F_\sN^3(t).
 \end{split} 
\end{equation}
Using Cauchy-Schwarz's inequality, we have that
\begin{equation}\label{2ndInePropNormU^d}
\begin{split}
 \int_{\Omega}\mu|\bS u^d(t)|^2 &=\int_{\Omega}\mu|\bS u^d(0)|^2 + 2\int_0^t\int_{\Omega} \mu \bS u^d(s) : \bS \partial_t u^d(s)  ds \\
&\leq  \int_{\Omega}\mu|\bS u^d(0)|^2 + \frac1{\Lambda} \int_0^t \int_{\Omega}\mu|\bS \partial_t u^d(s)|^2  ds + \Lambda \int_0^t\int_{\Omega} \mu |\bS u^d(s)|^2 ds
\end{split}
\end{equation}
and that
\begin{equation}\label{3rdInePropNormU^d}
\frac{d}{dt} \int_{\Omega}\rho_0|u^d|^2  \leq \frac1{\Lambda} \int_{\Omega}\rho_0|\partial_t u^d|^2 + \Lambda \int_{\Omega}\rho_0|u^d|^2.
\end{equation}
Three above inequalities \eqref{2ndIneSharpV}, \eqref{2ndInePropNormU^d} and \eqref{3rdInePropNormU^d} imply that 
\begin{equation}\label{4thInePropNormU^d}
\begin{split}
\frac{d}{dt} \int_{\Omega}\rho_0|u^d(t)|^2 + \frac12 \int_{\Omega}\mu|\bS u^d(t)|^2 &\leq \frac{z_1}{\Lambda} + \int_{\Omega}\mu|\bS u^d(0)|^2 + 2\Lambda\int_{\Omega}\rho_0|u^d(t)|^2\\
&\qquad+ \Lambda \int_0^t\int_{\Omega} \mu |\bS u^d(s)|^2 ds+(C_{24}+C_{25})\delta^3F_\sN^3(t),
\end{split}
\end{equation}
It follows from $U^d(0)=\delta^2 U_\star^{\delta,\sN}$  that  $z_1\lesssim \delta^4$, this yields
\[
\frac{z_1}{\Lambda} + \int_{\Omega}\mu|\bS u^d(0)|^2 \lesssim \delta^4.
\]
 Hence, the inequality \eqref{4thInePropNormU^d} implies 
\begin{equation}\label{5thInePropNormU^d}
\begin{split}
\frac{d}{dt} \int_{\Omega}\rho_0|u^d(t)|^2 + \frac12 \int_{\Omega}\mu|\bS u^d(t)|^2 &\leq 2\Lambda\int_{\Omega} \rho_0|u^d(t)|^2 +\Lambda \int_0^t\int_{\Omega} \mu |\bS u^d(s)|^2 ds\\
&\qquad+C_{26}\delta^3F_\sN^3(t).
\end{split}
\end{equation}
In view of Gronwall's inequality, we obtain from \eqref{5thInePropNormU^d} that
\begin{equation}\label{6thInePropNormU^d}
\begin{split}
 \int_{\Omega}\rho_0|u^d(t)|^2  + \frac12 \int_0^t \int_{\Omega}\mu|\bS u^d(s)|^2 ds&\leq  C_{26} \delta^3 \int_0^te^{2\Lambda (t-s)}F_\sN^3(s)ds \\
 &\leq C_{27} \delta^3 \int_0^t e^{2\Lambda (t-s)} F_\sN(3s)ds.
\end{split}
\end{equation}
Due to \eqref{AssumeLambdaN}, we obtain for $1\leq j\leq \sP$,
\begin{equation}\label{7thInePropNormU^d}
 \int_0^t  e^{(3\lambda_j-2\Lambda)s}ds = \frac1{3\lambda_j-2\Lambda}(e^{(3\lambda_j-2\Lambda)t}-1) \leq \frac1{3\lambda_j-2\Lambda}e^{(3\lambda_j-2\Lambda)t}
\end{equation}
and for $j\geq \sP+1$, 
\begin{equation}\label{8thInePropNormU^d}
 \int_0^t  e^{(3\lambda_j-2\Lambda)s}ds = \frac1{3\lambda_j-2\Lambda} (e^{(3\lambda_j-2\Lambda)t}-1)\leq \frac1{2\Lambda-3\lambda_j}.
\end{equation}
Substituting \eqref{7thInePropNormU^d} and \eqref{8thInePropNormU^d} into \eqref{6thInePropNormU^d}, we observe that  
%if $\sN\leq \sP$, 
%\begin{equation}\label{9thInePropNormU^d}
%\| u^d(t)\|_{L^2(\Omega)}^2 +\int_0^t \|\nabla u^d(s)\|_{L^2(\Omega)}^2 ds \leq C_{27}  \delta^3 \Big( \sum_{j=j_m}^\sN \frac{|\Csf_j|}{3\lambda_j-2\Lambda } e^{3\lambda_j t} \Big)
%\end{equation}
%and if $\sN\geq \sP+1$,
\begin{equation}\label{13InePropNormU^d}
\begin{split}
&\| u^d(t)\|_{L^2(\Omega)}^2 +\int_0^t \|\nabla u^d(s)\|_{L^2(\Omega)}^2 ds \leq C_{27}  \delta^3 \Big( \sum_{j=j_m}^\sP \frac{|\Csf_j|}{3\lambda_j-2\Lambda } e^{3\lambda_j t} + \sum_{j=\sP+1}^\sN \frac{|\Csf_j|}{2\Lambda-3\lambda_j} e^{2\Lambda t} \Big).
\end{split}
\end{equation}

We then estimate $\|\zeta^d(t)\|_{L^2(\Omega)}$. Due to $\eqref{EqDiff}_1$,  we obtain
\begin{equation}\label{10InePropNormU^d}
\|\zeta^d(t)\|_{L^2(\Omega)}^2 \lesssim  \|\zeta^d(0)\|_{L^2(\Omega)}^2 + \int_0^t (\|u_3^d(s)\|_{L^2(\Omega)}^2 +\|\cQ^1(U^{\delta,\sN})(s)\|_{L^2(\Omega)}^2)ds.
\end{equation}
Using  $U^d(0)=\delta^2 U_\star^{\delta,\sN}$ again and  using \eqref{1stIneLemHorizon} also,  the inequality \eqref{10InePropNormU^d} implies
\begin{equation}\label{11InePropNormU^d}
\begin{split}
\|\zeta^d(t)\|_{L^2(\Omega)}^2 &\lesssim  \delta^4+ \int_0^t (\|u_3^d(s)\|_{L^2(\Omega)}^2 + \|U^{\delta,\sN}(s)\|_{\cE_f}^4 )ds \\
&\lesssim \delta^4 +\int_0^t (\|u_3^d(s)\|_{L^2(\Omega)}^2 + \delta^4 F_\sN^4(s))ds.
\end{split}
\end{equation}
Note that $\delta F_\sN(t) \leq \varepsilon_0 \leq 1$ for any $t\leq T^\delta$. Hence, we have
\[
\delta^4 \int_0^t F_\sN^4(s)ds \lesssim \delta^3 \int_0^t F_\sN^3(s)ds \lesssim \delta^3 \int_0^t F_\sN(3s)ds, 
\]
this yields
\begin{equation}\label{EstDelta^4FM^4}
\delta^4 \int_0^t F_\sN^4(s)ds \lesssim \delta^3 \sum_{j=j_m}^\sN |\Csf_j| e^{3\lambda_j t}. 
\end{equation}
Combining \eqref{13InePropNormU^d} and  \eqref{EstDelta^4FM^4}, we deduce from \eqref{11InePropNormU^d} that
\begin{equation}\label{14InePropNormU^d}
\|\zeta^d(t)\|_{L^2(\Omega)}^2 \leq C_{28} \delta^3 \Big(\sum_{j=j_m}^\sN |\Csf_j|e^{3\lambda_j t}  +\sum_{j=\sP+1}^\sN |\Csf_j| e^{2\Lambda t}\Big).
\end{equation}

To estimate $\|\eta^d(t)\|_{L^2(\Gamma)}$, we  use $\eqref{EqDiff}_4$ and the trace theorem to obtain  
\[\begin{split}
 \frac{d}{dt} \|\eta^d\|_{L^2(\Gamma)}^2 &\leq \|\eta^d\|_{L^2(\Gamma)}(\|u_3^d\|_{L^2(\Gamma)} +\|\cQ^4(U^{\delta,\sN})\|_{L^2(\Gamma)})\\
 &\lesssim  \|\eta^d\|_{L^2(\Gamma)}( \|u_3^d\|_{H^1(\Omega)} +\|\cQ^4(U^{\delta,\sN})\|_{L^2(\Gamma)}).
\end{split}\]
This yields
\[
 \frac{d}{dt} \|\eta^d\|_{L^2(\Gamma)} \lesssim \|u_3^d\|_{H^1(\Omega)} +\|\cQ^4(U^{\delta,\sN})\|_{L^2(\Gamma)}.
\]
Thanks to \eqref{1stIneLemHorizon} and \eqref{PropNormU^delta_M}, we further get
\[
\begin{split}
\|\eta^d(t)\|_{L^2(\Gamma)}^2 &\lesssim \|\eta^d(0)\|_{L^2(\Gamma)}^2 + \int_0^t (\|u_3^d(s)\|_{H^1(\Omega)}^2 + \|U^{\delta,\sN}\|_{\cE_f}^2 )ds \\
&\lesssim  \delta^4 + \int_0^t (\|u_3^d(s)\|_{H^1(\Omega)}^2 + \delta^4 F_\sN^4(s))ds.
\end{split}
\]
Using  \eqref{13InePropNormU^d} and \eqref{EstDelta^4FM^4} again,  we have that  $\|\eta^d(t)\|_{L^2(\Gamma)}^2$ is bounded above like \eqref{14InePropNormU^d}.  Proposition \ref{PropL2_NormU^d} is proven.
\end{proof} 
\subsection{Proof of Theorem \ref{ThmNonlinear}} 
Since $j_m =\min\{j : 1\leq j\leq \sP, \Csf_j\neq 0\}$, we have 
\[
\begin{split}
\| u^\sN(t)\|_{L^2(\Omega)}^2 &=  \sum_{i=j_m}^\sN\Csf_i^2  e^{2\lambda_i t} \| u_i\|_{L^2(\Omega)}^2 +  2 \sum_{j_m\leq i<j\leq \sN} \Csf_i \Csf_j e^{(\lambda_i+\lambda_j)t} \int_{\Omega}  u_i \cdot  u_j\\
&\geq  \sum_{j=j_m}^\sN \Csf_j^2 e^{2\lambda_j t}\| u_j\|_{L^2(\Omega)}^2  + 2\sum_{j_m+1\leq i<j\leq \sN} \Csf_i\Csf_j e^{(\lambda_i+\lambda_j)t}   \int_{\Omega}  u_i \cdot  u_j \\
&\qquad  - |\Csf_{j_m}|\| u_{j_m}\|_{L^2(\Omega)} \Big(\sum_{j=j_m+1}^\sN|\Csf_j|  \| u_j\|_{L^2(\Omega)}\Big) e^{(\lambda_{j_m}+\lambda_{j_m+1})t}.
\end{split}
\]
By Cauchy-Schwarz's inequality, we obtain 
\[
\begin{split}
2\sum_{j_m+1 \leq i<j\leq \sN} \Csf_i\Csf_j e^{(\lambda_i+\lambda_j)t}   \int_{\Omega}  u_i\cdot  u_j &\geq -2 \sum_{j_m\leq i<j\leq \sN} |\Csf_i||\Csf_j| e^{(\lambda_{j_m+1}+\lambda_{j_m+2})t} \| u_i\|_{L^2(\Omega)}\| u_j\|_{L^2(\Omega)}\\
&\qquad \geq - e^{(\lambda_{j_m+1}+\lambda_{j_m+2})t} \Big(\sum_{j=j_m+1}^\sN |\Csf_j|\| u_j\|_{L^2(\Omega)}\Big)^2.
\end{split}
\]
This implies 
\[
\begin{split}
\| u^\sN(t)\|_{L^2(\Omega)}^2 &\geq \sum_{j=j_m}^\sN \Csf_j^2 e^{2\lambda_j t}\| u_j\|_{L^2(\Omega)}^2  -e^{(\lambda_{j_m+1}+\lambda_{j_m+2})t} \Big(\sum_{j=j_m+1}^\sN |\Csf_j| \| u_j\|_{L^2(\Omega)}\Big)^2 \\
&\qquad - |\Csf_{j_m}| e^{(\lambda_{j_m}+\lambda_{j_m+1})t}\| u_{j_m}\|_{L^2(\Omega)} \Big(\sum_{j=j_m+1}^\sN|\Csf_j|  \| u_j\|_{L^2(\Omega)}\Big).
\end{split}
\]
Due to the assumption \eqref{NormalizedCond_2}, we deduce that
\[
\begin{split}
\| u^\sN(t)\|_{L^2(\Omega)}^2 &\geq \sum_{j=j_m}^\sN \Csf_j^2 e^{2\lambda_j t}\| u_j\|_{L^2(\Omega)}^2 -\Csf_{j_m}^2 e^{(\lambda_{j_m}+\lambda_{j_m+1})t} \| u_{j_m}\|_{L^2(\Omega)}^2 \\
&\qquad -  \Csf_{j_m}^2 e^{(\lambda_{j_m+1}+\lambda_{j_m+2})t} \| u_{j_m}\|_{L^2(\Omega)}^2.
\end{split}
\]
This yields
\[
\begin{split}
\| u^\sN(t)\|_{L^2(\Omega)}^2 &\geq \Csf_{j_m}^2\Big(e^{2\lambda_{j_m} t} -\frac12 e^{(\lambda_{j_m}+\lambda_{j_m+1})t}-\frac14 e^{(\lambda_{j_m+1}+\lambda_{j_m+2})t} \Big) \| u_{j_m}\|_{L^2(\Omega)}^2  \\
&\qquad+  \sum_{j=j_m+1}^\sN \Csf_j^2 e^{2\lambda_j t}\| u_j\|_{L^2(\Omega)}^2  .
\end{split}
\]
Notice that for all $t\geq 0$,
\[
e^{2\lambda_{j_m} t}  -\frac12 e^{(\lambda_{j_m}+\lambda_{j_m+1})t}-  \frac14 e^{(\lambda_{j_m+1}+\lambda_{j_m+2})t}\geq \frac14 e^{2\lambda_{j_m} t}.
\]
Hence, for all $t\leq \min\{T^\delta,T^\star, T^{\star\star}\}$, we have 
\begin{equation}\label{L^2NormU_2^M}
\| u^\sN(t)\|_{L^2(\Omega)} \geq C_{29} F_\sN(t).
\end{equation}

Let 
\[
\tilde \Csf(\sN) = \max_{\sP+1\leq j\leq \sN}\frac{|\Csf_j|}{|\Csf_{j_m}|}\geq 0.
\]
Now, we show that 
\begin{equation}\label{T_deltaMin}
T^\delta \leq \min\{T^\star, T^{\star\star}\}
\end{equation}
by choosing 
\begin{equation}\label{ChoiceEps}
\nu_0 <\min\Big( \frac{2C_{19}\delta_0}{C_{22}},  \frac{ C_{20}^2}{C_{21}(1+\sN\tilde\Csf(\sN))^3}, \frac{C_{29}^2}{4C_{21}(1+\sN\tilde\Csf(\sN))^2}  \Big).
\end{equation}
Indeed, if $T^\star< T^\delta$, we have from \eqref{BoundNormU^delta_M} that
\[
\|U^{\delta,\sN}(T^\star)\|_{\cE_f} \leq C_{22} \delta F_\sN(T^\star)  < C_{22} \delta F_\sN(T^\delta )=C_{22}\nu_0 <2C_{19} \delta_0,
\]
which contradicts the definition of $T^\star$ in \eqref{DefTstar}. If $T^{\star\star} <T^\delta$, we obtain from the definition of $C_{20}$ \eqref{DefC_19} and the inequality \eqref{L2_NormU^d} that
\begin{equation}\label{EstBoundSigmaDelta}
\begin{split}
&\|(\zeta^{\delta,\sN},u^{\delta,\sN})(T^\delta)\|_{L^2(\Omega)} +\|\eta^{\delta,\sN}(T^\delta)\|_{L^2(\Gamma)}\\
 &\leq \| (\zeta^d,u^d)(T^\delta)\|_{L^2(\Omega)}+\|\eta^d(T^\delta)\|_{L^2(\Omega)}+\delta (\|(\zeta^\sN, u^\sN)(T^\delta)\|_{L^2(\Omega)}+\|\eta^\sN(T^\delta)\|_{L^2(\Gamma)})\\
&\leq  \sqrt{C_{21}}  \delta^{\frac32} \Big( \sum_{j=j_m}^\sP |\Csf_j| e^{\lambda_j T^{\delta}} + (\sN-\sP)\Big(\max_{\sP+1\leq j\leq \sN}|\Csf_j|\Big) e^{2\Lambda T^{\delta}/3}\Big)^{3/2} +  C_{20} \delta F_\sN(T^\delta).
\end{split}
\end{equation}
Notice from \eqref{AssumeLambdaN} that for $\sP+1\leq j\leq \sN$,
\[
|\Csf_j| \delta e^{\frac23 \Lambda T^{\delta}} < \frac{|\Csf_j|}{|\Csf_{j_m}|}( \delta |\Csf_{j_m}|  e^{\lambda_{j_m} T^{\delta}})<  \frac{|\Csf_j|}{|\Csf_{j_m}|} \delta F_\sN(T^{\delta}) =  \frac{|\Csf_j|}{|\Csf_{j_m}|} \nu_0.
\] 
Then, it follows from \eqref{EstBoundSigmaDelta} that
\[
\begin{split}
\| &(\zeta^{\delta,\sN},  u^{\delta,\sN})(T^{\delta})\|_{L^2(\Omega)} +\|\eta^{\delta,\sN}(T^\delta)\|_{L^2(\Gamma)}\\
 &\leq   C_{20} \delta F_\sN(T^{\delta}) + \sqrt{C_{21}}\delta^{3/2}(1+ \sN\tilde \Csf(\sN))^{3/2} F_\sN^{3/2}(T^\delta) \\
&\leq   C_{20}\nu_0+  \sqrt{C_{21}}(1+ \sN\tilde \Csf(\sN))^{3/2}\nu_0^{3/2}.
\end{split}
\]
Using \eqref{ChoiceEps} again, we deduce 
\[
\|(\zeta^{\delta,\sN},  u^{\delta,\sN})(T^{\delta})\|_{L^2(\Omega)} +\|\eta^{\delta,\sN}(T^\delta)\|_{L^2(\Gamma)}< 2  C_{20} \nu_0 = 2 C_{20} \delta F_\sN(T^{\delta}),
\]
which also contradicts  the definition of $T^{\star\star}$ in \eqref{DefTstar}.  So, \eqref{T_deltaMin} holds.

Once we have \eqref{T_deltaMin}, it follows from \eqref{L2_NormU^d} and \eqref{L^2NormU_2^M} that
\[\begin{split}
\|
& u^{\delta,\sN}(T^{\delta})\|_{L^2(\Omega)} \\
&\geq \delta\|u^\sN(T^\delta)\|_{L^2(\Omega)}- \| u^d(T^\delta)\|_{L^2(\Omega)} \\
&\geq C_{29} \delta F_\sN(T^{\delta})- \sqrt{C_{21}}  \delta^{3/2}\Big( \sum_{j=j_m}^\sP |\Csf_j| e^{\lambda_j T^{\delta}} + (\sN-\sP)\Big(\max_{\sP+1\leq j\leq \sN}|\Csf_j|\Big) e^{2 \Lambda T^{\delta}/3}\Big)^{3/2}.
\end{split}\]
Thanks to \eqref{ChoiceEps} again, we conclude that 
\[
\| u^{\delta,\sN}(T^{\delta})\|_{L^2(\Omega)} \geq C_{29}\nu_0 - \sqrt{C_{21}}(1+ \sN\tilde \Csf(\sN))^{3/2}\nu_0^{3/2} \geq \frac12C_{29}\nu_0 >0.
\]
Theorem \ref{ThmNonlinear} follows by taking $\delta_0$ satisfying  Propositions  \ref{PropAprioriEnergy}, \ref{PropModifyData}, $\nu_0$ satisfying \eqref{ChoiceEps} and $m_0=\frac12 C_{29}$. We complete the proof.

\appendix
\section{Poisson extension}\label{AppPoisson}

We  define the appropriate Poisson sum that allows us to extend $\eta$ defined on $\Gamma$ to a function $\theta$ defined on $\Omega$,
\[
(\cP f)(x_h,x_3) =\sum_{\vk \in (L^{-1}\bZ)^2} \frac{e^{i\vk\cdot x_h}}{2\pi L}e^{|\vk|x_3}\hat f(\vk), \quad\text{where } \hat f(\vk) =\int_{\fT^2} f(x_h)\frac{e^{-i\vk\cdot x_h}}{2\pi L} dx_h.
\]
We then have $\cP : H^s(\Gamma)\to H^{s+1/2}(\Omega)$ is a bounded linear operator for $s>0$. We extend $\eta$ defined on $\Gamma$ to be a function defined on $\Omega$, 
\[
\theta(t,x) := (\cP\eta)(t,x_h,x_3)
\]
for all $x_h \in \fT^2, x_3\leq 0$.  Lemma \ref{LemEstNablaQ_Pf} below implies in particular that if $\eta \in H^{q-1/2}(\Gamma)$, then $\theta \in H^q(\Omega)$ for $q\geq 0$. 
\begin{lemma}\label{LemEstNablaQ_Pf}
For $q\in \N$, let $H_h^q$ be the usual homogeneous Sobolev space of order $q$ and $\cP f$ be the Poisson sum of a function $f$ in $H_h^{q-1/2}(\Gamma)$. There holds 
\begin{equation}\label{EstNablaQ_Pf}
\|\nabla^q \cP f\|_{L^2(\Omega)}^2 \lesssim \|f\|_{H_h^{q-1/2}(\Gamma)}^2.
\end{equation}
\end{lemma}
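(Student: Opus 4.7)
My plan is to transform the problem into horizontal Fourier series, where the Poisson extension has the explicit symbol
\[
\widehat{\mathcal{P}f}(\mathbf{k},x_3) = \frac{1}{2\pi\sqrt{L_1L_2}} e^{|\mathbf{k}|x_3}\,\hat f(\mathbf{k}),
\]
so that differentiation becomes multiplication by a simple symbol. Specifically, each horizontal derivative $\partial_j$ ($j=1,2$) contributes a factor $ik_j$ and each vertical derivative $\partial_3$ contributes a factor $|\mathbf{k}|$ (from differentiating $e^{|\mathbf{k}|x_3}$). Hence for any multi-index $\alpha=(\alpha_1,\alpha_2,\alpha_3)$ with $|\alpha|=q$, the Fourier symbol of $\partial^\alpha\mathcal{P}f$ is bounded in modulus by $|\mathbf{k}|^q e^{|\mathbf{k}|x_3}|\hat f(\mathbf{k})|$.

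The key computation is then to apply Parseval's identity in the horizontal variables and integrate in $x_3\in(-\infty,0)$. For $\mathbf{k}\neq 0$,
\[
\int_{-\infty}^0 e^{2|\mathbf{k}|x_3}\,dx_3 = \frac{1}{2|\mathbf{k}|},
\]
and summing over $\mathbf{k}$ yields
\[
\|\nabla^q\mathcal{P}f\|_{L^2(\Omega)}^2 \;\lesssim\; \sum_{\mathbf{k}\neq 0}|\mathbf{k}|^{2q}|\hat f(\mathbf{k})|^2\int_{-\infty}^0 e^{2|\mathbf{k}|x_3}\,dx_3 \;\lesssim\; \sum_{\mathbf{k}\neq 0}|\mathbf{k}|^{2q-1}|\hat f(\mathbf{k})|^2 \;=\; \|f\|_{H_h^{q-1/2}(\Gamma)}^2,
\]
which is the desired bound.

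The only point that needs care is the zero mode $\mathbf{k}=0$, since the integral $\int_{-\infty}^0 1\,dx_3$ diverges. This is not a genuine obstruction: the homogeneous space $H_h^{q-1/2}(\Gamma)$ is by convention defined modulo constants (equivalently, restricting to mean-zero functions), and in all applications in the paper the operator $\mathcal{P}$ is applied to $\eta$, which satisfies the zero-average condition \eqref{ZeroAverageEta}, so that $\hat f(0)=0$ and the $\mathbf{k}=0$ term drops out of both sides of the inequality. Thus the argument is essentially a direct Plancherel computation, and I expect no serious technical obstacle beyond bookkeeping of the Fourier symbols and disposing of the zero mode through this convention.
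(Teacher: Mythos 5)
Your proof is correct and uses essentially the same approach as the paper: Parseval and Fubini to reduce to the explicit $x_3$-integral $\int_{-\infty}^0 e^{2|\vk|x_3}\,dx_3 = 1/(2|\vk|)$, yielding the weight $|\vk|^{2q-1}$. The extra care you take with the $\vk=0$ mode is a small detail the paper glosses over; for $q\geq 1$ that mode vanishes automatically since $|\vk|^{2q}=0$ at $\vk=0$, and for $q=0$ your appeal to the homogeneous-space convention (or the zero-average condition on $\eta$) is the right resolution.
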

\begin{proof}
Thanks to Fubini's theorem and Parseval's formula, we obtain 
\[
\begin{split}
\|\nabla^q \cP f\|_{L^2(\Omega)}^2 &\lesssim \sum_{\vk \in (L^{-1}\bZ)^2} \int_{-\infty}^0 |\vk|^{2q}|\hat f(\vk)|^2 e^{2|\vk|x_3} \lesssim \sum_{\vk \in (L^{-1}\bZ)^2} |\vk|^{2q-1}|\hat f(\vk)|^2.
\end{split}
\]
The inequality \eqref{EstNablaQ_Pf} then follows.
\end{proof}

\section{Nonlinear terms}\label{AppNonlinearTerms}

The nonlinear terms $\cQ^i (1\leq i\leq 5)$ in \eqref{EqPertur} are given by
\begin{equation}\label{TermQ}
\begin{split}
\cQ^1&= \rho_0'u_3 -\rho_0' \partial_t\theta+ K \partial_t\theta( \partial_3\zeta+ \rho_0'+ \rho_0''\theta+\rho_0'\partial_3\theta) - \text{div}_{\cA}((\rho_0+\rho_0'\theta+ \zeta) u) \\
\cQ^2 &= -(\zeta +\rho_0'\theta) \partial_t u- (\zeta+\rho_0+ \rho_0'\theta )K \partial_t\theta \partial_3 u -(\nabla_{\cA}p-\nabla q-g\zeta e_3) \\
&\qquad - (\zeta+\rho_0+ \rho_0'\theta )  u\cdot \nabla_{\cA} u - \mu (\Delta  u-\text{div}_{\cA}(\bS_{\cA} u)),\\
\cQ^3 &= \text{div}  u-\text{div}_{\cA}u, \\
\cQ^4 &= -u_1 \partial_1\eta -u_2 \partial_2\eta, \\
\cQ^5 &=  (q-g\rho_+\eta)\text{Id}\cdot ( e_3-\cN) -\mu \bS u  e_3 + \mu (\bS_{\cA} u) \cN.
\end{split}
\end{equation}
Alternatively, we have
\begin{equation}\label{TermQ1}
\begin{split}
\cQ^1  &= - K\rho_0'' \theta  u_3 + K\partial_t\theta(\partial_3\zeta+ \rho_0''\theta) -K\rho_0'\theta(Au_1+Bu_2) \\
&\qquad - u_1\partial_1\zeta -u_2 \partial_2\zeta -Ku_3\partial_3\zeta + K \partial_3\zeta(A u_1+ B u_2)
\end{split}
\end{equation}
that
\begin{equation}\label{TermQ2_1}
\begin{split}
\cQ_1^2 &=  -(\zeta +\rho_0'\theta) \partial_t u_1 -  (\rho_0+ \rho_0'\theta +\zeta)K \partial_t\theta \partial_3u_1 + AK(\partial_3q-g\rho_0'\theta) \\
&\quad- (\zeta+\rho_0+ \rho_0'\theta ) \Big( u_1(\partial_1u_1 -AK \partial_3u_1) + u_2(\partial_2u_1 -BK \partial_3u_1)+Ku_3 \partial_3u_1   \Big)\\
&\quad+ \mu \left(\begin{split}
 &(K^2+A^2+B^2-1)\partial_{33}^2 u_1 - 2AK \partial_{13}^2 u_1-2BK \partial_{23}^2 u_1 \\
 &(K\partial_3K (A^2+B^2+1) -\partial_1(AK)-\partial_2(BK)-A\partial_1K-B\partial_2K)\partial_3 u_1
 \end{split}\right),
\end{split}
\end{equation}
\begin{equation}\label{TermQ2_2}
\begin{split}
\cQ_2^2 &=  -(\zeta +\rho_0'\theta) \partial_t u_2 -  (\rho_0+ \rho_0'\theta+\zeta )K \partial_t\theta \partial_3u_1 + BK(\partial_3q-g\rho_0'\theta) \\
&\quad - (\zeta+\rho_0+ \rho_0'\theta ) \Big( u_1(\partial_1u_2 -AK \partial_3u_2) + u_2(\partial_2u_2 -BK \partial_3u_2)+Ku_3 \partial_3u_2   \Big)\\
&\quad+ \mu \left(\begin{split}
&(K^2+A^2+B^2-1)\partial_{33}^2 u_2    - 2AK \partial_{13}^2 u_2-2BK \partial_{23}^2 u_2 \\ 
&(K\partial_3K (A^2+B^2+1) -\partial_1(AK)-\partial_2(BK)-A\partial_1K-B\partial_2K)\partial_3 u_2
 \end{split}\right),
\end{split}
\end{equation}
\begin{equation}\label{TermQ2_3}
\begin{split}
\cQ_3^2&=-(\zeta+\rho_0'\theta)  \partial_t u_3 -  (\rho_0+ \rho_0'\theta+\zeta )K \partial_t\theta \partial_3u_3 + (1-K)(\partial_3q-g\rho_0'\theta)\\
&\quad - (\zeta+\rho_0+ \rho_0'\theta ) \Big( u_1(\partial_1u_3 -AK \partial_3u_3) + u_2(\partial_2u_3 -BK \partial_3u_3)+K u_3 \partial_3u_3   \Big)\\
&\quad + (K-1)(\partial_{13}^2 u_1+\partial_{23}^2u_2) +\partial_1K \partial_3u_1 + \partial_2K \partial_3u_2  -\partial_1(AK \partial_3u_3) \\
&\quad- AK \partial_3(K\partial_3u_1+\partial_1u_3- AK \partial_3u_3) - \partial_2(BK \partial_3u_3) \\
&\quad- BK \partial_3(K\partial_3u_2+\partial_2u_3- BK \partial_3u_3) +2(K^2-1) \partial_{33}^2 u_3 +2K \partial_3K \partial_3u_3,
\end{split}
\end{equation}
that
\begin{equation}\label{TermQ34}
\begin{split}
\cQ^3  &= (1 -K)\partial_3u_3 +AK \partial_3u_1+BK \partial_3u_2, \\
 \cQ^4 &= -u_1 \partial_1\eta-u_2 \partial_2\eta,
\end{split}
\end{equation}
and that
\begin{equation}\label{TermQ5}
\begin{split}
\cQ_1^5 &=\partial_1\eta \big(q -g\rho_+\eta -2\mu(\partial_1 u_1-AK \partial_3 u_1)\big)- \mu \partial_2\eta (\partial_1u_2 +\partial_2u_1 -AK \partial_3u_2 -BK \partial_3u_1) \\
&\qquad -\mu\big((1-K)\partial_3u_1+ AK \partial_3u_3\big),\\
\cQ_2^5 &=-\mu \partial_1\eta (\partial_1u_2 +\partial_2u_1 -AK \partial_3u_2 -BK \partial_3u_1) + \partial_2\eta \big(q -g\rho_+\eta -2\mu(\partial_2u_2 -BK \partial_3u_2)\big)\\
&\qquad -\mu\big((1-K)\partial_3 u_2 +BK \partial_3 u_3\big),\\
\cQ_3^5 &= -\mu\partial_1\eta (\partial_1 u_3-AK \partial_3u_3+K\partial_3 u_1) -\mu \partial_2\eta ( \partial_2u_3-AK \partial_3u_3+K\partial_3 u_2)- 2\mu(1-K)\partial_3u_3.
%\cQ^5  &= \partial_1\eta 
%\begin{pmatrix} q -g\rho_+\eta -2\mu(\partial_1 u_1-AK \partial_3 u_1)  \\ -\mu(\partial_1u_2 +\partial_2u_1 -AK \partial_3u_2 -BK \partial_3u_1) \\
%-\mu(\partial_1 u_3-AK \partial_3u_3+K\partial_3 u_1) \end{pmatrix}  \\
%&\qquad\qquad+\partial_2\eta 
%\begin{pmatrix} -\mu(\partial_1u_2 +\partial_2u_1 -AK \partial_3u_2 -BK \partial_3u_1) \\ q -g\rho_+\eta -2\mu(\partial_2u_2 -BK \partial_3u_2) \\ -\mu( \partial_2u_3-AK \partial_3u_3+K\partial_3 u_2) \end{pmatrix}  \\
%&\qquad\qquad-\mu \begin{pmatrix} (1-K)\partial_3u_1+ AK \partial_3u_3 \\ 
%(1-K)\partial_3 u_2 +BK \partial_3 u_3 \\ 2(1-K)\partial_3u_3 \end{pmatrix}.
\end{split}
 \end{equation}
The terms $F^{j,l} (1\leq j\leq 5)$ in \eqref{EqPerturGeometric} are given by 
\begin{equation}\label{TermF1}
F^{1,l} = \partial_t^l F^1 - \sum_{0<j\leq l}C_l^j \partial_t^j \cA_{jk}\partial_k(\rho_0 \partial_t^{l-j}u_j),
\end{equation}
\begin{equation}\label{TermF2}
\begin{split}
F_i^{2,l} &= \partial_t^l F^2+ \sum_{0<j\leq l}C_l^j \mu(\cA_{jk}\partial_k(\partial_t^j\cA_{jm}\partial_t^{l-j}\partial_m u_i)+ \partial_t^j\cA_{jk}\partial_t^{l-j}\partial_k(\cA_{jm}\partial_m u_i)) \\
&\qquad -  \sum_{0<j\leq l}C_l^j (\rho_0 \partial_t^j\cA_{ik}\partial_k\partial_t^{l-j}\zeta +\partial_t^j(\zeta+\rho_0'\theta)\partial_t(\partial_t^{l-j}u_i)), 
\end{split}
\end{equation}
\begin{equation}\label{TermF34}
\begin{split}
F^{3,l} &= -\sum_{0<j\leq l} C_l^j \partial_t^j \cA_{ik}\partial_k(\partial_t^{l-j} u_i), \quad F^{4,l} = \sum_{0<j\leq l} C_l^j \partial_t^j \cN \cdot \partial_t^{l-j}u,\\
\end{split}
\end{equation}
\begin{equation}\label{TermF5}
\begin{split}
F_i^{5,l} &=  \mu \sum_{0<j\leq l} C_l^j( \partial_t^j(\cA_{ik}\cN_m) \partial_k\partial_t^{l-j}u_m + \partial_t^j (\cA_{mk}\cN_m)\partial_k \partial_t^{l-j}u_i)\\
&\qquad+\sum_{0<j\leq l} C_l^j \partial_t^j\cN_i \partial_t^{l-j} ( g\rho_+\eta -q ).
\end{split}
\end{equation}

\section{Some useful estimates}\label{AppUsefulEst}

\noindent{\textbf{Product estimate}}. 
Suppose that $\Sigma=\Omega$ or $\Gamma$, let $f\in H^{s_1}(\Sigma) , g\in H^{s_2}(\Sigma)$, 
\begin{enumerate}
\item if $0\leq r\leq s_1\leq s_2$ and $s_2>r+3/2$, then  $fg\in H^r(\Sigma)$,
\item if $0\leq r\leq s_1\leq s_2$ and $s_1>3/2$, then  $fg\in H^r(\Sigma)$.
\end{enumerate}
In both cases, we have 
\begin{equation}\label{ProductEst}
\|fg\|_{H^r(\Sigma)} \lesssim \|f\|_{H^{s_1}(\Sigma)}\|g\|_{H^{s_2}(\Sigma)},
\end{equation}
We refer to \cite[Lemma 10.1]{GT13} for the proof of \eqref{ProductEst}.

\noindent{\textbf{Gagliardo-Nirenberg's inequality}}. Let $s\geq 0$,  $\Sigma=\Omega$ or $\Gamma$ and $f,g \in H^s(\Sigma) \cap L^\infty(\Sigma)$, we have 
\begin{equation}\label{GN_ine}
\|fg\|_{H^s(\Sigma)} \lesssim \|f\|_{H^s(\Sigma)}\|g\|_{L^\infty(\Sigma)} +\|f\|_{L^\infty(\Sigma)}\|g\|_{H^s(\Sigma)}.
\end{equation}

\noindent{\textbf{Elliptic estimates}}. Let $r\geq 2$ and $\phi \in H^{r-2}(\Omega), \psi \in H^{r-1}(\Omega)$ and $\alpha \in H^{r-3/2}(\Gamma)$. There exist unique $u\in H^r(\Omega)$ and $q\in H^{r-1}(\Omega)$ solving
\[\begin{cases}
-\Delta u+\nabla q=\phi &\quad\text{in } \Omega,\\
\text{div}u=\psi &\quad\text{in } \Omega,\\
(q\text{Id}-\mu \bS u)e_3 =\alpha &\quad\text{on } \Gamma.
\end{cases}\]
Moreover, we have
\begin{equation}\label{EllipticEst}
\|u\|_{H^r(\Omega)}^2+\|q\|_{H^{r-1}(\Omega)}^2 \lesssim \|\phi\|_{H^{r-2}(\Omega)}^2 +\|\psi\|_{H^{r-1}(\Omega)}^2 +\|\alpha\|_{H^{r-3/2}(\Gamma)}^2.
\end{equation}
thanks to \cite[Lemma A.15]{GT13} for example.

We also recall the classical regularity theory for the Stokes problem with Dirichlet boundary conditions (see \cite[Theorem 2.4]{Tem84} after using the domain expansion technique). Let $r\geq 2$ and $f\in H^{r-2}(\Omega), g\in H^{r-1}(\Omega)$ and $h\in H^{r-1/2}(\Gamma)$ such that 
\[
\int_\Omega g = \int_\Gamma h \cdot \nu, \text{ where }\nu \text{ is the outward unit normal vector to the boundary}.
\]
There exist uniquely $u \in H^r(\Omega)$ and $q\in H^{r-1}(\Omega)$ solving
\[\begin{cases}
-\Delta u+\nabla q= f &\quad\text{in } \Omega,\\
\text{div}u=g &\quad\text{in } \Omega,\\
u=h &\quad\text{on } \Gamma.
\end{cases}\]
There also holds
\begin{equation}\label{EstElliptic}
\|u\|_{H^r(\Omega)}^2+ \|q\|_{H^{r-1}(\Omega)}^2 \lesssim \|f\|_{H^{r-2}(\Omega)}^2+\|g\|_{H^{r-1}(\Omega)}^2+ \|h\|_{H^{r-1/2}(\Gamma)}^2.
\end{equation}

\noindent{\textbf{Korn's inequality}}.  The following Korn's inequality is proven in \cite[Theorem 5.12]{KO88}, 
\begin{equation}\label{KornIne}
\|\nabla u\|_{L^2(\Omega)}^2 \lesssim \|\bS u\|_{L^2(\Omega)}^2.
\end{equation}

\noindent{\textbf{Commutator estimates}}. Let $\cJ =\sqrt{1-\partial_1^2-\partial_2^2}$ and let us define the commutator 
\[
[\cJ^s,f]g = \cJ^s(fg)-f\cJ^sg.
\]
We have 
\begin{equation}\label{EstCommutator}
\|[\cJ^s,f]g\|_{L^2(\Gamma)} \lesssim \|\nabla f\|_{L^\infty(\Gamma)}\|\cJ^{s-1}g\|_{L^2(\Gamma)}+\|\cJ^sf\|_{L^2(\Gamma)}\|g\|_{L^\infty(\Gamma)}. 
\end{equation}
The proof of \eqref{EstCommutator} is similar to that one of \cite[Lemma X1]{KP88}.

\noindent{\textbf{Interpolation inequality}}. It can be found in \cite[Chapter 5]{AF05} that 
\[
\|u\|_{H^j(\Omega)} \lesssim \|u\|_{L^2(\Omega)}^{1/(j+1)} \|u\|_{H^{j+1}(\Omega)}^{j/(j+1)}.
\]
That implies for  $\varepsilon>0$, there is a universal constant $C(j)$ such that
\begin{equation}\label{EstJensen}
\|u\|_{H^j(\Omega)}\leq \varepsilon \| u\|_{H^{j+1}(\Omega)} + C(j) \varepsilon^{-j} \|u\|_{L^2(\Omega)}.
\end{equation}

\noindent{\textbf{Coefficient estimates}}. If $\|\eta\|_{H^{5/2}(\Gamma)}\lesssim 1$, we have
\begin{equation}\label{CoefEstimates}
\begin{split}
&\|J-1\|_{L^\infty(\Omega)} +\|\cN-1\|_{L^\infty(\Gamma)}+ \|K-1\|_{L^\infty(\Gamma)}\lesssim \|\eta\|_{H^{5/2}(\Gamma)}.
\end{split}
\end{equation}
Also, the map $\Theta$ defined by \eqref{ThetaTransform} is a diffeomorphism. We refer to \cite[Lemma 2.4]{GT13_1} for the proof of \eqref{CoefEstimates}.  In the following lemmas, we provide some additional estimates.
%We provide  some additional estimates on $\eta$, which will be used later. 
\begin{lemma}\label{LemEstEta_H^1/2}
We have
\begin{equation}\label{EstEta_H^1/2}
\|\partial_t \eta\|_{H^{7/2}(\Gamma)}\lesssim \cE_f + \cE_f^2,
\end{equation}
\begin{equation}\label{EstEta_H^3/2}
\|\partial_t^2\eta\|_{H^{3/2}(\Gamma)}\lesssim \cE_f + \cE_f^2,
\end{equation}
and 
\begin{equation}\label{EstEtaD_t^3}
\|\partial_t^3\eta\|_{H^{1/2}(\Gamma)}\lesssim   \|\partial_t^2 u\|_{H^1(\Omega)} (1+ \cE_f)+\cE_f^2.
\end{equation}
\end{lemma}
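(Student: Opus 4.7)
The plan is to apply the kinematic boundary condition $\eqref{EqPertur}_4$, namely
\[
\partial_t\eta = u_3 - u_1\partial_1\eta - u_2\partial_2\eta \quad\text{on }\Gamma,
\]
and its time differentiations, and then estimate each piece by combining the trace inequality $\|f\|_{H^{s-1/2}(\Gamma)}\lesssim\|f\|_{H^s(\Omega)}$ with standard product inequalities on the 2-dimensional surface $\Gamma$. Since $\dim\Gamma=2$, the space $H^s(\Gamma)$ is a Banach algebra for $s>1$; in particular both $H^{7/2}(\Gamma)$ and $H^{3/2}(\Gamma)$ are algebras, while for $H^{1/2}(\Gamma)$ I will rely on the sharper rule $\|fg\|_{H^{1/2}}\lesssim \|f\|_{H^{3/2}}\|g\|_{H^{1/2}}$ that follows from the condition $s_1+s_2>\tfrac12+\tfrac{n}{2}$ with $s_1,s_2\ge\tfrac12$.

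For \eqref{EstEta_H^1/2}, I would write $\partial_t\eta = u_3 - u_j\partial_j\eta$ on $\Gamma$, apply the $H^{7/2}(\Gamma)$-norm, and bound $\|u_3\|_{H^{7/2}(\Gamma)}\lesssim\|u\|_{H^4(\Omega)}\lesssim\cE_f$ by trace, while the algebra property of $H^{7/2}(\Gamma)$ gives
\[
\|u_j\partial_j\eta\|_{H^{7/2}(\Gamma)}\lesssim \|u\|_{H^{7/2}(\Gamma)}\|\eta\|_{H^{9/2}(\Gamma)}\lesssim \cE_f^2,
\]
which produces the right-hand side $\cE_f+\cE_f^2$. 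For \eqref{EstEta_H^3/2}, I differentiate once in time to obtain
\[
\partial_t^2\eta = \partial_t u_3 -\partial_t u_j\,\partial_j\eta - u_j\,\partial_j\partial_t\eta,
\]
take the $H^{3/2}(\Gamma)$-norm and use trace to bound $\|\partial_t u\|_{H^{3/2}(\Gamma)}\lesssim\|\partial_t u\|_{H^2(\Omega)}\lesssim\cE_f$. The remaining two products are controlled by the $H^{3/2}$ algebra estimate, the trace theorem for $u,\partial_t u$, and—crucially—the bound on $\|\partial_t\eta\|_{H^{5/2}(\Gamma)}$ provided by the already-established estimate \eqref{EstEta_H^1/2}; the output is $\lesssim\cE_f+\cE_f^2$.

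For the harder \eqref{EstEtaD_t^3}, I differentiate twice in time:
\[
\partial_t^3\eta = \partial_t^2 u_3 - \partial_t^2 u_j\,\partial_j\eta - 2\partial_t u_j\,\partial_j\partial_t\eta - u_j\,\partial_j\partial_t^2\eta,
\]
and estimate in $H^{1/2}(\Gamma)$. The leading term gives $\|\partial_t^2u_3\|_{H^{1/2}(\Gamma)}\lesssim\|\partial_t^2u\|_{H^1(\Omega)}$, which explains the first term on the right-hand side. For $\partial_t^2 u_j\,\partial_j\eta$ I use the sub-algebra estimate
\[
\|\partial_t^2u_j\,\partial_j\eta\|_{H^{1/2}(\Gamma)}\lesssim \|\partial_t^2 u\|_{H^{1/2}(\Gamma)}\|\partial_j\eta\|_{H^{3/2}(\Gamma)}\lesssim \|\partial_t^2u\|_{H^1(\Omega)}\,\cE_f,
\]
yielding exactly the $\|\partial_t^2u\|_{H^1(\Omega)}\cE_f$ contribution in the factor $(1+\cE_f)$. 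The remaining two pieces—$\partial_t u_j\,\partial_j\partial_t\eta$ and $u_j\,\partial_j\partial_t^2\eta$—are products in which each factor is controlled by $\cE_f$ together with the already-established estimates \eqref{EstEta_H^1/2} and \eqref{EstEta_H^3/2}, and I bound them in $H^{1/2}(\Gamma)$ via the product rule above to obtain $\lesssim\cE_f^2$. The main obstacle is the last term $u_j\,\partial_j\partial_t^2\eta$, because $\partial_t^2\eta$ only lies in $H^{3/2}(\Gamma)$ and its tangential derivative drops to $H^{1/2}(\Gamma)$, exactly the borderline space for which the algebra property fails; the sharp product estimate pairing $H^{3/2}(\Gamma)$ with $H^{1/2}(\Gamma)$ is precisely what saves the argument and justifies the bookkeeping of the order $\cE_f+\cE_f^2$ in the three estimates.
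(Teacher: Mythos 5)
Your proposal is correct and follows essentially the same route as the paper: differentiate the kinematic boundary condition $\eqref{EqPertur}_4$ in time, apply the trace theorem to the leading terms $\partial_t^l u_3$, and control the bilinear remainders with the product estimate \eqref{ProductEst} (which encodes the algebra/sharp-product rules you invoke) together with a bootstrap using \eqref{EstEta_H^1/2} and \eqref{EstEta_H^3/2}, with the cubic leftover $\cE_f^3$ absorbed into $\cE_f^2$ under the standing smallness assumption $\sup_{[0,t]}\cE_f\le\delta_0<1$.
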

\begin{proof}
By $\eqref{EqPertur}_4$, we have that
\begin{equation}\label{1_EstEta_H^1/2}
\|\partial_t\eta\|_{H^{7/2}(\Gamma)} \lesssim \|u_3\|_{H^{7/2}(\Gamma)} +\|\cQ^4\|_{H^{7/2}(\Gamma)} \lesssim \|u_3\|_{H^4(\Omega)} +\|\cQ^4\|_{H^{7/2}(\Gamma)}.
\end{equation}
We use \eqref{ProductEst} and  the trace theorem to estimate $\|\cQ^4\|_{H^{7/2}(\Gamma)}$ (see $\cQ^4$ in \eqref{TermQ}) as
\[
\|\cQ^4\|_{H^{7/2}(\Gamma)} \lesssim \|u\|_{H^{7/2}(\Gamma)}\|(\partial_1 \eta,\partial_2\eta)\|_{H^{7/2}(\Gamma)} \lesssim \|u\|_{H^4(\Omega)}\|\eta\|_{H^{9/2}(\Gamma)},
\]
Substituting the resulting inequality into \eqref{1_EstEta_H^1/2}, we have \eqref{EstEta_H^1/2}.

Using  \eqref{ProductEst} again,  we have 
\[
\begin{split}
\|\partial_t \cQ^4\|_{H^{3/2}(\Gamma)} &\lesssim \|\partial_t \partial_h \eta\|_{H^{3/2}(\Gamma)}\|u\|_{H^{7/2}(\Gamma)} + \|\partial_t u\|_{H^{3/2}(\Gamma)}\|\partial_h \eta\|_{H^{7/2}(\Gamma)}\\
& \lesssim \|\partial_t \eta\|_{H^{5/2}(\Gamma)} \|u\|_{H^4(\Omega)} + \|\eta\|_{H^{9/2}(\Gamma)}\|\partial_t u\|_{H^2(\Omega)}.
\end{split}
\]
Together with \eqref{EstEta_H^1/2}, that implies 
\[
\|\partial_t^2\eta\|_{H^{3/2}(\Gamma)}\lesssim \|\partial_t u_3\|_{H^2(\Omega)} +\|\partial_t \cQ^4\|_{H^{3/2}(\Gamma)} \lesssim \cE_f+\cE_f^2.
\]
One thus has \eqref{EstEta_H^3/2}.

We continue using  $\eqref{EqPertur}_4$ to have that 
\begin{equation}\label{2_EstEta_H^1/2}
\|\partial_t^3\eta\|_{H^{1/2}(\Gamma)} \lesssim \|\partial_t^2 u_3\|_{H^{1/2}(\Gamma)}+ \|\partial_t^2 \cQ^4\|_{H^{1/2}(\Gamma)} \lesssim  \|\partial_t^2 u_3\|_{H^1(\Omega)} +\|\partial_t^2 \cQ^4\|_{H^{1/2}(\Gamma)}.
\end{equation}
As a consequence of the product estimate \eqref{ProductEst} and Sobolev embedding, we obtain 
\[\begin{split}
\|\partial_t^2\cQ^4\|_{H^{1/2}(\Gamma)} &\lesssim \|\partial_t^2 u\|_{H^{1/2}(\Gamma)}\|(\partial_1\eta,\partial_2\eta)\|_{H^{5/2}(\Gamma)} + \|\partial_t u\|_{H^{1/2}(\Gamma)} \|\partial_t(\partial_1\eta,\partial_2\eta)\|_{H^{5/2}(\Gamma)} \\
&\qquad\quad+\|u\|_{H^{5/2}(\Gamma)}\|\partial_t^2(\partial_1\eta,\partial_2\eta)\|_{H^{1/2}(\Gamma)} \\
&\lesssim \|\partial_t^2 u\|_{H^1(\Omega)}\|\eta\|_{H^{7/2}(\Gamma)} + \|\partial_t u\|_{H^1(\Omega)} \|\partial_t\eta\|_{H^{7/2}(\Gamma)}+ \| u\|_{H^3(\Omega)} \|\partial_t^2 \eta\|_{H^{3/2}(\Gamma)}.
\end{split}\]
We continue using \eqref{EstEta_H^1/2} and \eqref{EstEta_H^3/2} to observe
\begin{equation}\label{3_EstEta_H^1/2}
\|\partial_t^2\cQ^4\|_{H^{1/2}(\Gamma)} \lesssim \|\partial_t^2 u\|_{H^1(\Omega)}\cE_f+\cE_f^2. 
\end{equation}
The inequality \eqref{EstEtaD_t^3} follows from \eqref{2_EstEta_H^1/2} and \eqref{3_EstEta_H^1/2}. Lemma \ref{LemEstEta_H^1/2} is proven.
\end{proof}

\begin{lemma}\label{LemCoefEst}
Under the assumption $\|\eta\|_{H^{9/2}(\Gamma)} \lesssim 1$, the following inequalities hold 
\begin{equation}\label{CoefEst_21}
\|\partial_t^l (A,B)\|_{H^s(\Omega)} \lesssim\|\partial_t^l \eta\|_{H^{s+1/2}(\Gamma)} \quad\text{for any }0\leq l\leq 2 \text{ and } 0\leq s\leq 4,
\end{equation}
and 
\begin{equation}\label{CoefEst_22}
\begin{cases}
\|K-1\|_{H^s(\Omega)} \lesssim \| \eta\|_{H^{s+1/2}(\Gamma)} \quad\text{for }0\leq s\leq 4, \\
\|\partial_t K \|_{H^s(\Omega)} \lesssim \|\partial_t\eta\|_{H^{s+1/2}(\Gamma)} \quad\text{for } 0\leq s\leq 2,\\
\|\partial_t^2 K\|_{L^2(\Omega)} \lesssim   \|\partial_t^2\eta\|_{H^{1/2}(\Gamma)}+ \|\partial_t\eta\|_{H^{5/2}(\Gamma)}^2, \\
\|\partial_t^3 K\|_{L^2(\Omega)} \lesssim \|\partial_t^3 \eta\|_{H^{1/2}(\Gamma)} +\|\partial_t\eta\|_{H^{5/2}(\Gamma)}\|\partial_t^2\eta\|_{H^{1/2}(\Gamma)} + \|\partial_t\eta\|_{H^{5/2}(\Gamma)}^3,
\end{cases}
\end{equation}
and
\begin{equation}\label{CoefEst_23}
\begin{cases}
\|(AK, BK)\|_{H^s(\Omega)} \lesssim  \|\eta\|_{H^{s+1/2}(\Gamma)}\quad\text{for }0\leq s\leq 4,\\
\|\partial_t(AK,BK)\|_{H^s(\Omega)} \lesssim \|\partial_t\eta\|_{H^{s+1/2}(\Gamma)}\quad\text{for } 0\leq s\leq 2,\\
\|\partial_t^2(AK,BK)\|_{L^2(\Omega)} \lesssim \|\partial_t^2\eta\|_{H^{1/2}(\Gamma)}+ \|\partial_t\eta\|_{H^{5/2}(\Gamma)}^2,\\
\|\partial_t^3 (AK,BK)\|_{L^2(\Omega)} \lesssim \|\partial_t^3 \eta\|_{H^{1/2}(\Gamma)} +\|\partial_t\eta\|_{H^{5/2}(\Gamma)}\|\partial_t^2\eta\|_{H^{1/2}(\Gamma)} + \|\partial_t\eta\|_{H^{5/2}(\Gamma)}^3,
\end{cases}
\end{equation}
and 
\begin{equation}\label{CoefEst_24}
\begin{cases}
 \|\cA-\text{Id}\|_{H^s(\Omega)}\lesssim  \|\eta\|_{H^{s+1/2}(\Gamma)}\quad\text{for }0\leq s\leq 4,\\
\|\partial_t \cA\|_{H^s(\Omega)} \lesssim \|\partial_t\eta\|_{H^{s+1/2}(\Gamma)}\quad\text{for } 0\leq s\leq 2,\\
\|\partial_t^2\cA\|_{L^2(\Omega)} \lesssim \|\partial_t^2\eta\|_{H^{1/2}(\Gamma)}+ \|\partial_t\eta\|_{H^{5/2}(\Gamma)}^2,\\
\|\partial_t^3\cA\|_{L^2(\Omega)} \lesssim \|\partial_t^3 \eta\|_{H^{1/2}(\Gamma)} +\|\partial_t\eta\|_{H^{5/2}(\Gamma)}\|\partial_t^2\eta\|_{H^{1/2}(\Gamma)} + \|\partial_t\eta\|_{H^{5/2}(\Gamma)}^3.
 \end{cases}
\end{equation}
\end{lemma}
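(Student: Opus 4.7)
The plan is to reduce every estimate to three ingredients: the Poisson extension bound of Lemma~\ref{LemEstNablaQ_Pf}, the Sobolev product estimate \eqref{ProductEst}, and a short bootstrap that turns $K=J^{-1}$ into a controlled perturbation of $1$ once $\partial_3\theta$ is small. All time-derivative estimates will follow from the same scheme because $\partial_t$ commutes with the Poisson extension $\mathcal{P}$.

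\textbf{Step 1 (Estimates for $A,B$).} Since $A=\partial_1\theta=\partial_1\mathcal{P}\eta$ and $B=\partial_2\theta=\partial_2\mathcal{P}\eta$, and because $\mathcal{P}$ commutes with $\partial_t$, we have $\partial_t^l A=\partial_1\mathcal{P}(\partial_t^l\eta)$ and similarly for $B$. Applying Lemma~\ref{LemEstNablaQ_Pf} at order $q=s+1$ gives
\[
\|\partial_t^l(A,B)\|_{H^s(\Omega)}\lesssim\|\nabla^{s+1}\mathcal{P}(\partial_t^l\eta)\|_{L^2(\Omega)}\lesssim\|\partial_t^l\eta\|_{H^{s+1/2}(\Gamma)},
\]
for $0\le l\le 2$ and $0\le s\le 4$, which is \eqref{CoefEst_21}.

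\textbf{Step 2 (Estimates for $K-1$ and its time derivatives).} Write $K-1=-\partial_3\theta\cdot K$ and observe that the same Poisson bound gives $\|\partial_3\theta\|_{H^s(\Omega)}\lesssim\|\eta\|_{H^{s+1/2}(\Gamma)}$ for $0\le s\le 4$. Because $\|\eta\|_{H^{9/2}(\Gamma)}\lesssim 1$ we have $\|\partial_3\theta\|_{L^\infty(\Omega)}\lesssim \tfrac12$ by Sobolev embedding $H^{s+1/2}(\Omega)\hookrightarrow L^\infty$ with $s>3/2$, so $J$ is bounded below and $K$ is well-defined and bounded. A short induction on $s$, combined with the product estimate \eqref{ProductEst} applied to the identity $K=1-\partial_3\theta\,K$, yields
\[
\|K-1\|_{H^s(\Omega)}\lesssim\|\partial_3\theta\|_{H^s(\Omega)}(1+\|K-1\|_{H^s(\Omega)})\lesssim\|\eta\|_{H^{s+1/2}(\Gamma)}
\]
for $0\le s\le 4$ (after absorbing the $K-1$ term on the right thanks to smallness). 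For the temporal derivatives we differentiate $KJ=1$:
\[
\partial_t K=-K^2\,\partial_t\partial_3\theta,\qquad \partial_t^2 K=2K^3(\partial_t\partial_3\theta)^2-K^2\partial_t^2\partial_3\theta,
\]
and similarly for $\partial_t^3 K$. Each $\partial_t^j\partial_3\theta$ satisfies $\|\partial_t^j\partial_3\theta\|_{H^\sigma(\Omega)}\lesssim\|\partial_t^j\eta\|_{H^{\sigma+1/2}(\Gamma)}$. Inserting these into the above formulas and using \eqref{ProductEst} (together with the $H^{s+1}$ bound on $K$ already established to guarantee $K\in L^\infty$ with bounded higher-Sobolev norms) delivers the four inequalities in \eqref{CoefEst_22}. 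The $L^2$ bounds for $\partial_t^2K$ and $\partial_t^3K$ are the delicate ones: one must check that the quadratic/cubic monomials in $\partial_t\partial_3\theta$ fit in $L^2$ with the listed regularity budget, which is done by putting the lowest-order factor in $H^{5/2-1}=H^{3/2}\hookrightarrow L^\infty$ and the remaining factor in $L^2$ (respectively in $H^{1/2}\hookrightarrow L^3$ for the cubic term).

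\textbf{Step 3 (Products $AK,BK$ and the matrix $\mathcal{A}-\mathrm{Id}$).} Write $AK=A+A(K-1)$. Then \eqref{ProductEst} combined with Steps~1--2 immediately gives the four inequalities in \eqref{CoefEst_23}; the time derivatives are handled by Leibniz and distributing the regularity between the factors, exactly as in Step~2. Finally, \eqref{MatrixA} shows $\mathcal{A}-\mathrm{Id}$ has entries in $\{0,-AK,-BK,K-1\}$, so \eqref{CoefEst_24} is a direct consequence of \eqref{CoefEst_22}--\eqref{CoefEst_23}.

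\textbf{Main obstacle.} The only nontrivial point is the bootstrap that controls $K-1$ despite the composition with the nonlinear function $x\mapsto (1+x)^{-1}$; this is why the smallness assumption $\|\eta\|_{H^{9/2}(\Gamma)}\lesssim 1$ is needed. Once $\|\partial_3\theta\|_{L^\infty}\le 1/2$ is secured, every remaining estimate is a mechanical application of \eqref{ProductEst} and Leibniz, and the essential accounting is matching the number of $\partial_t$'s on $\eta$ with the appropriate Sobolev index $s+1/2$ coming from the Poisson extension.
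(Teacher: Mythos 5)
Your proposal is correct and follows essentially the same route as the paper: reduce everything to the Poisson-extension bound of Lemma \ref{LemEstNablaQ_Pf}, the product estimate \eqref{ProductEst}, and explicit formulas for $\partial_t^jK$ in powers of $J^{-1}$ and $\partial_t^j\partial_3\theta$ (the paper writes $K-1=-J^{-1}\partial_3\theta$ and uses $\|J-1\|_{L^\infty(\Omega)}\lesssim1$ where you run a bootstrap/absorption on $K=1-\partial_3\theta K$, which amounts to the same smallness input). One small correction: $H^{3/2}(\Omega)\not\hookrightarrow L^\infty(\Omega)$ in three dimensions, so the $L^\infty$ factor in the quadratic and cubic terms should be taken in $H^2(\Omega)$, which is exactly what $\|\partial_t\partial_3\theta\|_{H^2(\Omega)}\lesssim\|\partial_t\eta\|_{H^{5/2}(\Gamma)}$ provides, and likewise the absorption in your bootstrap should use smallness of the low-order ($H^2$ or $L^\infty$) norm of $\partial_3\theta$ rather than its $H^4$ norm; with these routine adjustments all stated bounds follow as in the paper.
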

\begin{proof}
 To prove \eqref{CoefEst_21}, we use Lemma \ref{LemEstNablaQ_Pf} to obtain
\[
\|\partial_t^l (A,B)\|_{H^s(\Omega)} = \|\partial_t^l ( \partial_1 \theta, \partial_2 \theta)\|_{H^s(\Omega)} \lesssim \|\partial_t^l \theta\|_{H^{s+1}(\Omega)} \lesssim \|\partial_t^l \eta\|_{H^{s+1/2}(\Gamma)}.
\]

We then claim \eqref{CoefEst_22}. Since $K-1= J^{-1}(1-J)=-J^{-1}\partial_3\theta$,  we have
\[
\|K-1\|_{H^s(\Omega)} \lesssim \|J^{-1}\partial_3\theta\|_{H^s(\Omega)} \lesssim \|\theta\|_{H^{s+1}(\Omega)}\lesssim \|\eta\|_{H^{s+1/2}(\Gamma)}. 
\]
Note that $K=J^{-1}$, we have $\partial_t K=-J^{-2}\partial_t J$. Owing to the product estimate \eqref{ProductEst},  Lemma \ref{LemEstNablaQ_Pf} and the fact that $\|J-1\|_{L^\infty(\Omega)}\lesssim 1$ \eqref{CoefEstimates}, we get
\[
\begin{split}
\|\partial_t K\|_{H^s(\Omega)} &\lesssim  \|J^{-2}  \partial_t  \partial_3\theta \|_{H^s(\Omega)}  \lesssim \|\partial_t\partial_3 \theta\|_{H^s(\Omega)} \lesssim \|\partial_t\eta\|_{H^{s+1/2}(\Gamma)}.
\end{split}
\]
Since $\partial_t^2 K= -J^{-2}\partial_t^2 J +2J^{-3}(\partial_t J)^2$, we continue applying Sobolev embedding,  Lemma \ref{LemEstNablaQ_Pf} and  \eqref{CoefEstimates} to obtain
\[
\begin{split}
\|\partial_t^2K\|_{L^2(\Omega)} &\lesssim \|J^{-2}\partial_t^2\partial_3\theta\|_{L^2(\Omega)} + \|J^{-3}(\partial_t\partial_3\theta )^2\|_{L^2(\Omega)} \\
&\lesssim \|\partial_t^2\partial_3\theta\|_{L^2(\Omega)} (1+\|\partial_3\theta\|_{H^2(\Omega)}) + \|\partial_t\partial_3\theta\|_{L^2(\Omega)} \|\partial_t\partial_3\theta\|_{H^2(\Omega)}\\
&\lesssim \|\partial_t^2\eta\|_{H^{1/2}(\Gamma)} (1+\|\eta\|_{H^{5/2}(\Gamma)})+ \|\partial_t\eta\|_{H^{1/2}(\Gamma)}\|\partial_t\eta\|_{H^{5/2}(\Gamma)}\\
&\lesssim \|\partial_t^2\eta\|_{H^{1/2}(\Gamma)}+ \|\partial_t\eta\|_{H^{5/2}(\Gamma)}^2.
\end{split}
\]
Similarly, we have 
\[
\partial_t^3 K= -J^{-2}\partial_t^3 J +6J^{-3}\partial_t J\partial_t^2 J-6J^{-4}(\partial_t J)^3.
\]
 This yields
\[
\begin{split}
\|\partial_t^3K\|_{L^2(\Omega)} &\lesssim  \|J^{-2}\partial_t^3\partial_3\theta\|_{L^2(\Omega)} + \|J^{-3} \partial_t\partial_3\theta   \partial_t^2\partial_3\theta\|_{L^2(\Omega)} + \|J^{-4} (\partial_t \partial_3\theta)^3\|_{L^2(\Omega)}\\
&\lesssim \|\partial_t^3\partial_3 \theta\|_{L^2(\Omega)} + \|\partial_t\partial_3\theta\|_{H^2(\Omega)}\|\partial_t^2 \partial_3\theta\|_{L^2(\Omega)}+  \|\partial_t\partial_3\theta\|_{H^2(\Omega)}^2  \|\partial_t\partial_3\theta\|_{L^2(\Omega)} \\
&\lesssim \|\partial_t^3 \eta\|_{H^{1/2}(\Gamma)} +\|\partial_t\eta\|_{H^{5/2}(\Gamma)}\|\partial_t^2\eta\|_{H^{1/2}(\Gamma)} + \|\partial_t\eta\|_{H^{5/2}(\Gamma)}^3.
\end{split}
\]
Hence,  \eqref{CoefEst_22} is proven.

We combine \eqref{CoefEst_21} and \eqref{CoefEst_22} to prove \eqref{CoefEst_23}. Note that $XK=X(K-1)+X$ for $X=A$ or $B$, we use Sobolev embedding and \eqref{CoefEst_22} to obtain that
\[
\begin{split}
\| XK\|_{L^2(\Omega)} &\lesssim \|X\|_{L^2(\Omega)}(1+\|K-1\|_{H^2(\Omega)})\lesssim \|\eta\|_{H^{1/2}(\Gamma)}.
\end{split}
\]
We make use \eqref{ProductEst} and \eqref{CoefEst_21}, \eqref{CoefEst_22} to obtain 
\[
\begin{split}
\|XK\|_{H^1(\Omega)} \lesssim \|X\|_{H^1(\Omega)}(1+\|K-1\|_{H^3(\Omega)}) \lesssim \|\eta\|_{H^{3/2}(\Gamma)}
\end{split}
\]
and  if $s=2,3$ or 4, we use also Gagliardo-Nirenberg's inequality to have
\[
\begin{split}
\|XK\|_{H^s(\Omega)} &\lesssim \|X\|_{H^s(\Omega)}(1+\|K-1\|_{H^2(\Omega)}) + \|X\|_{H^2(\Omega)}(1+\|K-1\|_{H^s(\Omega)}) \\
&\lesssim \|\eta\|_{H^{s+1/2}(\Gamma)}.
\end{split}
\]
We further obtain
\[
\begin{split}
\|\partial_t (XK)\|_{H^s(\Omega)} &\lesssim \|\partial_t X\|_{H^s(\Omega)}+ \|\partial_t X(K-1)\|_{H^s(\Omega)}+ \|X\partial_t(K-1)\|_{H^s(\Omega)}.
\end{split}
\]
If $s=0$, we use Sobolev embedding and \eqref{CoefEst_21}, \eqref{CoefEst_22} again to have
\[
\begin{split}
\|\partial_t(XK)\|_{L^2(\Omega)} &\lesssim \|\partial_tX\|_{L^2(\Omega)}(1+\|K-1\|_{H^2(\Omega)})+ \|X\|_{H^2(\Omega)} \|\partial_tK\|_{L^2(\Omega)} \\
&\lesssim \|\partial_t\eta\|_{H^{1/2}(\Gamma)}(1+\|\eta\|_{H^{5/2}(\Gamma)}).
\end{split}
\]
If $s=1$ or 2, we use \eqref{ProductEst} and also \eqref{CoefEst_21}, \eqref{CoefEst_22}  to obtain
\[
\begin{split}
\|\partial_t(XK)\|_{H^1(\Omega)} &\lesssim \|\partial_tX\|_{H^1(\Omega)}(1+\|K-1\|_{H^3(\Omega)})+ \|X\|_{H^3(\Omega)} \|\partial_tK\|_{H^1(\Omega)} \\
&\lesssim \|\partial_t\eta\|_{H^{3/2}(\Gamma)}(1+\|\eta\|_{H^{7/2}(\Gamma)}).
\end{split}
\]
or 
\[
\begin{split}
\|\partial_t(XK)\|_{H^2(\Omega)} &\lesssim \|\partial_tX\|_{H^2(\Omega)}(1+\|K-1\|_{H^2(\Omega)})+ \|X\|_{H^2(\Omega)} \|\partial_tK\|_{H^2(\Omega)} \\
&\lesssim \|\partial_t\eta\|_{H^{5/2}(\Gamma)}(1+\|\eta\|_{H^{5/2}(\Gamma)}).
\end{split}
\]
Similarly, it can be seen that
\[
\begin{split}
\|\partial_t^2(XK) \|_{L^2(\Omega)} &\lesssim \|\partial_t^2X\|_{L^2(\Omega)} + \|\partial_t^2X (K-1)\|_{L^2(\Omega)}+ \|\partial_t X\partial_t (K-1)\|_{L^2(\Omega)} \\
&\qquad\qquad+ \|X\partial_t^2(K-1)\|_{L^2(\Omega)}\\
&\lesssim \|\partial_t^2X\|_{L^2(\Omega)}( 1+\|K-1\|_{H^2(\Omega)}) + \|\partial_tX\|_{L^2(\Omega)}\|\partial_t K\|_{H^2(\Omega)} \\
&\qquad\qquad+ \|X\|_{H^2(\Omega)}\|\partial_t^2 K\|_{L^2(\Omega)}\\
&\lesssim \|\partial_t^2\eta\|_{H^{1/2}(\Gamma)}(1+\|\eta\|_{H^{5/2}(\Gamma)}) + \|\partial_t\eta\|_{H^{1/2}(\Gamma)} \|\partial_t\eta\|_{H^{5/2}(\Gamma)} \\
&\qquad\qquad+ \|\eta\|_{H^{5/2}(\Gamma)} (\|\partial_t^2\eta\|_{H^{1/2}(\Gamma)}+ \|\partial_t\eta\|_{H^{5/2}(\Gamma)}^2)\\
&\lesssim \|\partial_t^2\eta\|_{H^{1/2}(\Gamma)}+ \|\partial_t\eta\|_{H^{5/2}(\Gamma)}^2.
\end{split}
\]
In a same way, we have
\[
\begin{split}
\|\partial_t^3(XK) \|_{L^2(\Omega)}  &\lesssim \|\partial_t^3X\|_{L^2(\Omega)}( 1+\|K-1\|_{H^2(\Omega)}) + \|\partial_t^2X\|_{L^2(\Omega)}\|\partial_t K\|_{H^2(\Omega)} \\
&\qquad+ \|\partial_t X\|_{H^2(\Omega)}\|\partial_t^2 K\|_{L^2(\Omega)} +\|X\|_{H^2(\Omega)}\|\partial_t^3K\|_{L^2(\Omega)}\\
&\lesssim \|\partial_t^3\eta\|_{H^{1/2}(\Gamma)} + \|\partial_t\eta\|_{H^{5/2}(\Gamma)}\|\partial_t^2\eta\|_{H^{1/2}(\Gamma)} + \|\partial_t\eta\|_{H^{5/2}(\Gamma)}^3.
\end{split}
\]
Thus, the proof of \eqref{CoefEst_23} is complete.

Note that
\[
\|\partial_t^l (\cA-\text{Id})\|_{H^s(\Omega)} \leq \|\partial_t^l (K-1)\|_{H^s(\Omega)} + \|\partial_t^l(AK)\|_{H^s(\Omega)} + \|\partial_t^l(BK)\|_{H^s(\Omega)}.
\]
Hence,   \eqref{CoefEst_24} follows from \eqref{CoefEst_21}, \eqref{CoefEst_22} and  \eqref{CoefEst_23}.
\end{proof}

\section{Proof of Lemma \ref{LemFormulaD_k}}\label{AppFormulaD_k}
Note that the quotient 
\begin{equation}\label{Quotient}
\frac{2k^2(  \phi'(0)\phi(0)- \phi'(-a)\phi(-a))}{\int_{-a}^0 ((\phi'')^2 +2k^2(\phi')^2+k^4\phi^2)}
\end{equation}
is bounded because of the embedding $H^2((-a,0)) \hookrightarrow C^1((-a,0))$.
To prove Lemma \ref{LemFormulaD_k}, let us consider the Lagrangian functional 
\[
\cL_k(\phi,\beta) =  \beta\Big( \int_{-a}^0((\phi'')^2 +2k^2(\phi')^2+k^4\phi^2)-1\Big)-2k^2(\phi'(0)\phi(0)- \phi'(-a)\phi(-a)),
\]
for any $\phi \in H^2((-a,0))$. Using Lagrange multiplier theorem, we find that the extrema of the quotient \eqref{Quotient} are necessarily the stationary points  $(\phi_k,\beta_k)$ of $\cL_k$, which 
 satisfy  
\begin{equation}\label{Psi_kConstraintAt1}
 \int_{-a}^0((\phi_k'')^2 +2k^2(\phi_k')^2+k^4\phi_k^2)=1
\end{equation}
and
\begin{equation}\label{EqConstraintPsi_k}
\begin{split}
&\beta_k \int_{-a}^0  (\phi_k''\theta''+2k^2 \phi_k'\theta'+k^4 \phi_k \theta)\\  &=k^2(\phi_k'(0)\theta(0)+\phi_k(0)\theta'(0) -\phi_k'(-a)\theta(-a) -\phi_k(-a)\theta'(-a) ).
\end{split}
\end{equation}
for all $\theta\in H^2((-a,0))$.   Taking the integration by parts, we obtain  that 
\begin{equation}\label{1Eq_LemFormulaD_k}
\begin{split}
&\beta_k \int_{-a}^0 (\phi_k^{(4)}-2k^2 \phi_k''+k^4\phi_k )\theta  +\beta_k( \phi_k''\theta' - \phi_k''' \theta + 2k^2 \phi_k'\theta)\Big|_{-a}^0 \\
&\qquad=k^2(\phi_k'(0)\theta(0)+\phi_k(0)\theta'(0) - \phi_k'(-a)\theta(-a) -\phi_k(-a)\theta'(-a) ).
\end{split}
\end{equation}
Restricting $\theta \in C_0^{\infty}((-a,0))$, the resulting equality yields
\begin{equation}\label{EqPhi_kLemma32}
 \phi_k^{(4)}-2k^2 \phi_k''+k^4\phi_k =0 \quad\text{on } (-a,0).
\end{equation}
Hence, \eqref{1Eq_LemFormulaD_k} tells us  that 
\begin{equation}\label{2Eq_LemFormulaD_k}
\begin{cases}
\beta_k \phi_k''(0) = k^2\phi_k(0), \\
\beta_k(-\phi_k'''(0)+2k^2\phi_k'(0)) =k^2\phi_k'(0),\\
\beta_k\phi_k''(-a)= k^2\phi_k(-a),\\
\beta_k(-\phi_k'''(-a)+2k^2 \phi_k'(-a)) =k^2\phi_k'(-a).
\end{cases}
\end{equation}
Any solution $\phi_k$ of \eqref{EqPhi_kLemma32} is of the form 
\begin{equation}\label{FormPhi_k}
\phi_k(x_3) =(Ax_3+B) \sinh(kx_3) +(Cx_3+D)\cosh(kx_3),
\end{equation}
with $A,B,C,D$ are four constants such that $A^2+B^2+C^2+D^2>0$. Let us  compute from \eqref{FormPhi_k} that
\[
\phi_k'(x_3)= (A+kD+kCx_3) \sinh(kx_3) + (C+kB+kAx_3) \cosh(kx_3),
\]
\[
\phi_k''(x_3) = (2kC+k^2B+ k^2Ax_3) \sinh(kx_3) + (2kA+ k^2D +k^2 Cx_3) \cosh(kx_3).
\]
and
\[
\phi_k'''(x_3) = (3k^2A +k^3D+ k^3Cx_3)\sinh(kx_3) + (3k^2C+k^3B+k^3Ax_3) \cosh(kx_3).
\]
Substituting these formulas into \eqref{2Eq_LemFormulaD_k}, we obtain  
\begin{equation}\label{SystCB}
\begin{cases}
\beta_k(2kA +k^2D )=k^2D, \\
\beta_k(-k^2 C+k^3B)= k^2(C+kB),\\
\end{cases}
\end{equation}
and  
\begin{equation}\label{SystAD}
\begin{cases}
\beta_k\Big( - (2kC+k^2(B-Aa))\sinh(ka)+ (2kA+k^2(D-Ca))\cosh(ka) \Big) \\
\quad\quad =k^2(-(B-Aa)\sinh(ka)+(D-Ca)\cosh(ka) ),\\
\beta_k \Big( -(3k^2A+k^3(D-Ca))\sinh(ka)+ (3k^2C+k^3(B-Aa))\cosh(ka) \Big)\\
\quad\quad = k^2(2\beta_k-1) \Big( -(A+k(D-Ca))\sinh(ka)+ (C+k(B-Aa))\cosh(ka) \Big).\\
\end{cases}
\end{equation}

System \eqref{SystCB} is equivalent to 
\begin{equation}\label{SystCB_1}
\begin{cases}
k (\beta_k-1)B=(\beta_k+1)C,\\
 k(\beta_k-1)D =-2\beta_kA.
\end{cases}
\end{equation}
We also obtain that  \eqref{SystAD} is equivalent to 
\begin{equation}\label{SystAD_1}
\begin{cases}
\Big((-\beta_k(ka\sinh(ka)+2\cosh(ka))+ka\sinh(ka) \Big) A + (\beta_k-1)k\sinh(ka) B \\
\quad +\Big( (2\sinh(ka)+ka\cosh(ka))\beta_k-ka\cosh(ka)\Big)C + (-\beta_k+1)k\cosh(ka)D =0, \\
\Big(-(\beta_k+1)\sinh(ka) + (\beta_k-1)ka \cosh(ka) \Big) A + (-\beta_k+1)k\cosh(ka)B \\
\quad +\Big( (-\beta_k+1)ka\sinh(ka) + (\beta_k+1)\cosh(ka) \Big) C +(\beta_k-1)k\sinh(ka) D=0.
\end{cases}
\end{equation}
Substituting \eqref{SystCB_1} into \eqref{SystAD_1}, we deduce
\[
\begin{cases}
ka \tanh(ka)(-\beta_k+1)A + ((3\beta_k+1)\tanh(ka)+ka (\beta_k-1))C=0,\\
(-(3\beta_k+1)\tanh(ka) + ka(\beta_k-1))A + (-\beta_k+1)ka \tanh(ka)C=0.
\end{cases}
\]
Hence, system \eqref{2Eq_LemFormulaD_k} is equivalent to 
\begin{equation}\label{SystABCD_1}
\begin{cases}
(\beta_k+1)C - k (\beta_k-1)B=0,\\
2\beta_kA +k(\beta_k-1)D =0,\\
ka \tanh(ka)(-\beta_k+1)A + (\tanh(ka)(3\beta_k+1)+ka (\beta_k-1))C=0,\\
(-\tanh(ka)(3\beta_k+1) + ka(\beta_k-1))A +ka \tanh(ka) (-\beta_k+1) C=0.
\end{cases}
\end{equation}
System \eqref{SystABCD_1} admits a nontrivial solution $(A,C,B,D)$ if and only if the determinant of the corresponding matrix is equal to zero. This yields 
\[
k^2(\beta_k-1)^2 \Big( (ka)^2\tanh^2(ka) (\beta_k-1)^2 -\Big( (ka)^2(\beta_k-1)^2 -\tanh^2(ka)(3\beta_k+1)^2\Big)\Big)=0.
\]
Equivalently,
\begin{equation}\label{DetBeta_k}
k^2(\beta_k-1)^2 ( (ka)^2(\beta_k-1)^2 - \sinh^2(ka)(3\beta_k+1)^2 )=0.
\end{equation}
We have three possible values of $\beta_k$, which are solutions of \eqref{DetBeta_k} and  ordered as
\[
1 \text{ (multiplicity 2)}>  - \frac{\sinh(ka)-ka}{3\sinh(ka)+ka}>- \frac{\sinh(ka)+ ka}{3\sinh(ka)-ka}.
\]

Let us take the maximal value $\beta_k=1$. Clearly, we obtain  $A=C=0$ from \eqref{SystCB_1} and 
\[
\phi_k(x_3) = B\sinh(kx_3)+D\cosh(kx_3).
\]
Substituting the above $\phi_k$ into \eqref{Psi_kConstraintAt1}, we have
\[
\begin{split}
\int_{-a}^0 (B\sinh(kx_3)+D\cosh(kx_3))^2 + \int_{-a}^0 (D\sinh(kx_3)+B\cosh(kx_3))^2 =\frac1{2k^4}.
\end{split}
\]
Equivalently, 
%\[
%(B^2+D^2) \int_{-a}^0  \cosh(2kx_3)  + 2BD\int_{-a}^0 \sinh(2kx_3) = \frac1{2k^4}.
%\]
%We directly have
\[
\frac12\sinh(2ka)  (B^2+D^2)- 2\sinh^2(ka) BD =\frac1{2k^3}.
\]
This yields  
\begin{equation}\label{EqBD_1}
\begin{cases}
D \text{ is arbitrary and }, \\
B= \frac{ 2\sinh^2(ka) \pm \sqrt{\sinh^2(ka)(2\cosh^2(ka)+\cosh(2ka)) D^2 + \frac1{k^3}\sinh(2ka)}}{2\sinh(2ka)}.
\end{cases}
\end{equation}

Let us consider the minimal value $\beta_k= - \frac{\sinh(ka)+ka}{3\sinh(ka)-ka}$. It can be seen from \eqref{SystABCD_1} that 
\begin{equation}\label{FindD}
\begin{split}
D &=- \frac{\sinh(ka)+ka}{2k\sinh(ka)}A, \\
C &= \frac{\cosh(ka)-1}{\sinh(ka)} A, \\
B &= -\frac{(\sinh(ka)-ka)(\cosh(ka)-1)}{2k\sinh^2(ka)}A.
\end{split}
\end{equation}
Hence, $\phi_k(x_3) = A z_k(x_3)$, where 
\[\begin{split}
z_k(x_3) &= \Big( x_3-\frac{(\sinh(ka)-ka)(\cosh(ka)-1)}{2k\sinh^2(ka)} \Big)\sinh(kx_3)\\
&\qquad + \Big( \frac{\cosh(ka)-1}{\sinh(ka)}x_3  - \frac{\sinh(ka)+ka}{2k\sinh(ka)}\Big)\cosh(kx_3).
\end{split}\]
To find $A$, we  trace back to \eqref{Psi_kConstraintAt1}. That means 
\begin{equation}\label{FindA}
A^2 \int_{-a}^0 ( (z_k'')^2 +2k^2(z_k')^2+k^4z_k^2)=1.
\end{equation}

From the  above cases, we conclude that 
\begin{itemize}
\item 
\[
\max_{\phi \in H^2((-a,0))} \frac{2k^2(\phi'(0)\phi(0)-\phi'(-a)\phi(-a))}{ \int_{-a}^0((\phi'')^2 +2k^2(\phi')^2+k^4\phi^2)} = 1.
\]
That  variational problem is attained by functions
\[
\phi(x_3) = B\sinh(kx_3)+D \cosh(kx_3),
\]
where $B,D$ satisfy \eqref{EqBD_1}.

\item 
\[
\min_{\phi \in H^2((-a,0))} \frac{2k^2(\phi'(0)\phi(0)-\phi'(-a)\phi(-a))}{ \int_{-a}^0( (\phi'')^2 +2k^2(\phi')^2+k^4\phi^2)}  =- \frac{\sinh(ka)+ka}{3\sinh(ka)-ka}.
\]
That  variational problem is attained by functions
\[
\phi(x_3) = (Ax_3+B)\sinh(kx_3)+(Cx_3+D)\cosh(kx_3),
\]
where $A,B,C,D$ satisfy \eqref{FindA} and \eqref{FindD}.
\end{itemize}

\section{Proof of Lemma \ref{LemVariationR}}\label{AppVariationR}
Let us consider the Lagrangian functional 
\[
\cL(\psi,\alpha) =  \alpha\Big( \int_{\R_-}((\psi''+k^2\psi)^2 +4k^2(\psi')^2)-1\Big)-k^2 (\psi(0))^2,
\]
for any $\psi \in H^2(\R_-)$. Using Lagrange multiplier theorem again, we find that the extrema of the quotient 
\[
\frac{k^2 (\psi(0))^2}{\int_{\R_-} ((\psi''+k^2\psi)^2 +4k^2(\psi')^2)}
\]
 are necessarily the stationary points  $(\psi_k,\alpha_k)$ of $\cL$, which 
 satisfy  
\[
 \int_{\R_-}((\psi_k''+k^2\psi_k)^2 +4k^2(\psi_k')^2)=1
\]
and for all $\theta\in H^2(\R_-)$,
\[
\begin{split}
\alpha_k \int_{\R_-}  (\psi_k''+k^2\psi_k) (\theta''+k^2\theta)+4k^2 \psi_k'\theta')   =k^2\psi_k(0)\theta(0).
\end{split}
\]
Taking the integration by parts, we obtain  that 
\begin{equation}\label{1Eq_LemAlpha_k}
\begin{split}
&\alpha_k \int_{\R_-} (\psi_k^{(4)}-2k^2 \psi_k''+k^4\psi_k )\theta  +\alpha_k(( \psi_k''+k^2\psi_k)\theta' - (\psi_k''' -3k^2 \psi_k')\theta)\Big|_{\R_-}\\ &=k^2\psi_k(0)\theta(0).
\end{split}
\end{equation}
Restricting $\theta \in C_0^{\infty}(\R_-)$, \eqref{1Eq_LemAlpha_k} yields that $\psi_k$ is a bounded solution of  
\[
 \psi_k^{(4)}-2k^2 \psi_k''+k^4\psi_k =0
 \] on $\R_-$ with the boundary conditions 
\begin{equation}\label{3EqPsi_k}
\begin{cases}
\alpha_k( \psi_k''(0)+k^2\psi_k(0) )=0, \\
\alpha_k(-\psi_k'''(0)+3k^2\psi_k'(0)) =k^2\psi_k(0).
\end{cases}
\end{equation}
We can see that $\psi_k$ is of the form 
\begin{equation}\label{FormPsi_k}
\psi_k(x_3) =(Ax_3+B) e^{kx_3} \quad(A^2+B^2>0).
\end{equation}
Substituting \eqref{FormPsi_k} into \eqref{3EqPsi_k}, we have
\begin{equation}\label{4EqPsi_k}
\begin{cases}
kB+A=0, \\
\alpha_k\big( -(k^3B+3k^2A)+3k^2(kB+A)\big) =k^2B.
\end{cases}
\end{equation}
Solving \eqref{4EqPsi_k}, we have $kB+A=0$ and $\alpha_k= \frac1{2k}$. It yields $\psi_k(x_3)= Cz_k(x_3)$, where 
\[
z_k(x_3)=(kx_3-1)e^{kx_3}
\]
 and $C$ satisfies
\[
C^2 \int_{\R_-}((z_k''+k^2z_k)^2+4k^2(z_k')^2)=1.
\]
We thus deduce \eqref{VariationTheta}.

\begin{thebibliography} {999999}
\bibitem{AF05}
\textsc{R. A. Adams, J. F. Fournier} (2005). \textit{Sobolev Space}, 2nd edition, Academic Press, New York.

\bibitem{Bea81}
\textsc{J. Beale} (1981). The initial value problem for the Navier-Stokes equations with a free surface. \textit{Comm. Pure Appl. Math.} \textbf{34}: 359--392.
% \texttt{https://doi.org/10.1002/cpa.3160340305}.

\bibitem{Cha61}
\textsc{S. Chandrasekhar} (1961). \textit{Hydrodynamics and Hydromagnetic Stability}, Oxford University Press, London.

\bibitem{DG06}
\textsc{B. Desjardins, E. Grenier} (2006). On Nonlinear Rayleigh–Taylor instabilities. \textit{Acta Math. Sinica (Engl. Ser.)} \textbf{22}(4):  1007--1016. 
%texttt{https://doi.org/10.1007/s10114-005-0559-8}.


\bibitem{GH03}
\textsc{Y. Guo, H. J. Hwang} (2003).
On the dynamical Rayleigh--Taylor instability. \textit{Arch. Rational Mech. Anal.} \textbf{167}: 235--253. 
%\texttt{https://doi.org/10.1007/s00205-003-0243-z}.

\bibitem{GS95}
\textsc{Y. Guo, W. Strauss} (1995). Instability of periodic BGK equilibria. \textit{Comm. Pure Appl. Math.} \textbf{ 48}: 861--894.
% \texttt{ https://doi.org/10.1002/cpa.3160480803}.


\bibitem{GT11}
\textsc{Y. Guo, I. Tice} (2011). Linear Rayleigh-Taylor instability for viscous, compressible fluids. \textit{SIAM J. Math. Anal.} \textbf{42}: 1688--1720. 
%\texttt{https://doi.org/10.1137/09077743}.

\bibitem{GT13}
\textsc{Y. Guo, I. Tice} (2013). Local well-posedness of the viscous surface wave problem without surface tension. \textit{Anal.  PDE} \textbf{6}: 287--369. 
%\texttt{DOI: 10.2140/apde.2013.6.287}.

\bibitem{GT13_1}
\textsc{Y. Guo, I. Tice} (2013). Almost exponential decay of periodic viscous surface waves without surface tension. \textit{Arch. Rational Mech. Anal.} \textbf{207}: 459--531. %\texttt{https://doi.org/10.1007/s00205-012-0570-z}.


\bibitem{Gre00}
\textsc{E. Grenier} (2000). On the nonlinear instability of Euler and Prandtl equations. \textit{Comm. Pure Appl. Math.} \textbf{ 53}:  1067--1091. 
%\texttt{https://doi.org/10.1002/1097-0312(200009)53:9<1067::AID-CPA1>3.0.CO;2-Q}.

\bibitem{JT13}
\textsc{J. Jang, I. Tice} (2013). Instability theory of the Navier-Stokes-Poisson equations. \textit{Anal. PDE} \textbf{6}: 1121--1181. 
%\texttt{DOI: 10.2140/apde.2013.6.1121}.


\bibitem{JTW16}
\textsc{J. Jang, I. Tice, Y. Wang} (2016). The compressible viscous surface-internal wave problem: nonlinear Rayleigh–Taylor instability. \textit{Arch. Rational Mech. Anal.}  \textbf{221}: 215--272. 
%\texttt{https://doi.org/10.1007/s00205-015-0960-0}.


\bibitem{JJN13}
\textsc{F. Jiang, S. Jiang, G. Ni} (2013). Nonlinear instability for nonhomogeneous incompressible viscous fluids. \textit{Sci. China Math.} \textbf{56}: 665--686.



\bibitem{HL03}
\textsc{B. Helffer,  O. Lafitte} (2003). Asymptotic methods for the eigenvalues of the Rayleigh equation for the linearized Rayleigh-Taylor instability. \textit{ Asymptotic Analysis} \textbf{ 33}: 189--235.

\bibitem{Kat95}
\textsc{T. Kato} (1995). \textit{Perturbation theory for linear operators}, reprint of the 1980 edition, Springer-Verlag, Berlin.

\bibitem{KP88}
\textsc{T. Kato, G. Ponce} (1988). Commutator estimates and the Euler and Navier- Stokes equations. \textit{Comm. Pure Appl. Math.} \textbf{41}: 891--907.

\bibitem{KO88}
\textsc{N. Kikuchi, J. T. Oden} (1988). \textit{Contact Problems in Elasticity: A Study of Variational Inequalities and Finite Element Methods}, Studies in Applied Mathematics, SIAM, Philadelphia.

\bibitem{Kul91}
\textsc{H. Kull} (1991). Theory of the Rayleigh-Taylor instability. \textit{Phys. Rep.} \textbf{206}: 197--325.

\bibitem{Laf01}
\textsc{O. Lafitte} (2001). Sur la phase lin\'eaire de l'instabilit\'e de Rayleigh-Taylor. S\'eminaire Equations aux D\'eriv\'ees Partielles du Centre de Math\'ematiques de l'Ecole Polytechnique.

\bibitem{LN20}
\textsc{O. Lafitte, T.-T. Nguyễn} (2022). Spectral analysis of  the incompressible viscous Rayleigh-Taylor system. \textit{Water Waves} \textbf{4}: 259--305.

\bibitem{Lan13}
\textsc{D. Lannes} (2013). \textit{The water waves problem: Mathematical analysis and asymptotics}, Mathematical survey and monographs, vol. 188, AMS, Providence.


\bibitem{Lin98}
\textsc{J. D. Lindl} (1998). \textit{Inertial Confinement Fusion},  Springer/AIP Press, New York. 

\bibitem{Tai22}
\textsc{T.-T. Nguyễn} (2024). Linear and nonlinear analysis of the viscous Rayleigh-Taylor system with Navier-slip boundary conditions, \textit{Cal. Var. PDEs}, \textbf{63}:41.

%\bibitem{Tai22_2}
%\textsc{T.-T. Nguyễn}, Nonlinear Rayleigh-Taylor instability of the viscous  surface waves in an infinitely deep ocean, preprint (2022), 80 pages, \texttt{https://arxiv.org/abs/2211.14888v1}.

\bibitem{RDTA00}
\textsc{B. A. Remington, R. P. Drake, H. Takabe, D. Arnett} (2000). A review of astrophysics experiments on intense lasers. \textit{Phys. Plasmas} \textbf{7}: 1641--1652.

\bibitem{Str83}
\textsc{J.W. Strutt (Lord Rayleigh)} (1883). Investigation of the character of the equilibrium of an incompressible heavy fluid of variable density. \textit{Proc. London Math. Soc} \textbf{14}: 170--177.

\bibitem{Tay50}
\textsc{G. Taylor} (1950). The instability of liquid surfaces when accelerated in a direction perpendicular to their planes. \textit{Proc. R. Soc. Lond. Ser. A} \textbf{201}: 192--196.

\bibitem{Tem84}
\textsc{R. Temam} (1984). \textit{Navier–Stokes Equations: Theory and Numerical Analysis}. 3rd ed. Amsterdam: North--Holland.


\bibitem{Wan19}
\textsc{Y. Wang} (2019). Sharp nonlinear stability criterion of viscous non-resistive MHD internal waves in 3D, \textit{Arch. Rational Mech. Anal.}  \textbf{231}: 1675--1743.

\bibitem{WT12}
\textsc{Y. Wang, I. Tice} (2012). The viscous surface-internal wave problem: nonlinear Rayleigh--Taylor instability, \textit{Comm. Part. Differ. Equ.} \textbf{37}: 1967--2028.


\bibitem{Zho17_1}
\textsc{Y. Zhou} (2017). Rayleigh--Taylor and Richtmyer--Meshkov instability induced flow, turbulence, and mixing. I., \textit{Phys. Rep.} \textbf{720–722}: 1--136.

\bibitem{Zho17_2}
\textsc{Y. Zhou} (2017). Rayleigh--Taylor and Richtmyer--Meshkov instability induced flow, turbulence, and mixing. II., \textit{Phys. Rep.} \textbf{723–725}: 1--160.

\end{thebibliography}
 \end{document}